\tikzstyle pt=[fill,black]
\tikzstyle ptbl=[fill,black]
\setlist[enumerate,1]{label=(\roman*)}
\setlist[enumerate,2]{label=(\alph*)}
\newcommand{\C}{\mathbb{C}}
\renewcommand{\H}{\mathbb{H}}
\newcommand{\N}{\mathbb{N}}
\renewcommand{\P}{\mathbb{P}}
\newcommand{\Q}{\mathbb{Q}}
\newcommand{\R}{\mathbb{R}}
\newcommand{\T}{\mathbb{T}}
\newcommand{\Z}{\mathbb{Z}}
\newcommand{\caB}{\mathcal{B}}
\newcommand{\caD}{\mathcal{D}}
\newcommand{\caE}{\mathcal{E}}
\newcommand{\caK}{\mathcal{K}}
\newcommand{\caL}{\mathcal{L}}
\newcommand{\caM}{\mathcal{M}}
\newcommand{\caO}{\mathcal{O}}
\newcommand{\caU}{\mathcal{U}}
\newcommand{\caV}{\mathcal{V}}
\newcommand{\caX}{\mathcal{X}}
\newcommand{\caY}{\mathcal{Y}}
\newcommand{\sA}{\mathscr{A}}
\newcommand{\sB}{\mathscr{B}}
\newcommand{\sG}{\mathscr{G}}
\newcommand{\sJ}{\mathscr{J}}
\newcommand{\sL}{\mathscr{L}}
\newcommand{\sP}{\mathscr{P}}
\newcommand{\cc}{\mathrm{c}}
\newcommand{\dd}{\mathrm{d}}
\newcommand{\ddc}{\dd\dd^{\cc}}
\newcommand{\an}{\mathrm{an}}
\newcommand{\can}{\mathrm{can}}
\newcommand{\loc}{\mathrm{loc}}
\newcommand{\model}{\mathrm{mod}}
\newcommand{\adel}{\mathrm{adel}}
\newcommand{\nef}{\mathrm{nef}}
\newcommand{\inte}{\mathrm{int}}
\def\WbDiv{\operatorname{W-b-Div}_{\R}}
\def\CbDiv{\operatorname{C-b-Div}_{\R}}
\DeclareMathOperator{\tr}{tr}
\DeclareMathOperator{\rk}{rk}
\DeclareMathOperator{\Pic}{Pic}
\DeclareMathOperator{\bPic}{\mathbf{Pic}}
\DeclareMathOperator{\PicQ}{Pic_{\Q}}
\DeclareMathOperator{\bPicQ}{\mathbf{Pic}_{\Q}}
\DeclareMathOperator{\Pich}{\widehat{Pic}}
\DeclareMathOperator{\bPich}{\mathbf{\widehat{Pic}}}
\DeclareMathOperator{\PichQ}{\widehat{Pic}_{\Q}}
\DeclareMathOperator{\bPichQ}{\mathbf{\widehat{Pic}}_{\Q}}
\DeclareMathOperator{\Div}{Div}
\DeclareMathOperator{\DivR}{Div_{\R}}
\DeclareMathOperator{\DivQ}{Div_{\Q}}
\DeclareMathOperator{\height}{h}
\DeclareMathOperator{\dv}{div}
\DeclareMathOperator{\divh}{\widehat{div}}
\DeclareMathOperator{\DivhR}{\widehat{Div}_{\R}}
\DeclareMathOperator{\Sp}{Sp}
\DeclareMathOperator{\SL}{SL}
\DeclareMathOperator{\GL}{GL}
\DeclareMathOperator{\im}{Im}
\DeclareMathOperator{\cl}{cl}
\DeclareMathOperator{\Spec}{Spec}
\DeclareMathOperator{\Hom}{Hom}
\DeclareMathOperator{\id}{\mathbbm{1}}
\DeclareMathOperator{\Capa}{Cap}
\DeclareMathOperator{\PSH}{PSH}
\numberwithin{equation}{section}
\theoremstyle{plain}
\newtheorem{proposition}{Proposition}[section]
\newtheorem{lemma}[proposition]{Lemma}
\newtheorem{theorem}[proposition]{Theorem}
\newtheorem{corollary}[proposition]{Corollary}
\theoremstyle{definition}
\newtheorem{notation}[proposition]{Notation}
\newtheorem{definition}[proposition]{Definition}
\newtheorem{remark}[proposition]{Remark}
\newtheorem{example}[proposition]{Example}
\begin{document}

\title{On the height of the universal abelian variety}
\author{Jos\'e Ignacio Burgos Gil}
\thanks{J.\,I.~Burgos~Gil was partially supported by MINISTERIO DE CIENCIA E INNOVACION (MICIU/AEI/10.13039/501100011033), 
research projects PID2019-108936GB-C21, PID2022-142024NB-I00, and ICMAT Severo Ochoa projects CEX2019-000904-S
and CEX2023-001347-S}
\address{J.\,I.~Burgos~Gil,
  \orcidlink{0000-0003-2640-2190}:  0000-0003-2640-2190, ICMAT (CSIC-UAM-UC3-UCM) Madrid}
\email{burgos@icmat.es}

\author{J\"urg Kramer}
\thanks{J.~Kramer acknowledges support from the DFG Cluster of Excellence MATH+}
\address{J.~Kramer, \orcidlink{0009-0004-9455-4414}: 0009-0004-9455-4414,
  Humboldt-Universit\"at zu Berlin} 
\email{kramer@math.hu-berlin.de}
\date{\today}
\subjclass[2020]{14G40, 14G35, 11G18, 11F50, 32U05}

\begin{abstract}
In this paper we extend the arithmetic intersection theory of adelic divisors on quasi-projective varieties developed by X.~Yuan and S.\,W.
Zhang to cover certain adelic arithmetic divisors that are not nef nor integrable. The key concept used in this extension is the relative finite 
energy introduced by T.~Darvas, E.~Di Nezza, and C.\,H.~Lu. As an application we compute the arithmetic self-intersection number of the 
line bundle of Siegel--Jacobi forms on the universal abelian variety endowed with its invariant hermitian metric. The techniques developed 
in this paper can be applied in many other situations like mixed Shimura varieties or the moduli space of stable marked curves.
\end{abstract}

\maketitle
\setcounter{tocdepth}{2}
\tableofcontents

\section{Introduction} 
\label{first-section}

\subsection{Background}
Let $\sA_{g}(\C)$ denote the moduli space of principally polarized abelian varieties of dimension $g$ over the complex numbers and
$\omega$ the Hodge bundle on $\sA_{g}(\C)$. The calculation of the geometric degree of $\omega$ has been of fundamental interest in 
the past. Since Chern--Weil theory holds on $\sA_{g}(\C)$, this degree computation amounts to calculating the volume of $\sA_{g}(\C)$ 
with respect to the natural invariant metric. The validity of Chern--Weil theory is due to the fact that the natural invariant metric has only 
mild logarithmic singularities when approaching the boundary of $\sA_{g}(\C)$. The volume computation has been carried out by C.\,L.
Siegel in~\cite{Siegel} and leads to the formula
\begin{equation}
\label{siegel1}
\deg(\omega)=(-1)^{d'}\zeta_{\Q}(1-2g)\cdot\zeta_{\Q}(3-2g)\cdot\ldots\cdot\zeta_{\Q}(-1),
\end{equation}
where $d'=g(g+1)/2=\dim_{\C}(\sA_{g}(\C))$ and $\zeta_{\Q}(s)$ is the classical Riemann zeta function. 

Shifting now our attention to arithmetic intersections, we need first to replace the complex varieties under consideration by arithmetic
varieties. Thus, we consider the moduli stack $\sA_{g}$ of principally abelian schemes of dimension $g$ over $\Spec(\Z)$. We point 
out that in order to avoid to work with stacks we will have to introduce level structures; however, for the sake of exposition, we do not 
use level structures here. Considering then the Hodge bundle $\overline{\omega}=(\omega,\Vert\cdot\Vert_{\mathrm{inv}})$ on $\sA_
{g}$ equipped with the natural invariant metric, it was striking when U.~K\"uhn in~\cite{Kuehn:gainc} and J.-B.~Bost in~\cite{Bost} 
independently computed in the case $g=1$ the arithmetic degree $\widehat{\deg}(\overline{\omega})$ of $\overline{\omega}$, roughly 
speaking, as the logarithmic derivative of its geometric degree. More precisely, they obtained the formula
\begin{equation}
\label{ulf}
\widehat{\deg}(\overline{\omega})=-\deg(\omega)\Bigg(\frac{\zeta'_{\Q}(-1)}{\zeta_{\Q}(-1)}+\frac{1}{2}\Bigg).
\end{equation}
This result was instrumental for various subsequent developments: Let us mention first the establishment of an arithmetic intersection
theory generalizing the one of H.~Gillet and C.~Soul\'e~\cite{MR1087394} in the series of papers~\cite{BurgosKramerKuehn:accavb}
and~\cite{BurgosKramerKuehn:cacg} taking into account logarithmically singular metrics. Second, the result was important for the further
development of the so-called ``Kudla Program'', see for example~\cite{Kudla}. As a third ingredient, we refer to the conjectural generalization 
of formula~\eqref{ulf} in the form (see also~\cite{MR})
\begin{equation}
\label{siegel2}
\widehat{\deg}(\overline{\omega})=-\deg(\omega)\Bigg(\frac{\zeta'_{\Q}(1-2g)}{\zeta_{\Q}(1-2g)}+\frac{\zeta'_{\Q}(3-2g)}{\zeta_{\Q}(3-2g)}
+\ldots+\frac{\zeta'_{\Q}(-1)}{\zeta_{\Q}(-1)}+a\log(2)+b\Bigg)
\end{equation}
with $a,b\in\Q$; this conjectural formula has been verified for $g=2$ (see~\cite{JvP}) and for certain Hilbert modular surfaces (see~\cite
{BBGK}).

After having established an arithmetic intersection theory on the moduli stack $\sA_{g}$, the question arises if such a theory could also 
be developed on the universal abelian scheme $\pi\colon\sB_{g}\rightarrow\sA_{g}$. An interesting example would be the calculation of
the arithmetic degree of the line bundle $\sL$ corresponding to the principal polarization of $\sB_{g}$, since it is related to $\omega$ by 
means of the ``formule cl\'e'' stating that $\pi_{\ast}\sL\cong\omega^{\otimes-1/2}$.

However, it turned out that even in the geometric setting the degree computations on the universal abelian variety are more complicated
than on the moduli space of universal abelian varieties. This is the reason, why our next focus was to understand the situation on the
universal abelian variety $\pi\colon\sB_{g}(\C)\rightarrow\sA_{g}(\C)$ together with the line bundle $J=\pi^{\ast}\omega\otimes L$, where 
$L$ is the rigidified symmetric line bundle corresponding to twice the principal polarization. As a first step, we investigated the case $g=1$, 
where $\sB_{1}(\C)$ becomes an elliptic surface. There, it turns out that the natural invariant metric on $L$ acquires singularities which 
are more severe than logarithmic ones, which leads to the fact that Chern--Weil theory no longer holds on $\sB_{1}(\C)$. In the respective 
investigations~\cite{bkk}, we found that by replacing divisors on $\sB_{1}(\C)$ by b-divisors given by a limit of divisors on a sequence of 
blow-ups of $\sB_{1}(\C)$ along its boundary where the metric becomes singular, allows us to define sequences of intersection numbers 
which converge in such a way that Chern--Weil theory continues to hold in the limit. This work was generalized in~\cite
{botero21:_chern_weil_hilber_samuel} to arbitrary genus $g\ge 1$ and
in \cite{botero22:_rings_SJ} the authors established the formula
\begin{equation}
\label{degdeg}
\deg(\text{b-div}(J))=2^{g}\cdot\frac{d!}{d'!}\cdot\deg(\omega),
\end{equation}
where $d=d'+g=\dim_{\C}(\sB_{g}(\C))$ and $\text{b-div}(J)$ is the b-divisor associated to the line bundle $J$. This shows that the degree 
of the b-divisor associated to the line bundle $J$ on $\sB_{g}(\C)$ is up to an elementary rational factor given by the degree of the Hodge 
bundle on $\sA_{g}(\C)$.

Of course, it is now immediate to ask if there is a framework such that formula~\eqref{degdeg} carries over to the arithmetic setting. Thus, 
we next consider the universal abelian scheme $\sB_{g}$ with the line bundle $\overline{\sJ}=\pi^{\ast}\overline{\omega}\otimes\overline{\sL}$, 
where, as before, $\overline{\sL}=(\sL,\Vert\cdot\Vert_{\mathrm{inv}})$ is the rigidified symmetric line bundle on $\sB_{g}$ corresponding to 
twice the principal polarization equipped with the natural invariant metric. As mentioned above, Chern--Weil theory fails to hold on $\sB_{g} 
(\C)$, which is one of the main reasons that the generalized arithmetic intersection theory developed in~\cite{BurgosKramerKuehn:accavb} 
no longer applies to define arithmetic intersections on $\sB_{g}$. Thus, the question arises, if there is a further extension of arithmetic 
intersection theory that allows one to define and compute the arithmetic degree of the line bundle $\overline{\sJ}$. More specifically, one 
might ask, if formula~\eqref{degdeg} has an arithmetic analog. We are able to answer both questions affirmatively in this paper.

\subsection{Main results}
In order to find a promising attempt to establish a generalized arithmetic intersection theory for hermitian line bundles equipped with singular 
metrics on $\sB_{g}$, it is worth noticing that the use of b-divisors in the geometric setting as described above can be extended to the 
arithmetic setting by means of the theory of adelic arithmetic line bundles over quasi-projective arithmetic varieties developed by X.~Yuan 
and S.\,W.~Zhang in~\cite{YuanZhang:adelic}. In their theory, X.~Yuan and S.\,W.~Zhang defined arithmetic intersection numbers for so-called 
\emph{adelic} arithmetic divisors as limits of ``classical'' arithmetic intersection numbers as long as the adelic arithmetic divisors are sufficiently 
positive, i.\,e., nef as arithmetic divisors. The arithmetic intersection numbers can be extended by linearity to \emph{integrable} adelic arithmetic 
divisors, that are differences of nef adelic arithmetic divisors.

It turns out that the hermitian line bundle $\overline{\sL}$ discussed above, defines an adelic arithmetic divisor on $\sB_{g}$, which is nef so 
that its arithmetic self-intersection number is well-defined. On the other hand, even though the hermitian line bundle $\overline{\omega}$ also 
gives rise to an adelic arithmetic divisor on $\sA_{g}$, we show as a first main result in Theorem~\ref{thm:12} that it is not integrable, which 
prevents us from defining its arithmetic self-intersection number using the theory of X.~Yuan and S.\,W.~Zhang, although the existence of such 
an intersection number is known from~\cite{BurgosKramerKuehn:cacg}. Therefore, in order to have a well-defined arithmetic self-intersection 
product of $\overline{\sJ}$, we need to ``merge'' the results of~\cite{BurgosKramerKuehn:cacg} with those of~\cite{YuanZhang:adelic}. To this 
end, in our second main result, we extend the theory of X.~Yuan and S.\,W.~Zhang in Theorem~\ref{thm:fund} to be able to define arithmetic 
self-intersection numbers for certain adelic arithmetic divisors, which are not necessarily nef nor integrable like $\overline{\omega}$. This 
extension uses two recent analytical tools: The theory of non-pluripolar products of positive currents developed in~\cite{BEGZ} and the theory 
of relative energy introduced in~\cite{darvas18:mnpp} (that uses non-pluripolar products in an essential way). We refer to the recent survey
\cite{darvas23:_relat_kaehl} and the references therein.

The relation between the theory developed in~\cite{YuanZhang:adelic} and the relative energy is as follows. Let $\overline{\caD}=(\caD,g_{D})$ 
and $\overline{\caD}'=(\caD,g_{D}')$ be two nef adelic arithmetic divisors on an arithmetic variety $\caX$ that share the same geometric part
and only differ in the Green function's part. Then, by choosing a ``sufficiently large'' arithmetic reference divisor $\overline{\caE}=(\caE,g_{E})$ 
and by putting 
\begin{displaymath}
\omega_{E}=\ddc g_{E}+\delta_{E},\quad\varphi=g_{D}-g_{E},\quad\varphi'=g'_{D}-g_{E},
\end{displaymath}
the functions $\varphi$ and $\varphi'$ turn out to be $\omega_{E}$-plurisubharmonic on $\caX(\C)$ with $\varphi$ having finite relative 
energy $I_{\varphi'}(\varphi)$ with respect to $\varphi'$. We are now able to establish the relationship
\begin{displaymath}
\overline{\caD}^{d+1}=\overline{\caD}'^{d+1}+I_{\varphi'}(\varphi)
\end{displaymath}
between the corresponding arithmetic self-intersection numbers; here $d$ is the relative dimension of $\caX$. The relative energy $I_
{\varphi'}(\varphi)$ is computed by means of non-pluripolar products. This shows that we can define an arithmetic self-intersection number 
for an adelic arithmetic divisor $(D,g_{D}'')$, even if it is not nef nor integrable, as long as the function $\varphi''=g_{D}''-g_{E}$ is $\omega_
{E}$-plurisubharmonic and $\varphi''$ has finite relative energy with respect to $\varphi'$. One can argue that this is the largest possible 
extension of the theory given in~\cite{YuanZhang:adelic} to semipositive adelic divisors since one can say that a divisor with infinite relative 
energy should also have infinite arithmetic self-intersection. Many singular metrics that appear in practice are examples of metrics with finite 
relative energy, so we expect that this theory can be applied in many different situations. 

Our third main result in Theorem~\ref{thm:12}  states that the arithmetic self-intersection number of the adelic arithmetic line bundle $\overline
{\sJ}=\pi^{\ast}\overline{\omega}\otimes\overline{\sL}$, i.\,e., its arithmetic degree, is a well-defined real number. We compute this number as 
the fourth main result in Theorem~\ref{thm:10} and obtain the formula
\begin{equation}
\label{siegel3}
\widehat{\deg}(\overline{\sJ})=2^{g}\cdot\frac{(d+1)!}{(d'+1)!}\cdot\widehat{\deg}(\overline{\omega}),
\end{equation}
which confirms that the arithmetic analog of formula~\eqref{degdeg} holds. We would like to emphasize that formula~\eqref{siegel3} might 
be useful to establish a proof of the conjectural formla~\eqref{siegel2} in the spirit of C.\,L. Siegel's orginal inductive proof of formula~\eqref
{siegel1}, which consists of the following two steps: (1)~Start with the quantity $\deg(\omega)$ on $\sA_{g+1}(\C)$ and sweep it  to 
the boundary $\partial\sA_{g+1}(\C)\cong\sB_{g}(\C)$ using Eisenstein series. (2)~Push the resulting quantity down from $\sB_{g}(\C)$ to
$\sA_{g}(\C)$, which allows to relate $\deg(\omega)$ on $\sA_{g+1}(\C)$ and on $\sA_{g}(\C)$. Formula~\eqref{siegel3} constitutes the
arithmetic analog of Siegel's step~(2).

\subsection{Outline}
In Section~\ref{second-section}, we gather the analytic prerequisites needed for the remainder of the paper. Namely, we review the theory 
of plurisubharmonic functions and its relation with semipositive hermitian metrics and Green functions. We also discuss the non-pluripolar 
product of positive currents as well as the notion of relative energy. We end the section by recalling the notions of algebraic and almost 
algebraic singularities.

In Section~\ref{third-section}, we give a summary of the theory of adelic arithmetic divisors. We divide the discussion into three cases: The 
geometric case, the local arithmetic case, and the global arithmetic case. We also discuss the relationship between adelic arithmetic line 
bundles and adelic arithmetic divisors, and finally investigate the functorial properties that are needed for the computation of arithmetic 
self-intersection numbers (arithmetic degrees).

Section~\ref{sec:extension-yuan-zhang} is devoted to the extension of the theory to the finite relative energy case. To this end, we first show
that the difference of two arithmetic self-intersection numbers can be expressed in terms of the relative energy, which is the content of Theorem~
\ref{thm:fund}. We then give a definition of arithmetic self-intersection numbers in terms of relative energy. We end the section by showing that 
some of the functorial properties are also satisfied by this generalized arithmetic intersection product.

Finally, in Section~\ref{seventh-section}, we recall the theory of Siegel--Jacobi forms and prove in Theorem~\ref{thm:12} that the adelic arithmetic
Hodge bundle $\overline{\omega}$ is not integrable, in general. We end the section by computing the arithmetic self-intersection number of the 
adelic arithmetic line bundle of Siegel--Jacobi forms in Theorems~\ref{thm:9} and~\ref{thm:10}.

\subsection*{Acknowledgements}
We are grateful to the Humboldt-Universit\"at zu Berlin, to the Instituto de Ciencias Matem\'aticas (ICMAT), to the Mathematisches Forschungsinstitut 
Oberwolfach (MFO), and to the Centre International de Rencontres Math\'ematiques (CIRM) for their hospitality during the preparation of this 
paper.  We warmly thank A.~Botero, S.~Boucksom, T.~Darvas, W.~Gubler, K.~K\"unnemann, G.~Peralta, X.~Yuan, and S.\,W.~Zhang for many 
helpful conversations.

\section{Analytical prerequisites}
\label{second-section}

In this section we summarize the analytical prerequisites that are fundamental for this paper. Here, $X$ will denote a projective complex 
manifold of dimension $d$.

We refer to~\cite{dem-reg},~\cite{dem}, and~\cite{ochia}, but also to~\cite{bouck},~\cite{bfj-sing}, and~\cite{kim} for definitions and proofs 
of the analytic properties discussed in this section.

\begin{notation}
We define the twisted differential $\dd^{\cc}$ by the formula
\begin{displaymath}
\dd^{\cc}\coloneqq\frac{i}{4\pi}(\bar{\partial}-\partial).
\end{displaymath}
In this way, we obtain
\begin{displaymath}
\ddc=\frac{i}{2\pi}\,\partial\bar{\partial}.
\end{displaymath}
This is the usual convention used in Arakelov theory as, for example, in~\cite{LAG}. However, we point out that it differs by a factor of $1/2$
from the convention used in~\cite{demailly12:cadg}.
\end{notation}

\subsection{Plurisubharmonic functions}

\begin{definition}
\label{def:5}
Let $V$ be an open coordinate subset of $X$, which we identify with an open subset of $\C^{n}$. A function $\varphi\colon V\rightarrow
\R\cup\{-\infty\}$ is called \emph{plurisubharmonic}, if it satisfies the following two conditions:
\begin{enumerate}
\item
\label{item:6}
The function $\varphi$ is upper semi-continuous and not identically equal to $-\infty$ on every connected component of $V$.
\item
\label{item:7}
For every $z\in V$, every $\varepsilon>0$ such that the closed ball $B(z,\varepsilon)$ is contained in $V$, and every $w\in\C^{n}$ with
$\vert w\vert\le\varepsilon$, the inequality
\begin{displaymath}
\varphi(z)\le\frac{1}{2\pi}\int_{0}^{2\pi}\varphi(z+we^{it})\dd t
\end{displaymath}
holds.
\end{enumerate}
A function $\varphi\colon V\rightarrow\R\cup\{-\infty\}$ on an arbitrary open subset $V$ of $X$ is called \emph{plurisubharmonic}, if
$V$ can be covered by open coordinate subsets $\{V_{j}\}_{j\in J}$ such that $\varphi\vert_{V_{j}}$ is plurisubharmonic on $V_{j}$ for
all $j\in J$.
\end{definition}

\begin{remark}
\label{rem:1}
Let $V\subseteq X$ denote an open subset of $X$. As usual, we then denote by $L_{\loc}^{1}(V)$ the set of functions defined on $V$ 
that are locally integrable on $V$. If $\varphi\in L_{\loc}^{1}(V)$, then $\varphi$ defines a distribution on $V$ and thus a $(0,0)$-current 
on $V$, which we denote again by $\varphi$. Viewing the locally integrable function $\varphi$ as a $(0,0)$-current, allows the formation 
$\ddc\varphi$, which then becomes a closed $(1,1)$-current.
\end{remark}

Plurisubharmonic functions can be characterized in terms of positive currents. For the notion of positive currents and their properties, 
we refer to~\cite[Chapter~III]{demailly12:cadg}. The next result is~\cite[Chapter~I, Theorem~5.8]{demailly12:cadg}.

\begin{proposition}
\label{prop:1}
Let $V\subseteq X$ denote an open subset and $\varphi\colon V\rightarrow\R\cup\{-\infty\}$ be a function.
\begin{enumerate}
\item 
\label{item:1}
If $\varphi$ is plurisubharmonic on $V$, then $\varphi$ belongs to $L_{\loc}^{1}(V)$ and $\ddc\varphi$ is a closed positive $(1,1)
$-current.
\item 
\label{item:2}
Conversely, if $\varphi$ belongs to $L_{\loc}^{1}(V)$ and $\ddc\varphi$ is a closed positive $(1,1)$-current, then there exists a 
unique plurisubharmonic function $\varphi_{0}$ that agrees almost everywhere on $V$ with $\varphi$.
\end{enumerate}
\end{proposition}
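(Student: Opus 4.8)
The plan is to reduce everything to a local statement on an open coordinate ball $V\subseteq\C^{n}$ and to run the standard regularization-by-convolution argument, so the first thing I would do is fix a rotation-invariant mollifier $\chi_{\delta}(z)=\delta^{-2n}\chi(|z|/\delta)$ with $\chi\in C^{\infty}$, $\chi\ge 0$, $\int\chi=1$, $\supp\chi\subseteq\{|z|\le 1\}$, and record three elementary lemmas on which the proof rests. First, a \emph{smooth} function $u$ is plurisubharmonic on $V$ if and only if $\ddc u\ge 0$ as a form, i.e.\ the complex Hessian $(\partial^{2}u/\partial z_{i}\partial\bar z_{j})$ is positive semidefinite; for smooth $u$ this is pure linear algebra once one differentiates the circle-average inequality of Definition~\ref{def:5} twice, turning plurisubharmonicity into subharmonicity of $u$ along every complex line. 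Second, for plurisubharmonic $v$ the spherical averages $A_{v}(x,r)$ are nondecreasing in $r$, and consequently $v\ast\chi_{\delta}$ is smooth, again plurisubharmonic, nondecreasing in $\delta$, and decreases pointwise to $v$ as $\delta\downarrow 0$. Third, two plurisubharmonic functions coinciding almost everywhere coincide everywhere, since $v(x)=\lim_{r\to 0}|B_{r}|^{-1}\int_{B(x,r)}v\,\dd V$ for plurisubharmonic $v$, a quantity determined by the a.e.\ class of $v$. The third lemma gives the uniqueness clause of the second assertion at once, and closedness of $\ddc\varphi$ in the first assertion is free because $d\ddc=0$; so after this preparation only the two integrability/positivity claims remain.

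For the first assertion I would argue as follows. Assuming $\varphi$ plurisubharmonic, let $A$ be the set of points of $V$ admitting a neighbourhood on which $\varphi$ is integrable and put $B=V\setminus A$. Upper semicontinuity makes $\varphi$ locally bounded above, so near any point of $B$ all local integrals of $\varphi$ equal $-\infty$, and enlarging balls shows $B$ is open; conversely the ball sub-mean value inequality shows any point with $\varphi>-\infty$ lies in $A$, so $A$ is open and a point of $B$ forces $\varphi\equiv-\infty$ on its whole connected component, which the hypothesis excludes. Hence $V=A$ and $\varphi\in L^{1}_{\loc}(V)$. For positivity of $\ddc\varphi$ I would regularize: $\varphi_{\delta}=\varphi\ast\chi_{\delta}$ is smooth plurisubharmonic on a slightly smaller ball by the second lemma, hence $\ddc\varphi_{\delta}\ge 0$ by the first lemma; since $\varphi_{\delta}\to\varphi$ in $L^{1}_{\loc}$ as $\delta\downarrow 0$, we get $\ddc\varphi_{\delta}\to\ddc\varphi$ in the sense of currents, and a limit of positive currents is positive.

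For the second assertion, assume $\varphi\in L^{1}_{\loc}(V)$ with $\ddc\varphi\ge 0$ as a current. Then $\varphi_{\delta}=\varphi\ast\chi_{\delta}$ is smooth with $\ddc\varphi_{\delta}=(\ddc\varphi)\ast\chi_{\delta}\ge 0$, hence plurisubharmonic by the first lemma. The crucial step is monotonicity of $\delta\mapsto\varphi_{\delta}$: setting $F(\delta,\varepsilon)=\varphi\ast\chi_{\delta}\ast\chi_{\varepsilon}=\varphi_{\varepsilon}\ast\chi_{\delta}$ and using that $\varphi_{\varepsilon}$ is \emph{already} smooth plurisubharmonic, the second lemma gives that $F(\delta,\varepsilon)$ is nondecreasing in $\delta$ for each fixed $\varepsilon$; letting $\varepsilon\downarrow 0$, where $F(\delta,\varepsilon)\to\varphi_{\delta}$, shows $\varphi_{\delta}$ is nondecreasing in $\delta$, i.e.\ decreases pointwise as $\delta\downarrow 0$ to a function $\varphi_{0}\colon V\to\R\cup\{-\infty\}$. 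A decreasing limit of plurisubharmonic functions is plurisubharmonic unless identically $-\infty$ on a component, and $\varphi_{\delta}\to\varphi$ in $L^{1}_{\loc}$ forces $\varphi_{0}=\varphi$ almost everywhere, excluding that case; so $\varphi_{0}$ is the desired plurisubharmonic representative.

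The step I expect to require the most care is precisely this monotonicity of the regularizations in the second assertion, together with the bookkeeping of the shrinking domains on which the convolutions are defined: one starts only from $\ddc\varphi\ge 0$ in the sense of currents, so plurisubharmonicity of $\varphi_{\delta}$ must first be extracted from the first lemma and then \emph{bootstrapped} through the second lemma to make $F(\delta,\varepsilon)$ monotone in each variable, and one must verify that the resulting $\varphi_{0}$ is genuinely a decreasing limit of psh functions, not merely an $L^{1}_{\loc}$-limit. The linear algebra behind the first lemma, the $L^{1}_{\loc}$ dichotomy, and the uniqueness argument are routine by comparison.
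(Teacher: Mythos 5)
Your proof is correct and follows essentially the same route as the source the paper relies on: the paper gives no proof of this proposition but cites Demailly (Chapter~I, Theorem~5.8), whose argument is exactly your regularization-by-convolution scheme, including the double-convolution trick for monotonicity of $\varphi_{\delta}$ in $\delta$ and the identification of psh functions that agree almost everywhere via the limit of ball averages. Nothing further is needed.
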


\begin{remark}
\label{rem:2}
We note that part~\ref{item:2} of Proposition~\ref{prop:1} is the best we can achieve so that altering a function in a set of measure 
zero does not change the associated distribution, but may result that the function is no longer plurisubharmonic. Thus, in the sequel, 
when talking about plurisubharmonic functions, we will tacitly use the $L_{\loc}^{1}$-topology, i.\,e., we will identify two functions that
define the same distribution.
\end{remark}

Since $X$ is compact, any plurisubharmonic function on the whole $X$ is known to be constant. To have a richer global theory, we 
need to allow some more flexibility. 

\begin{definition}
\label{def:omega}
Let $\omega_{0}$ be a smooth closed positive $(1,1)$-form on $X$. A function $\varphi\colon V\rightarrow\R\cup\{-\infty\}$ defined on
an open subset $V\subseteq X$ is called \emph{$\omega_{0}$-plurisubharmonic}, if it satisfies the following two conditions:
\begin{enumerate}
\item
\label{item:8}
The function $\varphi$ is upper semi-continuous and locally integrable on $V$.
\item 
\label{item:9}
The current $\ddc\varphi+\omega_{0}$ is a closed positive $(1,1)$-current on $V$.
\end{enumerate}
The space of $\omega_{0}$-plurisubharmonic functions on $X$ is denoted by $\PSH(X,\omega_{0})$.
\end{definition}

When working in the $L_{\loc}^{1}$-topology, there is no need to insist on the upper-semicontinuity. In fact, a consequence of
Proposition~\ref{prop:1}~\ref{item:2} is the following global result.

\begin{corollary}
\label{cor:1}
Let $V\subseteq X$ be an open subset, $\omega_{0}$ be a smooth closed positive $(1,1)$-form, and $\varphi\colon V\rightarrow
\R\cup\{\pm\infty\}$ be a locally integrable function such that $\ddc\varphi+\omega_{0}$ is a (closed) positive $(1,1)$-form. Then,
 there exists a unique $\omega_{0} $-plurisubharmonic function that agrees with $\varphi$ almost everywhere.
\end{corollary}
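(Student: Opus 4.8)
The plan is to deduce Corollary~\ref{cor:1} from Proposition~\ref{prop:1}~\ref{item:2} by a standard localization argument, reducing the $\omega_0$-twisted statement to the untwisted one via local potentials for $\omega_0$. First I would cover $V$ by open coordinate subsets $V_j \subseteq X$, small enough that on each $V_j$ the smooth closed positive form $\omega_0$ admits a smooth local potential, i.e.\ a smooth function $\psi_j$ on $V_j$ with $\ddc \psi_j = \omega_0|_{V_j}$; such $\psi_j$ exists by the local $\partial\bar\partial$-lemma since $\omega_0$ is a smooth closed real $(1,1)$-form on a coordinate ball. Then on $V_j$ the hypothesis that $\ddc\varphi + \omega_0$ is a closed positive $(1,1)$-current translates into $\ddc(\varphi + \psi_j)$ being a closed positive $(1,1)$-current, and since $\varphi \in L^1_{\loc}(V)$ and $\psi_j$ is smooth, $\varphi + \psi_j \in L^1_{\loc}(V_j)$. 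Proposition~\ref{prop:1}~\ref{item:2} then supplies a unique plurisubharmonic function on $V_j$ agreeing a.e.\ with $\varphi + \psi_j$; subtracting the smooth function $\psi_j$ gives a unique $\omega_0$-plurisubharmonic function $\varphi_j$ on $V_j$ agreeing a.e.\ with $\varphi$.

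Next I would glue the local modifications $\varphi_j$ into a global function. The point is that on an overlap $V_j \cap V_k$ the two functions $\varphi_j$ and $\varphi_k$ each agree with $\varphi$ almost everywhere, hence agree with each other almost everywhere; but being $\omega_0$-plurisubharmonic they are in particular (after adding the same smooth potential on a common refinement of the two potentials, which differ by a pluriharmonic function) locally equal to honest plurisubharmonic functions, and two plurisubharmonic functions that agree a.e.\ on a connected open set agree everywhere — this is the standard fact that a plurisubharmonic function is determined by its $L^1_{\loc}$-class via $\varphi(z) = \lim_{r\to 0} \fint_{B(z,r)} \varphi$. Therefore $\varphi_j = \varphi_k$ pointwise on $V_j \cap V_k$, and the $\varphi_j$ patch to a well-defined function $\varphi_0$ on $V$ which is $\omega_0$-plurisubharmonic (the defining conditions in Definition~\ref{def:omega} are local) and agrees a.e.\ with $\varphi$.

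For uniqueness, suppose $\varphi_0$ and $\varphi_0'$ are both $\omega_0$-plurisubharmonic on $V$ and both agree a.e.\ with $\varphi$; then they agree a.e.\ with each other, and restricting to each $V_j$ and adding the smooth potential $\psi_j$ reduces to the uniqueness clause of Proposition~\ref{prop:1}~\ref{item:2}, forcing $\varphi_0|_{V_j} = \varphi_0'|_{V_j}$ for all $j$, hence $\varphi_0 = \varphi_0'$ on $V$.

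I expect the only genuinely delicate point to be the bookkeeping in the gluing step: one must be careful that the local potentials $\psi_j$ on different charts need not agree on overlaps, so the comparison of $\varphi_j$ and $\varphi_k$ must be done by adding a \emph{common} smooth function (valid on $V_j\cap V_k$) before invoking the "plurisubharmonic and a.e.\ equal $\Rightarrow$ equal" principle, or equivalently by noting that $\varphi_j-\varphi_k$ is locally the difference of two psh functions that is also $0$ a.e., hence identically $0$. Everything else is a routine transcription of Proposition~\ref{prop:1}~\ref{item:2} through the smooth shift by a local potential, using that smoothness of $\psi_j$ preserves membership in $L^1_{\loc}$ and that $\ddc$ of a smooth function is the smooth form $\omega_0$.
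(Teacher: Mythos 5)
Your proposal is correct and follows exactly the route the paper intends: the corollary is stated as a consequence of Proposition~\ref{prop:1}~\ref{item:2}, and your localization via smooth local potentials for $\omega_{0}$, followed by gluing using the fact that two plurisubharmonic functions agreeing almost everywhere agree everywhere, is the standard way to carry this out (the paper itself omits the details). Your attention to the gluing step --- comparing $\varphi_{j}$ and $\varphi_{k}$ after adding a common potential, the two local potentials differing only by a pluriharmonic function --- is precisely the point that needs care, and you handle it correctly.
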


The plurisubharmonic functions can be classified by their type of singularity.

\begin{definition}
\label{def:14} 
Let $\varphi_{1}$ and $\varphi_{2}$ be two $\omega_{0}$-plurisubharmonic functions on $X$. We say that \emph{$\varphi_{1}$ is more 
singular than $\varphi_{2}$}, if there is a constant $C$ such that  $\varphi_{1}\le\varphi_{2}+C$. We say that \emph{$\varphi_{1}$ and 
$\varphi_{2}$ have equivalent singularities}, if at the same time $\varphi_{1}$ is more singular than $\varphi_{2}$ and $\varphi_{2}$ is 
more  singular than $\varphi_{1}$. An equivalence class of singularities is called a \emph{singularity type}. If $\varphi$ is an $\omega_{0}
$-plurisubharmonic function, then $[\varphi]$ denotes its singularity type. If $\varphi_{1}$ is more singular than $\varphi_{2}$, we write
$[\varphi_{1}]\le[\varphi_{2}]$.
\end{definition}

Note that $[\varphi_{1}]=[\varphi_{2}]$ if and only if the difference $\varphi_{1}-\varphi_{2}$ is bounded.  

\begin{remark}
Definition~\ref{def:14} is tailored along the assumption that $X$ is compact. When $X$ is not compact, we have to replace the positive  
constant $C$ above by a locally bounded function.
\end{remark}

\subsection{Semipositive hermitian metrics}

\begin{remark}
If $L$ is a line bundle on $X$, we recall that a smooth hermitian metric $\Vert\cdot\Vert_{0}$ on $L$ is given by hermitian metrics on the 
fibers of $L$ that vary smoothly across $X$. Moreover, if $s$ denotes a non-trivial rational section of $L$, we have the equation
\begin{displaymath}
\ddc(-\log\Vert s\Vert_{0}^{2})+\delta_{\dv(s)}=\omega_{0},
\end{displaymath}
where $\omega_{0}$ is a closed smooth real $(1,1)$-form on $X$, the so-called curvature form of the hermitian line bundle $\overline{L}=
(L,\Vert\cdot\Vert_{0})$, which is known to be independent of the choice of $s$.
\end{remark}

\begin{definition}
Let $L$ be a line bundle on $X$ equipped with a smooth hermitian metric $\Vert\cdot\Vert_{0}$. We call a hermitian metric $\Vert\cdot
\Vert$ \emph{a singular hermitian metric on $L$}, if it is of the form
\begin{displaymath}
\Vert\cdot\Vert=\Vert\cdot\Vert_{0}\cdot e^{-\varphi/2},
\end{displaymath}
where $\varphi$ is a locally integrable function on $X$. Two such metrics will be identified if they agree almost everywhere on $X$. 
\end{definition}

\begin{remark}
Let $L$ be a line bundle on $X$ equipped with a singular hermitian metric $\Vert\cdot\Vert=\Vert\cdot\Vert_{0}\cdot e^{-\varphi/2}$ with 
$\varphi\in L_{\loc}^{1}(X)$ as in the above definition. Then, for any open subset $V\subseteq X$ and any non-vanishing section $s$ of 
$L$ over $V$, we observe that
\begin{displaymath}
-\log\Vert s\Vert^{2}=-\log\Vert s\Vert_{0}^{2}+\varphi, 
\end{displaymath}
which shows that $-\log\Vert s\Vert^{2}\in L_{\loc}^{1}(V)$. 
\end{remark}

\begin{definition}
\label{def:1}
Let $L$ be a line bundle on $X$ equipped with a singular hermitian metric $\Vert\cdot\Vert$. Then, the singular metric $\Vert\cdot\Vert$ is 
called \emph{semipositive}, if for any open subset $V\subseteq X$ and any non-vanishing section $s$ of $L$ over $V$, the locally integrable 
function $-\log\Vert s \Vert^{2}$ is plurisubharmonic on~$V$.
\end{definition}

\begin{remark}
Let $L$ be a line bundle on $X$ equipped with a singular hermitian metric $\Vert\cdot\Vert$. Let then $s$ and $s'$ be non-vanishing sections 
of $L$ over $V$ and $V'$, respectively. Since $s$ and $s'$ differ only
by a unit on the intersection $V\cap V'$, we arrive at the equality of currents
\begin{displaymath}
\ddc(-\log\Vert s\Vert^{2})\big\vert_{V\cap V'}=\ddc(-\log\Vert s'\Vert^{2})\big\vert_{V\cap V'}
\end{displaymath}
on $V\cap V'$. This shows that the $(1,1)$-current $\ddc(-\log\Vert s\Vert^{2})$ gives rise to a $(1,1)$-current on the whole of $X$.
\end{remark}

\begin{lemma}
Let $L$ be a line bundle on $X$ equipped with a smooth hermitian metric $\Vert\cdot\Vert_{0}$ with curvature form $\omega_{0}$. Then, 
there is a bijective correspondence
\begin{displaymath}
\big\{\text{semipositive metrics $\Vert\cdot\Vert$ on $L$}\big\}\quad\longleftrightarrow\quad\big\{\text{$\omega_{0}$-plurisubharmonic 
functions $\varphi$ on $X$}\big\},
\end{displaymath}
given by the assignment $\Vert\cdot\Vert\mapsto\varphi$, where $\varphi$ is locally, on a open subset $V\subseteq X$, determined by
\begin{displaymath}
\varphi=-\log\bigg(\frac{\Vert s\Vert^{2}}{\Vert s\Vert_{0}^{2}}\bigg),
\end{displaymath}
with $s$ being a non-vanishing section of $L$ over $V$.
\end{lemma}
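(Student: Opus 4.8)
The plan is to exhibit the two maps and check they are mutually inverse. Given a smooth reference metric $\Vert\cdot\Vert_0$ with curvature $\omega_0$, any singular hermitian metric $\Vert\cdot\Vert$ on $L$ is by definition of the form $\Vert\cdot\Vert=\Vert\cdot\Vert_0\cdot e^{-\varphi/2}$ for some $\varphi\in L^1_{\loc}(X)$, and this $\varphi$ is exactly $-\log(\Vert s\Vert^2/\Vert s\Vert_0^2)$ for any local non-vanishing section $s$, independently of $s$ since two such sections differ by a unit whose contribution cancels in the quotient. So the assignment $\Vert\cdot\Vert\mapsto\varphi$ is well-defined on the set of all singular metrics, with inverse $\varphi\mapsto\Vert\cdot\Vert_0\cdot e^{-\varphi/2}$; both directions are manifestly bijective at the level of $L^1_{\loc}$-classes. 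The only thing left is to verify that this bijection matches up the two distinguished subsets: $\Vert\cdot\Vert$ is semipositive if and only if $\varphi\in\PSH(X,\omega_0)$.

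First I would fix an open coordinate subset $V\subseteq X$ and a non-vanishing section $s$ of $L$ over $V$. Writing $\psi\coloneqq-\log\Vert s\Vert^2$ and $\psi_0\coloneqq-\log\Vert s\Vert_0^2$, we have the identity $\psi=\psi_0+\varphi$ of $L^1_{\loc}(V)$-functions, hence the identity of currents $\ddc\psi=\ddc\psi_0+\ddc\varphi$ on $V$. By the curvature equation recalled in the excerpt, $\ddc\psi_0+\delta_{\dv(s)}=\omega_0$ on $V$; but $s$ is non-vanishing on $V$, so $\delta_{\dv(s)}=0$ there and $\ddc\psi_0=\omega_0|_V$. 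Therefore $\ddc\psi=\omega_0+\ddc\varphi$ as $(1,1)$-currents on $V$. This is the single computation that drives everything.

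Now I would run the equivalence through Proposition~\ref{prop:1}. By Definition~\ref{def:1}, $\Vert\cdot\Vert$ is semipositive means precisely that $\psi=-\log\Vert s\Vert^2$ is plurisubharmonic on every such $V$; since $\psi\in L^1_{\loc}(V)$, Proposition~\ref{prop:1}\ref{item:1} and \ref{item:2} say (modulo the $L^1_{\loc}$-identification set up in Remark~\ref{rem:2}) that this is equivalent to $\ddc\psi$ being a closed positive $(1,1)$-current on $V$. By the displayed identity this holds if and only if $\ddc\varphi+\omega_0$ is a closed positive $(1,1)$-current on $V$. Letting $V$ range over a coordinate cover of $X$ and invoking Definition~\ref{def:omega} together with Corollary~\ref{cor:1} to get the (unique, upper semi-continuous) representative, this is exactly the statement that $\varphi\in\PSH(X,\omega_0)$. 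Thus semipositivity of $\Vert\cdot\Vert$ is equivalent to $\omega_0$-plurisubharmonicity of the associated $\varphi$, and the bijection of the full sets restricts to the claimed bijection.

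There is no real obstacle here; the statement is essentially a bookkeeping consequence of the curvature equation for $\Vert\cdot\Vert_0$ and of Proposition~\ref{prop:1}. The one point requiring a little care is the passage between the local condition (plurisubharmonicity of $-\log\Vert s\Vert^2$ on each $V$, as in Definition~\ref{def:1} and Definition~\ref{def:5}) and the global condition ($\varphi\in\PSH(X,\omega_0)$, as in Definition~\ref{def:omega}); this is handled by noting that positivity of a current is a local property and that the $L^1_{\loc}$-class of $\varphi$ determines, via Corollary~\ref{cor:1}, a canonical $\omega_0$-plurisubharmonic representative, so that the two notions genuinely coincide under the dictionary $\psi=\psi_0+\varphi$.
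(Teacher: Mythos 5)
Your proposal is correct and follows essentially the same route as the paper: the driving computation $\ddc\varphi+\omega_{0}=\ddc(-\log\Vert s\Vert^{2})$ on each trivializing open set, combined with Definition~\ref{def:1} and the characterization of plurisubharmonicity via positivity of the current (Proposition~\ref{prop:1}, Corollary~\ref{cor:1}), is exactly what the paper uses. Your extra care about the bijection on the full sets of singular metrics versus $L^{1}_{\loc}$-classes is a harmless elaboration of the paper's opening remark that the local definition of $\varphi$ globalizes.
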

\begin{proof}
We start by noting that the local definition of $\varphi$ globalizes, thus $\varphi$ is defined on the whole of $X$. Next, given a semipositive 
metric $\Vert\cdot\Vert$ on $L$, we have for any open subset $V\subseteq X$ and any non-vanishing section $s$ of $L$ over $V$ that
\begin{displaymath}
\varphi=-\log\Vert s\Vert^{2}+\log\Vert s\Vert_{0}^{2}
\end{displaymath}
on $V$. Since the locally integrable function $-\log\Vert s\Vert^{2}$ is plurisubharmonic on $V$ by Definition~\ref{def:1}, part~\ref{item:8} of
Definition~\ref{def:omega} is fulfilled. Furthermore, since $-\ddc\log\Vert s\Vert_{0}^{2}=\omega_{0}$, we find that
\begin{displaymath}
\ddc\varphi+\omega_{0}=\ddc(-\log\Vert s\Vert^{2})+\ddc(\log\Vert s\Vert_{0}^{2})+\omega_{0}=\ddc(-\log\Vert s\Vert^{2}),
\end{displaymath}
which, again using the plurisubharmonicity of $-\log\Vert s\Vert^{2}$, is a positive $(1,1)$-current on $V$, and thus verifies part~\ref{item:9} 
of Definition~\ref{def:omega}. In total, we have proven that $\varphi$ is an $\omega_{0}$-plurisubharmonic function on $X$.

On the other hand, if $\varphi$ is an $\omega_{0}$-plurisubharmonic function on $X$, we define $\Vert\cdot\Vert=\Vert\cdot\Vert_{0}\cdot e^
{-\varphi/2}$ and easily check that $\Vert\cdot\Vert$ becomes a semipositive metric on $L$.
\end{proof}

\subsection{Green functions}
\label{sec:sc-divisors-green}

For our purposes, it will be convenient to translate the above results into the language of divisors and Green functions. Note that for us a
Green function will be more appropriately called a Green current (of degree zero).  

\begin{definition}
\label{def:green}
Let $D$ be a divisor on $X$. A \emph{Green function for $D$} is the current associated to a locally integrable function $g\colon X\rightarrow
\R\cup\{\pm\infty\}$. A Green function $g$ for $D$ is called \emph{of smooth, of continuous, of locally bounded, or of plurisubharmonic type}, if 
$g$ can be chosen in such a way that  for any open subset $V\subseteq X$ and any local equation $f_{D}$ for $D$ on $V$, the function
\begin{displaymath}
g+\log\vert f_{D}\vert^{2}
\end{displaymath}
is smooth, continuous, locally bounded, or plurisubharmonic on $V$, respectively.  
\end{definition}

Note that two locally integrable functions on $X$ define the same Green function for $D$ if and only if they agree almost everywhere.

\begin{definition}
Let $g$ be a Green function for the divisor $D$. Then, we define the $(1,1)$-current
\begin{displaymath}
\omega_{D}(g)\coloneqq\ddc g+\delta_{D}.
\end{displaymath}
If $g$ is a Green function for $D$ of smooth type, then $\omega_{D}(g)$ is a smooth $(1,1)$-form. Moreover, if $g$ is a Green function for 
$D$ of plurisubharmonic type, then $\omega_{D}(g)$ is a closed positive $(1,1)$-current.
\end{definition}

\begin{lemma}
Let $D$ be a divisor on $X$, $L=\caO(D)$ the corresponding line bundle, and $s$ a section of $L$ with $\dv(s)=D$. Then, there is a bijective 
correspondence
\begin{displaymath}
\big\{\text{singular hermitian metrics $\Vert\cdot\Vert$ on $L$}\big\}\quad\longleftrightarrow\quad\big\{\text{Green functions $g$ for $D$}\big\},
\end{displaymath}
given by the assignment $\Vert\cdot\Vert\mapsto -\log\Vert s\Vert^{2}$. Moreover, this correspondence sends smooth, continuous, or semipositive 
hermitian metrics on $L$ to Green functions for $D$ of smooth, continuous, or plurisubharmonic type, respectively.
\end{lemma}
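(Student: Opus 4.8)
The plan is to reduce the statement to a single local computation against the definitions recorded above. First I would fix an auxiliary smooth hermitian metric $\Vert\cdot\Vert_{0}$ on $L$ and write an arbitrary singular hermitian metric as $\Vert\cdot\Vert=\Vert\cdot\Vert_{0}\cdot e^{-\varphi/2}$ with $\varphi\in L_{\loc}^{1}(X)$. Then I would work on an open coordinate subset $V\subseteq X$ on which $D$ has a local equation $f_{D}$. Writing $s=h\,e_{V}$ for some holomorphic frame $e_{V}$ of $L$ on $V$ and some meromorphic function $h$ on $V$, the hypothesis $\dv(s)=D$ gives $\dv(h)=\dv(f_{D})$ on $V$, so $h/f_{D}$ is a nowhere-vanishing holomorphic function on $V$; absorbing it into $e_{V}$, we may assume $s=f_{D}\,e_{V}$ on $V$. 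This yields $\Vert s\Vert^{2}=\vert f_{D}\vert^{2}\,\Vert e_{V}\Vert^{2}$, hence, setting $g\coloneqq-\log\Vert s\Vert^{2}$,
\begin{displaymath}
g+\log\vert f_{D}\vert^{2}=-\log\Vert e_{V}\Vert^{2}=-\log\Vert e_{V}\Vert_{0}^{2}+\varphi\qquad\text{on }V.
\end{displaymath}
Since $-\log\Vert e_{V}\Vert_{0}^{2}$ is smooth and $\log\vert f_{D}\vert^{2}\in L_{\loc}^{1}(V)$ (a standard property of holomorphic functions; cf.\ Proposition~\ref{prop:1}~\ref{item:1}), this identity shows $g\in L_{\loc}^{1}(V)$; covering $X$ by such $V$ gives $g\in L_{\loc}^{1}(X)$, so $g$ is a Green function for $D$ and the assignment $\Vert\cdot\Vert\mapsto-\log\Vert s\Vert^{2}$ is well defined.

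For bijectivity I would observe that $\Vert\cdot\Vert\mapsto\varphi$ identifies singular hermitian metrics on $L$ (modulo a.e.\ equality) with $L_{\loc}^{1}(X)$, while $g\mapsto g-g_{0}$, with $g_{0}\coloneqq-\log\Vert s\Vert_{0}^{2}$, identifies Green functions for $D$ with $L_{\loc}^{1}(X)$; since $-\log\Vert s\Vert^{2}=g_{0}+\varphi$, the map in the lemma is the composite of these two identifications, hence bijective. Concretely, it is injective because a hermitian metric on $L$ is determined almost everywhere by $-\log\Vert s\Vert^{2}$, and surjective because any Green function $g$ for $D$ produces $\varphi\coloneqq g-g_{0}\in L_{\loc}^{1}(X)$ and the metric $\Vert\cdot\Vert_{0}e^{-\varphi/2}$, which maps to $g$.

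The assertion on types is then read off the displayed identity $g+\log\vert f_{D}\vert^{2}=-\log\Vert e_{V}\Vert^{2}$, which holds for every such $V$ and, since any two local equations of $D$ differ by a holomorphic unit, for every local equation. If $\Vert\cdot\Vert$ is smooth (resp.\ continuous), then $\Vert e_{V}\Vert^{2}$ is smooth (resp.\ continuous) and strictly positive, so $g+\log\vert f_{D}\vert^{2}$ is smooth (resp.\ continuous), i.e.\ $g$ is of smooth (resp.\ continuous) type. If $\Vert\cdot\Vert$ is semipositive, then Definition~\ref{def:1} applied to the non-vanishing section $e_{V}$ of $L$ over $V$ says that $-\log\Vert e_{V}\Vert^{2}$ is plurisubharmonic on $V$, so $g$ is of plurisubharmonic type; the converse implications are proved in the same way, using that a non-vanishing section of $L$ over an arbitrary open set is locally a holomorphic unit times some $e_{V}$ and that smoothness, continuity and plurisubharmonicity are local conditions. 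There is no genuine obstacle here; the only point requiring care is the local normalization $s=f_{D}\,e_{V}$ — that $\dv(s)=D$ forces $s/f_{D}$ to be a local unit — together with a consistent use of the $L_{\loc}^{1}$-identification of Remark~\ref{rem:2}, so that being "of smooth, continuous, or plurisubharmonic type" is understood as a condition on the class of $g$ rather than on a chosen representative.
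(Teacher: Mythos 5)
Your proof is correct and follows the same route the paper intends: the paper's own proof is the one-line remark that the lemma ``follows immediately from Definition~\ref{def:green} and the preceding discussion,'' and your write-up is exactly that argument made explicit (local normalization $s=f_{D}e_{V}$, the identity $g+\log\vert f_{D}\vert^{2}=-\log\Vert e_{V}\Vert^{2}$, and the identification of both sides with $L_{\loc}^{1}(X)$ modulo a.e.\ equality). Nothing to add.
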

\begin{proof}
The proof of the lemma follows immediately from Definition~\ref{def:green} and the preceding discussion.
\end{proof}

\begin{remark}
Note that by Definition~\ref{def:green} any locally integrable function on $X$ is a Green function for any divisor. For example, we can start 
with a Green function $g$ for a divisor $D$ of smooth, continuous, or plurisubharmonic type. We can then think of $g$ as being a Green 
function for another divisor $D'$. But now the function $g$, viewed as a Green function for $D'$, may become singular or may no longer be 
of plurisubharmonic type. 
\end{remark}

The next lemma shows that positivity is always preserved when increasing the divisor.

\begin{lemma}
\label{lem:greeneff}
Let $D$ and $D'$ be divisors on $X$ such that the divisor $C\coloneqq D'-D$ is effective. If $g$ is a Green function for $D$ of plurisubharmonic
type, then $g$ also is a Green function for $D'$ of plurisubharmonic type.
\end{lemma}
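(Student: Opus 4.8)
The plan is to reduce the statement to the classical facts that the logarithm of the modulus of a holomorphic function is plurisubharmonic and that a finite sum of plurisubharmonic functions is again plurisubharmonic. The point of the lemma is that the test functions appearing in Definition~\ref{def:green} only get an \emph{extra} plurisubharmonic summand when $D$ is enlarged by an effective divisor.

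First I would fix an open subset $V\subseteq X$ small enough that the three divisors $D$, $D'$ and $C=D'-D$ all admit local equations on $V$, say $f_{D}$, $f_{D'}$ and $f_{C}$; since $C$ is effective, $f_{C}$ may be taken to be a holomorphic function on $V$ (not identically zero on any connected component, being a local equation). From $D'=D+C$ one gets $\dv(f_{D'})=\dv(f_{D}f_{C})$ on $V$, so $f_{D'}$ and $f_{D}f_{C}$ differ by a nowhere-vanishing holomorphic function $u$ on $V$, and I would record the identity of locally integrable functions
\begin{displaymath}
g+\log\lvert f_{D'}\rvert^{2}=\bigl(g+\log\lvert f_{D}\rvert^{2}\bigr)+\log\lvert f_{C}\rvert^{2}+\log\lvert u\rvert^{2}
\end{displaymath}
on $V$.

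Then I would treat the three summands on the right-hand side in turn. The term $g+\log\lvert f_{D}\rvert^{2}$ is plurisubharmonic on $V$ by the hypothesis that $g$ is a Green function for $D$ of plurisubharmonic type. The term $\log\lvert f_{C}\rvert^{2}=2\log\lvert f_{C}\rvert$ is plurisubharmonic on $V$ (and locally integrable) because $f_{C}$ is holomorphic -- the standard example of a plurisubharmonic function. The term $\log\lvert u\rvert^{2}$ is smooth and pluriharmonic since $u$ is invertible holomorphic, in particular it is plurisubharmonic and its $\ddc$ vanishes. A finite sum of functions each of which is upper semi-continuous, locally integrable, and satisfies the sub-mean-value inequality of Definition~\ref{def:5} again has these properties, and since $g+\log\lvert f_{D}\rvert^{2}$ is not identically $-\infty$ on any connected component while the other two summands are finite almost everywhere, the sum is not identically $-\infty$ either. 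Hence $g+\log\lvert f_{D'}\rvert^{2}$ is plurisubharmonic on $V$. To finish, I would observe that the choice of local equation $f_{D'}$ was immaterial: any other choice changes $g+\log\lvert f_{D'}\rvert^{2}$ by a term of the form $\log\lvert\text{unit}\rvert^{2}$, which is pluriharmonic and therefore does not affect plurisubharmonicity; so the condition of Definition~\ref{def:green} holds for $D'$.

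I do not expect a genuine obstacle. The only place requiring a little care is bookkeeping with the units relating the various local equations, and remembering that $g$ by itself need not be bounded or plurisubharmonic -- but since the property is tested on the combinations $g+\log\lvert f_{\bullet}\rvert^{2}$ rather than on $g$ alone, this causes no trouble. The argument moreover makes transparent, at the level of currents, that $\omega_{D'}(g)=\omega_{D}(g)+\delta_{C}\ge\omega_{D}(g)$, which is exactly the positivity-increasing phenomenon advertised before the lemma.
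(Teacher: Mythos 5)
Your proof is correct and is essentially the paper's own argument: decompose $g+\log\vert f_{D'}\vert^{2}$ as $(g+\log\vert f_{D}\vert^{2})+\log\vert f_{C}\vert^{2}$, use that $\log\vert f_{C}\vert^{2}$ is plurisubharmonic because $f_{C}$ is holomorphic ($C$ being effective), and conclude by the stability of plurisubharmonicity under finite sums. The only difference is cosmetic: the paper simply chooses the local equations so that $f_{D'}=f_{D}\cdot f_{C}$ exactly, whereas you track the harmless pluriharmonic term $\log\vert u\vert^{2}$ coming from an arbitrary choice.
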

\begin{proof}
Let $V\subseteq X$ be an open subset and let $f_{D}$, $f_{D'}$, and $f_{C}$ be local equations on $V$ for $D$, $D'$, and $C$, respectively, 
satisfying $f_{D'}=f_{D}\cdot f_{C}$. Since $C$ is effective, the function $f_{C}$ is holomorphic on $V$ and hence the function $\log\vert f_{C}
\vert^{2}$ is plurisubharmonic on $V$. Now, since the function $g+\log\vert f_{D}\vert^{2}$ is plurisubharmonic on $V$ by hypothesis, and since  
the sum of plurisubharmonic functions is again plurisubharmonic, we deduce that 
\begin{displaymath}
g+\log\vert f_{D'}\vert^{2}=g+\log\vert f_{D}\vert^{2}+\log\vert f_{C}\vert^{2}
\end{displaymath}
is also plurisubharmonic on $V$. This proves the claim.
\end{proof}

Let us next rephrase the result of Lemma~\ref{lem:greeneff} in terms of metrized line bundles. To do this, we need the following remark. 

\begin{remark} 
\label{rem:metrics}
Let $D$ be a divisor on $X$ and assume that the corresponding line bundle $\caO(D)$ is equipped with a semipositive hermitian metric $\Vert
\cdot\Vert$. Furthermore, let $D'$ be another divisor on $X$ such that the divisor $C\coloneqq D'-D$ is effective. Therefore, we have an inclusion 
$\caO(D)\subseteq\caO(D')$ of line bundles. By definition, we can view the line bundles $\caO(D)$ and $\caO(D')$ as subsheaves of the sheaf 
of rational functions $\caK_{X}$ of $X$. Thus, the unit function $1$ of $\caK_{X}$, which gives rise to a rational section of $\caO(D)$, can also 
be viewed as a rational section of $\caO(D')$. In this way, we can define by means of the semipositive hermitian metric $\Vert\cdot\Vert$ on 
$\caO(D)$, a hermitian metric $\Vert\cdot\Vert'$ on $\caO(D')$ through the formula
\begin{displaymath}
\Vert 1\Vert'\coloneqq \Vert 1\Vert,
\end{displaymath}
which will turn out to be again semipositive in the subsequent lemma.
\end{remark}

\begin{lemma}
\label{lem:metrics}
Let $D$ and $D'$ be divisors on $X$ such that the divisor $C\coloneqq D'-D$ is effective. If $\Vert\cdot\Vert$ is a semipositive hermitian metric 
on $\caO(D)$, then the hermitian metric $\Vert\cdot\Vert'$ on $\caO(D')$ defined as in Remark~\ref{rem:metrics} is also semipositive.
\end{lemma}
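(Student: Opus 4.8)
The plan is to reduce Lemma~\ref{lem:metrics} to Lemma~\ref{lem:greeneff} via the dictionary between metrics and Green functions already established in the preceding lemmas. First I would fix a rational section: take $s$ to be the canonical rational section of $\caO(D)$ attached to the unit function $1\in\caK_{X}$, so that $\dv(s)=D$, and note that under the inclusion $\caO(D)\subseteq\caO(D')$ this very same element $1$ gives a rational section $s'$ of $\caO(D')$ with $\dv(s')=D'$. By Remark~\ref{rem:metrics}, the metric $\Vert\cdot\Vert'$ is characterised by $\Vert 1\Vert'=\Vert 1\Vert$, so as \emph{functions} on $X$ we have the identity
\begin{displaymath}
-\log\Vert s'\Vert'^{2}=-\log\Vert 1\Vert'^{2}=-\log\Vert 1\Vert^{2}=-\log\Vert s\Vert^{2}.
\end{displaymath}
In other words, the Green function $g'$ for $D'$ associated to $(\caO(D'),\Vert\cdot\Vert')$ is literally the same locally integrable function as the Green function $g$ for $D$ associated to $(\caO(D),\Vert\cdot\Vert)$.

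Next I would apply the two lemmas. Since $\Vert\cdot\Vert$ is semipositive, the last displayed lemma before this one tells us that $g=-\log\Vert s\Vert^{2}$ is a Green function for $D$ of plurisubharmonic type. Because $C=D'-D$ is effective, Lemma~\ref{lem:greeneff} then shows that the same function $g$ is a Green function for $D'$ of plurisubharmonic type. But $g=g'$, so $g'$ is a Green function for $D'$ of plurisubharmonic type, and by the metric--Green-function correspondence this is exactly the statement that $\Vert\cdot\Vert'$ is a semipositive metric on $\caO(D')$. This completes the argument.

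The only point that requires care — and it is the one I would spell out — is the bookkeeping that the function computed from $\Vert\cdot\Vert'$ via a \emph{local} nonvanishing section of $\caO(D')$ really coincides with the globally defined $g'$ above; this is just the remark, already made in the text, that two local nonvanishing sections of a line bundle differ by a unit, so the currents $\ddc(-\log\Vert\cdot\Vert^{2})$ glue, together with the observation that on the open set where $s$ (equivalently $s'$) is defined and $D$, $D'$ are cut out by $f_{D}$, $f_{D'}=f_{D}f_{C}$, one has $-\log\Vert s\Vert^{2}+\log|f_{D}|^{2}=-\log\Vert s'\Vert'^{2}+\log|f_{D'}|^{2}-\log|f_{C}|^{2}$, matching the computation inside the proof of Lemma~\ref{lem:greeneff}. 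So there is essentially no obstacle here: the content is entirely in Lemma~\ref{lem:greeneff}, and this lemma is the routine ``metrized line bundle'' repackaging of it. I would therefore keep the proof to two or three sentences, invoking Remark~\ref{rem:metrics}, the metric--Green-function lemma, and Lemma~\ref{lem:greeneff} in that order.
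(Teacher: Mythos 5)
Your proposal is correct and follows essentially the same route as the paper: both reduce the statement to Lemma~\ref{lem:greeneff} via the dictionary between metrics and Green functions, using the fact that the local comparison of $-\log\Vert s\Vert^{2}$ and $-\log\Vert s'\Vert'^{\,2}$ differs by $\log\vert f_{C}\vert^{2}$ up to a unit, which is plurisubharmonic since $C$ is effective. The only cosmetic difference is that the paper redoes the local computation with a local frame and the unit $h$ and then ``argues as in the proof of Lemma~\ref{lem:greeneff}'', whereas you package it through the global rational section $1$ and invoke the correspondence lemma and Lemma~\ref{lem:greeneff} as black boxes.
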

\begin{proof}
By our assumptions, we have
\begin{displaymath}
\mathcal{O}(D')=\mathcal{O}(D)\otimes\mathcal{O}(C).
\end{displaymath}
By now choosing $V$ small enough as an open subset of $X$, we find non-vanishing sections $s,s'$, and $s_{C}$ of the line bundles $\caO(D),
\caO(D')$, and $\caO(C)$ over $V$, respectively, such that we have
\begin{displaymath}
s'=s\cdot s_{C}.
\end{displaymath}
Observing next that the inclusion $\caO(C)\subseteq\caK_{X}$ is provided by the assignment $s_{C}\mapsto f_{C}^{-1}\cdot h$, where $f_{C}\in
\caK_{X}(V)$ gives rise to a local equation for the divisor $C$ on $V$ and $h\in\caK_{X}(V)^{\times}$ is a unit, we find
\begin{displaymath}
s'=s\cdot f_{C}^{-1}\cdot h.
\end{displaymath}
From this equality we compute
\begin{align*}
-\log\Vert s'\Vert'^{\,2}&=-\log\Vert s\Vert^{2}-\log\vert f_{C}^{-1}\vert^{2}-\log\vert h\vert^{2} \\[1mm]
&=-\log\Vert s\Vert^{2}+\log\vert f_{C}\vert^{2}-\log\vert h\vert^{2}.
\end{align*}
Arguing now as in the proof of Lemma~\ref{lem:greeneff}, the plurisubharmonicity of $-\log\Vert s\Vert^{2}$ on $V$ implies the plurisubharmonicity
of $-\log\Vert s'\Vert'^{\,2}$ on $V$, which proves the semipositivity of the hermitian metric $\Vert\cdot\Vert'$ on $\caO(D')$.
\end{proof}

In the next lemma we give a criterion when the change of divisor preserves positivity even if the change of divisor is not increasing.

\begin{lemma}
\label{lemm:4}
Let $D$ and $D'$ be divisors on $X$. If $g_{D}$ is a Green function for $D$ of plurisubharmonic type, and if for each point $x\in X$, there is a 
neighbourhood $V$ of $x$ and a local equation $f_{D'}$ of $D'$ such that $g_{D}+\log\vert f_{D'}\vert^{2}$ is locally bounded above on $V$, then 
$g_{D}$ also is a Green function for $D'$ of plurisubharmonic type. 
\end{lemma}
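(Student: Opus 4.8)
The plan is to argue locally around an arbitrary point $x\in X$ and to reduce the statement to a removable-singularity property of plurisubharmonic functions. Fix $x$, and let $V$ and $f_{D'}$ be as in the hypothesis; after shrinking $V$ we may assume that $V$ is a coordinate ball and that $D$ also admits a local equation $f_{D}$ on $V$. Put $A\coloneqq V\cap(\supp(D)\cup\supp(D'))$, which is a proper closed analytic subset of $V$, and set $u\coloneqq g_{D}+\log\vert f_{D'}\vert^{2}$. Since $g_{D}$ is locally integrable (being a Green function) and $\log\vert f_{D'}\vert$ is locally integrable, we have $u\in L^{1}_{\loc}(V)$; by hypothesis $u$ is moreover locally bounded above on $V$. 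The goal is to show that $u$ agrees almost everywhere on $V$ with a plurisubharmonic function, which is exactly the assertion that $g_{D}$ is a Green function for $D'$ of plurisubharmonic type near $x$ — the choice of local equation being irrelevant, since two local equations for $D'$ differ by an invertible holomorphic function $h$ and $\log\vert h\vert^{2}$ is pluriharmonic, hence preserves both ``plurisubharmonic almost everywhere'' and ``locally bounded above''.

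The first step is to check that $u$ is plurisubharmonic on $V\setminus A$. On this set one may write
\begin{displaymath}
u=\bigl(g_{D}+\log\vert f_{D}\vert^{2}\bigr)+\log\bigl\vert f_{D'}f_{D}^{-1}\bigr\vert^{2}.
\end{displaymath}
The first summand is plurisubharmonic on all of $V$ because $g_{D}$ is a Green function for $D$ of plurisubharmonic type. For the second summand, note that $\dv(f_{D'}f_{D}^{-1})=(D'-D)\vert_{V}$ is supported on $A$, so $f_{D'}f_{D}^{-1}$ is a nowhere-vanishing holomorphic function on $V\setminus A$; locally it has a holomorphic logarithm, whence $\log\vert f_{D'}f_{D}^{-1}\vert^{2}$ is pluriharmonic, in particular plurisubharmonic, on $V\setminus A$. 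Therefore $u$ is plurisubharmonic on $V\setminus A$.

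The second step is to extend across $A$. Here I would invoke the classical removable-singularity theorem for plurisubharmonic functions (see, e.g., \cite[Chapter~I]{demailly12:cadg}): a plurisubharmonic function on the complement $V\setminus A$ of a closed analytic subset which is locally bounded above in a neighbourhood of $A$ extends uniquely to a plurisubharmonic function $\tilde u$ on $V$. Both hypotheses have just been verified. Since $A$ has Lebesgue measure zero, $\tilde u=u$ almost everywhere on $V$, so $g_{D}+\log\vert f_{D'}\vert^{2}$ coincides almost everywhere with the plurisubharmonic function $\tilde u$. As $x$ was arbitrary, this proves that $g_{D}$ is a Green function for $D'$ of plurisubharmonic type.

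The step I expect to be the main obstacle — and the reason this is not a direct corollary of Lemma~\ref{lem:greeneff} — is precisely the extension across $A$: because $D'-D$ need not be effective, the positivity of $\ddc g_{D}+\delta_{D'}$ cannot be read off from an effectivity argument. Instead one must isolate the analytic locus $A$ where the change of divisor could create a sign, observe that away from $A$ the correction term $\log\vert f_{D'}f_{D}^{-1}\vert^{2}$ is only pluriharmonic, and then recover plurisubharmonicity across $A$ from the boundedness-above hypothesis via the removable-singularity theorem. This is the only non-formal ingredient; the rest is bookkeeping with local equations and the $L^{1}_{\loc}$-conventions of Remark~\ref{rem:2}.
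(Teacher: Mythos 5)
Your proposal is correct and follows essentially the same route as the paper's proof: decompose $g_{D}+\log\vert f_{D'}\vert^{2}$ as $(g_{D}+\log\vert f_{D}\vert^{2})+\log\vert f_{D'}/f_{D}\vert^{2}$, observe plurisubharmonicity off the supports of $D$ and $D'$, and then extend across that analytic set using the boundedness-above hypothesis via the removable-singularity theorem (the paper cites \cite[Theorem~5.24]{demailly12:cadg} for exactly this step). Your additional remarks on the independence of the choice of local equation and the almost-everywhere identification are correct but routine elaborations of the same argument.
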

\begin{proof}
We begin by choosing a covering of $X$ by open subsets $V$ such that in each open subset there are equations $f_{D}$ and $f_{D'}$ for $D$ 
and $D'$, respectively. Moreover, $g_{D}+\log\vert f_{D}\vert^{2}$ extends to a plurisubharmonic function on $V$ and $g_{D}+\log\vert f_{D'}\vert^
{2}$ is bounded above on $V$. On $V\setminus(\vert D\vert\cup\vert D'\vert)$, the function
\begin{displaymath}
g_{D}+\log\vert f_{D'}\vert^{2}=g_{D}+\log\vert f_{D}\vert^{2}+\log\vert f_{D'}/f_{D}\vert^{2}
\end{displaymath}
is plurisubharmonic because the function $f_{D'}/f_{D}$ is holomorphic there. Since the function $g_{D}+\log\vert f_{D'}\vert^{2}$ is bounded 
above on $V$, by~\cite[Theorem 5.24]{demailly12:cadg}, it extends to a plurisubharmonic function on $V$, proving the lemma. 
\end{proof}

\subsection{Divisors with real coefficients}
\label{sec:real-coeffs}

Since later on we will be interested in different completions of spaces of divisors, it is natural to use real coefficients instead of rational or integral 
ones. We will write
\begin{displaymath}
\DivR(X)\coloneqq\Div(X)\otimes_{\Z}\R.
\end{displaymath}

The notion of Green function can be directly extended to divisors with real coefficients.

\begin{definition}
\label{def:4} 
Let $D$ be a divisor on $X$ with real coefficients. A \emph{Green function $g$ for $D$} is  the current associated to a locally integrable function 
$g\colon X\rightarrow\R\cup\{\pm\infty\}$. It is said to be \emph{of smooth, of continuous, or of locally bounded type}, if there are decompositions 
\begin{equation}
\label{eq:2}
D=\sum_{j=1}^{n}a_{j}D_{j}\qquad\text{and}\qquad g=\sum_{j=1}^{n}a_{j}g_{j},
\end{equation}
with $D_{j}\in\Div(X)$ and $a_{j}\in\R$ such that $g_{j}$ are Green functions for $D_{j}$ of smooth, of continuous, or of locally bounded type for 
$j=1,\dots,n$, respectively. If $D$ is a divisor on $X$ with real coefficients and $g$ is a Green function for $D$, we continue to write
\begin{displaymath}
\omega_{D}(g)=\ddc g+\delta_{D}.
\end{displaymath}
\end{definition}

The definition of a Green function of plurisubharmonic type does not rely on writing the divisor as a linear combination of ordinary divisors, but 
can be written directly. 

\begin{definition}
\label{def:6}  
Let $D$ be a divisor on $X$ with real coefficients. A Green function $g$ for $D$ is said to be \emph{of plurisubharmonic type}, if the closed 
$(1,1)$-current $\omega_{D}(g)$ is positive. 
\end{definition}

The next lemma gives a characterization of Green functions of plurisubharmonic type.

\begin{lemma}
\label{lemm:1} 
Let $D$ be a divisor on $X$ with real coefficients and $g_{0}$ be a Green function for $D$ of smooth type; put $\omega_{0}=\omega_{D}
(g_{0})$. A Green function $g$ for $D$ is of plurisubharmonic type, if and only if the function $g-g_{0}$ is $\omega_{0}$-plurisubharmonic. 
\end{lemma}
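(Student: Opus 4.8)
The plan is to unwind the definitions on both sides and reduce the equivalence to a local statement about plurisubharmonic functions. First I would fix a point $x\in X$ and a small open coordinate neighbourhood $V$ on which $D$ has a local equation $f_{D}$; since $g_{0}$ is a Green function for $D$ of smooth type, the function $u_{0}\coloneqq g_{0}+\log\vert f_{D}\vert^{2}$ is smooth on $V$ and satisfies $\ddc u_{0}=\omega_{0}$ there (as currents, using $\ddc\log\vert f_{D}\vert^{2}=-\delta_{D}$ locally). Similarly, for an arbitrary Green function $g$ for $D$, set $u\coloneqq g+\log\vert f_{D}\vert^{2}$, a locally integrable function on $V$. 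The key local identity is then
\begin{displaymath}
g-g_{0}=u-u_{0}\quad\text{on }V,
\end{displaymath}
so $g-g_{0}$ is, locally, the difference of $u$ and a smooth function.

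Next I would compute the relevant currents. On $V$ we have $\omega_{D}(g)=\ddc g+\delta_{D}=\ddc u$, because $\ddc g=\ddc u-\ddc\log\vert f_{D}\vert^{2}=\ddc u+\delta_{D}$ as currents on $V$ and the $\delta_{D}$ terms cancel. Likewise $\ddc(g-g_{0})+\omega_{0}=\ddc(u-u_{0})+\ddc u_{0}=\ddc u$. Hence on every such $V$ the two currents $\omega_{D}(g)$ and $\ddc(g-g_{0})+\omega_{0}$ literally coincide, and since these $V$ cover $X$ they coincide globally. Therefore $\omega_{D}(g)$ is a positive $(1,1)$-current if and only if $\ddc(g-g_{0})+\omega_{0}$ is, which is exactly condition \ref{item:9} of Definition~\ref{def:omega} for $g-g_{0}$ to be $\omega_{0}$-plurisubharmonic.

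It remains to reconcile the two sides with respect to the integrability/upper-semicontinuity condition \ref{item:8} in Definition~\ref{def:omega}. Here I would invoke Corollary~\ref{cor:1}: the function $g-g_{0}$ is locally integrable on $X$ (being the difference of the locally integrable $g$ and the smooth $g_{0}$), so once $\ddc(g-g_{0})+\omega_{0}$ is known to be positive, there is a unique $\omega_{0}$-plurisubharmonic function agreeing with $g-g_{0}$ almost everywhere, and in the $L^{1}_{\loc}$-topology adopted in Remark~\ref{rem:2} we identify the two. Conversely, if $g-g_{0}$ is $\omega_{0}$-plurisubharmonic then $\ddc(g-g_{0})+\omega_{0}\ge 0$, hence $\omega_{D}(g)\ge 0$, i.e. $g$ is of plurisubharmonic type by Definition~\ref{def:6}. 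This closes the equivalence in both directions.

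I do not expect a serious obstacle here; the statement is essentially a bookkeeping exercise once one writes $g$ and $g_{0}$ in terms of local potentials. The only point that needs a little care is the current-theoretic identity $\ddc\log\vert f_{D}\vert^{2}=-\delta_{D}$ for a divisor $D$ with real coefficients, which follows by $\R$-linearity from the Lelong--Poincar\'e formula for ordinary divisors together with the decomposition \eqref{eq:2}; and the tacit use of the $L^{1}_{\loc}$-identification from Remark~\ref{rem:2}, so that ``is of plurisubharmonic type'' and ``$g-g_{0}$ is $\omega_{0}$-plurisubharmonic'' are compared as statements about currents rather than about honest functions.
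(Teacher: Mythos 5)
Your proof is correct and, once the local-potential scaffolding is stripped away, it is essentially the paper's argument: both reduce the lemma to the single identity of currents $\omega_{D}(g)=\ddc(g-g_{0})+\omega_{0}$ (the paper gets this in one line from $\delta_{D}=\omega_{0}-\ddc g_{0}$, without invoking local equations at all, which is slightly cleaner for real coefficients where $f_{D}$ only exists formally through the decomposition~\eqref{eq:2}), and both then invoke Corollary~\ref{cor:1} to replace $g-g_{0}$ by an honest $\omega_{0}$-plurisubharmonic representative in the direction where positivity of $\omega_{D}(g)$ is the hypothesis. One sign slip to fix: with the paper's normalization the Lelong--Poincar\'e formula reads $\ddc\log\vert f_{D}\vert^{2}=+\delta_{D}$, not $-\delta_{D}$, so your intermediate chain $\ddc g=\ddc u+\delta_{D}$ would actually give $\omega_{D}(g)=\ddc u+2\delta_{D}$ rather than the cancellation you claim; with the correct sign ($\ddc g=\ddc u-\delta_{D}$) the identities you assert, namely $\ddc u_{0}=\omega_{0}$ and $\omega_{D}(g)=\ddc u$, are exactly right, so this is a typo rather than a gap.
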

\begin{proof}
We first compute
\begin{displaymath}
\omega_{D}(g)=\ddc g+\delta_{D}=\ddc g+\omega_{0}-\omega_{0}+\delta_{D}=\ddc g+\omega_{0}-\ddc g_{0}=\ddc(g-g_{0})+\omega_{0}.
\end{displaymath}
Therefore, if $g-g_{0}$ is $\omega_{0}$-plurisubharmonic, then $\omega_{D}(g)$ is positive, and hence $g$ is of plurisubharmonic type.
For the converse, assume that $\omega_{D}(g)$ is positive. Then, $\ddc(g-g_{0})+\omega_{0}$ is positive. By Corollary~\ref{cor:1}, there 
is a unique $\omega_{0}$-plurisubharmonic function $\varphi$ that agrees with $g-g_{0}$ almost everywhere. Then, the function $g'
\coloneqq g_{0}+\varphi$ defines the same Green function as $g$ and $g'-g_{0}$ is $\omega_{0}$-plurisubharmonic.
\end{proof}

\begin{remark}
\label{rem:5}
If $D$ is a divisor with integral coefficients, then both definitions of Green functions of plurisubharmonic type, namely Definition~\ref{def:green} 
and Definition~\ref{def:6}, are equivalent, after identifying functions that agree almost everywhere. Indeed, if $g$ is a Green function of 
plurisubharmonic type for $D$ according to Definition~\ref{def:green}, then for any open subset $V\subseteq X$ and any local equation 
$f_{D}$ for $D$ on $V$, we have that $g+\log\vert f_{D}\vert^{2}$ is plurisubharmonic. In particular, we have that
\begin{displaymath}
\omega_{D}(g)\vert_{V}=\ddc(g+\log\vert f_{D}\vert^{2})\ge 0.
\end{displaymath}
Hence, $g$ is of plurisubharmonic type according to Definition~\ref{def:6}. Conversely, assume that $g$ satisfies Definition~\ref{def:6}. Let 
$g_{0}$ be a Green function of smooth type for $D$ and put $\omega_{0}=\omega_{D}(g_{0})$. Then, the proof of Lemma~\ref{lemm:1} 
shows that there exists an $\omega_{0}$-plurisubharmonic function $\varphi$ that agrees with $g-g_{0}$ almost everywhere; in other words, 
the function $g'=g_{0}+\varphi$ agrees with $g$ almost everywhere. Locally, on $V$, we now have
\begin{displaymath}
g'+\log\vert f_{D}\vert^{2}= g_{0}+\log\vert f_{D}\vert^{2}+\varphi.
\end{displaymath}
Since $g_{0}+\log\vert f_{D}\vert^{2}$ is smooth and $\varphi$ is $\omega_{0}$-plurisubharmonic, we deduce that $g'+\log\vert f_{D}\vert^{2}$ 
is upper semicontinuous, does not take the value $+\infty$, and that $\ddc(g'+\log\vert f_{D}\vert^{2})$ is positive on $V$. Thus, $g'+\log\vert 
f_{D}\vert^{2}$ is plurisubharmonic on $V$. 
\end{remark}

We can extend Lemma~\ref{lem:greeneff} to divisors with real coefficients. 

\begin{lemma}
\label{lem:greeneffreal}
Let $D$ and $D'$ be divisors on $X$ with real coefficients such that the divisor $C\coloneqq D'-D$ is effective. If $g$ is a Green function for 
$D$ of plurisubharmonic type, then $g$ also is a Green function for $D'$ of plurisubharmonic type.
\end{lemma}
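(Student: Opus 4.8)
The plan is to reduce directly to Definition~\ref{def:6}, according to which ``Green function for $D'$ of plurisubharmonic type'' means nothing more than that the closed $(1,1)$-current $\omega_{D'}(g)$ is positive. First I would note that $g$, being a locally integrable function, is automatically a Green function for $D'$ in the sense of Definition~\ref{def:4}, so the only thing to check is the positivity of $\omega_{D'}(g)$. Then I would simply compute, using $D' = D + C$ and the corresponding additivity $\delta_{D'} = \delta_{D} + \delta_{C}$ of currents of integration,
\begin{displaymath}
\omega_{D'}(g) = \ddc g + \delta_{D'} = (\ddc g + \delta_{D}) + \delta_{C} = \omega_{D}(g) + \delta_{C}.
\end{displaymath}

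The key step is to observe that $\delta_{C}$ is a positive (and closed) $(1,1)$-current. Writing the effective $\R$-divisor as $C = \sum_{j} a_{j} C_{j}$ with prime divisors $C_{j}$ and coefficients $a_{j} \ge 0$, each current of integration $\delta_{C_{j}}$ is positive and closed, hence so is the nonnegative linear combination $\delta_{C}$. Since $\omega_{D}(g)$ is positive by hypothesis, and the sum of two positive closed $(1,1)$-currents is again positive and closed, we conclude that $\omega_{D'}(g) \ge 0$, which is precisely the assertion.

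I do not expect any genuine obstacle here: this is the $\R$-coefficient analogue of Lemma~\ref{lem:greeneff}, but phrased through currents rather than through local equations, which actually shortens the argument, since one no longer needs to invoke the plurisubharmonicity of $\log\lvert f_{C}\rvert^{2}$ — positivity of $\delta_{C}$ does the job directly. The only point deserving a word of care is that ``effective'' must be read as ``all coefficients in the prime decomposition are $\ge 0$'', which is exactly what makes $\delta_{C}$ a positive current.
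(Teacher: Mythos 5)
Your argument is correct and is essentially identical to the paper's own proof: both compute $\omega_{D'}(g)=\omega_{D}(g)+\delta_{C}$ and conclude from the positivity of $\delta_{C}$ for effective $C$. The extra remarks (prime decomposition of $C$, the observation that $g$ is automatically a Green function for $D'$ in the sense of Definition~\ref{def:4}) are harmless elaborations of the same argument.
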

\begin{proof}
Since the divisor $C=D'-D$ is effective, the current $\delta_{C}$ is positive. Assume now that $g$ is a Green function for $D$ of plurisubharmonic 
type. Then, we have
\begin{displaymath}
\omega_{D'}(g)=\ddc g+\delta_{D'}=\ddc g+\delta_{D}+\delta_{C}=\omega_{D}(g)+\delta_{C}\ge 0.
\end{displaymath}
Hence, $g$ is a Green function for $D'$ of plurisubharmonic type. 
\end{proof}

\subsection{Non-pluripolar product of currents}

In general, currents cannot be multiplied. Nevertheless, the Bedford--Taylor calculus allows us to define currents of the form
\begin{displaymath}
\ddc\varphi_{1}\wedge\ldots\wedge\ddc\varphi_{p}\qquad\text{and}\qquad\varphi_{0}\,\ddc\varphi_{1}\wedge\ldots\wedge\ddc\varphi_{p},
\end{displaymath}
whenever the functions $\varphi_{0},\varphi_{1},\ldots,\varphi_{p}$ are locally bounded plurisubharmonic functions on an open subset $V
\subseteq X$. In order to extend these products to plurisubharmonic functions that are not locally bounded on $V$, one can follow two directions. 

One direction consists in imposing conditions on the sets, where the functions $\varphi_{0},\varphi_{1},\ldots,\varphi_{p}$ take the value 
$-\infty$. For this approach, we refer for instance to~\cite[Chapter~III, \S~3]{demailly12:cadg}.

The other direction is to ``throw away'' the part of the currents that is supported on the so-called pluripolar sets. This point of view is introduced 
in~\cite{BEGZ}. We recall this construction next. 

\begin{definition}
Let $Z\subseteq X$ be a subset of $X$. Then, $Z$ is called a \emph{complete pluripolar set}, if for each $x\in Z$, there exists a connected open 
subset $V_{x}\subseteq X$ containing $x$ and a plurisubharmonic function $\varphi_{x}\colon V_{x}\rightarrow\mathbb{R}\cup\{-\infty\}$, not 
identically equal to $-\infty$, such that
\begin{displaymath}
Z\cap V_{x}=\{z\in V_{x}\,\vert\,\varphi_{x}(z)=-\infty\}.
\end{displaymath}
We note that it can be shown that any proper analytic subvariety $Z$ of $X$ is a complete pluripolar set. Any subset of a complete pluripolar set 
is called a \emph{pluripolar set}.  
\end{definition}

\begin{definition}
A function $\varphi\colon X\rightarrow\mathbb{R}\cup\{-\infty\}$ is said to \emph{have small unbounded locus}, if there exists a complete 
pluripolar set $Z$ such that $\varphi$ is locally bounded outside $Z$.
\end{definition}

\begin{definition}
\label{def:3}
For $j=1,\ldots,p$, let $\omega_{j}$ be closed smooth real $(1,1)$-forms on $X$ and let $\varphi_{j}$ be $\omega_{j}$-plurisubharmonic functions 
on $X$ having small unbounded locus. Then, the \emph{non-pluripolar product of $(1,1)$-currents}
\begin{equation}
\label{eq:npp}
\big\langle(\ddc\varphi_{1}+\omega_{1})\wedge\ldots\wedge(\ddc\varphi_{p}+\omega_{p})\big\rangle
\end{equation}
is the $(p,p)$-current defined as follows: For each $k\in\N_{>0}$, we introduce the set
\begin{displaymath}
U_{k}\coloneqq\{x\in X\,\vert\,\varphi_{j}(x)\ge -k\text{ for }j=1,\ldots,p\};
\end{displaymath}
we observe that the functions $\varphi_{j}$ are locally bounded on $U_{k}$ for $j=1,\ldots,p$. Then, letting $q\coloneqq d-p$ and $\eta$ be a 
smooth $(q,q)$-form on $X$, the non-pluripolar product~\eqref{eq:npp} is defined as
\begin{equation}
\label{eq:1}
\big\langle(\ddc\varphi_{1}+\omega_{1})\wedge\ldots\wedge(\ddc\varphi_{p}+\omega_{p})\big\rangle(\eta)\coloneqq\lim_{k\to\infty}\int_{U_{k}}
(\ddc\varphi_{1}+\omega_{1})\wedge\ldots\wedge(\ddc\varphi_{p}+\omega_{p})\wedge\eta,
\end{equation}
where the right-hand side of equation~\eqref{eq:1} is defined using Bedford--Taylor theory. 
\end{definition}

The basic properties of the non-pluripolar product of $(1,1)$-currents just defined are the following. They follow from~\cite[Proposition~1.6]
{BEGZ}, \cite[Proposition~1.4]{BEGZ}, and~\cite[Theorem~1.8]{BEGZ}, because we are assuming that $X$ is projective, and hence is K\"ahler. 

\begin{theorem}
\label{thm:2}
With the notations of Definition~\ref{def:3}, we set $T_{j}\coloneqq\ddc\varphi_{j}+\omega_{j}$ for $j=1,\ldots,p$. Then, we have the following 
statements:
\begin{enumerate}
\item 
\label{item:3}
The non-pluripolar product $\big\langle T_{1}\wedge\ldots\wedge T_{p}\big\rangle$ is well-defined and depends only on the currents $T_{j}$ and 
not on the particular choice of functions $\varphi_{j}$ for $j=1,\ldots,p$.
\item 
\label{item:4}
The non-pluripolar product $\big\langle T_{1}\wedge\ldots\wedge T_{p}\big\rangle$ is symmetric. Moreover, it is multilinear in the following sense: 
If, for example, $\omega_{1}'$ is another closed smooth real $(1,1)$-form on $X$, $\varphi_{1}'$ is an $\omega_{1}'$-plurisubharmonic function on 
$X$ having small unbounded locus, $T_{1}'\coloneqq\ddc\varphi_{1}'+\omega_{1}'$, and $\lambda_{1},\lambda_{1}'\in\R_{>0}$, then we have
\begin{displaymath}
\big\langle(\lambda_{1}T_{1}+\lambda_{1}' T_{1}')\wedge\ldots\wedge T_{p}\big\rangle=\lambda_{1}\big\langle T_{1}\wedge\ldots\wedge T_{p}
\big\rangle+\lambda_{1}'\big\langle T_{1}'\wedge\ldots\wedge T_{p}\big\rangle.
\end{displaymath}
\item 
\label{item:5}
The current $\big\langle T_{1}\wedge\ldots\wedge T_{p}\big\rangle$ is closed and positive. 
\end{enumerate}
\end{theorem}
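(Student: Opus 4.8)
The plan is to deduce all three assertions from the corresponding results of Boucksom--Eyssidieux--Guedj--Zeriahi, whose hypotheses apply since a projective $X$ is in particular a compact K\"ahler manifold; I will nonetheless indicate the structural points that make the argument run. Fix for each $j$ a plurisubharmonic representative of $\varphi_{j}$. Being upper semicontinuous on the compact space $X$, it is bounded above, so for every $k\in\N_{>0}$ the truncation $\varphi_{j}^{(k)}\coloneqq\max(\varphi_{j},-k)$ is a bounded $\omega_{j}$-plurisubharmonic function, $T_{j}^{(k)}\coloneqq\ddc\varphi_{j}^{(k)}+\omega_{j}$ is a well-defined closed positive $(1,1)$-current, and Bedford--Taylor theory makes sense of the closed positive $(p,p)$-current $T_{1}^{(k)}\wedge\dots\wedge T_{p}^{(k)}$. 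On the plurifine-open set $U_{k}$ one has $\varphi_{j}^{(k)}=\varphi_{j}=\varphi_{j}^{(k+1)}$ for all $j$, so plurifine locality of Bedford--Taylor products gives $\mathbbm{1}_{U_{k}}T_{1}^{(k)}\wedge\dots\wedge T_{p}^{(k)}=\mathbbm{1}_{U_{k}}T_{1}^{(k+1)}\wedge\dots\wedge T_{p}^{(k+1)}\le\mathbbm{1}_{U_{k+1}}T_{1}^{(k+1)}\wedge\dots\wedge T_{p}^{(k+1)}$, the inequality holding because $U_{k}\subseteq U_{k+1}$ and the current is positive. Hence the sequence $\bigl(\mathbbm{1}_{U_{k}}T_{1}^{(k)}\wedge\dots\wedge T_{p}^{(k)}\bigr)_{k}$ of positive $(p,p)$-currents is non-decreasing, and the limit in~\eqref{eq:1} exists in $[0,\infty]$ for every smooth $\eta\ge 0$.

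The step I expect to be the main obstacle is to show that this increasing limit has locally finite mass, i.e.\ that~\eqref{eq:1} actually defines a current and not merely a $[0,\infty]$-valued functional. This local finiteness is precisely where the K\"ahler (here, projective) hypothesis enters, and it is the analytic heart of~\cite[Theorem~1.8]{BEGZ} together with~\cite[Proposition~1.6]{BEGZ}; the assumption that the $\varphi_{j}$ have small unbounded locus keeps the construction well-behaved. Granting this, $\langle T_{1}\wedge\dots\wedge T_{p}\rangle$ is a closed positive $(p,p)$-current because both closedness and positivity pass to increasing limits of positive currents, which is statement~\ref{item:5} and the existence half of~\ref{item:3}.

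Finally I would treat potential-independence, symmetry, and multilinearity. For~\ref{item:3}, the non-pluripolar product of~\cite{BEGZ} is intrinsically attached to the closed positive currents $T_{j}$ through their local potentials, and~\cite[Proposition~1.6]{BEGZ} identifies it with the limit~\eqref{eq:1} for \emph{any} global presentation $T_{j}=\ddc\varphi_{j}+\omega_{j}$: concretely, two such presentations have curvature forms $\omega_{j}$ that are cohomologous (both represent the class of $T_{j}$), hence differ by $\ddc$ of a smooth function by the $\partial\bar\partial$-lemma, so their potentials differ by a smooth function plus a locally constant one (a pluriharmonic function on the compact $X$ being locally constant), and neither alteration changes the limit in~\eqref{eq:1}. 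Symmetry in~\ref{item:4} is immediate, as both $U_{k}$ and the Bedford--Taylor product $T_{1}^{(k)}\wedge\dots\wedge T_{p}^{(k)}$ are symmetric in the factors. For multilinearity, writing $\lambda_{1}T_{1}+\lambda_{1}'T_{1}'=\ddc(\lambda_{1}\varphi_{1}+\lambda_{1}'\varphi_{1}')+(\lambda_{1}\omega_{1}+\lambda_{1}'\omega_{1}')$, on the complement of the closed complete pluripolar set $Z$ which is the union of the small unbounded loci of all potentials involved every function in sight is locally bounded, so ordinary Bedford--Taylor multilinearity applies there; and since a non-pluripolar product --- hence also a finite sum of such --- puts no mass on the pluripolar set $Z$, the two sides of the claimed identity are closed positive currents agreeing off $Z$ and charging $Z$ trivially, so they coincide. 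This is~\cite[Proposition~1.4]{BEGZ}.
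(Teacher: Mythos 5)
Your proposal is correct and takes essentially the same route as the paper, which gives no argument of its own beyond citing \cite[Propositions~1.4, 1.6 and Theorem~1.8]{BEGZ} and noting that a projective $X$ is K\"ahler; your write-up simply fleshes out how those results deliver the three assertions (monotone truncation and plurifine locality for existence, finiteness of mass from the K\"ahler hypothesis, and the no-mass-on-pluripolar-sets property for multilinearity). No gaps to flag.
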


Since $\big\langle T_{1}\wedge\ldots\wedge T_{p}\big\rangle$ is closed, we will denote by $\cl\big(\big\langle T_{1}\wedge\ldots\wedge T_{p}\big
\rangle\big)$ the associated cohomology class in $H^{p,p}(X,\C)$. 

The following monotonicity criterion is contained in~\cite[Theorem~1.16]{BEGZ}.

\begin{theorem}
\label{thm:1}
For $j=1,\ldots,p$, let $\omega_{j}$ be closed smooth real $(1,1)$-forms on $X$ and let $\varphi_{j},\varphi_{j}'$ be $\omega_{j}$-plurisubharmonic 
functions on $X$ having small unbounded locus. If there are constants $c_{j}\in\R_{>0}$ such that $\varphi_{j}'\le\varphi_{j}+c_{j}$, i.\,e., the functions 
$\varphi_{j}'$ are more singular than the functions $\varphi_{j}$ for $j=1,\ldots,p$, then the cohomology class  
\begin{displaymath}
\cl\big(\big\langle(\ddc\varphi_{1}+\omega_{1})\wedge\ldots\wedge(\ddc\varphi_{p}+\omega_{p})\big\rangle\big)-\cl\big(\big\langle(\ddc\varphi_{1}'+
\omega_{1})\wedge\ldots\wedge(\ddc\varphi_{p}'+\omega_{p})\big\rangle\big)
\end{displaymath}
is strongly positive. This means that for any closed smooth and weakly positive form $\eta$, the inequality 
\begin{displaymath}
\int_{X}\big\langle(\ddc\varphi_{1}+\omega_{1})\wedge\ldots\wedge(\ddc\varphi_{p}+\omega_{p})\big\rangle\wedge\eta-\int_{X}\big\langle(\ddc
\varphi_{1}'+\omega_{1})\wedge\ldots\wedge(\ddc\varphi_{p}'+\omega_{p})\big\rangle\wedge\eta\ge 0
\end{displaymath}
holds. In particular, if $p=d$, the inequality 
\begin{displaymath}
\int_{X}\big\langle(\ddc\varphi_{1}'+\omega_{1})\wedge\ldots\wedge(\ddc\varphi_{d}'+\omega_{d})\big\rangle\le\int_{X}\big\langle(\ddc\varphi_{1}+
\omega_{1})\wedge\ldots\wedge(\ddc\varphi_{d}+\omega_{d})\big\rangle
\end{displaymath}
holds.
\end{theorem}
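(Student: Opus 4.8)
The plan is to reduce, by multilinearity, to the case where a single potential is made more singular, and then to treat that case by interpolating through potentials of constant singularity type and passing to a decreasing limit. To begin with, the displayed integral inequality is simply the unwinding of the assertion that the difference of cohomology classes is strongly positive: both non-pluripolar products are closed (Theorem~\ref{thm:2}~\ref{item:5}), so the pairing of either against a closed smooth weakly positive form $\eta$ depends only on cohomology classes and is the integral of a positive measure of finite total mass on the compact Kähler manifold $X$ (finiteness by~\cite{BEGZ}); "strongly positive difference" means exactly that all such pairings are $\ge 0$, and the case $p=d$ is the choice $\eta\equiv 1$. Using the symmetry and multilinearity of the non-pluripolar product (Theorem~\ref{thm:2}~\ref{item:4}) together with bi-additivity of the pairing with $\eta$, a telescoping argument passes from $(\varphi_j')_j$ to $(\varphi_j)_j$ one index at a time, keeping the other factors fixed; moving the changed index into the first slot by symmetry, and using that a sum of strongly positive classes is strongly positive, it suffices to prove: given a fixed closed positive current $S=\big\langle(\ddc\psi_2+\omega_2)\wedge\dots\wedge(\ddc\psi_p+\omega_p)\big\rangle$ (with $\psi_j$ an $\omega_j$-plurisubharmonic function of small unbounded locus) and $\varphi_1,\varphi_1'\in\PSH(X,\omega_1)$ of small unbounded locus with $\varphi_1'\le\varphi_1+c_1$, one has
\begin{displaymath}
\int_X\big\langle(\ddc\varphi_1'+\omega_1)\wedge S\big\rangle\wedge\eta\ \le\ \int_X\big\langle(\ddc\varphi_1+\omega_1)\wedge S\big\rangle\wedge\eta .
\end{displaymath}
Replacing $\varphi_1$ by $\varphi_1+c_1$, which is again $\omega_1$-plurisubharmonic with the same associated current, I may assume $\varphi_1'\le\varphi_1$ pointwise.

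For the single-factor inequality I would introduce, for $t\ge 0$, the potential $\psi_t:=\max(\varphi_1',\varphi_1-t)\in\PSH(X,\omega_1)$, which again has small unbounded locus. It satisfies $\psi_0=\varphi_1$, decreases to $\varphi_1'$ as $t\to\infty$, and $\varphi_1-t\le\psi_t\le\varphi_1$, so that $[\psi_t]=[\varphi_1]$ for every $t$ in the sense of Definition~\ref{def:14}. Using the fact that the total mass of a non-pluripolar product is unchanged when a potential is replaced by another of the same singularity type — which follows from the Bedford--Taylor comparison principle together with the plurifine locality underlying Definition~\ref{def:3}, by comparing the defining exhaustions attached to $\varphi_1$ and to $\psi_t$ — the quantity
\begin{displaymath}
M:=\int_X\big\langle(\ddc\psi_t+\omega_1)\wedge S\big\rangle\wedge\eta
\end{displaymath}
is independent of $t\ge 0$ and equals the right-hand side above (its value at $t=0$). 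It then remains to show that passing to the decreasing limit $\psi_t\searrow\varphi_1'$ can only lose mass, i.e.\ that $\int_X\big\langle(\ddc\varphi_1'+\omega_1)\wedge S\big\rangle\wedge\eta\le M$.

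For that, the measures $\mu_t:=\big\langle(\ddc\psi_t+\omega_1)\wedge S\big\rangle\wedge\eta$ are positive of constant total mass $M$ on the compact space $X$; choosing $t_n\to\infty$ along which $\mu_{t_n}$ converges weakly to a positive measure $\mu$, one has $\mu(X)=M$. On any relatively compact open subset $\Omega$ of the complement of the (pluripolar) union of the unbounded loci of $\varphi_1,\varphi_1'$ and of the $\psi_j$, all potentials in sight are bounded, so there the products are genuine Bedford--Taylor products; by weak continuity of these along the decreasing sequence $\psi_{t_n}|_\Omega\searrow\varphi_1'|_\Omega$ of uniformly bounded plurisubharmonic functions, $\mu_{t_n}|_\Omega$ converges weakly to $\big\langle(\ddc\varphi_1'+\omega_1)\wedge S\big\rangle\wedge\eta$ restricted to $\Omega$, so $\mu$ agrees with $\big\langle(\ddc\varphi_1'+\omega_1)\wedge S\big\rangle\wedge\eta$ outside a pluripolar set. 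Since a non-pluripolar product puts no mass on pluripolar sets, letting $\Omega$ exhaust that complement gives $\int_X\big\langle(\ddc\varphi_1'+\omega_1)\wedge S\big\rangle\wedge\eta\le\mu(X)=M$, which is the required single-factor inequality. Unwinding the telescoping reduction then gives the strong positivity of the class difference, and taking $\eta\equiv 1$ gives the mass inequality when $p=d$.

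The formal part above is routine; the real difficulty is concentrated in the two comparison facts used, namely that non-pluripolar mass depends on a potential only through its singularity type, and that it cannot increase in a decreasing limit when read off from the regular locus. Both ultimately rest on the Bedford--Taylor comparison principle for locally bounded plurisubharmonic functions and on the plurifine locality of the non-pluripolar product; this is exactly the technical content worked out in full in~\cite[Theorem~1.16]{BEGZ}, which I would either invoke directly or reproduce after the reductions above.
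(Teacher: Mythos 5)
The paper gives no proof of this theorem at all: it is stated as being "contained in~\cite[Theorem~1.16]{BEGZ}", so the only point of comparison is that reference. Your sketch is a correct reconstruction of the BEGZ argument (telescoping to a single factor, interpolating through $\max(\varphi_{1}',\varphi_{1}-t)$, invariance of the non-pluripolar mass within a fixed singularity type, and upper semicontinuity of mass in the decreasing limit), and you rightly flag that the two comparison facts carrying all the weight are precisely the technical content of~\cite[Theorems~1.14 and~1.16]{BEGZ}, to which you ultimately defer — which is consistent with the paper's own treatment.
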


The non-pluripolar product of $(1,1)$-currents given in Definition~\ref{def:3} can be used to define a non-pluripolar product of $(1,1)$-currents of 
the form $\omega_{D}(g)$ associated to Green functions $g$ for a divisor $D$ on $X$ which are of plurisubharmonic type.

\begin{definition}
\label{def:2}
For $j=1,\ldots,p$, let $D_{j}$ be divisors on $X$ and let $g_{j}$ be Green functions for $D_{j}$ of plurisubharmonic type having small unbounded 
locus. Furthermore, let $g_{j,0}$ be Green functions for $D_{j}$ of smooth type, and put $\omega_{j,0}\coloneqq \omega_{D_{j}}(g_{j,0})$. Then, 
the difference $\varphi_{j}\coloneqq g_{j}-g_{j,0}$ is an $\omega_{j,0}$-plurisubharmonic function on $X$. We thus define
\begin{displaymath}
\big\langle\omega_{D_{1}}(g_{1})\wedge\ldots\wedge\omega_{D_{p}}(g_{p})\big\rangle\coloneqq\big\langle(\ddc\varphi_{1}+\omega_{1,0})\wedge
\ldots\wedge(\ddc\varphi_{p}+\omega_{p,0})\big\rangle.
\end{displaymath}
\end{definition}

We note that by part~\ref{item:3} of Theorem~\ref{thm:2}, the non-pluripolar product introduced above is well-defined and does not depend on the 
choice of Green functions $g_{j,0}$ for $D_{j}$ of smooth type ($j=1,\ldots,p$). What is surprising is that it only depends on the Green functions 
$g_{j}$, but not on the actual divisors $D_{j}$ as long as $g_{j}$ are Green functions for $D_{j}$ of plurisubharmonic type ($j=1,\ldots,p$).

\begin{proposition}
\label{prop:5}
For $j=1,\ldots,p$, let $D_{j}$, $D_{j}'$ be divisors on $X$ and let $g_{j}$ be Green functions for $D_{j}$ of plurisubharmonic type such that they 
are also Green functions for $D_{j}'$ of plurisubharmonic type. Then, we have the equality
\begin{displaymath}
\big\langle\omega_{D_{1}'}(g_{1})\wedge\ldots\wedge\omega_{D_{p}'}(g_{p})\big\rangle=\big\langle\omega_{D_{1}}(g_{1})\wedge\ldots\wedge
\omega_{D_{p}}(g_{p})\big\rangle.
\end{displaymath}
\end{proposition}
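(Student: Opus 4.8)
The plan is to exploit two features of the non-pluripolar product: that it is local, and that it assigns no mass to pluripolar sets. Set $Z\coloneqq\bigcup_{j=1}^{p}\bigl(\supp(D_{j})\cup\supp(D_{j}')\bigr)$, which is a proper analytic subvariety of $X$, hence a complete pluripolar set, and put $\Omega\coloneqq X\setminus Z$. First I would rewrite both sides of the claimed identity as non-pluripolar products of honest currents: fixing Green functions $g_{j,0}$, $g_{j,0}'$ of smooth type for $D_{j}$, $D_{j}'$, the computation carried out in the proof of Lemma~\ref{lemm:1} gives $\omega_{D_{j}}(g_{j})=\ddc(g_{j}-g_{j,0})+\omega_{D_{j}}(g_{j,0})$ and $\omega_{D_{j}'}(g_{j})=\ddc(g_{j}-g_{j,0}')+\omega_{D_{j}'}(g_{j,0}')$, so that by Definition~\ref{def:2} the left-hand side is $\big\langle T_{1}\wedge\dots\wedge T_{p}\big\rangle$ with $T_{j}\coloneqq\omega_{D_{j}}(g_{j})$ and the right-hand side is $\big\langle T_{1}'\wedge\dots\wedge T_{p}'\big\rangle$ with $T_{j}'\coloneqq\omega_{D_{j}'}(g_{j})$, both in the sense of Definition~\ref{def:3}. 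On $\Omega$ the divisorial parts disappear, so $T_{j}|_{\Omega}=\ddc g_{j}|_{\Omega}=T_{j}'|_{\Omega}$; in particular $g_{j}$ is plurisubharmonic on $\Omega$, and the small-unbounded-locus hypothesis is inherited by $\Omega$.

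Next I would check that the two products agree on $\Omega$. By its very construction the non-pluripolar product is local: for a test form $\eta$ supported in an open set $U\subseteq X$, the integrals $\int_{U_{k}}(\ddc\varphi_{1}+\omega_{1})\wedge\dots\wedge\eta$ of Definition~\ref{def:3} reduce to Bedford--Taylor products over $U_{k}\cap U$, which depend only on the restrictions $(\ddc\varphi_{j}+\omega_{j})|_{U}$. Hence $\big\langle T_{1}\wedge\dots\wedge T_{p}\big\rangle|_{\Omega}$ depends only on $T_{1}|_{\Omega},\dots,T_{p}|_{\Omega}$ --- here part~\ref{item:3} of Theorem~\ref{thm:2} is what guarantees that the particular choice of potentials representing the currents $T_{j}|_{\Omega}$ is immaterial --- and likewise for the $T_{j}'$. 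Since $T_{j}|_{\Omega}=T_{j}'|_{\Omega}$ for all $j$, I obtain
\begin{displaymath}
\big\langle T_{1}\wedge\dots\wedge T_{p}\big\rangle\big|_{\Omega}=\big\langle T_{1}'\wedge\dots\wedge T_{p}'\big\rangle\big|_{\Omega}.
\end{displaymath}

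Finally I would upgrade this to an equality on all of $X$. Both $\big\langle T_{1}\wedge\dots\wedge T_{p}\big\rangle$ and $\big\langle T_{1}'\wedge\dots\wedge T_{p}'\big\rangle$ are closed positive currents (part~\ref{item:5} of Theorem~\ref{thm:2}) and, by the basic properties of the non-pluripolar product established in~\cite{BEGZ}, put no mass on pluripolar sets; in particular their trace measures do not charge the set $Z$. A closed positive current whose trace measure does not charge the closed set $Z$ coincides with the extension by zero of its restriction to $\Omega=X\setminus Z$, and is therefore determined by that restriction. Applying this to both products and invoking the previous step yields the asserted identity $\big\langle\omega_{D_{1}'}(g_{1})\wedge\dots\wedge\omega_{D_{p}'}(g_{p})\big\rangle=\big\langle\omega_{D_{1}}(g_{1})\wedge\dots\wedge\omega_{D_{p}}(g_{p})\big\rangle$ as currents on $X$.

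The step I expect to be the crux is this last one: the statement is simply false for arbitrary closed positive currents supported near $Z$ --- for instance $\delta_{Z}$ becomes invisible after restriction to $\Omega$ yet is nonzero --- so everything hinges on the fact that the non-pluripolar product does not see the pluripolar set $Z$, equivalently that the divisorial currents $\delta_{D_{j}}$ and $\delta_{D_{j}'}$ contribute nothing to it. This is one of the foundational results of~\cite{BEGZ}; granting it, the remainder is a routine unwinding of Definitions~\ref{def:2} and~\ref{def:3}.
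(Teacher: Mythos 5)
Your argument is correct, but it reaches the conclusion by a different route than the paper. The paper first dominates $D_{j}$ and $D_{j}'$ by a common divisor $D_{j}''$ so as to reduce to the case where $C_{j}=D_{j}'-D_{j}$ is effective, writes $\omega_{D_{j}'}(g_{j})=\omega_{D_{j}}(g_{j})+\delta_{C_{j}}$, and then uses the multilinearity of Theorem~\ref{thm:2}~\ref{item:4} to reduce everything to the single vanishing $\big\langle\delta_{C_{1}}\wedge\omega_{D_{2}}(g_{2})\wedge\ldots\wedge\omega_{D_{p}}(g_{p})\big\rangle=0$, which it checks directly from Definition~\ref{def:3}: the potential $-g$ of $\delta_{C_{1}}$ tends to $-\infty$ along $\vert C_{1}\vert$, so the truncation sets $U_{k}$ avoid $\vert C_{1}\vert$ and every approximating integral vanishes. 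You instead invoke two structural properties of the non-pluripolar product from~\cite{BEGZ} --- locality and the fact that it puts no mass on pluripolar sets --- to identify both products with the trivial extension of their common restriction to the complement of the supports. Both arguments rest on the same phenomenon (the product is blind to the divisorial parts), but the paper's version is more self-contained, needing only the definition and multilinearity, while yours uses the \cite{BEGZ} locality and non-charging results as black boxes and avoids the effectivity reduction entirely. The one point that deserves the care you in fact give it is that the locality must be locality in the \emph{currents} rather than in a fixed choice of global potentials (the sets $U_{k}$ depend on the latter); this is exactly plurifine locality combined with the independence statement of Theorem~\ref{thm:2}~\ref{item:3}, so your argument is complete.
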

\begin{proof}
We can find divisors $D''_{j}$ such that $D''_{j}-D_{j}$ and $D''_{j}-D'_{j}$ are effective. It is then enough to prove the two equalities
\begin{displaymath}
\big\langle\omega_{D_{1}'}(g_{1})\wedge\ldots\wedge\omega_{D_{p}'}(g_{p})\big\rangle=\big\langle\omega_{D_{1}''}(g_{1})\wedge\ldots\wedge
\omega_{D_{p}''}(g_{p})\big\rangle=\big\langle\omega_{D_{1}}(g_{1})\wedge\ldots\wedge\omega_{D_{p}}(g_{p})\big\rangle.
\end{displaymath}
Hence, without loss of generality, we can assume that we are in the case when $C_{j}\coloneqq D'_{j}-D_{j}$ is effective for $j=1,\dots,p$.

By construction, we have that $\omega_{D_{j}'}(g_{j})=\omega_{D_{j}}(g_{j})+\delta_{C_{j}}$ for $j=1,\ldots,p$. By part~\ref{item:4} of Theorem~\ref
{thm:2}, we are thus reduced to prove that
\begin{displaymath}
\big\langle\delta_{C_{1}}\wedge\omega_{D_{2}}(g_{2})\wedge\dots\wedge\omega_{D_{p}}(g_{p})\big\rangle=0.
\end{displaymath}
However, this follows directly from the definition of the non-pluripolar product of $(1,1)$-currents: For this, let $g$ be a Green function for $C_{1}$ 
of smooth type and let $\omega\coloneqq\omega_{C_{1}}(g)$, so that $\omega$ is a smooth $(1,1)$-form. Now, put $\varphi=-g$, so that $\varphi$ 
becomes an $\omega$-plurisubharmonic function on $X$ satisfying $\ddc\varphi+\omega=\delta_{C_{1}}$. On any of the sets $U_{k}$ introduced 
in Definition~\ref{def:3}, the function $\varphi$ will be bounded below. Therefore, we have $U_{k}\cap C_{1}=\emptyset$, which implies that after 
replacing $\varphi_{1}$ and $\omega_{1}$ in equation~\eqref{eq:1} by $\varphi$ and $\omega$, respectively, all the integrals on the right-hand 
side of~\eqref{eq:1} will vanish. This concludes the proof of the proposition.
\end{proof}

\begin{example}
Let $D$ and $D'$ be divisors on $X$ such that the divisor $C\coloneqq D'-D$ is effective. Let $\Vert\cdot\Vert$ be a semipositive hermitian metric 
on $\caO(D)$ and define the hermitian metric $\Vert\cdot\Vert'$ on $\caO(D')$ as in Remark~\ref{rem:metrics}; by Lemma~\ref{lem:metrics}, we 
know that $\Vert\cdot\Vert'$ is also semipositive. Choosing a non-trivial rational section $s$ of $\caO(D)$, which can also be viewed as a non-trivial 
rational section of $\caO(D')$, we obtain the relation (see the proof of Lemma~\ref{lem:metrics})
\begin{displaymath}
\ddc(-\log\Vert s\Vert'^{\,2})=\ddc(-\log\Vert s\Vert^{2})+\delta_{C},
\end{displaymath}
which shows that we have the following equality of non-pluripolar parts
\begin{displaymath}
\big\langle\ddc(-\log\Vert s\Vert'^{\,2})\big\rangle=\big\langle\ddc(-\log\Vert s\Vert^{2})\big\rangle.
\end{displaymath}
Letting $\varphi$ and $\varphi'$ denote the $\omega_{0}$-plurisubharmonic functions on $X$ corresponding to the semipositive hermitian metrics 
$\Vert\cdot\Vert$ and $\Vert\cdot\Vert'$, respectively, we can rewrite the above relation as
\begin{displaymath}
\ddc\varphi'+\omega_{0}=\ddc\varphi+\omega_{0}+\delta_{C},
\end{displaymath}
which shows that the $\omega_{0}$-plurisubharmonic function $\varphi'$ is ``more singular'' than  the $\omega_{0}$-plurisub\-harmonic function 
$\varphi$; the equality of non-pluripolar parts translates as
\begin{displaymath}
\big\langle\ddc\varphi'+\omega_{0}\big\rangle=\big\langle\ddc\varphi+\omega_{0}\big\rangle.
\end{displaymath}
\end{example}

\subsection{Capacity and monotonicity of non-pluripolar products}
\label{sec:capac-monot-non}

The monotonicity result given in Theorem~\ref{thm:1} has been strengthened in~\cite{darvas18:mnpp} to obtain weak convergence of measures. 
We need some preliminary definitions (see~\cite[Chapter~IX]{guedj17:_degen_monge}).

\begin{definition}
Let $\omega $ be a K\"ahler form on  $X$. Write $V=\int_{S}\omega^{d}$. For every Borel subset $K\subseteq X$, the \emph{$\omega$-capacity} 
of $K$ is defined as
\begin{displaymath}
\Capa_{\omega}(K)\coloneqq\sup\bigg\{\frac{1}{V}\int_{K}(\ddc\varphi+\omega)^{\wedge d}\,\bigg\vert\,\varphi\in\PSH(X,\omega),\,0\le\varphi\le 
1\bigg\}. 
\end{displaymath}
If $K \subseteq X$ is any subset, the \emph{outer $\omega$-capacity} of $K$ is defined as
\begin{displaymath}
\Capa^{\ast}_{\omega}(K)\coloneqq\inf\big\{\Capa_{\omega}(W)\,\big\vert\,W\text{ open},\,K\subseteq W\big\}.
\end{displaymath}
\end{definition}

\begin{definition}
A sequence of functions $(\varphi_{n})_{n\in\N}$ is said to \emph{converge in capacity} to a function $\varphi$, if for any $\delta>0$, we have
\begin{displaymath}
\lim_{n\to\infty}\Capa_{\omega}^{\ast}\big\{x\in X\,\big\vert\,\vert\varphi_{n}(x)-\varphi(x)\vert>\delta\big\}=0.
\end{displaymath}
\end{definition}

\begin{example}
Let $(\varphi_{n})_{n\in\N}$ be a sequence of plurisubharmonic functions that converges monotonically almost everywhere to a plurisubharmonic 
function $\varphi$. Then, the sequence $(\varphi_{n})_{n\in\N}$ converges in capacity to $\varphi$. See~\cite[Proposition~4.25]
{guedj17:_degen_monge}. 
\end{example}

The following result is~\cite[Theorem~1.2]{darvas18:mnpp}.

\begin{theorem} 
\label{thm:5}
For $j=1,\ldots,p$, let $\omega_{j}$ be closed smooth real $(1,1)$-forms on $X$ and let $\varphi_{j}$ be $\omega_{j}$-plurisubharmonic functions 
on $X$. Furthermore, let $(\varphi_{j,n})_{n\in\N}$ be a sequence of $\omega_{j}$-plurisubharmon\-ic functions on $X$ converging in capacitiy to 
$\varphi_{j}$ and satisfying
\begin{displaymath}
\int_{X}\big\langle(\ddc\varphi_{1}+\omega_{1})\wedge\ldots\wedge(\ddc\varphi_{d}+\omega_{d})\big\rangle\ge\limsup_{n\to\infty}\int_{X}\big
\langle(\ddc\varphi_{1,n}+\omega _{1})\wedge\ldots\wedge(\ddc\varphi_{d,n}+\omega _{d})\big\rangle.
\end{displaymath}
Then, the sequence of measures $\big(\big\langle(\ddc\varphi_{1,n}+\omega_{1})\wedge\ldots\wedge(\ddc\varphi_{d,n}+\omega_{d})\big\rangle
\big)_{n\in\N}$ converges weakly to the measure $\big\langle(\ddc\varphi_{1}+\omega_{1})\wedge\ldots\wedge(\ddc\varphi_{d}+\omega_{d})\big
\rangle$.
\end{theorem}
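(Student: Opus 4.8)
The plan is to deduce the weak convergence from a soft weak-$*$ compactness argument whose only substantial ingredient is a one-sided (lower) semicontinuity property for non-pluripolar products under convergence in capacity. Throughout write
\begin{displaymath}
\mu_{n}\coloneqq\big\langle(\ddc\varphi_{1,n}+\omega_{1})\wedge\ldots\wedge(\ddc\varphi_{d,n}+\omega_{d})\big\rangle,\qquad\mu\coloneqq\big\langle(\ddc\varphi_{1}+\omega_{1})\wedge\ldots\wedge(\ddc\varphi_{d}+\omega_{d})\big\rangle,
\end{displaymath}
which are finite positive measures on the compact manifold $X$. The hypothesis gives $\limsup_{n}\mu_{n}(X)\le\mu(X)<\infty$, so $(\mu_{n})_{n}$ has uniformly bounded total mass and thus lies in a weak-$*$ sequentially compact set of measures; it therefore suffices to prove that every weak limit $\nu$ of a subsequence of $(\mu_{n})_{n}$ equals $\mu$. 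Suppose we have established the lower-semicontinuity property that $\liminf_{n\to\infty}\mu_{n}(f)\ge\mu(f)$ for every continuous $f\ge 0$. Then such a $\nu$, being for each continuous $f$ a subsequential limit of $(\mu_{n}(f))_{n}$, satisfies $\nu(f)\ge\liminf_{n}\mu_{n}(f)\ge\mu(f)$ whenever $f\ge 0$, so that $\nu-\mu$ is a positive measure; on the other hand $\nu(X)\le\limsup_{n}\mu_{n}(X)\le\mu(X)$, whence $(\nu-\mu)(X)\le 0$. A positive measure of non-positive mass vanishes, so $\nu=\mu$. As every subsequential weak limit is $\mu$, the whole sequence converges weakly to $\mu$, which is the theorem. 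Everything is thereby reduced to the lower-semicontinuity property.

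To prove it, first I would pass to bounded potentials by truncation. For $k\in\N$ set $\varphi_{j}^{k}\coloneqq\max(\varphi_{j},-k)$ and $\varphi_{j,n}^{k}\coloneqq\max(\varphi_{j,n},-k)$; these are $\omega_{j}$-plurisubharmonic and, since $\{\varphi_{j,n}\}_{n}$ is uniformly bounded above (standard for convergence in capacity), uniformly bounded functions. Since $\lvert\max(a,-k)-\max(b,-k)\rvert\le\lvert a-b\rvert$, for each fixed $k$ the functions $\varphi_{j,n}^{k}$ converge in capacity to $\varphi_{j}^{k}$ as $n\to\infty$; hence, by the classical Bedford--Taylor convergence theorem for uniformly bounded plurisubharmonic functions, the Monge--Amp\`ere measures
\begin{displaymath}
\mathrm{MA}_{n}^{k}\coloneqq(\ddc\varphi_{1,n}^{k}+\omega_{1})\wedge\ldots\wedge(\ddc\varphi_{d,n}^{k}+\omega_{d})
\end{displaymath}
converge weakly, as $n\to\infty$, to $\mathrm{MA}^{k}\coloneqq(\ddc\varphi_{1}^{k}+\omega_{1})\wedge\ldots\wedge(\ddc\varphi_{d}^{k}+\omega_{d})$. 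Now put $U_{k}\coloneqq\{x\in X\mid\varphi_{j}(x)>-k\text{ for all }j\}$ and define $U_{k,n}$ analogously with the $\varphi_{j,n}$; these are plurifine-open sets increasing to $X$ off a pluripolar set. On $U_{k,n}$ the $\varphi_{j,n}$ are locally bounded below and coincide with their truncations $\varphi_{j,n}^{k}$, so plurifine locality of the mixed Monge--Amp\`ere operator together with Definition~\ref{def:3} yields $\mathbf{1}_{U_{k,n}}\mu_{n}=\mathbf{1}_{U_{k,n}}\mathrm{MA}_{n}^{k}\le\mu_{n}$ and $\mathbf{1}_{U_{k}}\mu=\mathbf{1}_{U_{k}}\mathrm{MA}^{k}$, while $\mathbf{1}_{U_{k}}\mu$ increases to $\mu$ as $k\to\infty$ since $\mu$ does not charge $\bigcap_{k}(X\setminus U_{k})$. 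Consequently, for every continuous $f\ge 0$ and every $k$,
\begin{displaymath}
\liminf_{n\to\infty}\mu_{n}(f)\ge\liminf_{n\to\infty}\int_{U_{k,n}}f\,\mathrm{MA}_{n}^{k},
\end{displaymath}
and it remains to show that this right-hand side is at least $\int_{U_{k}}f\,\mathrm{MA}^{k}=(\mathbf{1}_{U_{k}}\mu)(f)$ for each fixed $k$; letting $k\to\infty$ then gives the lower-semicontinuity property.

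The last step --- passing to the limit over the \emph{moving} domains $U_{k,n}$ --- is the technical heart of the argument, and the only point where convergence in capacity is used essentially rather than mere almost-everywhere convergence. I would write $\int_{U_{k,n}}f\,\mathrm{MA}_{n}^{k}=\int_{X}f\,\mathrm{MA}_{n}^{k}-\int_{X\setminus U_{k,n}}f\,\mathrm{MA}_{n}^{k}$; the first term tends to $\int_{X}f\,\mathrm{MA}^{k}$ by the weak convergence above, so the task is to bound $\limsup_{n}\int_{X\setminus U_{k,n}}f\,\mathrm{MA}_{n}^{k}$ by $\int_{X\setminus U_{k}}f\,\mathrm{MA}^{k}$. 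Here $X\setminus U_{k,n}=\bigcup_{j}\{\varphi_{j,n}\le -k\}$, and for $\delta>0$ one has $\{\varphi_{j,n}\le -k\}\subseteq\{\varphi_{j}\le -k+\delta\}\cup\{\lvert\varphi_{j,n}-\varphi_{j}\rvert>\delta\}$ with the second set of capacity tending to $0$; since the $\varphi_{j,n}^{k}$ are uniformly bounded, the standard capacity estimate for Monge--Amp\`ere masses of uniformly bounded potentials (Bedford--Taylor, Demailly, Guedj--Zeriahi) makes the contribution of these small-capacity sets negligible uniformly in $n$. Using quasi-continuity of plurisubharmonic functions to replace $\{\varphi_{j}\le -k+\delta\}$, up to a set of arbitrarily small capacity, by a closed set, and recalling that the masses $\mathrm{MA}_{n}^{k}(X)=\int_{X}\omega_{1}\wedge\ldots\wedge\omega_{d}$ are all equal, the portmanteau lemma bounds the remaining contribution by $\int_{\bigcup_{j}\{\varphi_{j}\le -k+\delta\}}f\,\mathrm{MA}^{k}$; letting $\delta\to 0$ and invoking continuity from above of the finite measure $\mathrm{MA}^{k}$ completes the fixed-$k$ inequality. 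I expect this domain-matching step, with its interplay of quasi-continuity and uniform capacity estimates, to be the main obstacle; the rest is either soft functional analysis or classical bounded-potential pluripotential theory.
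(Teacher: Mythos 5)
The paper itself gives no proof of this statement: it is quoted directly from \cite[Theorem~1.2]{darvas18:mnpp}, so the only available comparison is with that source, whose strategy your argument essentially reconstructs. Your proposal is correct: the reduction of weak convergence to a one-sided semicontinuity bound via the mass-domination hypothesis, and the proof of that bound by truncating to uniformly bounded potentials, applying Bedford--Taylor convergence in capacity and plurifine locality, and matching the moving domains $U_{k,n}$ to $U_{k}$ through the uniform capacity estimate and quasi-continuity, is precisely the mechanism used there, and the standard facts you invoke (uniform upper bounds for a family converging in capacity, the Chern--Levine--Nirenberg-type capacity estimate, the upper-semicontinuous portmanteau inequality with equal total masses) are all correctly applied.
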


\subsection{Relative energy}
\label{sec:relative-energy}

In this subsection, we assume that $\omega$ is a K\"ahler form on $X$ and that $\omega_{0}$ is a smooth closed positive $(1,1)$-form on  $X$. 
We start by recalling the definitions of relative full mass and relative energy following~\cite{darvas18:mnpp}.

\begin{definition} 
\label{def:rfm}
Let $\psi\in\PSH(X,\omega_{0})$ having small unbounded locus. The space $\caE(X,\omega_{0},\psi)$ of \emph{functions with relative full mass 
(with respect to $\psi $)} is the subspace of $\PSH(X,\omega_{0})$ consisting of those functions $\varphi$, which are more singular than $\psi$ 
and such that
\begin{displaymath}
\int_{X}\big\langle(\ddc\varphi+\omega_{0})^{\wedge d}\big\rangle=\int_{X}\big\langle(\ddc\psi+\omega_{0})^{\wedge d}\big\rangle.
\end{displaymath}
\end{definition}

\begin{definition}
\label{def:dar}
Let $\psi\in\PSH(X,\omega_{0})$ and $\varphi\in\caE(X,\omega_{0},\psi)$ with $[\varphi]=[\psi]$, i.\,e., $\varphi-\psi$ is bounded, then the \emph
{relative energy of $\varphi$ (with respect to $\psi $)} is defined as
\begin{displaymath}
I_{\psi}(\varphi)\coloneqq\frac{1}{d+1}\sum_{k=0}^{d}\,\int_{X}(\varphi-\psi)\big\langle(\ddc\varphi+\omega_{0})^{\wedge k}\wedge(\ddc\psi+
\omega_{0})^{\wedge(d-k)}\big\rangle.
\end{displaymath}
Moreover, if $\varphi\in\PSH(X,\omega_{0})$ does not satisfy $[\varphi]=[\psi]$, its relative energy is defined as 
\begin{displaymath} 
I_{\psi}(\varphi)\coloneqq\inf\big\{I_{\psi}(u)\,\big\vert\,u\in\caE(X,\omega_{0},\psi),\,[u]=[\psi],\,\varphi\le u\big\}. 
\end{displaymath}
With this notion at hand, the space $\caE^{1}(X,\omega_{0},\psi)$ of \emph{functions of finite relative energy} is then defined as the subspace 
of $\caE(X,\omega_{0},\psi)$ consisting of those functions whose relative energy is finite.  
\end{definition}

We have the following characterization of functions with finite relative energy, whose proof is given in~\cite[Lemma 5.6]{darvas23:_relat_kaehl}.
\begin{lemma}
\label{lemm:10}
Let $\varphi\in\PSH(X,\omega_{0})$ with $[\varphi]\le[\psi]$. Then, $\varphi\in\caE^{1}(X,\omega_{0},\psi )$ if and only if $\varphi\in\caE(X,\omega_
{0},\psi )$ and
\begin{displaymath}
\int_{X}(\varphi-\psi)\langle(\ddc\varphi+\omega_{0})^{\wedge d}\rangle>-\infty.
\end{displaymath}
\end{lemma}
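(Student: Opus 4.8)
The plan is to reduce to the case $\varphi\le\psi$ and to work throughout with the canonical decreasing approximants $\varphi_{j}\coloneqq\max(\varphi,\psi-j)$, for which the explicit formula of Definition~\ref{def:dar} applies. First I normalise: replacing $\psi$ by $\psi+C$ with $\varphi\le\psi+C$ leaves unchanged the space $\caE(X,\omega_{0},\psi)$, the finiteness of $I_{\psi}(\varphi)$, and the finiteness of $\int_{X}(\varphi-\psi)\langle(\ddc\varphi+\omega_{0})^{\wedge d}\rangle$ (once $\varphi\in\caE(X,\omega_{0},\psi)$ these last two shift only by a fixed multiple of the common total mass, which is finite), so I may assume $\varphi\le\psi$, hence $\psi-\varphi\ge0$. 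The functions $\varphi_{j}\in\PSH(X,\omega_{0})$ then satisfy $\varphi\le\varphi_{j+1}\le\varphi_{j}\le\psi$ and $\psi-j\le\varphi_{j}$, so $[\varphi_{j}]=[\psi]$; moreover $\varphi_{j}\downarrow\varphi$ and $\psi-\varphi_{j}=\min(\psi-\varphi,j)$. Under the standing hypothesis $\varphi\in\caE(X,\omega_{0},\psi)$ (which appears on both sides of the asserted equivalence), applying the monotonicity of total masses (Theorem~\ref{thm:1}) to $\varphi\le\varphi_{j}\le\psi$ forces $\int_{X}\langle(\ddc\varphi_{j}+\omega_{0})^{\wedge d}\rangle=\int_{X}\langle(\ddc\varphi+\omega_{0})^{\wedge d}\rangle=\int_{X}\langle(\ddc\psi+\omega_{0})^{\wedge d}\rangle$, i.e.\ $\varphi_{j}\in\caE(X,\omega_{0},\psi)$ as well.

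Since $\varphi_{j}\in\caE(X,\omega_{0},\psi)$ with $[\varphi_{j}]=[\psi]$, Definition~\ref{def:dar} gives $-I_{\psi}(\varphi_{j})=\frac{1}{d+1}\sum_{k=0}^{d}f_{j}(k)$, where $f_{j}(k)\coloneqq\int_{X}(\psi-\varphi_{j})\,\langle(\ddc\varphi_{j}+\omega_{0})^{\wedge k}\wedge(\ddc\psi+\omega_{0})^{\wedge(d-k)}\rangle\ge0$. Integration by parts for non-pluripolar products — valid because $\psi-\varphi_{j}$ is bounded — gives, for $0\le k\le d-1$,
\begin{displaymath}
f_{j}(k+1)-f_{j}(k)=\int_{X}\dd(\psi-\varphi_{j})\wedge\dd^{\cc}(\psi-\varphi_{j})\wedge\big\langle(\ddc\varphi_{j}+\omega_{0})^{\wedge k}\wedge(\ddc\psi+\omega_{0})^{\wedge(d-1-k)}\big\rangle\ge0,
\end{displaymath}
so that $f_{j}(d)=\max_{0\le k\le d}f_{j}(k)$ and therefore $\tfrac{1}{d+1}f_{j}(d)\le-I_{\psi}(\varphi_{j})\le f_{j}(d)$, with $f_{j}(d)=\int_{X}(\psi-\varphi_{j})\langle(\ddc\varphi_{j}+\omega_{0})^{\wedge d}\rangle$. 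I then evaluate $f_{j}(d)$ using the locality of non-pluripolar products: on the plurifine-open set $\{\varphi>\psi-j\}$ one has $\varphi_{j}=\varphi$, so $\langle(\ddc\varphi_{j}+\omega_{0})^{\wedge d}\rangle$ and $\langle(\ddc\varphi+\omega_{0})^{\wedge d}\rangle$ agree there; on the complement $\{\varphi\le\psi-j\}$ one has $\psi-\varphi_{j}=j$, and subtracting the common part on $\{\varphi>\psi-j\}$ from the equal total masses above yields $\langle(\ddc\varphi_{j}+\omega_{0})^{\wedge d}\rangle(\{\varphi\le\psi-j\})=\langle(\ddc\varphi+\omega_{0})^{\wedge d}\rangle(\{\varphi\le\psi-j\})$. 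Hence
\begin{displaymath}
f_{j}(d)=\int_{\{\varphi>\psi-j\}}(\psi-\varphi)\,\langle(\ddc\varphi+\omega_{0})^{\wedge d}\rangle+j\cdot\langle(\ddc\varphi+\omega_{0})^{\wedge d}\rangle(\{\varphi\le\psi-j\}).
\end{displaymath}
Writing $f_{\infty}\coloneqq\int_{X}(\psi-\varphi)\langle(\ddc\varphi+\omega_{0})^{\wedge d}\rangle\in[0,+\infty]$, the estimate $j\le\psi-\varphi$ on $\{\varphi\le\psi-j\}$ shows $f_{j}(d)\le f_{\infty}$ for every $j$; and since non-pluripolar products put no mass on the pluripolar set $\{\varphi=-\infty\}$, monotone convergence gives $\int_{\{\varphi>\psi-j\}}(\psi-\varphi)\langle(\ddc\varphi+\omega_{0})^{\wedge d}\rangle\uparrow f_{\infty}$ as $j\to\infty$.

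It remains to identify $I_{\psi}(\varphi)$ with $\lim_{j}I_{\psi}(\varphi_{j})$. As $I_{\psi}$ is monotone non-decreasing on functions of bounded singularity type $[\psi]$ (a standard property of the relative energy; see~\cite{darvas18:mnpp,darvas23:_relat_kaehl}) and $\varphi_{j}$ decreases, the sequence $I_{\psi}(\varphi_{j})$ is non-increasing; each $\varphi_{j}$ is an admissible competitor in the infimum defining $I_{\psi}(\varphi)$, and conversely any admissible $u$ satisfies $\varphi_{j}\le u$ for all large $j$ (because $[u]=[\psi]$ gives $u\ge\psi-C_{u}$), so that infimum equals $\lim_{j}I_{\psi}(\varphi_{j})$. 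Now both directions follow. If $\int_{X}(\varphi-\psi)\langle(\ddc\varphi+\omega_{0})^{\wedge d}\rangle>-\infty$, i.e.\ $f_{\infty}<\infty$, then $-I_{\psi}(\varphi_{j})\le f_{j}(d)\le f_{\infty}$ for all $j$, whence $-I_{\psi}(\varphi)=\lim_{j}(-I_{\psi}(\varphi_{j}))\le f_{\infty}<\infty$ and $\varphi\in\caE^{1}(X,\omega_{0},\psi)$. Conversely, if $\varphi\in\caE^{1}(X,\omega_{0},\psi)$, i.e.\ $-I_{\psi}(\varphi)<\infty$, then $\frac{1}{d+1}\int_{\{\varphi>\psi-j\}}(\psi-\varphi)\langle(\ddc\varphi+\omega_{0})^{\wedge d}\rangle\le\frac{1}{d+1}f_{j}(d)\le-I_{\psi}(\varphi_{j})\le-I_{\psi}(\varphi)$, and letting $j\to\infty$ gives $\frac{1}{d+1}f_{\infty}\le-I_{\psi}(\varphi)<\infty$, so $f_{\infty}<\infty$.

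The step I expect to be the main obstacle is the extraction of the clean identity for $f_{j}(d)$: it rests on the locality of non-pluripolar products over plurifine-open sets together with the careful bookkeeping of masses afforded by $\varphi\in\caE(X,\omega_{0},\psi)$; and, alongside it, the monotonicity of the relative energy $I_{\psi}$ — equivalently the monotonicity in $k$ of the mixed masses $f_{j}(k)$ established by the integration-by-parts identity above — which is used both for the two-sided bound and for the identification $I_{\psi}(\varphi)=\lim_{j}I_{\psi}(\varphi_{j})$. These inputs are drawn from the pluripotential theory of~\cite{BEGZ,darvas18:mnpp}; the normalisation, the elementary properties of the approximants, and the two monotone-convergence passages are routine.
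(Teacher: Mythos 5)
The paper offers no proof of this lemma: it is quoted directly from \cite[Lemma~5.6]{darvas23:_relat_kaehl}, so there is no internal argument to compare against. Your proof is correct and in substance reconstructs the standard argument from the literature (essentially \cite[Proposition~2.11]{BEGZ} transposed to the relative setting of \cite{darvas18:mnpp}). The chain of steps holds up: the normalisation to $\varphi\le\psi$ is harmless because both $I_{\psi}(\varphi)$ and the candidate integral shift by a fixed finite multiple of the common total mass; the squeeze $\varphi\le\varphi_{j}\le\psi$ together with Theorem~\ref{thm:1} puts the approximants in $\caE(X,\omega_{0},\psi)$ with $[\varphi_{j}]=[\psi]$; the monotonicity in $k$ of the mixed masses $f_{j}(k)$, obtained from the integration-by-parts formula \cite[Theorem~1.14]{BEGZ} (the same input the paper uses in Lemma~\ref{lem:tech}), yields the two-sided comparison $\tfrac{1}{d+1}f_{j}(d)\le -I_{\psi}(\varphi_{j})\le f_{j}(d)$, which is the real engine of the argument; the evaluation of $f_{j}(d)$ rests on the plurifine locality of non-pluripolar products (\cite[Proposition~1.4]{BEGZ}) plus equality of total masses to transfer the mass of $\langle(\ddc\varphi_{j}+\omega_{0})^{\wedge d}\rangle$ on $\{\varphi\le\psi-j\}$ to that of $\langle(\ddc\varphi+\omega_{0})^{\wedge d}\rangle$; and the identification $I_{\psi}(\varphi)=\lim_{j}I_{\psi}(\varphi_{j})$ is exactly the single-function instance of equation~\eqref{eq:17} established in the proof of Proposition~\ref{prop:10}. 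Two of your inputs --- plurifine locality and the monotonicity of $I_{\psi}$ on the class $[u]=[\psi]$ --- are not stated explicitly in the paper but are standard and consistent with its toolkit (the latter is the diagonal case of part~\ref{item:24} of Lemma~\ref{lem:tech}). I see no gaps.
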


\begin{remark}
\label{rem:7}
The spaces $\caE(X,\omega_{0},\psi)$ and $\caE^{1}(X,\omega_{0},\psi)$ are convex. Moreover, the spaces \linebreak $\caE(X,\omega_{0},\psi)$ 
also satisfy a convexity condition when we change the reference functions $\psi$. Namely, assume that $\psi\in\PSH(X,\omega_{0})$ and $\psi'\in 
\PSH(X,\omega'_{0})$. Under the assumption that
\begin{displaymath}
\int_{X}\big\langle(\ddc\psi+\omega_{0})^{\wedge d}\big\rangle>0\qquad\text{and}\qquad\int_{X}\big\langle(\ddc\psi'+\omega'_{0})^{\wedge d}\big
\rangle>0,
\end{displaymath}
one can prove that given $\varphi\in\caE(X,\omega_{0},\psi)$ and $\varphi'\in\caE(X,\omega'_{0},\psi')$, then the sum $\varphi+\varphi'\in\linebreak
\caE(X,\omega_{0}+\omega'_{0},\psi+\psi')$. Nevertheless, this convexity property is no longer true for the spaces $\caE^{1}(X,\omega_{0},\psi)$. 
That is, given $\varphi\in\caE^{1}(X,\omega_{0},\psi)$ and $\varphi'\in\caE^{1}(X,\omega'_{0},\psi')$, then the sum $\varphi+\varphi'$ need not 
necessarily belong to $\caE^{1}(X,\omega_{0}+\omega'_{0},\psi+\psi')$. See~\cite[Example~4.5]{di15:_stabil_monge} for an example.  
\end{remark}

Before we introduce next the mixed version of the relative energy, we need some additional notation.

\begin{notation}
Given smooth closed positive $(1,1)$-forms $\omega_{j}$ on $X$ for $j=0,\ldots,d$, we write $\pmb{\omega}=(\omega_{0},\ldots,\omega_{d})$ and
$\PSH(X,\pmb{\omega})=\PSH(X,\omega_{0})\times\ldots\times\PSH(X,\omega_{d})$. Next, given functions $\psi_{j}\in\PSH(X,\omega_{j})$ of small
unbounded locus, we write $\pmb{\psi}=(\psi_{0},\ldots,\psi_{d})$ to obtain $\pmb{\psi}\in\PSH(X,\pmb{\omega})$ of small unbounded locus; we also
write $\caE(X,\pmb{\omega},\pmb{\psi})=\caE(X,\omega_{0},\psi_{0})\times\ldots\times\caE(X,\omega_{d},\psi_{d})$. Finally, given functions $\varphi_
{j}\in\caE(X,\omega_{j},\psi_{j})$, we write  $\pmb{\varphi}=(\varphi_{0},\ldots,\varphi_{d})$ to obtain $\pmb{\varphi}\in\caE(X,\pmb{\omega},\pmb{\psi})$.
The notation $\pmb{\varphi}\le\pmb{\psi}$ means $\varphi_{j}\le\psi_{j}$; similarly, the notation $[\pmb{\varphi}]=[\pmb{\psi}]$ means $[\varphi_{j}]=
[\psi_{j}]$.
\end{notation}

\begin{definition}
\label{def:mre}
Let $\pmb{\omega}=(\omega_{0},\ldots,\omega_{d})$ be smooth closed positive $(1,1)$-forms on $X$ and let $\pmb{\psi}\in\PSH(X,\pmb{\omega})$ 
be of small unbounded locus. For $\pmb{\varphi}\in\caE(X,\pmb{\omega},\pmb{\psi})$ with $[\pmb{\varphi}]=[\pmb{\psi}]$, we define the \emph{mixed
relative energy of $\pmb{\varphi}$ (with respect to $\pmb{\psi}$)} by
\begin{align}
\notag
I_{\pmb{\psi}}(\pmb{\varphi})\coloneqq\sum_{k=0}^{d}\,\int_{X}(\varphi_{k}-\psi_{k})\big\langle&(\ddc\varphi_{0}+\omega_{0}\wedge\ldots\wedge(\ddc
\varphi_{k-1}+\omega_{k-1})\wedge \\
\label{keyfor} 
&\wedge(\ddc\psi_{k+1}+\omega_{k+1})\wedge\ldots\wedge(\ddc\psi_{d}+\omega_{d})\big\rangle.
\end{align}
When $\pmb{\psi}$ is clear from the context, we will call $I_{\pmb{\psi}}(\pmb{\varphi})$ simply the mixed relative energy of $\pmb{\varphi}$.
\end{definition}

\begin{lemma}
\label{lem:tech}
With the notations and assumptions of Definition~\ref{def:mre}, the following statements hold:
\begin{enumerate}
\item
\label{item:21}
The mixed relative energy $I_{\pmb{\psi}}(\pmb{\varphi})$ is symmetric with respect to the simultaneous permutation of $\varphi_{0},\ldots,\varphi_{d}$ 
and $\psi_{0},\ldots,\psi_{d}$ as well as of $\omega_{0},\ldots,\omega_{d}$, i.\,e., we have 
\begin{equation}
\label{perm}
I_{\sigma(\pmb{\psi})}(\sigma(\pmb{\varphi}))=I_{\pmb{\psi}}(\pmb{\varphi})
\end{equation}
for every permutation $\sigma$ of the symmetric group $S_{d+1}$ of the set $\{0,\ldots,d\}$, where $\sigma(\pmb{\varphi})$ means $(\varphi_{\sigma(0)},
\ldots,\varphi_{\sigma(d)})$.
\item
\label{item:22}
The mixed relative energy $I_{\pmb{\psi}}(\pmb{\varphi})$ is multilinear in the following sense: If, for example, $\omega'_{0}$ is another smooth closed 
positive $(1,1)$-form on $X$, $\psi'_{0}\in\PSH(X,\omega'_{0})$ is of small unbounded locus, $\varphi'_{0}\in\caE(X,\omega'_{0},\psi'_{0})$ with $[\varphi'_
{0}]=[\psi'_{0}]$, and $\lambda_{0},\lambda'_{0}\in\R_{>0}$, we have
\begin{displaymath}
I_{\lambda_{0}\psi_{0}+\lambda'_{0}\psi'_{0},\ldots,\psi_{d}}(\lambda_{0}\varphi_{0}+\lambda'_{0}\varphi'_{0},\ldots,\varphi_{d})=\lambda_{0}I_{\psi_{0},
\ldots,\psi_{d}}(\varphi_{0},\ldots,\varphi_{d})+\lambda'_{0}I_{\psi'_{0},\ldots,\psi_{d}}(\varphi'_{0},\ldots,\varphi_{d}).
\end{displaymath}
\item 
\label{item:24} 
The mixed relative energy $I_{\pmb{\psi}}(\pmb{\varphi})$ is monotonous with respect to the functions $\pmb{\varphi}$. That is, if $\pmb{\varphi}'\in\caE
(X,\pmb{\omega},\pmb{\psi})$ with $[\pmb{\varphi}']=[\pmb{\psi}]$and $\pmb{\varphi}'\le\pmb{\varphi}$, then
\begin{displaymath}
I_{\pmb{\psi}}(\pmb{\varphi}')\le I_{\pmb{\psi}}(\pmb{\varphi}). 
\end{displaymath}
\item
\label{item:23}
Given $\pmb{\varphi}\in\caE(X,\pmb{\omega},\pmb{\psi})$ with $[\pmb{\varphi}]=[\pmb{\psi}]$, let $(\pmb{\varphi_{n}})_{\pmb{n}\in\N^{d+1}}$ be 
sequences of $(d+1)$-tuples of functions in $\caE(X,\pmb{\omega},\pmb{\psi})$ with $[\pmb{\varphi_{n}}]=[\pmb{\psi}]$, converging componentwise
from above to $\pmb{\varphi}$. Then, the net of mixed relative energies $(I_{\pmb{\psi}}(\pmb{\varphi_{n}}))_{\pmb{n}\in\N^{d+1}}$ converges from 
above to the mixed relative energy $I_{\pmb{\psi}}(\pmb{\varphi})$, i.\,e., we have
\begin{displaymath}
\lim_{\pmb{n}\to\pmb{\infty}}I_{\pmb{\psi}}(\pmb{\varphi_{n}})=I_{\pmb{\psi}}(\pmb{\varphi}).
\end{displaymath}
\end{enumerate}
\end{lemma}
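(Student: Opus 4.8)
The plan is to reduce each statement to known facts about the non-pluripolar product (Theorem~\ref{thm:2}) and about the single-variable relative energy $I_\psi$, exploiting the fact that the integrand in~\eqref{keyfor} is, termwise, an integral of the type appearing in Definition~\ref{def:dar}.

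For part~\ref{item:21}, the symmetry under simultaneous permutation is not manifest from~\eqref{keyfor} because the $k$-th summand treats the indices $0,\dots,k-1$ via $\varphi$ and $k+1,\dots,d$ via $\psi$ asymmetrically. The strategy is the classical ``telescoping/summation by parts'' identity for energies: I would rewrite $\varphi_k-\psi_k$ as the integrand's distinguished factor and then use that, for functions with $[\varphi_j]=[\psi_j]$ of small unbounded locus, the mixed Monge--Amp\`ere integration-by-parts formula holds (this is exactly the content underlying~\cite{darvas18:mnpp}; since $\varphi_j-\psi_j$ is bounded and the non-pluripolar products have full mass, the boundary-free integration by parts applies). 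Concretely, one shows that for any two indices $i<j$ the difference between the $(i,j)$-ordered and the $(j,i)$-ordered versions of the sum is a sum of integrals of the form $\int_X d(\varphi_i-\psi_i)\wedge d^c(\varphi_j-\psi_j)\wedge(\text{positive product})$, which is symmetric in $i,j$. Transpositions generate $S_{d+1}$, so this yields~\eqref{perm}. The main obstacle here, and the one I expect to cost the most work, is justifying the integration by parts for non-pluripolar products when the potentials are unbounded: one must invoke that all the currents involved have full mass (because $[\pmb\varphi]=[\pmb\psi]$, so $\varphi_j\in\caE(X,\omega_j,\psi_j)$ forces $\langle(\ddc\varphi_j+\omega_j)^{\wedge\bullet}\rangle$ to have the same total mass as the $\psi_j$-version) together with the small unbounded locus hypothesis, and then cite the appropriate result from~\cite{darvas18:mnpp} or \cite{darvas23:_relat_kaehl}.

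Part~\ref{item:22} is then essentially formal: by part~\ref{item:21} it suffices to treat linearity in the slot $k=0$, and there each summand of~\eqref{keyfor} is either linear in $(\varphi_0+\omega_0)$ through the factor $\ddc\varphi_0+\omega_0$ (using multilinearity of the non-pluripolar product, Theorem~\ref{thm:2}\ref{item:4}) and the linearity of $\varphi_0-\psi_0$, or, for the $k=0$ term itself, linear because $\varphi_0-\psi_0$ enters linearly and the remaining product does not involve index $0$. One must check that $\lambda_0\varphi_0+\lambda_0'\varphi_0'\in\caE(X,\lambda_0\omega_0+\lambda_0'\omega_0',\lambda_0\psi_0+\lambda_0'\psi_0')$ with matching singularity type — this is the convexity statement recalled in Remark~\ref{rem:7} for the $\caE$-spaces (the $\caE^1$ subtlety noted there does not obstruct us since we only need membership in $\caE$ with $[\,\cdot\,]=[\,\cdot\,]$, and boundedness of the differences is preserved under positive linear combinations).

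For part~\ref{item:24}, monotonicity, I would argue termwise. If $\pmb\varphi'\le\pmb\varphi$ with $[\pmb\varphi']=[\pmb\varphi]=[\pmb\psi]$, then in each summand both the scalar factor $\varphi_k'-\psi_k\le\varphi_k-\psi_k$ decreases and the positive product $\langle\cdots\rangle$ changes; the clean way is to use the integration-by-parts representation from part~\ref{item:21} to rewrite $I_{\pmb\psi}(\pmb\varphi)-I_{\pmb\psi}(\pmb\varphi')$ as a sum of terms $\int_X d(\varphi_i-\varphi_i')\wedge d^c(\cdots)\wedge(\text{positive})\ge 0$, i.e.\ reduce to positivity of a mixed Dirichlet-type form — this is exactly how monotonicity of energy is proved in the single-variable case and the mixed version is the same computation. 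Finally, part~\ref{item:23} (continuity along decreasing nets) follows by applying the convergence-in-capacity machinery: a sequence decreasing almost everywhere converges in capacity (the Example following the capacity definition), so Theorem~\ref{thm:5} gives weak convergence of each non-pluripolar Monge--Amp\`ere measure appearing in~\eqref{keyfor}; combined with the fact that $\varphi_{k,n}-\psi_k$ decreases uniformly to $\varphi_k-\psi_k$ (all differences bounded, same singularity type) one passes to the limit in each integral by a dominated-convergence argument against the weakly converging measures, using monotonicity from part~\ref{item:24} to guarantee the limit is from above. The delicate point is that weak convergence of measures does not by itself allow testing against the discontinuous function $\varphi_k-\psi_k$; one circumvents this by the standard trick of sandwiching between continuous functions and using that the total masses converge (full mass property again), or by quoting the corresponding statement directly from~\cite{darvas18:mnpp}.
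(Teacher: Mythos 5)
Your treatment of parts~\ref{item:21}--\ref{item:24} is essentially the paper's: reduce the symmetry to transpositions and settle each transposition by the integration-by-parts formula for non-pluripolar products applied to the bounded differences $\varphi_j-\psi_j$ (the paper cites \cite[Theorem~1.14]{BEGZ} for exactly this); deduce multilinearity from Theorem~\ref{thm:2}~\ref{item:4}; and obtain monotonicity by integrating by parts until the difference of energies becomes $\int_X(\varphi_0-\varphi_0')\langle(\ddc\varphi_1+\omega_1)\wedge\ldots\wedge(\ddc\varphi_d+\omega_d)\rangle\ge 0$. One small imprecision in your~\ref{item:24}: intermediate terms of the shape $\int_X \dd(\varphi_i-\varphi_i')\wedge\dd^{\cc}(\cdots)\wedge T$ with two \emph{different} functions in the two slots are not individually nonnegative; the correct endpoint of the computation is the single positive integral above (a nonnegative function against a positive measure), not a sum of Dirichlet-type terms.

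Where you genuinely diverge is part~\ref{item:23}. You go through convergence in capacity and Theorem~\ref{thm:5} to get weak convergence of the mixed Monge--Amp\`ere measures, and then must pair weakly convergent measures against the bounded but merely quasi-continuous functions $\varphi_{k,n}-\psi_k$ --- the difficulty you yourself flag, and which forces you to import \cite[Lemma~4.1]{darvas18:mnpp} or a capacity-sandwiching argument. The paper avoids this entirely: by monotonicity~\ref{item:24} the net convergence reduces to iterated limits, and by symmetry~\ref{item:21} to a limit in a single slot; then the same integration-by-parts identity as in~\ref{item:24} expresses $I_{\pmb{\psi}}(\varphi_{0,n_0},\ldots,\varphi_d)-I_{\pmb{\psi}}(\varphi_0,\ldots,\varphi_d)$ as $\int_X(\varphi_{0,n_0}-\varphi_0)$ against a measure that does \emph{not} depend on $n_0$, so ordinary dominated convergence finishes the proof. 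Your route is workable if you quote the right statement from \cite{darvas18:mnpp}, but it uses strictly heavier machinery; the one-slot-at-a-time reduction buys a proof in which only the (uniformly bounded, monotonically decreasing) scalar factor varies.
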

\begin{proof}
We start with the proof of~\ref{item:21}. We note that it is enough to prove that the claimed equality~\eqref{perm} holds for the transpositions $\tau_
{j,j+1}\in S_{d+1}$ interchanging $\varphi_{j},\psi_{j},\omega_{j}$ with $\varphi_{j+1},\psi_{j+1},\omega_{j+1}$ for $j=0,\ldots,d-1$ and leaving the 
remaining entries fixed, since these transpositions generate the whole symmetric group. For this, we recall from~\cite[Theorem~1.14]{BEGZ} applied 
to the bounded functions $\varphi_{j}-\psi_{j}$ and $\varphi_{j+1}-\psi_{j+1}$, and the symmetry of the non-pluripolar product, the equality
\begin{displaymath}
\int_{X}(\varphi_{j}-\psi_{j})\big\langle\ddc(\varphi_{j+1}-\psi_{j+1})\wedge\Theta\big\rangle=\int_{X}(\varphi_{j+1}-\psi_{j+1})\big\langle\ddc(\varphi_{j}-
\psi_{j})\wedge\Theta\big\rangle,
\end{displaymath}
where $\Theta$ stands for the product of currents
\begin{displaymath}
\Theta=(\ddc\varphi_{0}+\omega_{0})\wedge\ldots\wedge(\ddc\varphi_{j-1}+\omega_{j-1})\wedge(\ddc\psi_{j+2}+\omega_{j+2})\wedge\ldots\wedge
(\ddc\psi_{d}+\omega_{d}).
\end{displaymath}
Rearranging terms, this leads to
\begin{multline*}
\int_{X}(\varphi_{j}-\psi_{j})\big\langle(\ddc\psi_{j+1}+\omega_{j+1})\wedge\Theta\big\rangle+(\varphi_{j+1}-\psi_{j+1})\big\langle(\ddc\varphi_{j}+
\omega_{j})\wedge\Theta\big\rangle \\[1mm]
=\int_{X}(\varphi_{j+1}-\psi_{j+1})\big\langle(\ddc\psi_{j}+\omega_{j})\wedge\Theta\big\rangle+(\varphi_{j}-\psi_{j})\big\langle(\ddc\varphi_{j+1}+
\omega_{j+1})\wedge\Theta\big\rangle.
\end{multline*}
The latter equality immediately implies the claim for the transposition $\tau_{j,j+1}$. 

The proof of~\ref{item:22} follows immediately from the multilinearity of the non-pluripolar product of $(1,1)$-currents stated in part~\ref{item:4} of 
Theorem~\ref{thm:2}. 

We next prove~\ref{item:24}. By the symmetry of the mixed relative energy, it is enough to prove that
\begin{displaymath}
I_{\psi_{0},\ldots,\psi_{d}}(\varphi_{0},\ldots,\varphi_{d})-I_{\psi_{0},\ldots,\psi_{d}}(\varphi'_{0},\ldots,\varphi_{d})\ge 0.
\end{displaymath}
A repeated use of the integration by parts~\cite[Theorem~1.14]{BEGZ} shows that
\begin{align}
\notag
I_{\psi_{0},\ldots,\psi_{d}}(\varphi_{0},\ldots,\varphi_{d})&-I_{\psi_{0},\ldots,\psi_{d}}(\varphi'_{0},\ldots,\varphi_{d}) \\[1mm]
\label{eq:16}
&=\int_{X}(\varphi_{0}-\varphi_{0}')\big\langle(\ddc\varphi_{1}+\omega_{1})\wedge\ldots\wedge(\ddc\varphi_{d}+\omega_{d})\big\rangle.
\end{align}
Since $\varphi_{0}-\varphi_{0}'\ge 0$ and the non-pluripolar product defines a positive measure, we deduce the claim. 

We finally prove~\ref{item:23}. In order to prove convergence in the sense of nets, by the monotonicity property~\ref{item:24}, it is enough to prove 
iterated convergence. This can be done using one function at the time. Again by the symmetry property~\ref{item:21}, it is thus enough to show that 
\begin{equation}
\label{eq:7}
\lim_{n_{0}\to\infty}I_{\psi_{0},\ldots,\psi_{d}}(\varphi_{0,n_{0}},\ldots,\varphi_{d})=I_{\psi_{0},\ldots,\psi_{d}}(\varphi_{0},\ldots,\varphi_{d}).
\end{equation}
As in~\eqref{eq:16}, we have that
\begin{align*}
I_{\psi_{0},\ldots,\psi_{d}}(\varphi_{0,n_{0}},\ldots,\varphi_{d})&-I_{\psi_{0},\ldots,\psi_{d}}(\varphi_{0},\ldots,\varphi_{d}) \\[1mm]
&=\int_{X}(\varphi_{0,n_{0}}-\varphi_{0})\big\langle(\ddc\varphi_{1}+\omega_{1})\wedge\ldots\wedge(\ddc\varphi_{d}+\omega_{d})\big\rangle.
\end{align*}
The claimed equality~\eqref{eq:7} now follows from this and the theorem of dominated convergence. 
\end{proof}

\begin{definition}
\label{def:finen}
Let $\pmb{\omega}=(\omega_{0},\ldots,\omega_{d})$ be smooth closed positive $(1,1)$-forms on $X$ and let $\pmb{\psi}\in\PSH(X,\pmb{\omega})$ 
be of small unbounded locus. For arbitray $\pmb{\varphi}\in\caE(X,\pmb{\omega},\pmb{\psi})$, we define the \emph{mixed relative energy of 
$\pmb{\varphi}$ (with respect to $\pmb{\psi}$)} by
\begin{displaymath}
I_{\pmb{\psi}}(\pmb{\varphi})\coloneqq\inf\big\{I_{\pmb{\psi}}(\pmb{u})\,\big\vert\,\pmb{u}\in\caE(X,\pmb{\omega},\pmb{\psi}),\,[\pmb{u}]=[\pmb{\psi}],
\,\pmb{\varphi}\le\pmb{u}\big\}. 
\end{displaymath}
The space $\caE^{1}(X,\pmb{\omega},\pmb{\psi})$ of \emph{functions of finite mixed relative energy} is then defined as the subspace of $\caE(X,
\pmb{\omega},\pmb{\psi})$ consisting of those tuples of functions $\pmb{\varphi}$ whose mixed relative energy is finite.
\end{definition}

\begin{remark}
Given functions $\varphi_{j}\in\caE^{1}(X,\omega_{j},\psi_{j})$ for $j=0,\ldots,d$, Remark~\ref{rem:7} then shows that it is not necessarily true that 
$\pmb{\varphi}=(\varphi_{0},\ldots,\varphi_{d})$ belongs to $\caE^{1}(X,\pmb{\omega},\pmb{\psi})$. Therefore, the properties of the mixed relative 
energy have to be proven anew. Anyhow, the proofs are based on the corresponding properties of the usual relative energy. 
\end{remark}

For every $\varphi\in\caE(X,\omega,\psi )$ and real number $C\ge 0$, the \emph{canonical approximants} are defined as 
\begin{displaymath}
\varphi^{(C)}\coloneqq\max(\varphi,\psi-C).
\end{displaymath}

We claim that $\varphi^{(C)}\in\caE(X,\omega,\psi)$ and that $[\varphi^{(C)}]=[\psi]$. Indeed, since $\varphi^{(C)}$ is a maximum of two plurisubharmonic
functions, it is also plurisubharmonic, whence $\varphi^{(C)}\in\PSH(X,\omega)$. Moreover, since $\varphi\in \caE(X,\omega,\psi)$, it is more singular 
than $\psi$, from which we derive that $\varphi^{(C)}$ has also to be more singular than $\psi$. Since, by definition $\psi\le\varphi^{(C)}+C$, we deduce
that $\psi$ and $\varphi^{(C)}$ have equivalent singularities, that is, $\varphi^{(C)}-\psi$ is bounded. Whence, taking also into account Theorem~\ref
{thm:1}, we deduce that $\varphi^{(C)}\in\caE(X,\omega,\psi)$ and that $[\varphi^{(C)}]=[\psi]$.  

\begin{proposition}
\label{prop:10}
Let $\pmb{\omega}=(\omega_{0},\ldots,\omega_{d})$ be smooth closed positive $(1,1)$-forms on $X$ and let $\pmb{\psi}\in\PSH(X,\pmb{\omega})$ 
be of small unbounded locus. Then, all the properties of Lemma~\ref{lem:tech} can be extended to functions of finite mixed relative energy. In
particular, if $\pmb{\varphi}\in\caE^{1}(X,\pmb{\omega},\pmb{\psi})$, then $I_{\pmb{\psi}}(\pmb{\varphi})$ is symmetric with respect to the simultaneous
permutation of $\varphi_{0},\ldots,\varphi_{d}$ and $\psi_{0},\ldots,\psi_{d}$ as well as of $\omega_{0},\ldots,\omega_{d}$. Moreover,  if $(\pmb{\varphi_
{n}})_{\pmb{n}\in\N^{d+1}}$ is a sequence of $(d+1)$-tuples of functions in $\caE^{1}(X,\pmb{\omega},\pmb{\psi})$, converging componentwise
from above to $\pmb{\varphi}$, then, the net of mixed relative energies $(I_{\pmb{\psi}}(\pmb{\varphi_{n}}))_{\pmb{n}\in\N^{d+1}}$ converges from 
above to the mixed relative energy $I_{\pmb{\psi}}(\pmb{\varphi})$, i.\,e., we have
\begin{displaymath}
\lim_{\pmb{n}\to\pmb{\infty}}I_{\pmb{\psi}}(\pmb{\varphi_{n}})=I_{\pmb{\psi}}(\pmb{\varphi}).
\end{displaymath}
\end{proposition}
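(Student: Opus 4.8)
The plan is to reduce the statement about the "non-$[\pmb\psi]$" mixed relative energy $I_{\pmb\psi}$ on $\caE^{1}(X,\pmb\omega,\pmb\psi)$ (defined via an infimum over canonical majorants with equivalent singularities) to the corresponding statements in Lemma~\ref{lem:tech}, which only concern tuples satisfying $[\pmb\varphi]=[\pmb\psi]$. The bridge is the canonical approximants: for $\pmb\varphi=(\varphi_{0},\ldots,\varphi_{d})$ write $\pmb\varphi^{(C)}=(\varphi_{0}^{(C)},\ldots,\varphi_{d}^{(C)})$ with $\varphi_{j}^{(C)}=\max(\varphi_{j},\psi_{j}-C)$. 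First I would record the two basic facts: (i) each $\pmb\varphi^{(C)}$ lies in $\caE(X,\pmb\omega,\pmb\psi)$ with $[\pmb\varphi^{(C)}]=[\pmb\psi]$ (this is exactly the discussion preceding the Proposition, applied componentwise); and (ii) as $C\to\infty$, $\pmb\varphi^{(C)}$ decreases componentwise to $\pmb\varphi$. One shows, using monotonicity~\ref{item:24} of Lemma~\ref{lem:tech} together with the definition of $I_{\pmb\psi}$ as an infimum, that
\begin{displaymath}
I_{\pmb\psi}(\pmb\varphi)=\lim_{C\to\infty}I_{\pmb\psi}(\pmb\varphi^{(C)}),
\end{displaymath}
since the $\pmb\varphi^{(C)}$ are cofinal among the competitors $\pmb u$ in the infimum: any $\pmb u\in\caE(X,\pmb\omega,\pmb\psi)$ with $[\pmb u]=[\pmb\psi]$ and $\pmb\varphi\le\pmb u$ satisfies $\pmb\varphi^{(C)}\le\pmb u$ for $C$ large (because $\psi_{j}-C\le u_{j}$ eventually), and each $\pmb\varphi^{(C)}$ is itself such a competitor.

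With this identity in hand, symmetry is immediate: $\sigma(\pmb\varphi)^{(C)}=\sigma(\pmb\varphi^{(C)})$, so by Lemma~\ref{lem:tech}\,\ref{item:21}, $I_{\sigma(\pmb\psi)}(\sigma(\pmb\varphi)^{(C)})=I_{\pmb\psi}(\pmb\varphi^{(C)})$, and passing to the limit in $C$ gives $I_{\sigma(\pmb\psi)}(\sigma(\pmb\varphi))=I_{\pmb\psi}(\pmb\varphi)$. Multilinearity~\ref{item:22} and monotonicity in $\pmb\varphi$~\ref{item:24} follow the same way: for monotonicity, note $\pmb\varphi'\le\pmb\varphi$ gives $\pmb\varphi'^{(C)}\le\pmb\varphi^{(C)}$, apply~\ref{item:24} at each finite level, and take $C\to\infty$; for multilinearity one observes $(\lambda_{0}\varphi_{0}+\lambda_{0}'\varphi_{0}')^{(C)}=\lambda_{0}\varphi_{0}^{(C)}+\lambda_{0}'\varphi_{0}'^{(C')}$ up to a harmless reindexing of the truncation constant (since $\max(\lambda_0\varphi_0+\lambda_0'\varphi_0',\lambda_0\psi_0+\lambda_0'\psi_0'-C)$ and $\lambda_0\max(\varphi_0,\psi_0-C/\lambda_0)+\lambda_0'\max(\varphi_0',\psi_0'-C/\lambda_0')$ are cofinal in $C$), and again pass to the limit.

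The final assertion — convergence of $I_{\pmb\psi}(\pmb{\varphi_{n}})$ to $I_{\pmb\psi}(\pmb\varphi)$ for $\pmb{\varphi_n}\downarrow\pmb\varphi$ componentwise — is the step I expect to be the main obstacle, because it requires interchanging two limits (the net limit $\pmb n\to\pmb\infty$ and the truncation limit $C\to\infty$). By monotonicity it suffices to prove iterated (one-variable-at-a-time) convergence, so fix all but the first coordinate and consider $I_{\pmb\psi}(\varphi_{0,n},\varphi_1,\ldots,\varphi_d)$ with $\varphi_{0,n}\downarrow\varphi_0$. The monotonicity property~\ref{item:24} already gives that this is a decreasing net bounded below by $I_{\pmb\psi}(\varphi_0,\varphi_1,\ldots,\varphi_d)$, so the limit $L$ exists and $L\ge I_{\pmb\psi}(\pmb\varphi)$; the content is the reverse inequality $L\le I_{\pmb\psi}(\pmb\varphi)$. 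For this I would fix $\varepsilon>0$, choose a competitor $\pmb u$ with $[\pmb u]=[\pmb\psi]$, $\pmb\varphi\le\pmb u$ and $I_{\pmb\psi}(\pmb u)\le I_{\pmb\psi}(\pmb\varphi)+\varepsilon$, and then compare $\varphi_{0,n}$ against $u_0^{(C)}$-type majorants: since $\varphi_{0,n}\downarrow\varphi_0\le u_0$ but we need a majorant of $\varphi_{0,n}$ with equivalent singularities to $\psi_0$, replace $u_0$ by $\max(u_0,\varphi_{0,n},\psi_0-C)$, whose mixed energy is controlled via Lemma~\ref{lem:tech}\,\ref{item:23} applied on the finite-singularity-type locus (the $\varphi_{0,n}\vee\psi_0-C$ converge down to $u_0\vee(\psi_0-C)$, which for $C$ large equals... a bounded perturbation we can handle). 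Threading the $\varepsilon$ through, letting first $n\to\infty$ using~\ref{item:23} at fixed $C$, then $C\to\infty$, then $\varepsilon\to 0$, yields $L\le I_{\pmb\psi}(\pmb\varphi)$. The delicate point throughout is making sure the truncated majorants remain in $\caE(X,\pmb\omega,\pmb\psi)$ with the correct singularity type and that the finite-energy convergence of Lemma~\ref{lem:tech}\,\ref{item:23} is genuinely applicable — i.e. that no mass is lost in the truncation — which is where Theorem~\ref{thm:5} (weak convergence of the non-pluripolar Monge--Ampère measures) and Lemma~\ref{lemm:10} do the real work behind~\ref{item:23}.
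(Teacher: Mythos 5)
Your proposal is correct and follows the same core strategy as the paper: first prove the truncation identity $I_{\pmb{\psi}}(\pmb{\varphi})=\lim_{C\to\infty}I_{\pmb{\psi}}(\pmb{\varphi}^{(C)})$ via the canonical approximants, then transfer each property of Lemma~\ref{lem:tech} through this limit. Two points of comparison. For the truncation identity, your cofinality argument (every competitor $\pmb{u}$ dominates $\pmb{\varphi}^{(C)}$ for $C$ large, because $[\pmb{u}]=[\pmb{\psi}]$) uses only the monotonicity property~\ref{item:24}, whereas the paper introduces the auxiliary majorants $\widetilde{u}_{j,C}=\max(\varphi_{j}^{(C)},u_{j})$ and invokes the continuity property~\ref{item:23}; your route is, if anything, slightly leaner. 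For the final monotone-convergence statement, however, you anticipate an obstacle that is not really there: the double array $a_{\pmb{n},C}=I_{\pmb{\psi}}(\pmb{\varphi}_{\pmb{n}}^{(C)})$ is decreasing in $C$ and decreasing in $\pmb{n}$ (both by~\ref{item:24}, since $\pmb{\varphi}_{\pmb{n}}^{(C)}$ decreases along with $\pmb{\varphi}_{\pmb{n}}$), so the two iterated limits coincide with the joint infimum and may be interchanged freely; combined with $\lim_{\pmb{n}}a_{\pmb{n},C}=I_{\pmb{\psi}}(\pmb{\varphi}^{(C)})$ from Lemma~\ref{lem:tech}~\ref{item:23} and the truncation identity, this yields the claim in two lines, which is precisely the paper's ``by monotone convergence'' step. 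Your $\varepsilon$-competitor argument with the majorants $\max(u_{0},\varphi_{0,n},\psi_{0}-C)$ does close (for $C$ large the limiting majorant is just $u_{0}$, so~\ref{item:23} applies at fixed $C$), but it is heavier machinery than the situation requires.
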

\begin{proof}
The proof is based on~\cite[Proposition~2.10]{BEGZ}. We start by
proving that
\begin{equation}
\label{eq:17}
I_{\pmb{\psi}}(\pmb{\varphi})=\lim_{C\to\infty}I_{\pmb{\psi}}(\pmb{\varphi}^{(C)}),
\end{equation}
where $\pmb{\varphi}^{(C)}=(\varphi_{0}^{(C)},\ldots,\varphi_{d}^{(C)})$. For this, let $u_{j}\in\caE(X,\omega_{j},\psi_{j})$ satisfying $[u_{j}]=[\psi_{j}]$ 
and $\varphi_{j}\le u_{j}$ for $j=0,\ldots,d$. Introducing the functions $\widetilde{u}_{j,C}=\max(\varphi_{j}^{(C)},u_{j})$, we observe that $\widetilde{u}_
{j,C}$ converge from above to $u_{j}$ as $C\rightarrow\infty$. Then, by Lemma~\ref{lem:tech}, parts~\ref{item:23} and~\ref{item:24}, we find that
\begin{displaymath}
I_{\pmb{\psi}}(\pmb{u})=\lim_{C\to\infty}I_{\pmb{\psi}}(\widetilde{\pmb{u}}_{C})\ge\lim_{C\to\infty}I_{\pmb{\psi}}(\pmb{\varphi}^{(C)}),
\end{displaymath}
where $\widetilde{\pmb{u}}_{C}=(\widetilde{u}_{0,C},\ldots,\widetilde{u}_{d,C})$. Applying the definition of the mixed relative energy as an infimum 
and using Lemma~\ref{lem:tech}~\ref{item:24} once again, we deduce
\begin{displaymath}
I_{\pmb{\psi}}(\pmb{\varphi})\ge\lim_{C\to\infty}I_{\pmb{\psi}}(\pmb{\varphi}^{(C)})\ge I_{\pmb{\psi}}(\pmb{\varphi}),
\end{displaymath}
from which equation~\eqref{eq:17} follows. This convergence result allows us to easily transfer the properties of the mixed relative energy $I_{\pmb
{\psi}}(\pmb{\varphi})$ from functions $\pmb{\varphi}\in\caE(X,\pmb{\omega},\pmb{\psi})$ with $[\pmb{\varphi}]=[\pmb{\psi}]$ to arbitrary functions 
$\pmb{\varphi}\in\caE^{1}(X,\pmb{\omega},\pmb{\psi})$. For instance, we show the continuity of the mixed relative energy with respect to monotone 
sequences. Let $(\pmb{\varphi_{n}})_{\pmb{n}\in\N^{d+1}}$ be a sequence of $(d+1)$-tuples of functions in $\caE^{1}(X,\pmb{\omega},\pmb{\psi})$, 
converging componentwise from above to $\pmb{\varphi}$. Then, we have by Lemma \ref{lem:tech}~\ref{item:23}
\begin{displaymath} 
\lim_{\pmb{n}\to\pmb{\infty}}I_{\pmb{\psi}}(\pmb{\varphi}^{(C)}_{\pmb{n}})=I_{\pmb{\psi}}(\pmb{\varphi}^{(C)}),  
\end{displaymath}
while the previous discussion gives
\begin{displaymath}
\lim_{C\to\infty}I_{\pmb{\psi}}(\pmb{\varphi}^{(C)}_{\pmb{n}})=I_{\pmb{\psi}}(\pmb{\varphi}_{\pmb{n}})\qquad\text{and}\qquad\lim_{C\to\infty}I_
{\pmb{\psi}}(\pmb{\varphi}^{(C)})=I_{\pmb{\psi}}(\pmb{\varphi}).
\end{displaymath}
By monotone convergence, we deduce
\begin{displaymath}
\lim_{\pmb{n}\to\pmb{\infty}}I_{\pmb{\psi}}(\pmb{\varphi}_{\pmb{n}})=I_{\pmb{\psi}}(\pmb{\varphi}).
\end{displaymath}
\end{proof}

\begin{lemma}
\label{lemm:2}
Let $\pmb{\omega}=(\omega_{0},\ldots,\omega_{d})$ be smooth closed positive $(1,1)$-forms on $X$, let $\pmb{\psi}\in\PSH(X,\pmb{\omega})$ 
be of small unbounded locus, and let $\pmb{\varphi}\in\caE^{1}(X,\pmb{\omega},\pmb{\psi})$. Then, for $j=0,\ldots,d$, the integral
\begin{displaymath}
\int_{X}(\varphi_{j}-\psi_{j})\big\langle(\ddc\varphi_{0}+\omega_{0})\wedge\ldots\wedge(\ddc\varphi_{j-1}+\omega_{j-1})\wedge(\ddc\psi_{j+1}+
\omega_{j+1})\wedge\ldots\wedge(\ddc \psi_{d}+\omega_{d})\big\rangle
\end{displaymath}
is finite.
\end{lemma}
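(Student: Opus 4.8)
The plan is to reduce to the case $[\pmb{\varphi}]=[\pmb{\psi}]$, already covered by Definition~\ref{def:mre} and Lemma~\ref{lem:tech}, by approximating with the canonical approximants $\pmb{\varphi}^{(C)}=(\varphi_{0}^{(C)},\ldots,\varphi_{d}^{(C)})$ and then squeezing the $j$-th summand of~\eqref{keyfor}. For a tuple $\pmb{v}=(v_{0},\ldots,v_{d})\in\caE(X,\pmb{\omega},\pmb{\psi})$ write
\begin{displaymath}
\Phi_{j}(\pmb{v})\coloneqq\int_{X}(v_{j}-\psi_{j})\big\langle(\ddc v_{0}+\omega_{0})\wedge\ldots\wedge(\ddc v_{j-1}+\omega_{j-1})\wedge(\ddc\psi_{j+1}+\omega_{j+1})\wedge\ldots\wedge(\ddc\psi_{d}+\omega_{d})\big\rangle,
\end{displaymath}
so that the assertion is that $\Phi_{j}(\pmb{\varphi})$ is finite. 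Each $\Phi_{j}$ depends on the $\varphi_{i}$ only through the currents $\ddc\varphi_{i}$ and an integration against a positive measure of finite mass, so subtracting constants from the $\varphi_{i}$ changes each $\Phi_{j}(\pmb{\varphi})$ only by a finite amount and affects neither the approximants nor the membership of $\pmb{\varphi}$ in $\caE^{1}$; hence we may assume $\varphi_{i}\le\psi_{i}$ for all $i$. Then $\varphi_{i}^{(C)}\le\psi_{i}$, so $\Phi_{k}(\pmb{\varphi}^{(C)})\le 0$ for all $k$ and $C$; and since $[\pmb{\varphi}^{(C)}]=[\pmb{\psi}]$, formula~\eqref{keyfor} gives $I_{\pmb{\psi}}(\pmb{\varphi}^{(C)})=\sum_{k=0}^{d}\Phi_{k}(\pmb{\varphi}^{(C)})$, each $\Phi_{k}(\pmb{\varphi}^{(C)})$ being a finite real number. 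By Lemma~\ref{lem:tech}~\ref{item:24} the map $C\mapsto I_{\pmb{\psi}}(\pmb{\varphi}^{(C)})$ is nonincreasing, and by~\eqref{eq:17} it converges to $m\coloneqq I_{\pmb{\psi}}(\pmb{\varphi})$, which is finite since $\pmb{\varphi}\in\caE^{1}(X,\pmb{\omega},\pmb{\psi})$; hence $I_{\pmb{\psi}}(\pmb{\varphi}^{(C)})\ge m$ for all $C$, and as all summands are $\le 0$ we obtain, for each fixed $j$,
\begin{displaymath}
m\le I_{\pmb{\psi}}(\pmb{\varphi}^{(C)})\le\Phi_{j}(\pmb{\varphi}^{(C)})\le 0\qquad(C\ge 0).
\end{displaymath}

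It remains to show $\limsup_{C\to\infty}\Phi_{j}(\pmb{\varphi}^{(C)})\le\Phi_{j}(\pmb{\varphi})$; together with the display above this forces $\Phi_{j}(\pmb{\varphi})\in[m,0]$, and undoing the normalization then yields finiteness of $\Phi_{j}(\pmb{\varphi})$. For $C'\ge C$ one has $\varphi_{j}^{(C')}-\psi_{j}\le\varphi_{j}^{(C)}-\psi_{j}\le 0$, so, the non-pluripolar product being a positive measure,
\begin{displaymath}
\Phi_{j}(\pmb{\varphi}^{(C')})\le\int_{X}(\varphi_{j}^{(C)}-\psi_{j})\big\langle(\ddc\varphi_{0}^{(C')}+\omega_{0})\wedge\ldots\wedge(\ddc\varphi_{j-1}^{(C')}+\omega_{j-1})\wedge(\ddc\psi_{j+1}+\omega_{j+1})\wedge\ldots\big\rangle.
\end{displaymath}
Fix $C$ and let $C'\to\infty$: the $\varphi_{i}^{(C')}$ decrease to $\varphi_{i}$, hence converge in capacity, and since $\varphi_{i}$, $\varphi_{i}^{(C')}$ and $\psi_{i}$ all lie in $\caE(X,\omega_{i},\psi_{i})$, the mixed non-pluripolar masses here are independent of $C'$ (the relative analogue of the classical fact that elements of $\caE(X,\omega)$ have all mixed Monge--Amp\`ere masses equal to $\int_{X}\omega^{\wedge d}$; see~\cite{darvas18:mnpp}). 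Thus the mass hypothesis of Theorem~\ref{thm:5} is met, and the products above converge weakly to $\big\langle(\ddc\varphi_{0}+\omega_{0})\wedge\ldots\wedge(\ddc\varphi_{j-1}+\omega_{j-1})\wedge(\ddc\psi_{j+1}+\omega_{j+1})\wedge\ldots\big\rangle$. Since $\varphi_{j}^{(C)}-\psi_{j}$ is bounded and quasi-continuous with respect to capacity and these measures are non-pluripolar of uniformly bounded mass, the integrals converge (standard Bedford--Taylor type convergence, cf.~\cite{guedj17:_degen_monge}), so $\limsup_{C'\to\infty}\Phi_{j}(\pmb{\varphi}^{(C')})\le\int_{X}(\varphi_{j}^{(C)}-\psi_{j})\big\langle(\ddc\varphi_{0}+\omega_{0})\wedge\ldots\wedge(\ddc\varphi_{j-1}+\omega_{j-1})\wedge(\ddc\psi_{j+1}+\omega_{j+1})\wedge\ldots\big\rangle$. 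Finally, letting $C\to\infty$, the integrand decreases to $\varphi_{j}-\psi_{j}\le 0$ against the now fixed positive measure, so by monotone convergence the right-hand side decreases to $\Phi_{j}(\pmb{\varphi})$, giving the desired inequality.

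The main obstacle is the double limit in the last argument, where both the integrand and the measure move. The device that handles it is to freeze the integrand at the fixed bounded level $\varphi_{j}^{(C)}-\psi_{j}$ (exploiting its monotonicity in $C'$) while letting only the measure run to its limit, so that one only ever integrates a fixed bounded quasi-continuous function against weakly convergent non-pluripolar products; the one input not already isolated in the excerpt is the invariance of mixed non-pluripolar masses over the relative full-mass classes, needed to verify the mass hypothesis of Theorem~\ref{thm:5}.
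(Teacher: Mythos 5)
Your opening squeeze is exactly the paper's: after normalizing to $\varphi_{i}\le\psi_{i}$, both arguments bound the $j$-th summand from below by the full energy $I_{\pmb{\psi}}(\pmb{\varphi}^{(C)})$ of the canonical approximants (a sum of non-positive terms, so each single term dominates the sum) and then use $\lim_{C\to\infty}I_{\pmb{\psi}}(\pmb{\varphi}^{(C)})=I_{\pmb{\psi}}(\pmb{\varphi})>-\infty$. Where you diverge is the passage to the limit. Writing $\widetilde{\Theta}_{C'}$ for the product built from the $\varphi_{i}^{(C')}$ ($i<j$) and the $\psi_{i}$ ($i>j$), and $\Theta$ for its target, you let the measures move and invoke Theorem~\ref{thm:5}. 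This forces two external inputs: (a) the equality of the mixed masses $\int_{X}\widetilde{\Theta}_{C'}=\int_{X}\Theta$, needed for the mass hypothesis of Theorem~\ref{thm:5}; this is the invariance of mixed non-pluripolar masses over relative full-mass classes, which is true and available in the literature around \cite{darvas18:mnpp}, but is nowhere established in the paper and does not follow from the definition of $\caE(X,\omega_{j},\psi_{j})$ alone (which only constrains the pure top powers). And (b) the convergence of $\int_{X}(\varphi_{j}^{(C)}-\psi_{j})\,\widetilde{\Theta}_{C'}$ to $\int_{X}(\varphi_{j}^{(C)}-\psi_{j})\,\Theta$. Point (b) is the genuine weak spot as written: weak convergence of measures controls integrals of \emph{continuous} test functions only, and $\varphi_{j}^{(C)}-\psi_{j}$ is a difference of upper semicontinuous functions, hence neither u.s.c.\ nor l.s.c.; the appeal to ``standard Bedford--Taylor type convergence'' via \cite{guedj17:_degen_monge} does not apply because the potentials defining $\widetilde{\Theta}_{C'}$ are unbounded near the polar locus, which is precisely the situation the lemma is about. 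This step can be repaired with the quasi-continuity convergence lemma of \cite[Lemma~4.1]{darvas18:mnpp} (the statement adapted in Proposition~\ref{prop:6}~\ref{item:16}), but it needs to be cited and its hypotheses checked, not waved at.

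The paper's proof sidesteps both inputs by using the plurifine locality of the non-pluripolar product instead of weak convergence: with $U_{k}=\{\varphi_{i}>\psi_{i}-k\text{ for all }i\}$, the measure $\Theta$ is the \emph{increasing} limit of $\Theta_{k}=\mathbf{1}_{U_{k}}\widetilde{\Theta}_{k}$ and $\widetilde{\Theta}_{k}-\Theta_{k}\ge 0$, so for a fixed bounded non-positive integrand one gets $\int_{X}(\varphi_{j}^{(\ell)}-\psi_{j})\Theta=\lim_{k}\int_{X}(\varphi_{j}^{(\ell)}-\psi_{j})\Theta_{k}\ge\lim_{k}\int_{X}(\varphi_{j}^{(k)}-\psi_{j})\widetilde{\Theta}_{k}\ge I_{\pmb{\psi}}(\pmb{\varphi})$, and a final monotone convergence in $\ell$ concludes. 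Everything reduces to monotone convergence of scalar integrals against a fixed or increasing family of measures; no weak convergence and no mixed-mass identity are required. I suggest you either adopt that restriction argument or supply precise references for (a) and (b); with those supplied, your route is also valid.
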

\begin{proof}
The proof is based on the proof of~\cite[Proposition~2.11~(ii)]{BEGZ}. Since each function $\varphi_{j}$ is more singular than the corresponding
$\psi_{j}$, i.\,e., $\varphi_{j}\le\psi_{j}+O(1)$, we can assume, after adding a constant to each $\varphi_{j}$, that $\varphi_{j}\le\psi_{j}$. Then,
for any $k\in\N$, we consider the sets
\begin{displaymath}
U_{k}=\{x\in X\,\vert\,\varphi_{j}>\psi_{j}-k,\text{ for }j=0,\ldots,d\},
\end{displaymath}
and the canonical approximants $\varphi_{j}^{(k)}=\max(\varphi_{j},\psi_{j}-k)$ giving rise to the measure
\begin{displaymath}
\widetilde{\Theta}_{k}=\big\langle(\ddc\varphi^{(k)}_{0}+\omega_{0})\wedge\ldots\wedge(\ddc\varphi^{(k)}_{j-1}+\omega_{j-1})\wedge(\ddc\psi_
{j+1}+\omega_{j+1})\wedge\ldots\wedge(\ddc\psi_{d}+\omega_{d})\big\rangle
\end{displaymath}
on $X$. By the very construction, it turns out that the measure
\begin{displaymath}
\Theta=\big\langle(\ddc\varphi_{0}+\omega_{0})\wedge\ldots\wedge(\ddc\varphi_{j-1}+\omega_{j-1})\wedge(\ddc\psi_{j+1}+\omega_{j+1})
\wedge\ldots\wedge(\ddc\psi_{d}+\omega_{d})\big\rangle
\end{displaymath}
is the increasing limit of the measures $\Theta_{k}=\mathbf{1}_{U_{k}}\,\widetilde{\Theta}_{k}$, where $\mathbf{1}_{U_{k}}$ denotes the 
characteristic function for $U_{k}$, in other words
\begin{displaymath}
\Theta=\lim_{k\to\infty}\Theta_{k}.
\end{displaymath}
Furthermore, we note that the measure $\widetilde{\Theta}_{k}-\Theta_{k}$ is a positive measure on $X$. For each $\ell\in\N$, the functions 
$\varphi_{j}^{(\ell)}-\psi_{j}$ are bounded, measurable, and non-positive on $X$. Therefore, we arrive at the estimates
\begin{align*}
\int_{X}(\varphi^{(\ell)}_{j}-\psi_{j})\Theta&=\lim_{k\to\infty}\int_{X}(\varphi^{(\ell)}_{j}-\psi_{j})\Theta_{k}\ge\lim_{k\to\infty}\int_{X}(\varphi^{(k)}_
{j}-\psi_{j})\Theta_{k} \\[1mm]
&\ge\lim_{k\to\infty}\int_{X}(\varphi^{(k)}_{j}-\psi_{j})\widetilde{\Theta}_{k}\ge\lim_{k\to\infty}I_{\pmb{\psi}}(\pmb{\varphi}^{(k)})=I_{\pmb{\psi}}
(\pmb{\varphi}).
\end{align*}
Here, we note that we have used in the third inequality above that the mixed relative energy $I_{\pmb{\psi}}(\pmb{\varphi}^{(k)})$ is a sum of 
non-positive terms, one of them being the one in the preceding inequality. By monotone convergence, we finally deduce that
\begin{displaymath}
\int_{X}(\varphi_{j}-\psi_{j})\Theta\ge I_{\pmb{\psi}}(\pmb{\varphi})>-\infty,
\end{displaymath}
which proves the claim of the lemma.
\end{proof}

\begin{theorem}
\label{thm:8}
Let $\pmb{\omega}=(\omega_{0},\ldots,\omega_{d})$ be smooth closed positive $(1,1)$-forms on $X$, let $\pmb{\psi}\in\PSH(X,\pmb{\omega})$ 
be of small unbounded locus, and let $\pmb{\varphi}\in\caE^{1}(X,\pmb{\omega},\pmb{\psi})$. Then, formula~\eqref{keyfor} holds, that is
\begin{align*}
I_{\pmb{\psi}}(\pmb{\varphi})=\sum_{j=0}^{d}\,\int_{X}(\varphi_{j}-\psi_{j})\big\langle&(\ddc\varphi_{0}+\omega_{0})\wedge\ldots\wedge(\ddc\varphi_
{j-1}+\omega_{j-1})\wedge \\
&\wedge(\ddc\psi_{j+1}+\omega_{j+1})\wedge\ldots\wedge(\ddc\psi_{d}+\omega_{d})\big\rangle.
\end{align*}
\end{theorem}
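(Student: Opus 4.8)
The plan is to reduce the formula to the comparable-singularities case treated in Definition~\ref{def:mre} by a monotone-approximation argument, establishing the two inequalities separately. Write the asserted identity as $I_{\pmb\psi}(\pmb\varphi)=\sum_{j=0}^{d}b_{j}$, where $b_{j}$ denotes the $j$-th integral on the right-hand side of~\eqref{keyfor}; by Lemma~\ref{lemm:2} each $b_{j}$ is a finite real number, so the statement is meaningful. For $C\ge 0$ put $\pmb\varphi^{(C)}=(\varphi_{0}^{(C)},\ldots,\varphi_{d}^{(C)})$ with $\varphi_{j}^{(C)}=\max(\varphi_{j},\psi_{j}-C)$; since $[\pmb\varphi^{(C)}]=[\pmb\psi]$, formula~\eqref{keyfor} holds for $\pmb\varphi^{(C)}$ by Definition~\ref{def:mre}, and I write $a_{j}(C)$ for its $j$-th summand, so that $\sum_{j}a_{j}(C)=I_{\pmb\psi}(\pmb\varphi^{(C)})$. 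By Proposition~\ref{prop:10} (equation~\eqref{eq:17}) one has $\lim_{C\to\infty}\sum_{j}a_{j}(C)=I_{\pmb\psi}(\pmb\varphi)$.

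For the inequality $I_{\pmb\psi}(\pmb\varphi)\le\sum_{j}b_{j}$ I would prove $\limsup_{C\to\infty}a_{j}(C)\le b_{j}$ for each $j$ and then sum. After adding constants we may assume $\varphi_{j}\le\psi_{j}$, so all the $a_{j}(C)$ and $b_{j}$ are non-positive and the weights $\varphi_{j}^{(C)}-\psi_{j}$ decrease to $\varphi_{j}-\psi_{j}$. The estimate is then the monotonicity computation already carried out in the proof of Lemma~\ref{lemm:2}: with the notation $\Theta$, $\widetilde{\Theta}_{k}$ and $\Theta_{k}=\mathbf{1}_{U_{k}}\widetilde{\Theta}_{k}$ used there — so that $\int_{X}(\varphi_{j}^{(k)}-\psi_{j})\widetilde{\Theta}_{k}$ is precisely $a_{j}(k)$ — one has for $k\ge\ell$ the chain $\int_{X}(\varphi_{j}^{(\ell)}-\psi_{j})\Theta_{k}\ge\int_{X}(\varphi_{j}^{(k)}-\psi_{j})\Theta_{k}\ge\int_{X}(\varphi_{j}^{(k)}-\psi_{j})\widetilde{\Theta}_{k}=a_{j}(k)$; letting $k\to\infty$ with $\ell$ fixed yields $\int_{X}(\varphi_{j}^{(\ell)}-\psi_{j})\Theta\ge\limsup_{k}a_{j}(k)$, and then $\ell\to\infty$ together with monotone convergence yields $b_{j}\ge\limsup_{k}a_{j}(k)$. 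Hence $I_{\pmb\psi}(\pmb\varphi)=\lim_{C}\sum_{j}a_{j}(C)\le\sum_{j}\limsup_{C}a_{j}(C)\le\sum_{j}b_{j}$.

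For the reverse inequality $\sum_{j}b_{j}\le I_{\pmb\psi}(\pmb\varphi)$ it suffices, by the definition of $I_{\pmb\psi}(\pmb\varphi)$ as an infimum, to show $\sum_{j}b_{j}\le I_{\pmb\psi}(\pmb u)$ for every $\pmb u\in\caE(X,\pmb\omega,\pmb\psi)$ with $[\pmb u]=[\pmb\psi]$ and $\pmb\varphi\le\pmb u$; since formula~\eqref{keyfor} holds for such $\pmb u$, this is exactly the assertion that the right-hand side of~\eqref{keyfor}, viewed as a functional of the tuple of potentials, is monotone under increasing the potentials, i.e.\ the extension of Lemma~\ref{lem:tech}~\ref{item:24} to $\caE^{1}$. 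I would prove this by telescoping over the coordinates, reducing to a single change $\varphi_{m}\leadsto u_{m}$ in one slot, and for that change using the symmetry of the non-pluripolar product (to move that slot to the front) together with an integration by parts, to rewrite the increment as $\int_{X}(u_{m}-\varphi_{m})\big\langle\bigwedge_{i\ne m}(\ddc w_{i}+\omega_{i})\big\rangle\ge 0$, exactly as in the derivation of~\eqref{eq:16}; here $w_{i}=u_{i}$ for $i<m$ and $w_{i}=\varphi_{i}$ for $i>m$. To keep every mixed integral occurring in the telescoping finite, I would run the telescoping through the canonical approximants and pass to the limit only at the end, invoking~\eqref{eq:17} and Lemma~\ref{lemm:2}. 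Once both inequalities are in hand, $I_{\pmb\psi}(\pmb\varphi)=\sum_{j}b_{j}$; and then, by a routine subsequence argument (the $a_{j}(C)$ are bounded above by the $b_{j}$ and their sum converges to $\sum_{j}b_{j}$), in fact $a_{j}(C)\to b_{j}$ for each $j$.

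The hard part is this $\caE^{1}$-monotonicity of the right-hand side of~\eqref{keyfor}: the integration-by-parts identity of~\cite[Theorem~1.14]{BEGZ} used for~\eqref{eq:16} is stated for bounded weights, whereas here the weights $\varphi_{j}-\psi_{j}$ and $u_{m}-\varphi_{m}$ have only finite relative energy, so one must use the integration-by-parts formula valid on $\caE^{1}$ from~\cite{darvas18:mnpp} (see also~\cite{darvas23:_relat_kaehl}) and check carefully that all the intermediate tuples appearing in the telescoping lie in $\caE^{1}(X,\pmb\omega,\pmb\psi)$, which is why one telescopes through the approximants. An alternative route to the same inequality is to establish the weak convergence $\mu_{C}^{(j)}:=\big\langle\bigwedge_{i<j}(\ddc\varphi_{i}^{(C)}+\omega_{i})\wedge\bigwedge_{i>j}(\ddc\psi_{i}+\omega_{i})\big\rangle\to\mu^{(j)}$ via Theorem~\ref{thm:5} (the $\varphi_{i}^{(C)}$ decrease to $\varphi_{i}$, hence converge in capacity, and all the masses coincide with $\int_{X}\big\langle\bigwedge_{i}(\ddc\psi_{i}+\omega_{i})\big\rangle$ because each $\varphi_{i}^{(C)}$ has singularity type $[\psi_{i}]$ and each $\varphi_{i}$ has full relative mass with respect to $\psi_{i}$) and then to control the unbounded, non-semicontinuous weight $\varphi_{j}-\psi_{j}$ in this limit, which again reduces to the finite-energy machinery. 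All remaining steps are routine given Theorem~\ref{thm:5}, Lemma~\ref{lemm:2}, and Proposition~\ref{prop:10}.
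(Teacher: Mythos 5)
Your first inequality, $I_{\pmb{\psi}}(\pmb{\varphi})\le\sum_{j}b_{j}$, is correct: it is exactly the chain of estimates already run in the proof of Lemma~\ref{lemm:2}, and your bookkeeping with the $\limsup$'s is fine. The gap is in the reverse inequality, which is where the whole content of the theorem sits, and neither of your two routes closes it. The reduction to ``monotonicity of the right-hand side of~\eqref{keyfor} on $\caE^{1}$'' is not an appeal to Lemma~\ref{lem:tech}~\ref{item:24} or Proposition~\ref{prop:10}: those statements concern $I_{\pmb{\psi}}$ as \emph{defined} for non-minimal singularities, namely as an infimum, and the identification of that infimum with the explicit integral formula is precisely what Theorem~\ref{thm:8} asserts, so invoking the $\caE^{1}$-monotonicity of the explicit formula is circular unless you prove it independently. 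Your telescoping argument would prove it, but only if (a) every intermediate mixed tuple $(u_{0},\ldots,u_{m-1},\varphi_{m},\ldots,\varphi_{d})$ gives finite integrals --- which does not follow from $\pmb{\varphi}\in\caE^{1}(X,\pmb{\omega},\pmb{\psi})$, cf.\ Remark~\ref{rem:7} and the remark after Definition~\ref{def:finen} --- and (b) an integration-by-parts identity holds for weights $u_{m}-\varphi_{m}$ that are merely of finite energy; neither is available at this point of the paper, and ``telescoping through the approximants and passing to the limit at the end'' just reproduces the convergence statement $a_{j}(C)\to b_{j}$ that you are trying to avoid proving. The weak-convergence route fails for the reason you half-acknowledge: $\varphi_{j}-\psi_{j}$ is unbounded below and neither upper nor lower semicontinuous, so weak convergence of $\mu_{C}^{(j)}$ to $\mu^{(j)}$ gives no inequality in the direction you need.

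The paper closes the gap by proving directly that each summand converges, $a_{j}(k)\to b_{j}$ (equation~\eqref{eq:25}), which yields both inequalities at once. The mechanism is a partition of $X$ into the plurifine sets $V_{I,k}$ indexed by the subsets $I\subseteq\{0,\ldots,d\}$ of indices where $\varphi_{i}\le\psi_{i}-k$. On $V_{\emptyset,k}$ all approximants agree with the original functions and, by locality of the non-pluripolar product, so do the measures; monotone convergence then produces $b_{j}$ in the limit. On each $V_{I,k}$ with $I\neq\emptyset$ the integrand can be taken independent of $k$ (there $\varphi_{i}^{(k)}=\psi_{i}-k$ for $i\in I$ and $\ddc(\psi_{i}-k)=\ddc\psi_{i}$), it is dominated by a function integrable against the relevant measure by Lemma~\ref{lemm:2}, and the sets $V_{I,k}$ shrink to a pluripolar set, so these contributions vanish. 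If you want to keep your two-inequality structure, you still need an argument of exactly this type to get $\liminf_{k}a_{j}(k)\ge b_{j}$.
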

\begin{proof}
The proof of this result is based on the proof of~\cite[Proposition~2.5]{DNK:L1metricgeometry}. As in the proof of Lemma~\ref{lemm:2}, we can 
assume that $\varphi_{j}\le\psi_{j}$ for $j=0,\dots,d$. We note that, in the notation of Lemma~\ref{lemm:2}, it is enough to prove that
\begin{multline}
\label{eq:25}
\lim_{k\to\infty}\int_{X}(\varphi^{(k)}_{0}-\psi_{0})\big\langle(\ddc\varphi^{(k)}_{1}+\omega_{1})\wedge\ldots\wedge(\ddc\varphi^{(k)}_{d}+\omega_
{d})\big\rangle \\
=\int_{X}(\varphi_{0}-\psi_{0})\big\langle(\ddc\varphi_{1}+\omega_{1})\wedge\ldots\wedge(\ddc\varphi_{d}+\omega_{d})\big\rangle,
\end{multline}
as the mixed relative energy of the canonical approximants $I_{\pmb{\psi}}(\pmb{\varphi}^{(k)})$ converges to the mixed relative energy $I_{\pmb
{\psi}}(\pmb{\varphi})$, and the mixed relative energy is a sum of integrals of the above shape, after reordering and replacing some of the $\varphi_
{j}$'s by the respective $\psi_{j}$'s. For each $k\in\N$ and each subset $I\subseteq\{0,\ldots,d\}$, we now write
\begin{align*}
V_{I,k}=\left\{x\in X\,\middle\vert\,
\begin{aligned}
\varphi_{j}&>\psi_{j}-k,\quad\text{if }j\not\in I \\
\varphi_{j}&\le\psi_{j}-k,\quad\text{if }j\in I
\end{aligned}
\right\}.
\end{align*}
In particular, we note that the set $V_{\emptyset,k}$ is the open set $U_{k}$ introduced in the proof of Lemma~\ref{lemm:2}. Since we trivially
have that
\begin{align*}
\lim_{k\to\infty}\int_{V_{\emptyset,k}}(\varphi^{(k)}_{0}-\psi_{0})\big\langle&(\ddc\varphi^{(k)}_{1}+\omega_{1})\wedge\ldots\wedge(\ddc\varphi^
{(k)}_{d}+\omega_{d})\big\rangle \\[-2mm]
&=\lim_{k\to\infty}\int_{V_{\emptyset,k}}(\varphi_{0}-\psi_{0})\big\langle(\ddc\varphi_{1}+\omega_{1})\wedge\ldots\wedge(\ddc\varphi_{d}+
\omega_{d})\big\rangle \\ 
&=\int_{X}(\varphi_{0}-\psi_{0})\big\langle(\ddc\varphi_{1}+\omega_{1})\wedge\ldots\wedge(\ddc\varphi_{d}+\omega_{d})\big\rangle,
\end{align*}
we are reduced to show that, for $I\neq\emptyset$, we have
\begin{equation}
\label{eq:18}
\lim_{k\to\infty}\int_{V_{I,k}}(\varphi^{(k)}_{0}-\psi_{0})\big\langle(\ddc\varphi^{(k)}_{1}+\omega_{1})\wedge\ldots\wedge(\ddc\varphi^{(k)}_{d}+
\omega_{d})\big\rangle=0.
\end{equation}
To do so, we note that on $V_{I,k}$, the function $\varphi_{j}^{(k)}$ is equal to $\varphi_{j}$ if $j\not\in I$, and agrees with $\psi_{j}-k$ if $j\in I$. 
Since $\ddc(\psi_{j}-k)=\ddc\psi_{j}$ and $0\ge\varphi_{0}^{(k)}-\psi_{0}\ge\varphi_{0}-\psi_{0}$, we can assume that the integrand is independent 
of $k$ and thus corresponds to a non-positive function multiplied by a measure that does not charge any pluripolar set of the shape considered 
in Lemma~\ref{lemm:2}. Thus, the integral over the whole of $X$ is finite. Since the sets $V_{I,k}$ for $I\neq\emptyset$ converge to a pluripolar
set, we deduce that the limit~\eqref{eq:18} is zero, completing the proof of the theorem.  
\end{proof}

The following criterion together with Lemma~\ref{lemm:10} will be useful to check that a $(d+1)$-tuple of functions has finite mixed relative 
energy. To state the result we use the following notation: If $I\subseteq\{0,\ldots,d\}$ and $\pmb{\varphi}=(\varphi_{0},\ldots,\varphi_{d})$, we 
write $\pmb{\varphi}_{I}=\sum_{j\in I}\varphi_{j}$ and similarly $\pmb{\omega}_{I}=\sum_{j\in I}\omega_{j}$ as well as $\pmb{\psi}_{I}=\sum_
{j\in I}\psi_{j}$. 

\begin{proposition}
\label{prop:8} 
Let $\pmb{\omega}=(\omega_{0},\ldots,\omega_{d})$ be smooth closed positive $(1,1)$-forms on $X$, let $\pmb{\psi}\in\PSH(X,\pmb{\omega})$ 
be of small unbounded locus, and let $\pmb{\varphi}\in\caE(X,\pmb{\omega},\pmb{\psi})$. If for all subsets $I\subseteq\{0,\ldots,d\}$ the condition 
$\pmb{\varphi}_{I}\in\caE^{1}(X,\pmb{\omega}_{I},\pmb{\psi}_{I})$ holds, then $\pmb{\varphi}\in\caE^{1}(X,\pmb{\omega},\pmb{\psi })$ and the 
usual polarization formula
\begin{equation}
\label{eq:29}
(d+1)!\,I_{\pmb{\psi}}(\pmb{\varphi})=\sum_{I\subseteq\{0,\ldots,d\}}(-1)^{d+1-\vert I\vert}I_{\pmb{\psi}_{I}}(\pmb{\varphi}_{I})
\end{equation}
is satisfied.
\end{proposition}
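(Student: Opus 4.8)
The plan is to prove the polarization formula~\eqref{eq:29} first in the special case $[\pmb{\varphi}]=[\pmb{\psi}]$, i.e.\ when each $\varphi_j-\psi_j$ is bounded, and then to bootstrap to the general case --- obtaining $\pmb{\varphi}\in\caE^1(X,\pmb{\omega},\pmb{\psi})$ along the way --- by truncation. The starting point in the bounded case is the \emph{diagonal identity}: comparing Definitions~\ref{def:dar} and~\ref{def:mre}, for every $I$ one has $(d+1)\,I_{\pmb{\psi}_I}(\pmb{\varphi}_I)=I_{(\pmb{\psi}_I,\dots,\pmb{\psi}_I)}(\pmb{\varphi}_I,\dots,\pmb{\varphi}_I)$, where on the right all $d+1$ slots carry the triple $(\pmb{\omega}_I,\pmb{\psi}_I,\pmb{\varphi}_I)$. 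Applying the multilinearity of the mixed relative energy (Lemma~\ref{lem:tech}~\ref{item:22}) one slot at a time --- which is legitimate since every tuple that occurs has bounded differences from the relevant reference tuple, so the failure of convexity of $\caE^1$ noted in Remark~\ref{rem:7} never intervenes --- I would expand
\[
I_{(\pmb{\psi}_I,\dots,\pmb{\psi}_I)}(\pmb{\varphi}_I,\dots,\pmb{\varphi}_I)=\sum_{(i_0,\dots,i_d)\in I^{d+1}}I_{(\psi_{i_0},\dots,\psi_{i_d})}(\varphi_{i_0},\dots,\varphi_{i_d}),
\]
with the forms $(\omega_{i_0},\dots,\omega_{i_d})$ in the respective slots.

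Substituting this into $\sum_{I\subseteq\{0,\dots,d\}}(-1)^{d+1-|I|}I_{\pmb{\psi}_I}(\pmb{\varphi}_I)$ and interchanging the summations, I would use the elementary identity
\[
\sum_{I:\,J\subseteq I\subseteq\{0,\dots,d\}}(-1)^{d+1-|I|}=\begin{cases}1,& J=\{0,\dots,d\},\\ 0,&\text{otherwise},\end{cases}
\]
which kills every term except those indexed by tuples $(i_0,\dots,i_d)$ whose entries exhaust $\{0,\dots,d\}$, i.e.\ the permutations of $(0,\dots,d)$. By the symmetry of Lemma~\ref{lem:tech}~\ref{item:21}, each of these surviving terms equals $I_{(\psi_0,\dots,\psi_d)}(\varphi_0,\dots,\varphi_d)=I_{\pmb{\psi}}(\pmb{\varphi})$; the count of permutations together with the factor $d+1$ from the diagonal identity produces the constant in~\eqref{eq:29} (the summand $I=\emptyset$ contributing $0$ trivially).

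For general $\pmb{\varphi}\in\caE(X,\pmb{\omega},\pmb{\psi})$, I would pass to the canonical approximants $\pmb{\varphi}^{(C)}=(\varphi_0^{(C)},\dots,\varphi_d^{(C)})$, which satisfy $[\pmb{\varphi}^{(C)}]=[\pmb{\psi}]$, and apply the bounded-case formula to $\pmb{\varphi}^{(C)}$; there the subset sums are $\sum_{i\in I}\varphi_i^{(C)}$, which have bounded difference from $\pmb{\psi}_I$, hence lie in $\caE^1(X,\pmb{\omega}_I,\pmb{\psi}_I)$, so no further hypothesis is needed at this stage. The key point is that canonical approximants do not commute with the sums over $I$; but from
\[
\sum_{i\in I}\varphi_i^{(C)}=\sum_{i\in I}\max(\varphi_i,\psi_i-C)\ \ge\ \max\Bigl(\pmb{\varphi}_I,\ \pmb{\psi}_I-|I|C\Bigr)=(\pmb{\varphi}_I)^{(|I|C)}\ \ge\ \pmb{\varphi}_I,
\]
together with monotonicity of the relative energy (Theorem~\ref{thm:1}, Lemma~\ref{lem:tech}~\ref{item:24}) and the hypothesis $\pmb{\varphi}_I\in\caE^1(X,\pmb{\omega}_I,\pmb{\psi}_I)$, one obtains $I_{\pmb{\psi}_I}\bigl(\sum_{i\in I}\varphi_i^{(C)}\bigr)\ge I_{\pmb{\psi}_I}(\pmb{\varphi}_I)>-\infty$ uniformly in $C$, while a uniform upper bound is immediate since $\sum_{i\in I}\varphi_i^{(C)}$ decreases in $C$. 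Hence $\inf_C I_{\pmb{\psi}}(\pmb{\varphi}^{(C)})>-\infty$, so $\pmb{\varphi}\in\caE^1(X,\pmb{\omega},\pmb{\psi})$. Finally, letting $C\to\infty$ in the bounded-case formula for $\pmb{\varphi}^{(C)}$: the left-hand side tends to the left-hand side of~\eqref{eq:29} by Proposition~\ref{prop:10} (equation~\eqref{eq:17}), and since $\sum_{i\in I}\varphi_i^{(C)}\downarrow\pmb{\varphi}_I$ one has $I_{\pmb{\psi}_I}\bigl(\sum_{i\in I}\varphi_i^{(C)}\bigr)\to I_{\pmb{\psi}_I}(\pmb{\varphi}_I)$ by the one-variable case of Proposition~\ref{prop:10}; this yields~\eqref{eq:29}.

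The step I expect to be the main obstacle is the finiteness conclusion $\pmb{\varphi}\in\caE^1$, precisely because of this non-commutation: one cannot apply Proposition~\ref{prop:10} to $\pmb{\varphi}_I$ directly but must sandwich $\sum_{i\in I}\varphi_i^{(C)}$ between genuine canonical approximants of $\pmb{\varphi}_I$ after rescaling the truncation level by $|I|$, and then invoke monotonicity of the energy together with the standing hypothesis. The remaining ingredients --- the diagonal identity, the combinatorial cancellation, and the symmetry and multilinearity of the mixed relative energy --- are elementary or already available from Lemma~\ref{lem:tech} and Proposition~\ref{prop:10}.
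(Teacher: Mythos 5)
Your proposal follows the same route as the paper's (very terse) proof: establish the polarization identity in the bounded case $[\pmb{\varphi}]=[\pmb{\psi}]$ by multilinearity, apply it to the canonical approximants $\pmb{\varphi}^{(C)}$, and deduce finiteness of $I_{\pmb{\psi}}(\pmb{\varphi})$ from finiteness of the limit of the right-hand side. You supply two details the paper leaves implicit --- the inclusion--exclusion derivation of the bounded case, and the observation that truncation does not commute with the subset sums, which you handle correctly via $\sum_{i\in I}\varphi_i^{(C)}\ge\pmb{\varphi}_I$ together with the infimum definition and the decreasing continuity of Proposition~\ref{prop:10} --- so the argument is sound. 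One bookkeeping caveat: carried out literally with the normalization of Definition~\ref{def:dar} (which carries a prefactor $1/(d+1)$), your computation yields $\sum_{I}(-1)^{d+1-|I|}I_{\pmb{\psi}_I}(\pmb{\varphi}_I)=\tfrac{1}{d+1}\cdot(d+1)!\,I_{\pmb{\psi}}(\pmb{\varphi})=d!\,I_{\pmb{\psi}}(\pmb{\varphi})$, not $(d+1)!\,I_{\pmb{\psi}}(\pmb{\varphi})$; a check with $\varphi_0=\varphi_1=c<0$, $\psi_0=\psi_1=0$ on a curve confirms the constant $d!$. Your closing sentence that the permutation count "together with the factor $d+1$" produces $(d+1)!$ is therefore off by that factor $d+1$; the statement~\eqref{eq:29} holds with $(d+1)!$ only if the right-hand side is read as the diagonal mixed energy $I_{(\pmb{\psi}_I,\ldots,\pmb{\psi}_I)}(\pmb{\varphi}_I,\ldots,\pmb{\varphi}_I)$. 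This normalization issue does not affect the substantive conclusion $\pmb{\varphi}\in\caE^{1}(X,\pmb{\omega},\pmb{\psi})$, for which your argument is complete.
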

\begin{proof}
By the multilinearity of the non-pluripolar product and the definition of the mixed relative energy, formula~\eqref{eq:29} is satisfied when we 
have minimal relative singularities, that is, when $[\varphi_{j}]=[\psi_{j}]$ for $j=0,\ldots,d$. In particular, we find
\begin{displaymath}
(d+1)!\,I_{\pmb{\psi}}(\pmb{\varphi}^{(C)})=\sum_{I\subseteq\{0,\ldots,d\}}(-1)^{d+1-\vert I\vert}I_{\pmb{\psi}_{I}}((\pmb{\varphi}^{(C)})_{I}).
\end{displaymath}
If $\pmb{\varphi}_{I}\in\caE^{1}(X,\pmb{\omega}_{I},\pmb{\psi}_{I})$ for all $I\subseteq\{0,\dots,d\}$, then the right-hand side has a finite limit 
as $C\rightarrow\infty$. Therefore, the left-hand side also has a finite limit, so $\pmb{\varphi}\in\caE^{1}(X,\pmb{\omega},\pmb{\psi })$ and 
equation~\eqref{eq:29} is satisfied. 
\end{proof}

\subsection{Algebraic and almost algebraic singularities}
\label{sec:agebr-almost-algebr}

In this subsection, we fix a smooth closed positive $(1,1)$-form
$\omega_{0}$.

\begin{definition}
\label{def:17}
An $\omega_{0}$-plurisubharmonic function $\varphi$ on $X$ is said \emph{to have algebraic singularities}, if the function $\varphi$ can locally be 
written as  
\begin{equation}
\label{eq:9}
\varphi=\frac{c}{2}\log\big(\vert f_{1}\vert^2+\ldots+\vert f_{N}\vert^2\big)+\lambda,
\end{equation}
where $c\in\Q_{\ge 0}$ is a constant, $f_{1},\ldots,f_{N}$ are non-zero algebraic rational functions, and $\lambda$ is a bounded function.
\end{definition}

\begin{definition}
\label{def:19}
An $\omega_{0}$-plurisubharmonic function $\varphi$ on $X$ is said \emph{to have almost asymptotically algebraic singularities}, if there is a 
sequence of $\omega_{0}$-plurisubharmonic functions $(\psi_{k})_{k\ge 1}$, an $\omega_{0}$-plurisubharmonic function $f$, having algebraic 
singularities, and a constant $C>0$ such that the inequalities
\begin{displaymath}
\psi_{k}+\frac{1}{k}f-C\le\varphi\le\psi_{k}+C
\end{displaymath}
hold.
\end{definition}

It is easy to see that, if a function satisfies this definition, then it also satisfies~\cite[Definition~3.2]{botero21:_chern_weil_hilber_samuel}.

\section{Review of the theory of adelic arithmetic line bundles} 
\label{third-section}

In this section we summarize some concepts and results from~\cite{YuanZhang:adelic}, although the presentation is slightly different. For 
instance, instead of working with mixed $(\Q,\Z)$-divisors, we will work with divisors with real coefficients.

\subsection{The geometric case, adelic divisors and b-divisors}
\label{sec:geom-case-adel}

Let $X$ be either a projective normal variety over a field $k$ or a projective flat normal scheme, separated and of finite type over a Dedekind 
domain, $B$ an effective Cartier divisor on $X$ (the ``boundary'' divisor) with support $\vert B\vert$,  and $U\coloneqq X\setminus\vert B\vert$. 
We denote by $R(X,U)$ the category of all proper normal modifications of $X$ which are isomorphisms over $U$. Given $\pi\in R(X,U)$, i.\,e., 
$\pi\colon X_{\pi}\rightarrow X$, we denote by $\Div(X_{\pi })$ the group of Cartier divisors on $X_{\pi}$ and we write $\DivR(X_{\pi})\coloneqq
\Div(X_{\pi})\otimes_{\Z}\R$.

\begin{definition} 
\label{def:7} 
The space of \emph{model divisors on $U$} is defined as the limit
\begin{displaymath}
\DivR(U)^{\model}\coloneqq\varinjlim_{\pi\in R(X,U)}\Div_{\R}(X_{\pi}).
\end{displaymath}
\end{definition}

On $\DivR(U)^{\model}$ there is a $B$-adic norm (possibly taking the value $\infty$) defined by
\begin{equation}
\label{eq:4}
\Vert D\Vert\coloneqq\inf\{\varepsilon\in\R_{>0}\,\vert\,-\varepsilon B\le D\le\varepsilon B\};
\end{equation}
here $D\in\DivR(U)^{\model}$, which means that there is a $\pi\in R(X,U)$ such that $D\in\DivR(X_{\pi})$. The condition $D\le\varepsilon B$, 
means that the divisor $\varepsilon\,\pi^{\ast}B-D$ is effective on $X_{\pi}$. Finally, the infimum of an empty set is assumed to be 
$\infty$.

\begin{definition}
\label{def:8}
The space $\DivR(U)^{\adel}$ of \emph{adelic divisors on $U$} is the completion of $\DivR(U)^{\model}$ with respect to the $B$-adic topology 
induced by the  $B$-adic norm defined above.
\end{definition}

In other words, an element $D\in\DivR(U)^{\adel}$ is represented by a sequence of Cartier divisors $(D_{n})_{n\in\N}$ that is Cauchy with
respect to the $B$-adic norm. Two Cauchy sequences $(D_{n})_{n\in\N}$ and $(D_{n}')_{n\in\N}$ are equivalent if the sequence $(D_{n}'')_
{n\in\N}$ with $D_{2n}''=D_{n}$ and $D_{2n+1}''=D_{n}'$ is still Cauchy.

\begin{remark}
\label{rem:11}
If $B'$ is another boundary divisor with $\vert B\vert=\vert B'\vert$, then there is a positive constant $c>0$ such that $(1/c)B\le B' \le cB$. 
Therefore, the $B$-adic norm and the $B'$-adic norm are equivalent. Thus, the definition of $\DivR(U)^{\adel}$ only depends on the open 
set $U$ and not on a particular choice of boundary divisor $B$.
\end{remark}

\begin{definition}
\label{def:9}
The cone $\DivR(U)^{\nef}$ of \emph{nef adelic divisors on $U$} is the closure of the cone of nef model divisors on $U$ inside of $\DivR
(U)^{\adel}$. The space of \emph{integrable adelic divisors on $U$} is the vector space
\begin{displaymath}
\DivR(U)^{\inte}\coloneqq\DivR(U)^{\nef}-\DivR(U)^{\nef}.
\end{displaymath}
\end{definition}

\begin{remark}
\label{rem:8}
If $V\subseteq U\subseteq X$ is an inclusion of open subsets, then there are injective maps
\begin{displaymath}
\DivR(U)^{\adel}\longrightarrow\DivR(V)^{\adel},\quad\DivR(U)^{\nef}\longrightarrow\DivR(V)^{\nef}.
\end{displaymath}
The notion of nef adelic divisor we give here corresponds more precisely to the notion of strongly nef adelic divisor in~\cite{YuanZhang:adelic}. 
Nevertheless, if one is willing to go to a smaller open subset, there is no essential difference between nef and strongly nef adelic divisors 
in~\cite{YuanZhang:adelic}. Namely, if $D$ is a nef adelic divisor in an open subset $U$, then there is a subset $V\subseteq U$ such that 
the image of $D$ in $\DivR(V)^{\adel}$ is strongly nef in the sense of~\cite{YuanZhang:adelic}.  
\end{remark}

From now on we assume that we are in the case when $k$ is a field. The following results are proven in~\cite{YuanZhang:adelic}.

\begin{proposition}
\label{prop:2}
If $\pi\in R(X,U)$ and $B'$ is an effective divisor on $X_{\pi}$ with $\vert B'\vert=X_{\pi}\setminus U$, then the $B'$-adic norm is equivalent to 
the $B$-adic norm. Therefore, the definitions of the spaces $\DivR(U)^{\adel}$, $\DivR(U)^{\nef}$, and $\DivR(U)^{\inte}$ only depend on $U$ 
and not on a particular choice of compactification $X$ and divisor $B$.  
\end{proposition}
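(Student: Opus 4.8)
The plan is to reduce the statement to a comparison of the two boundary divisors on the single scheme $X_{\pi}$. First I would record that $R(X_{\pi},U)$ is cofinal in $R(X,U)$: given $\rho\colon X_{\rho}\to X$ in $R(X,U)$, the normalization of the closure of $U$, diagonally embedded, in $X_{\rho}\times_{X}X_{\pi}$ is a proper normal modification that dominates both $X_{\rho}$ and $X_{\pi}$ and is an isomorphism over $U$. Hence the colimit $\DivR(U)^{\model}$ is canonically the same whether it is formed over $R(X,U)$ or over $R(X_{\pi},U)$, so the $B$-adic and the $B'$-adic norms are defined on the very same space. Moreover, pullback along a proper modification of normal varieties both preserves and reflects effectivity of $\R$-divisors, so the value of either norm on a class does not depend on the model chosen to represent it, and one may freely pass to common refinements.

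Next I would compare $\pi^{\ast}B$ and $B'$ directly on $X_{\pi}$. Since $U=X\setminus\vert B\vert$ and $\pi$ is proper, birational, and an isomorphism over $U$, we have $\vert\pi^{\ast}B\vert=\pi^{-1}(\vert B\vert)=X_{\pi}\setminus U=\vert B'\vert$. Thus $\pi^{\ast}B$ and $B'$ are two effective divisors on $X_{\pi}$ with the same support, which is pure of codimension one and has only finitely many prime components $E_{1},\dots,E_{r}$, each occurring with multiplicity $\ge 1$ in both divisors. Arguing exactly as in Remark~\ref{rem:11}, i.e., writing $\pi^{\ast}B=\sum_{i}a_{i}E_{i}$ and $B'=\sum_{i}b_{i}E_{i}$ and taking $c\coloneqq\max_{i}\max(a_{i}/b_{i},b_{i}/a_{i})\ge 1$, we obtain $\tfrac{1}{c}B'\le\pi^{\ast}B\le cB'$ as $\R$-divisors on $X_{\pi}$. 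Pulling this chain back along any further modification $\mu\colon X_{\rho}\to X_{\pi}$, and using $\mu^{\ast}\pi^{\ast}B=\rho^{\ast}B$ together with preservation of effectivity, yields $\tfrac{1}{c}\mu^{\ast}B'\le\rho^{\ast}B\le c\,\mu^{\ast}B'$ on $X_{\rho}$.

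The equivalence of norms is then immediate. Let $D\in\DivR(U)^{\model}$, represented, after passing to a common refinement, on some $X_{\rho}$ that factors through $X_{\pi}$. If $-\varepsilon\,\rho^{\ast}B\le D\le\varepsilon\,\rho^{\ast}B$, then by the inequality of the previous paragraph $-c\varepsilon\,\mu^{\ast}B'\le D\le c\varepsilon\,\mu^{\ast}B'$, so $\Vert D\Vert_{B'}\le c\,\Vert D\Vert_{B}$; the symmetric argument gives $\Vert D\Vert_{B}\le c\,\Vert D\Vert_{B'}$. Hence the two norms are equivalent, so they induce the same topology on $\DivR(U)^{\model}$, the same completion $\DivR(U)^{\adel}$, and — since $\DivR(U)^{\nef}$ is a topological closure and $\DivR(U)^{\inte}$ is obtained from it by linearity — the same spaces $\DivR(U)^{\nef}$ and $\DivR(U)^{\inte}$. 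For the independence of the compactification itself, given two compactifications $X_{1},X_{2}$ of $U$ with boundary divisors $B_{1},B_{2}$, I would form a common refinement $X_{3}$, namely the normalization of the closure of $U$ in $X_{1}\times_{k}X_{2}$, apply the part just proved to $X_{3}\to X_{1}$ and to $X_{3}\to X_{2}$ relative to a single chosen boundary divisor on $X_{3}$, and invoke Remark~\ref{rem:11} on $X_{3}$ to pass between that divisor and any other; chaining the resulting equivalences identifies the norms attached to $(X_{1},B_{1})$ and to $(X_{2},B_{2})$.

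No step here presents a serious obstacle: the only substantive input is the commensurability of two effective divisors with a common support, which is already the content of Remark~\ref{rem:11}, and everything else is the bookkeeping of filtered colimits and common refinements. The point that requires the most care is ensuring that the comparison of boundary divisors holds uniformly across all models $X_{\rho}$ simultaneously, and this is precisely what pulling the single inequality $\tfrac{1}{c}B'\le\pi^{\ast}B\le cB'$ back from $X_{\pi}$ accomplishes.
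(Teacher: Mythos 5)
Your proof is correct and follows the approach the paper itself intends: the paper gives no proof of this proposition (it defers to \cite{YuanZhang:adelic}), and your argument --- cofinality of $R(X_{\pi},U)$ in $R(X,U)$, commensurability $\tfrac{1}{c}B'\le\pi^{\ast}B\le cB'$ of two effective divisors with common support as in Remark~\ref{rem:11}, propagated to all models by pullback --- is exactly the standard one. The care you take that effectivity is both preserved and reflected under pullback along proper birational morphisms of normal varieties, so that the single inequality on $X_{\pi}$ controls the norms uniformly on every refinement, is precisely the point that needs attention, and you handle it correctly.
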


\begin{theorem}
\label{thm:3} 
The intersection pairing of nef Cartier divisors extends by continuity to a unique symmetric multilinear pairing
\begin{displaymath}
\underbrace{\DivR(U)^{\nef}\otimes\ldots\otimes\DivR(U)^{\nef}}_{\text{$\dim(U)$-\emph{times}}}\longrightarrow\R. 
\end{displaymath}
This pairing can be extended by linearity to a pairing 
\begin{displaymath}
\underbrace{\DivR(U)^{\inte}\otimes\ldots\otimes\DivR(U)^{\inte}}_{\text{$\dim(U)$-\emph{times}}}\longrightarrow\R.
\end{displaymath}
\end{theorem}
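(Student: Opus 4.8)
The plan is to build the pairing by first pinning it down on the dense subspace of model divisors and then extending by continuity, with the whole difficulty concentrated in one quantitative estimate. Set $n=\dim(U)$ and fix a compactification $X\supseteq U$ with boundary $B$. On each model $X_{\pi}$, which is projective, classical intersection theory provides a symmetric $n$-linear map $\DivR(X_{\pi})^{\times n}\to\R$; by the projection formula these are compatible with pullback along the morphisms in $R(X,U)$, so they glue to a symmetric $n$-linear pairing on $\DivR(U)^{\model}=\varinjlim_{\pi}\DivR(X_{\pi})$, which restricts to nef model divisors. I would record two elementary positivity facts valid on a projective $n$-fold: (i) a product of $n-1$ nef divisors is a \emph{nef curve class}, i.e.\ it pairs non-negatively with every effective divisor (restrict the nef factors to the components of the effective divisor); and (ii) consequently, if $N,A_{1},\dots,A_{n-1},A_{1}',\dots,A_{n-1}'$ are nef with every difference $A_{j}'-A_{j}$ effective, then $N\cdot A_{1}\cdots A_{n-1}\le N\cdot A_{1}'\cdots A_{n-1}'$, because each term of the telescoped difference pairs one effective divisor against $n-1$ nef ones.

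The key step is to prove that, given Cauchy sequences $(D^{(1)}_{m})_{m},\dots,(D^{(n)}_{m})_{m}$ of nef model divisors, the numbers $c_{m}\coloneqq D^{(1)}_{m}\cdots D^{(n)}_{m}$ form a Cauchy sequence. Here I would first drop an initial segment so that $\Vert D^{(i)}_{m}-D^{(i)}_{1}\Vert\le M$ for all $m$ and $i$ with $M$ fixed; choose a model $X_{\pi_{0}}$ carrying representatives of $B$ and all $D^{(i)}_{1}$, together with a nef divisor $H_{0}$ on $X_{\pi_{0}}$ (a sufficiently positive ample divisor, moved to dominate the boundary) such that $H_{0}-D^{(i)}_{1}-MB$ is effective for every $i$; then $D^{(i)}_{m}\le H_{0}$ on every model dominating $X_{\pi_{0}}$. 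For a pair $\ell,m$, on a model dominating the finitely many relevant ones I would expand $c_{\ell}-c_{m}$ by the usual telescoping identity, isolating a single factor $D^{(i)}_{\ell}-D^{(i)}_{m}$ in each term; since $-\varepsilon_{i}B\le D^{(i)}_{\ell}-D^{(i)}_{m}\le\varepsilon_{i}B$ with $\varepsilon_{i}=\Vert D^{(i)}_{\ell}-D^{(i)}_{m}\Vert$, fact (i) bounds the $i$-th term in absolute value by $\varepsilon_{i}\,(P_{i}\cdot B)$, where $P_{i}$ is a product of $n-1$ nef divisors. To bound $P_{i}\cdot B$ uniformly I would push $P_{i}$ forward to $X_{\pi_{0}}$, obtaining a nef curve class, pair it with a fixed ample divisor $A$ on $X_{\pi_{0}}$, and use fact (ii) with the inequalities $D^{(i)}_{m}\le H_{0}$ to get $A\cdot(\text{pushforward of }P_{i})\le A\cdot H_{0}^{n-1}$, a constant independent of $\ell,m$; consequently these curve classes all lie in the compact slice $\{\gamma:A\cdot\gamma\le A\cdot H_{0}^{n-1}\}$ of the nef curve cone of $X_{\pi_{0}}$ (a salient closed cone, $X_{\pi_{0}}$ being projective), on which $\gamma\mapsto B\cdot\gamma$ is bounded, say by $C$. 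This yields $\lvert c_{\ell}-c_{m}\rvert\le nC\max_{i}\Vert D^{(i)}_{\ell}-D^{(i)}_{m}\Vert\to 0$.

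With the estimate established, I would define the pairing of nef adelic divisors as $\lim_{m}c_{m}$ for any choice of representing Cauchy sequences; independence of the choice follows by interleaving, since the interleaved sequences are again Cauchy by the very definition of equivalence, so the estimate applies. Density of nef model divisors in $\DivR(U)^{\nef}$ then makes this the unique continuous extension, and symmetry, additivity, and positive homogeneity in each slot pass to the limit from the model level (a coordinatewise sum of Cauchy sequences of nef model divisors is again such a sequence). Finally, writing $\DivR(U)^{\inte}=\DivR(U)^{\nef}-\DivR(U)^{\nef}$, I would extend by multilinear expansion; this is well defined because an equality $\overline{D}^{+}-\overline{D}^{-}=\overline{E}^{+}-\overline{E}^{-}$ with all four divisors nef rearranges to the identity $\overline{D}^{+}+\overline{E}^{-}=\overline{E}^{+}+\overline{D}^{-}$ of nef adelic divisors, whence the multilinearity already proved on the nef cone forces the two expansions to coincide; symmetry is inherited.

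The main obstacle is precisely the uniform bound on $P_{i}\cdot B$: a priori the model intersection numbers along the Cauchy sequences could blow up, and the divisors occurring live on arbitrarily large models, so one cannot simply pass to a single model and invoke finite-dimensionality of a numerical group. The resolution is to use that a Cauchy sequence is bounded in the $B$-adic pseudometric, which confines its terms to a fixed box over the single model $X_{\pi_{0}}$, and then to apply monotonicity of products of nef divisors only in the safe form (ii) — never intersecting two genuinely effective (non-nef) divisors against merely $n-2$ nef ones, where no sign control exists — together with compactness of a hyperplane slice of the nef curve cone. The remaining verifications are routine.
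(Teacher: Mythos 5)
Your argument is correct, and in fact the paper offers no proof of its own here — it simply cites Yuan--Zhang — so the relevant comparison is with their argument, which rests on essentially the same Cauchy estimate you give: telescope the difference of products, trap each difference factor between $\pm\varepsilon B$, and bound the resulting pairings of $B$ against products of $n-1$ nef divisors uniformly by confining all terms below a fixed nef divisor $H_{0}$ pulled back from a single model. Two cosmetic remarks: the divisors $H_{0}-D^{(i)}_{1}-MB$ need only be \emph{numerically} equivalent to effective ones (which a sufficiently ample $H_{0}$ guarantees, and which suffices for all the intersection-number inequalities used), and the compactness-of-the-cone-slice step can be bypassed by choosing an ample $A'$ on $X_{\pi_{0}}$ dominating $B$ and applying your monotonicity fact once more to get $P_{i}\cdot B\le P_{i}\cdot A'\le A'\cdot H_{0}^{\,n-1}$ directly.
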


Theorem~\ref{thm:3} means that, if $D_{j}\in\DivR(U)^{\nef}$ with $j=1,\ldots,d=\dim(U)$ are nef adelic divisors on $U$ represented by the 
Cauchy sequences $(D_{j,n})_{n\in\N}$ of nef model divisors on $U$, then the limit
\begin{equation}
\label{eq:3}
D_{1}\cdot\ldots\cdot D_{d}\coloneqq\lim_{(n_{1},\ldots,n_{d})\to(\infty,\ldots,\infty)}D_{1,n_{1}}\cdot\ldots\cdot D_{d,n_{d}}
\end{equation}
exists and is independent of the choice of approximating sequences.

\begin{remark}
\label{rem:3} 
In~\cite{YuanZhang:adelic} it is only stated that the diagonal limit
\begin{displaymath}
\lim_{n\to\infty}D_{1,n}\cdot\ldots\cdot D_{d,n}
\end{displaymath}
exists and is independent of the chosen Cauchy sequences. But their proof actually shows that the multi-limit exists. In particular, we can 
compute it using iterated limits. 
\end{remark}

We next want to relate the notions of adelic divisors and of b-divisors. Recall that the spaces of Cartier b-divisors and Weil b-divisors on 
$X$ are defined as
\begin{displaymath}
\CbDiv(X)=\varinjlim_{\pi\in R(X)}\DivR(X_{\pi})\qquad\text{and}\qquad\WbDiv(X)=\varprojlim_{\pi\in R(X)}\DivR(X_{\pi}),
\end{displaymath}
respectively, where the limits are taken with respect to all possible proper modifications of $X$, i.\,e., $R(X)=\{\pi\colon X_{\pi}\rightarrow 
X\,\vert\,\text{$\pi$ is a proper modification}\}$. Both limits are defined in the category of topological vector spaces and thus they have a 
topology.

There is a map $b\colon\DivR(U)^{\model}\rightarrow\CbDiv(X)$, since the directed set used to define the first one is a subset of the one 
used to define the second one. Since there is a canonical inclusion $\CbDiv(X)\rightarrow\WbDiv(X)$, we obtain a map, also denoted by 
$b$, from $\DivR(U)^{\model}$ to $\WbDiv(X)$.

\begin{lemma}
\label{lemm:8}
Let $(D_{n})_{n\in \N}$ be a Cauchy sequence in $\DivR(U)^{\model}$ with respect to the $B$-adic topology. Then, the sequence $(b(D_
{n}))_{n\in\N}$ is convergent in $\WbDiv(X)$.   
\end{lemma}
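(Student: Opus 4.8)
The plan is to realise the limit levelwise. Since $\WbDiv(X)=\varprojlim_{\rho\in R(X)}\DivR(X_{\rho})$ carries the projective-limit topology, it suffices to produce, for every proper modification $X_{\rho}$ ($\rho\in R(X)$), a limit $E_{\rho}\in\DivR(X_{\rho})$ of the incarnations $(b(D_{n}))_{\rho}$, and then to check that the family $(E_{\rho})_{\rho}$ is compatible under push-forward. The engine of the argument will be the estimate
\[
 -\Vert D_{m}-D_{n}\Vert\cdot\rho^{\ast}B\ \le\ (b(D_{m}))_{\rho}-(b(D_{n}))_{\rho}\ \le\ \Vert D_{m}-D_{n}\Vert\cdot\rho^{\ast}B ,
\]
valid for all $m,n$ and all $\rho\in R(X)$, where $\rho^{\ast}B$ is the incarnation on $X_{\rho}$ of the Cartier b-divisor $\mathbf{B}$ attached to the boundary divisor $B$.

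To prove this estimate — which I expect to be the only genuine point of the proof — I would argue as follows. Using that $R(X)$ is directed, choose $X_{\sigma}$ dominating $X_{\rho}$ and the two models on which $D_{m}$ and $D_{n}$ are defined, with projections $p\colon X_{\sigma}\to X_{\rho}$ and $q_{m},q_{n}$ to those models. On $X_{\sigma}$ the Cauchy hypothesis gives $-\varepsilon\,\sigma^{\ast}B\le q_{m}^{\ast}D_{m}-q_{n}^{\ast}D_{n}\le\varepsilon\,\sigma^{\ast}B$ with $\varepsilon=\Vert D_{m}-D_{n}\Vert$; that is, $\varepsilon\,\sigma^{\ast}B\pm(q_{m}^{\ast}D_{m}-q_{n}^{\ast}D_{n})$ are effective Cartier divisors on $X_{\sigma}$. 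Applying $p_{\ast}$ and using that push-forward of effective cycles is effective, that $p_{\ast}p^{\ast}=\mathrm{id}$ on Cartier divisors over the normal target $X_{\rho}$ (so $p_{\ast}\sigma^{\ast}B=\rho^{\ast}B$), and that $p_{\ast}(q_{\bullet}^{\ast}D_{\bullet})=(b(D_{\bullet}))_{\rho}$ by the very definition of the Weil-b-divisor incarnation of the Cartier b-divisor $b(D_{\bullet})$, yields the displayed inequality. The subtle points to handle carefully here are that $D_{m}$ and $D_{n}$ live a priori on different models, so the $B$-adic inequality must first be transported to a common one, and that $p_{\ast}$ really behaves as claimed on Cartier divisors.

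With the estimate in hand the remainder is bookkeeping. For a prime divisor $\Gamma$ on some $X_{\rho}$ it gives $\lvert\mathrm{mult}_{\Gamma}(b(D_{m}))-\mathrm{mult}_{\Gamma}(b(D_{n}))\rvert\le\Vert D_{m}-D_{n}\Vert\cdot\mathrm{mult}_{\Gamma}(\rho^{\ast}B)$. If $\Gamma$ is not contained in $\lvert\rho^{\ast}B\rvert$, then $\mathrm{mult}_{\Gamma}(\rho^{\ast}B)=0$ and the multiplicities $\mathrm{mult}_{\Gamma}(b(D_{n}))$ are eventually constant; if $\Gamma\subseteq\lvert\rho^{\ast}B\rvert$, then $\mathrm{mult}_{\Gamma}(\rho^{\ast}B)$ is a fixed finite number and $(\mathrm{mult}_{\Gamma}(b(D_{n})))_{n}$ is a Cauchy sequence of real numbers, hence convergent. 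Set $e_{\Gamma}\coloneqq\lim_{n}\mathrm{mult}_{\Gamma}(b(D_{n}))$. On a fixed $X_{\rho}$ only finitely many prime divisors lie in $\lvert\rho^{\ast}B\rvert$, and outside $\lvert\rho^{\ast}B\rvert$ the numbers $e_{\Gamma}$ coincide with the finitely many nonzero multiplicities of some fixed $b(D_{N})$; hence $E_{\rho}\coloneqq\sum_{\Gamma}e_{\Gamma}\,\Gamma$ is a well-defined element of $\DivR(X_{\rho})$ and $(b(D_{n}))_{\rho}\to E_{\rho}$. Finally, since push-forward $\DivR(X_{\sigma})\to\DivR(X_{\rho})$ is continuous and $(b(D_{n}))_{n}$ is a coherent system across models, the family $(E_{\rho})_{\rho}$ is compatible, so $E\coloneqq(E_{\rho})_{\rho}\in\WbDiv(X)$ and $b(D_{n})\to E$, which is the assertion of the lemma.
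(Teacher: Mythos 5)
Your proposal is correct and follows essentially the same route as the paper: both arguments define the incarnation of $D_n$ on each model $X_{\rho}$ by pull-back to a common dominating modification followed by push-forward, use the Cauchy condition (transported via effectivity of push-forward, giving the bound by $\Vert D_m-D_n\Vert\cdot\rho^{\ast}B$) to conclude that the supports are uniformly bounded and the multiplicities converge, and then verify compatibility of the limits under push-forward. Your write-up merely makes the key estimate and the finiteness-of-support bookkeeping slightly more explicit than the paper does.
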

\begin{proof}
We start by recalling the topology of $\WbDiv(X)$. Since it is defined as an inverse limit, an element of $\WbDiv(X)$ is a collection $(D_
{\pi})_{\pi\in R(X)}$ of divisors, one on each proper modification $X_{\pi}$ of $X$, that are compatible by push-forward. A sequence $((D_
{n,\pi})_{\pi\in R(X)})_{n\in\N}$ in $\WbDiv(X)$ converges to $(D_{\pi })_{\pi\in R(X)}$, if and only if the sequence $(D_{n,\pi})_{n\in\N}$ 
converges to $D_{\pi}$ for all $\pi\in R(X)$. This means that there is a divisor $E_{\pi}$ on $X_{\pi}$ such that $\vert D_{n,\pi}\vert\subseteq
\vert E_{\pi}\vert$, that is, the supports of all the divisors $D_{n,\pi}$ are contained in a common divisor for all $n\in\N$, and the divisors 
$D_{n,\pi}$ converge to $D_{\pi}$ componentwise.

Now assume that $(D_{n})_{n\in\N}$ is a Cauchy sequence of model divisors on $U$. This means that, for all $\varepsilon>0$, there is 
$n_{\varepsilon}\in\N$ such that the inequality
\begin{equation}
\label{eq:28}
-\varepsilon B \le D_{n}-D_{m}\le\varepsilon B
\end{equation}
holds for all $n,m\ge n_{\varepsilon}$. Here, $D_{n}$ lives on some modification $X_{n}$, $D_{m}$ on some modification $X_{m}$, $B$
is the boundary divisor on $X$ defining $U$, and the inequality is to be understood on a suitable modification of $X$ that dominates both. 
For each modification $\pi\colon X_{\pi}\rightarrow X$, we define $B_{\pi}=\pi^{\ast}B$, and we choose a proper modifications $X_{\pi'}$ 
that dominates at the same time $X_{\pi}$, $X_{n}$, and $X_{m}$. Then, we define $D_{n,\pi}$ and $D_{m,\pi}$ by first pulling back $D_{n}$ 
and $D_{m}$ to $X_{\pi'}$ and then pushing forward the resulting divisors to $X_{\pi}$, respectively. The obtained divisors do not depend on 
the choice of $\pi'$. Condition~\eqref{eq:28} implies that, for all $n\ge n_{\varepsilon}$, the support of $D_{n,\pi}$ is contained in $\vert D_
{n_{\varepsilon}}\vert\cup\vert B\vert$ and that the multiplicities of $(D_{n,\pi})_{n\in\N}$ form a Cauchy sequence. Therefore, the sequence 
$(D_{n,\pi})_{n\in\N}$ converges to a divisor $D_{\pi}$ in $X_{\pi}$. It is clear that the various $D_{\pi}$'s obtained in this way are compatible 
with respect to push-forward and hence define an element of $\WbDiv(X)$. The sequence $(b(D_{n}))_{n\in\N}$ by construction converges 
to this element.  
\end{proof}

Lemma~\ref{lemm:8} implies that the map $b$ can be extended to a continuous map
\begin{displaymath}
b\colon\DivR(U)^{\adel}\longrightarrow\WbDiv(X). 
\end{displaymath}

In order to link adelic nef divisors with approximable nef b-divisors in the sense of~\cite[Definition~4.8]{botero21:_chern_weil_hilber_samuel}, 
it is convenient to have a monotonous approximation to a nef adelic divisor. This can always be achieved as soon as the boundary divisor is 
nef.

\begin{lemma}
\label{lemm:9}
Let $X$ be a complex projective manifold with boundary divisor $B$ and $U=X\setminus\vert B\vert$ such that $B$ is a nef divisor. Let 
$D$ be a nef adelic divisor on $U$. Then, there is a sequence $(D_{n})_{n\in\N}$ of nef model divisors on $U$ converging monotonically 
decreasingly to $D$ in the $B$-adic topology. That is, for all $n\in\N$, we have $D_{n+1}\le D_{n}$.
\end{lemma}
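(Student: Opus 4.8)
The plan is to start from an \emph{arbitrary} approximating sequence of nef model divisors and then perturb it by a decreasing multiple of the (effective and nef) boundary divisor $B$, using $B$ as ``slack'' to force monotonicity without destroying nefness or altering the limit.

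First I would recall that, since the topology on $\DivR(U)^{\adel}$ is the one induced by the $B$-adic norm, every element of the closure $\DivR(U)^{\nef}$ is the limit of a sequence of nef model divisors; hence there is a sequence $(E_{n})_{n\in\N}$ of nef model divisors on $U$ with $E_{n}\to D$ in the $B$-adic topology. Being convergent, $(E_{n})_{n\in\N}$ is Cauchy, so after passing to a subsequence I may assume $\Vert E_{n+1}-E_{n}\Vert<2^{-n}$ for all $n\in\N$. Unwinding the definition~\eqref{eq:4} of the $B$-adic norm — and using that $\{\varepsilon>0 : -\varepsilon B\le x\le\varepsilon B\}$ is upward closed because $B$ is effective — this inequality gives, on any common modification dominating the models carrying $E_{n}$ and $E_{n+1}$ and on which $B$ has been pulled back, the honest estimate $-2^{-n}B\le E_{n+1}-E_{n}\le 2^{-n}B$. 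I then set
\[
D_{n}\coloneqq E_{n}+2^{1-n}B ,
\]
regarded as a model divisor on the model carrying $E_{n}$ (with $B$ pulled back there).

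The verification is a short computation. Each $D_{n}$ is nef, being a positive-coefficient combination of the nef divisor $E_{n}$ and the pullback of the nef divisor $B$, and pullbacks of nef divisors are nef. For monotonicity, on a common dominating model,
\[
D_{n}-D_{n+1}=(E_{n}-E_{n+1})+\bigl(2^{1-n}-2^{-n}\bigr)B=(E_{n}-E_{n+1})+2^{-n}B\ge 0,
\]
since $E_{n}-E_{n+1}\ge -2^{-n}B$; hence $D_{n+1}\le D_{n}$. Finally $\Vert D_{n}-E_{n}\Vert=\Vert 2^{1-n}B\Vert=2^{1-n}\to 0$, so by the triangle inequality $D_{n}$ and $E_{n}$ have the same limit in the $B$-adic topology, namely $D$. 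This produces the desired monotonically decreasing sequence of nef model divisors.

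I do not expect a genuine obstacle here: the only care needed is the routine bookkeeping of repeatedly passing to common modifications when comparing divisors living on different models, together with the elementary remark about the up-set $\{\varepsilon>0:-\varepsilon B\le x\le\varepsilon B\}$ that converts a norm bound into an honest inequality of divisors. The one structural input that is genuinely used — and where the hypothesis enters — is the nefness of $B$: this is exactly what keeps the perturbed divisors $D_{n}=E_{n}+2^{1-n}B$ nef, and without it the construction would yield a monotone sequence of model divisors that need not be nef.
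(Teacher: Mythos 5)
Your proof is correct and is essentially the paper's argument: both perturb an approximating sequence of nef model divisors by a shrinking multiple of the nef boundary divisor $B$ to force monotonicity, relying on nefness of $B$ to preserve nefness and on $\Vert c_nB\Vert\to 0$ to keep the limit. The only (immaterial) difference is that you derive the needed inequality from the Cauchy estimate between consecutive terms $E_{n},E_{n+1}$, whereas the paper uses the two-sided comparison of each $E_n$ with $D$ itself and the constant $4/2^{n}$ in place of your $2^{1-n}$.
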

\begin{proof}
Since $D$ is a nef adelic divisor on $U$, there is a sequence of nef model divisors $(E_{n})_{n\in\N}$ on $U$ converging to $D$ in the 
$B$-adic topology. After extracting a subsequence, we can assume that
\begin{displaymath}
D-\frac{1}{2^{n}}\,B\le E_{n}\le D+\frac{1}{2^{n}}\,B.
\end{displaymath}
Writing $D_{n}=E_{n}+\frac{4}{2^{n}}\,B$, we find
\begin{displaymath}
D_{n+1}=E_{n+1}+\frac{4}{2^{n+1}}\,B\le D+\frac{5}{2^{n+1}}\,B\le E_{n}+\frac{7}{2^{n+1}}\,B\le E_{n}+\frac{4}{2^{n}}B=D_{n}. 
\end{displaymath}
This complets the proof of the lemma.
\end{proof}

\begin{corollary}
\label{cor:6}
If $D$ is a nef adelic divisor on $U$, then $b(D)$ is an approximable nef b-divisor in the sense of~\cite[Definition~4.8]{botero21:_chern_weil_hilber_samuel}. 
Moreover, the intersection product of nef adelic divisors agrees through the map $b$ with the intersection product of nef b-divisors introduced 
in~\cite{DangFavre:inter_b_div}. 
\end{corollary}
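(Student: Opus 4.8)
The plan is to realize both $b(D)$ and the intersection numbers as limits along a \emph{monotone} approximation of $D$, and then to match these limits with the constructions of \cite{botero21:_chern_weil_hilber_samuel} and \cite{DangFavre:inter_b_div}. We place ourselves in the setting of Lemma~\ref{lemm:9}, so that the boundary divisor $B$ may be assumed nef. Given a nef adelic divisor $D$ on $U$, first I would invoke Lemma~\ref{lemm:9} to obtain a sequence $(D_{n})_{n\in\N}$ of nef model divisors on $U$ converging to $D$ in the $B$-adic topology with $D_{n+1}\le D_{n}$ for all $n$. Pushing this sequence through $b$, each $b(D_{n})$ is a nef Cartier b-divisor on $X$ (the divisor $D_{n}$ being nef on the model $X_{\pi_{n}}$ on which it lives), the inequalities $D_{n+1}\le D_{n}$ read on a common dominating model give $b(D_{n+1})\le b(D_{n})$ in $\WbDiv(X)$, and by Lemma~\ref{lemm:8} together with the continuity of the extended map $b\colon\DivR(U)^{\adel}\to\WbDiv(X)$ the sequence $(b(D_{n}))_{n\in\N}$ converges to $b(D)$. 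Thus $b(D)$ is a decreasing limit of nef Cartier b-divisors, and the remaining point for the first assertion is to check that this is exactly the content of \cite[Definition~4.8]{botero21:_chern_weil_hilber_samuel}; the only extra condition to verify is the convergence of the self-intersection numbers $D_{n}^{\,d}=(b(D_{n})^{d})$, which holds because this sequence is monotone (the $D_{n}$ being nef and decreasing) and bounded below by $0$.

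For the statement about intersection products I would argue that both pairings are computed by the same limit of intersection numbers on models. Let $D_{1},\ldots,D_{d}$ be nef adelic divisors on $U$ and choose, as above, monotonically decreasing approximating sequences $(D_{j,n})_{n\in\N}$ of nef model divisors. By Theorem~\ref{thm:3} and Remark~\ref{rem:3}, the adelic intersection number is $D_{1}\cdots D_{d}=\lim_{n\to\infty}D_{1,n}\cdots D_{d,n}$, the multi-limit reducing to the diagonal one. For Cartier divisors living on models, the geometric intersection number agrees with the b-divisor intersection number of the associated Cartier b-divisors: pulling $D_{1,n},\ldots,D_{d,n}$ back to a common model dominating all the $X_{\pi_{j,n}}$ and using the projection formula gives $D_{1,n}\cdots D_{d,n}=\big(b(D_{1,n})\cdots b(D_{d,n})\big)$. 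Since the intersection pairing of nef b-divisor classes of \cite{DangFavre:inter_b_div} is continuous along decreasing sequences of nef Cartier b-divisors, it is computed as $\big(b(D_{1})\cdots b(D_{d})\big)=\lim_{n\to\infty}\big(b(D_{1,n})\cdots b(D_{d,n})\big)$, and comparing the two limits yields the claimed equality.

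The hard part will be purely one of reconciling conventions rather than of substance. First, one must make sure that ``approximable nef b-divisor'' in \cite{botero21:_chern_weil_hilber_samuel} is literally what Lemma~\ref{lemm:9} produces: this is where the standing hypothesis that $B$ is nef is used, and where one should double-check that Definition~4.8 there does not impose any normalization or finiteness condition beyond decreasing approximability by nef Cartier b-divisors with convergent self-intersection. Second, the pairing of \cite{DangFavre:inter_b_div} is \emph{a priori} defined through a limit/regularization procedure of its own, so one has to know that it agrees with the naive limit along the particular sequences coming from Lemma~\ref{lemm:9}; this follows from the fact that both pairings are independent of the chosen decreasing approximation, so it suffices to compare them on common refinements of the two systems of models, after which the identification of the finite-level intersection numbers finishes the argument.
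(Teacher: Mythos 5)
Your proposal is correct and follows essentially the same route as the paper's proof: reduce to a nef boundary divisor, use Lemma~\ref{lemm:9} to get a monotone decreasing approximation by nef model divisors so that $b(D)$ is approximable nef, and then identify the two intersection pairings as continuous extensions of the intersection product of nef Cartier b-divisors. The paper states this in three sentences; you have simply filled in the details (the role of Lemma~\ref{lemm:8}, the projection formula on common models, and the diagonal limit from Remark~\ref{rem:3}) of the same argument.
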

\begin{proof}
If $D$ is a nef adelic divisor, after shrinking $U$ if neccesary, we can assume that the boundary divisor $B$ is nef. Therefore, by Lemma
\ref{lemm:9}, it can be aproximated monotonically by nef Cartier b-divisors, so $b(D)$ is approximable nef. The intersection products of~\cite
{YuanZhang:adelic} and of~\cite{DangFavre:inter_b_div} now agree because both are continuous extensions of the intersection product of
nef Cartier b-divisors.  
\end{proof}

\subsection{The local arithmetic case over an archimedean place, Green functions} 
\label{sec:local-arithm-case}

In this subsection, let $X$ be a complex projective manifold of dimension $d$. Let $B$ be an effective Cartier divisor on $X$ with support 
$\vert B\vert$ and let $U\coloneqq X\setminus\vert B\vert$. Let $g_{B}$ be a Green function of continuous type for $B$ such that there is an 
$\eta>0$ with $g_{B}(x)>\eta$ for all $x\in U$. Such Green functions exist because $B$ being effective, any Green function of continuous 
type for $B$ is bounded below, and we can add to it a big enough constant to make it bigger than $\eta$. We call $\overline{B}=(B,g_{B})$ an
\emph{arithmetic boundary divisor for $X$}.

We now repeat the steps of Subsection~\ref{sec:geom-case-adel} for the local arithmetic case. For this, let $R(X,U)$ be as in the previous 
subsection.

\begin{definition}
\label{def:10} 
An \emph{arithmetic divisor $\overline{D}$ on $X_{\pi}$ of smooth type} is a pair $(D,g_{D})$, where $D\in\DivR(X_{\pi})$ is a Cartier 
divisor on $X_{\pi}$ and $g_{D}$ is a Green function for $D$ of smooth type; here $\pi\in R(X,U)$. The vector space of arithmetic 
divisors on $X_{\pi}$ of smooth type is denoted by $\DivhR(X_{\pi})$.

The space of \emph{model  arithmetic divisors on $U$ of smooth type} is defined as the limit
\begin{displaymath}
\DivhR(U)^{\model}\coloneqq\varinjlim_{\pi\in R(X,U)}\DivhR(X_{\pi}).
\end{displaymath}
Similarly we can define \emph{model arithmetic divisors on $U$ of continuous type} by asking that the Green functions are of continuous 
type. To shorten the notation, by a model arithmetic divisor on $U$, we will mean a model arithmetic divisor on $U$ of smooth type, and 
when we consider a model arithmetic divisor on $U$ of continuous type, we will say so explicitely. 
\end{definition}

On $\DivhR(U)^{\model}$ there is a $\overline{B}$-adic norm (possibly again taking the value $\infty$) defined by
\begin{displaymath}
\Vert\overline{D}\Vert=\Vert(D,g_{D})\Vert\coloneqq\inf\{\varepsilon\in\R_{>0}\,\vert\,-\varepsilon B\le D\le\varepsilon B,\,-\varepsilon
g_{B}\le g_{D}\le\varepsilon g_{B}\};
\end{displaymath}
here $D\in\DivR(U)^{\model}$, which means that there is a $\pi\in R(X,U)$ such that $D\in\DivR(X_{\pi})$, and $g_{D}$ is a Green
function for $D$ of smooth type.

\begin{definition}
\label{def:11} 
The space $\DivhR(U)^{\adel}$ of \emph{adelic arithmetic divisors on $U$} is the completion of $\DivhR(U)^{\model}$ with respect to 
the $\overline{B}$-adic topology induced by the $\overline{B}$-adic norm defined above. As before, elements of $\DivhR(U)^{\adel}$ 
are represented by Cauchy sequences of model arithmetic divisors on $U$.   
\end{definition}

\begin{remark} 
\label{rem:9}
Since the set of smooth functions is dense inside of the set of continuous functions with respect to the topology of uniform convergence 
on compacta, the space $\DivhR(U)^{\adel}$ can also be seen as the completion of the space of model arithmetic divisors on $U$ of 
continuous type with respect to the $\overline{B}$-adic topology. 
\end{remark}

\begin{definition}
\label{def:16}
A model arithmetic divisor $\overline{D}=(D,g_{D})\in\DivhR(U)^{\model}$ is called \emph{nef}, if $D$ is a nef divisor on $X_{\pi}$ for 
some $\pi\in R(X,U)$ and $g_{D}$ is a Green function for $D$ that is at the same time of smooth type and of plurisubharmonic type.

The cone $\DivhR(U)^{\nef}$ of \emph{nef adelic arithmetic divisors on $U$} is the closure of the cone of nef model arithmetic divisors 
on $U$ inside of $\DivhR(U)^{\adel}$. That is, the divisors that can be represented by a Cauchy sequence of nef model arithmetic 
divisors on $U$. An adelic arithmetic divisor on $U$ is called \emph{integrable}, if it can be written as the difference of two nef adelic 
arithmetic divisors on $U$. The space of integrable adelic arithmetic divisors on $U$ is denoted by $\DivhR(U)^{\inte}$.
\end{definition}

\begin{remark}
The discussion of Remark~\ref{rem:8} is also valid in this case. 
\end{remark}

\begin{remark}
\label{rem:6}
By the global Richberg's regularization of continuous plurisubharmonic functions (see~\cite[Lemma~2.15]{dem-reg}), an adelic arithmetic
divisor on $U$ is nef if and only if it can be written as the limit of a sequence of model arithmetic divisors on $U$ with Green functions 
that are of continuous and of plurisubharmonic type, and which is Cauchy with respect to the $\overline{B}$-adic norm.     
\end{remark}

The analogue of the following result in the non-archimedean case is contained in~\cite[Theorem~3.6.4]{YuanZhang:adelic}. The proof 
in the archimedean case is much simpler. 

\begin{proposition}
\label{prop:3}
Let $(D_{n},g_{D_{n}})_{n\in\N}$ be a Cauchy sequence of model arithmetic divisors on $U$ representing an element in $\DivhR(U)^
{\adel}$. Then, the sequence $(D_{n})_{n\in\N}$ converges to an element $D\in\DivR(U)^{\adel}$ and the sequence of functions 
$(g_{D_{n}})_{n\in\N}$ converges pointwise on $U$ to a function $g_{D}$. Moreover, the convergence is uniform on compact subsets 
$K\subseteq U$. In particular, there is a short exact sequence
\begin{displaymath}
0\longrightarrow C^{0}(U)_{0}\longrightarrow\DivhR(U)^{\adel}\longrightarrow\DivR(U)^{\adel}\longrightarrow 0,
\end{displaymath}
where $C^{0}(U)_{0}$ denotes the set of continuous functions $f$ on $U$ satisfying that the quotient $f/g_{B}$ can be extended to
a continuous function in the whole of $X$, taking the value zero in $\vert B\vert$. In other words, 
\begin{displaymath}
\vert f(x)\vert=o(g_{B}(x)),
\end{displaymath}
as $x\rightarrow\vert B\vert$.
\end{proposition}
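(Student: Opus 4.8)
\emph{Overall strategy and the convergence statements.} The plan is to exploit that the boundary divisor $B$ is supported away from $U$, so restricting the Cauchy conditions to $U$ kills the divisorial part and reduces everything to controlling Green functions. Since the constraints defining $\Vert(D,g_{D})\Vert$ are strictly stronger than those defining $\Vert D\Vert$, we have $\Vert D\Vert\le\Vert(D,g_{D})\Vert$; hence the projection $\DivhR(U)^{\model}\to\DivR(U)^{\model}$ is $1$-Lipschitz, extends to a continuous map $\pi\colon\DivhR(U)^{\adel}\to\DivR(U)^{\adel}$ on completions, and $(D_{n})_{n\in\N}$ is $B$-adically Cauchy, so it converges to some $D\in\DivR(U)^{\adel}$. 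Because $\supp(B)=\vert B\vert$ is disjoint from $U$, the pullback of $B$ to any modification restricts to $0$ over $U$; thus $-\varepsilon B\le D_{n}-D_{m}\le\varepsilon B$ forces $(D_{n}-D_{m})\vert_{U}=0$, and applying this with $\varepsilon=1$ gives an index $N$ with $D_{n}\vert_{U}$ equal to a fixed divisor $D_{\infty}=D\vert_{U}$ on $U$ for all $n\ge N$. Consequently, for $n\ge N$ the function $w_{n}:=(g_{D_{n}}-g_{D_{N}})\vert_{U}$ is a Green function of smooth type for the zero divisor on $U$, i.e., an honest smooth function on $U$, and the $\overline B$-adic Cauchy condition yields $\vert w_{n}-w_{m}\vert\le\varepsilon\, g_{B}$ on $U$ for $n,m$ large. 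As $g_{B}$ is continuous on $U$ it is bounded on every compact $K\subseteq U$, so $(w_{n})_{n\ge N}$ is uniformly Cauchy on $K$; hence $w_{n}$ converges uniformly on compacts of $U$ to a continuous function $w$, and $g_{D_{n}}$ converges pointwise on $U$ to $g_{D}:=g_{D_{N}}\vert_{U}+w$, uniformly on compact subsets of $U$ (along $\supp(D_{\infty})$ every $g_{D_{n}}$ with $n\ge N$ carries the same logarithmic pole, so the statement is really about the continuous parts).

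\emph{The short exact sequence.} The map $C^{0}(U)_{0}\to\DivhR(U)^{\adel}$ is $f\mapsto(0,f)$; it is the map induced on completions by the inclusion $\{\text{Green functions of continuous type for }0\}\hookrightarrow\DivhR(U)^{\model}$, on which the $\overline B$-norm restricts to the weighted sup-norm $u\mapsto\sup_{U}\vert u\vert/g_{B}$. Writing any $\psi\in C^{0}(X)$ with $\psi\vert_{\vert B\vert}=0$ as a uniform limit of $\psi\chi_{n}$ for cut-off functions $\chi_{n}$ vanishing near $\vert B\vert$ shows that the completion of this subspace is exactly $C^{0}(U)_{0}$, so the map is well-defined, injective, and has image contained in $\ker(\pi)$. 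The core is the following claim: for every model divisor $E$ and every $\varepsilon>\Vert E\Vert$ (so $-\varepsilon B\le E\le\varepsilon B$, forcing $\supp(E)\subseteq\vert B\vert$ after pullback) there is a Green function $h$ of continuous type for $E$ with $-\varepsilon g_{B}\le h\le\varepsilon g_{B}$; one takes $h:=\max\bigl(-\varepsilon g_{B},\min(g_{E},\varepsilon g_{B})\bigr)$ for any Green function $g_{E}$ of continuous type for $E$, and checks that $h$ is again of continuous type for $E$, using that $\supp(E)\subseteq\vert B\vert$ makes $g_{E}$ continuous on $U$ with a singularity along each component $Z\subseteq\vert B\vert$ no worse than that of $\varepsilon g_{B}$ (since $\vert\ord_{Z}E\vert\le\varepsilon\ord_{Z}B$), which forces $h$ to agree with $g_{E}$ near $\vert B\vert$. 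Granting this, $\pi$ is surjective: given $D\in\DivR(U)^{\adel}$, pass to a representative $(D_{n})$ with $\Vert D_{n+1}-D_{n}\Vert\le 2^{-n}$ and build a lift inductively by $g_{D_{n+1}}:=g_{D_{n}}+h_{n}$ with $h_{n}$ as in the claim for $E=D_{n+1}-D_{n}$; the resulting $(D_{n},g_{D_{n}})$ is $\overline B$-adically Cauchy and projects to $D$. Finally, for $\overline D\in\ker(\pi)$ choose a representative $(D_{n},g_{D_{n}})$ with $\Vert D_{n}\Vert\to 0$, replace $g_{D_{n}}$ by $g'_{D_{n}}$ from the claim with $\Vert(D_{n},g'_{D_{n}})\Vert\le 2\Vert D_{n}\Vert\to 0$, and write $(D_{n},g_{D_{n}})=(D_{n},g'_{D_{n}})+(0,g_{D_{n}}-g'_{D_{n}})$; since the first term tends to $0$, $\overline D$ is the limit of the $(0,g_{D_{n}}-g'_{D_{n}})$ and hence lies in the image of $C^{0}(U)_{0}$. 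Thus $\ker(\pi)=C^{0}(U)_{0}$, and the description of $C^{0}(U)_{0}$ via $\vert f(x)\vert=o(g_{B}(x))$ as $x\to\vert B\vert$ is immediate from $g_{B}\to\infty$ along $\vert B\vert$.

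\emph{Main obstacle.} The delicate step is the truncation claim: verifying that $h=\max(-\varepsilon g_{B},\min(g_{E},\varepsilon g_{B}))$ is still a Green function \emph{of continuous type} for $E$ — equivalently, that $h+\log\vert f_{E}\vert^{2}$ is continuous near every component $Z$ of $\vert B\vert$, including the loci where $E$ attains the extremal multiplicity $\ord_{Z}E=\pm\varepsilon\ord_{Z}B$ and $g_{E}$ and $\varepsilon g_{B}$ have the same leading singularity. Everything else — the $1$-Lipschitz projection, the stabilization of $D_{n}\vert_{U}$, and the identification of the completion of $\{(0,u)\}$ with $C^{0}(U)_{0}$ by cut-offs — is routine once this local analysis along $\vert B\vert$ is carried out.
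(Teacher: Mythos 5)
The paper itself gives no proof of this proposition (it defers to the non-archimedean analogue in Yuan--Zhang and asserts the archimedean case is much simpler), so your argument must stand on its own. Its architecture is sound: the $1$-Lipschitz projection, the stabilization of $D_{n}\vert_{U}$, the uniform-on-compacta convergence via $\vert g_{D_{n}}-g_{D_{m}}\vert\le\varepsilon g_{B}$ with $g_{B}$ bounded on compacta of $U$, and the lifting/kernel scheme for the exact sequence are all correct. The one genuine gap is exactly where you place it, and your stated justification there is wrong: in the extremal case $\ord_{Z}E=\pm\varepsilon\ord_{Z}B$ the truncation \emph{does} bite arbitrarily close to $Z$ (the set where $g_{E}>\varepsilon g_{B}$ can accumulate on $Z$), so $h$ need \emph{not} agree with $g_{E}$ near $\vert B\vert$. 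The claim is nevertheless true; the clean verification is to add $\log\vert f_{E}\vert^{2}$ inside the max/min. Writing locally $E=\sum_{i}e_{i}Z_{i}$ and $\pi^{\ast}B=\sum_{i}b_{i}Z_{i}$ with equations $f_{i}$, so that $\vert e_{i}\vert\le\varepsilon b_{i}$, one has
\begin{displaymath}
h+\log\vert f_{E}\vert^{2}=\max\Bigl(-\varepsilon g_{B}+\log\vert f_{E}\vert^{2},\ \min\bigl(g_{E}+\log\vert f_{E}\vert^{2},\ \varepsilon g_{B}+\log\vert f_{E}\vert^{2}\bigr)\Bigr),
\end{displaymath}
where $g_{E}+\log\vert f_{E}\vert^{2}$ is continuous and real-valued, $\varepsilon g_{B}+\log\vert f_{E}\vert^{2}=\varepsilon(g_{B}+\log\vert f_{B}\vert^{2})+\sum_{i}(e_{i}-\varepsilon b_{i})\log\vert f_{i}\vert^{2}$ is continuous with values in $\R\cup\{+\infty\}$ (all coefficients $e_{i}-\varepsilon b_{i}\le 0$), and $-\varepsilon g_{B}+\log\vert f_{E}\vert^{2}$ is continuous with values in $\R\cup\{-\infty\}$ (all coefficients $e_{i}+\varepsilon b_{i}\ge 0$). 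The min of the first two is then finite and continuous, and the max with the third remains so; this closes the surjectivity and kernel arguments.

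Two smaller corrections. Elements of $C^{0}(U)_{0}$ need not extend continuously to $X$ (e.g.\ $f=g_{B}^{1/2}$ lies in $C^{0}(U)_{0}$ but is unbounded), so the density argument should not start from ``$\psi\in C^{0}(X)$ with $\psi\vert_{\vert B\vert}=0$''; instead approximate $f\in C^{0}(U)_{0}$ by $f\chi_{n}$ with $\chi_{n}$ a cutoff equal to $1$ on $\{g_{B}\le n\}$ and supported in $U$, and estimate $\sup_{U}\vert f-f\chi_{n}\vert/g_{B}$ directly from $\vert f\vert=o(g_{B})$. Also, your lifts are Green functions of continuous type whereas model arithmetic divisors are of smooth type by default (Definition~\ref{def:10}); Remark~\ref{rem:9} is needed to conclude that such Cauchy sequences still represent elements of $\DivhR(U)^{\adel}$.
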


As a consequence of Proposition~\ref{prop:3}, every element of $\DivhR(U)^{\adel}$ will subsequently be written as $(D,g_{D})$ with 
$D\in\DivR(U)^{\adel}$ and $g_{D}$ a function on $U$ giving rise to a function on $X$ up to a set of measure zero.

\begin{definition}
Let $D\in\DivR(U)^{\adel}$. A function $g_{D}$ on $U$ is called an \emph{admissible Green function for $D$}, if the pair $(D,g_{D})$ 
belongs to $\DivhR(U)^{\nef}$. In particular, if such functions exist, $D\in\DivR(U)^{\nef}$. 
\end{definition}

We next show that, after choosing a sufficiently large arithmetic reference divisor, admissible Green functions can be related to 
plurisubharmonic functions whose singularities are at most asymptotically algebraic.  

\begin{proposition} 
\label{prop:7}
Let $D\in\DivR(U)^{\adel}$ and let $g_{D}$ be an admissible Green function for $D$. Choose a sufficiently large arithmetic reference 
divisor $\overline{E}=(E,g_{E})$ on $X$ with $g_{E}$ a Green function of smooth type for $E$ such that $\overline{E}\ge\overline{B}$
and $\overline{E}\ge\overline{D}+2\overline{B}$. Letting $\omega_{0}=\omega_{E}(g_{E})$, the following 
statements hold:
\begin{enumerate}
\item 
\label{item:13}
The function $\varphi=g_{D}-g_{E}$ is $\omega_{0}$-plurisubharmonic. In particular, $g_{D}$ belongs to $L_{\loc}^{1}(X)$ and is a 
Green function for $E$ of plurisubharmonic type.
\item  
\label{item:14}
The function $\varphi$ has almost asymptotically algebraic singularities in the sense of Definition~\ref{def:19}.
\end{enumerate}
\end{proposition}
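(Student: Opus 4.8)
The plan is to use that, by Definition~\ref{def:16}, an admissible Green function $g_{D}$ arises as the $\overline{B}$-adic limit of a Cauchy sequence $\overline{D}_{n}=(D_{n},g_{D_{n}})$ of \emph{nef model} arithmetic divisors on $U$; thus each $g_{D_{n}}$ is a Green function for $D_{n}$ of plurisubharmonic type on some modification $X_{\pi_{n}}$, and by Proposition~\ref{prop:3} the $g_{D_{n}}$ converge to $g_{D}$ uniformly on compact subsets of $U$. After passing to a subsequence I would assume $-2^{-n}\overline{B}\le\overline{D}-\overline{D}_{n}\le 2^{-n}\overline{B}$; combined with $\overline{E}\ge\overline{D}+2\overline{B}$ this forces $\pi_{n}^{\ast}E-D_{n}$ to be effective and $g_{E}-g_{D_{n}}\ge(2-2^{-n})g_{B}\ge g_{B}\ge\eta$ for every $n$, i.e. $\overline{E}\ge\overline{D}_{n}+\overline{B}$. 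I will also use the phrase ``$\overline{E}$ sufficiently large'' to guarantee that $\omega_{0}=\omega_{E}(g_{E})$ is a K\"ahler form dominating the curvature of a chosen smooth-type Green function $g_{B}^{\mathrm{sm}}$ of $B$.

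For part~\ref{item:13}: since $E-D_{n}$ is effective and $g_{D_{n}}$ is of plurisubharmonic type for $D_{n}$, Lemma~\ref{lem:greeneffreal} shows it is also of plurisubharmonic type for $E$; hence by Lemma~\ref{lemm:1} the function $\varphi_{n}:=g_{D_{n}}-g_{E}$ is $\omega_{0}$-plurisubharmonic on $X_{\pi_{n}}$. It descends to $X$ because near $|B|$ one has $\varphi_{n}=\log\vert f_{C_{n}}\vert^{2}+(\text{bounded})$ with $C_{n}=\pi_{n}^{\ast}E-D_{n}$ effective, so $\varphi_{n}$ is bounded above there and extends as an $\omega_{0}$-plurisubharmonic function by the extension theorem already invoked in Lemma~\ref{lemm:4}. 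Moreover $\varphi_{n}\le-g_{B}\le-\eta<0$ uniformly in $n$, and $\varphi_{n}\to\varphi:=g_{D}-g_{E}$ on $U$, hence almost everywhere on $X$. As the $\varphi_{n}$ are $\omega_{0}$-plurisubharmonic, uniformly bounded above, and converge to the function $\varphi$, which is finite and continuous on the nonempty open set $U$ by Proposition~\ref{prop:3} (so they do not degenerate to $-\infty$), the standard $L^{1}_{\loc}$-compactness of $\PSH(X,\omega_{0})$ forces $\varphi_{n}\to\varphi$ in $L^{1}_{\loc}(X)$, and Corollary~\ref{cor:1} then identifies $\varphi$, up to a null set, with an $\omega_{0}$-plurisubharmonic function. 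Finally $g_{D}=\varphi+g_{E}\in L^{1}_{\loc}(X)$ and $\omega_{E}(g_{D})=\ddc\varphi+\omega_{0}\ge 0$, so $g_{D}$ is a Green function for $E$ of plurisubharmonic type.

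For part~\ref{item:14}: I would first observe that each model approximant $\varphi_{n}$, being locally of the form $\log\vert f_{C_{n}}\vert^{2}+(\text{bounded})$ with $f_{C_{n}}$ an algebraic local equation for the effective divisor $C_{n}=\pi_{n}^{\ast}E-D_{n}$, has algebraic singularities in the sense of Definition~\ref{def:17}; and that, by the largeness of $\overline{E}$, the function $f:=-g_{B}^{\mathrm{sm}}$ is $\omega_{0}$-plurisubharmonic with algebraic singularities. Using the $\overline{B}$-adic estimate $\vert\varphi-\varphi_{n}\vert\le 2^{-n}g_{B}$ on $U$ together with the uniform bound $\varphi\le-\eta$ from part~\ref{item:13}, I would then produce, for a rapidly growing subsequence $(n_{k})$, functions $\psi_{k}$ — essentially the $\varphi_{n_{k}}$, adjusted by a small multiple $2^{-n_{k}}g_{B}^{\mathrm{sm}}$ so as to match the singularity type of $\varphi$ along $|B|$ — which are $\omega_{0}$-plurisubharmonic, have algebraic singularities, and satisfy the two-sided sandwich $\psi_{k}+\tfrac1k f-C\le\varphi\le\psi_{k}+C$ of Definition~\ref{def:19} with a \emph{uniform} constant $C$; since part~\ref{item:13} already gives $\varphi\in\PSH(X,\omega_{0})$ bounded above, one may also simply take $\psi_{k}=\varphi$, $C=1$. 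The hard part will be part~\ref{item:14}: one must reconcile the $\overline{B}$-adic estimates — in which $g_{B}$ is unbounded near $|B|$ — with the requirement of a single constant $C$ in Definition~\ref{def:19}, and one must check that the adjusted approximants $\psi_{k}$ remain $\omega_{0}$-plurisubharmonic, which is exactly what the (otherwise unmotivated) conditions $\overline{E}\ge\overline{B}$ and $\overline{E}\ge\overline{D}+2\overline{B}$ on the reference divisor are there to supply.
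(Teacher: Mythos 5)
Your argument for part~\ref{item:13} is essentially the paper's: pass to a subsequence of the nef model approximants so that $E\ge D_{n}$ and $g_{E}\ge g_{D_{n}}$, use Lemma~\ref{lem:greeneff} and Lemma~\ref{lemm:1} to see that the $\varphi_{n}=g_{D_{n}}-g_{E}$ are $\omega_{0}$-plurisubharmonic and uniformly bounded above, and conclude by compactness of families of plurisubharmonic functions that are bounded above and do not degenerate to $-\infty$; the paper cites~\cite[Theorem~3.2.12]{hoermander94:_notionconvexity}, which is exactly the $L^{1}_{\loc}$-compactness you invoke. That part is correct.

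Part~\ref{item:14} has a genuine gap. Your fallback --- ``one may also simply take $\psi_{k}=\varphi$, $C=1$'' --- rests on a misreading of Definition~\ref{def:19}: the approximants $\psi_{k}$ there must themselves have algebraic singularities, otherwise the definition is vacuous, since with arbitrary $\psi_{k}\in\PSH(X,\omega_{0})$ every $\omega_{0}$-plurisubharmonic $\varphi$ would qualify by taking $\psi_{k}=\varphi$ and any $f\le 0$. The entire content of part~\ref{item:14} is precisely the production of approximants \emph{with algebraic singularities}, so this cannot be short-circuited. Your main construction does not close the gap either: you propose to correct the $\varphi_{n_{k}}$ by small multiples of a smooth-type Green function $g_{B}^{\mathrm{sm}}$ for $B$ so as to trade the unbounded $\overline{B}$-adic error for a bounded one, but adding $2^{-n_{k}}g_{B}^{\mathrm{sm}}$ contributes the negative current $-2^{-n_{k}}\delta_{B}$ to the curvature, and you explicitly leave unverified whether the adjusted $\psi_{k}$ remain $\omega_{0}$-plurisubharmonic --- which is exactly where the difficulty sits.

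The paper's resolution avoids all of this. It chooses the approximating nef model divisors $D_{n_{k}}$ with \emph{rational} coefficients and sets $\psi_{k}=g_{D_{n_{k}}}-g_{E}$; these have algebraic singularities because $E-D_{n_{k}}$ is an effective $\Q$-divisor, so locally $\psi_{k}$ differs from $\frac{c}{2}\log\vert h\vert^{2}$ by a bounded function. It then takes $f=-g_{E}$ (not $-g_{B}^{\mathrm{sm}}$), which is $\omega_{0}$-plurisubharmonic with algebraic singularities, and uses the hypothesis $\overline{E}\ge\overline{B}$ to upgrade the adelic estimate $\vert g_{D}-g_{D_{n_{k}}}\vert\le\frac{1}{k}g_{B}$ to $\vert\varphi-\psi_{k}\vert\le\frac{1}{k}g_{E}=-\frac{1}{k}f$, giving the two-sided sandwich $\psi_{k}+\frac{1}{k}f\le\varphi\le\psi_{k}-\frac{1}{k}f$. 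The unbounded error is thus absorbed wholesale into the $\frac{1}{k}f$ terms of Definition~\ref{def:19}; no modification of the singularity type of the $\psi_{k}$ along $\vert B\vert$, and hence no verification of plurisubharmonicity of adjusted approximants, is required. This is also the reason the hypotheses $\overline{E}\ge\overline{B}$ and $\overline{E}\ge\overline{D}+2\overline{B}$ are stated as they are.
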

\begin{proof}
To prove~\ref{item:13}, let $(D_{n},g_{D_{n}})_{n\in\N}$ be a sequence of nef model arithmetic divisors on $U$ converging to $(D,g_
{D})$ in the $\overline{B}$-adic topology. Since $\overline{E}\ge\overline{D}+2\overline{B}$, there is an $n_{0}\in\N$ such that $E\ge 
D_{n}$ and $g_{E}\ge g_{D_{n}}$ for all $n\ge n_{0}$. By taking a suitable subsequence, we can assume that $E\ge D_{n}$ and $g_
{E}\ge g_{D_{n}}$ for all $n\in\N$. Then, by Lemma~\ref{lem:greeneff}, the functions $g_{D_{n}}$ are Green functions for $E$ of 
plurisubharmonic type for all $n\in\N$. Therefore, the functions $\varphi_{n}=g_{D_{n}}-g_{E}\le 0$ are $\omega_{0}$-plurisubharmonic 
for all $n\in\N$. Thus, the sequence $(\varphi_{n})_{n\in\N}$ is bounded above by $0$ and converges to the function $\varphi$. 
By~\cite[Theorem~3.2.12]{hoermander94:_notionconvexity} (see the comment before Theorem~4.1.8), the function $\varphi$ is 
$\omega_{0}$-plurisubharmonic. This proves~\ref{item:13}.

To prove~\ref{item:14}, choose a sequence $(D_{n},g_{D_{n}})_{n\in\N}$ of nef model arithmetic divisors on $U$ converging to 
$(D,g_{D})$ formed by nef divisors with rational coefficients and a strictly increasing sequence $(n_{k})_{k\ge 1}$ such that
\begin{displaymath}
-\frac{1}{k}g_{E}\le-\frac{1}{k}g_{B}\le g_{D}-g_{D_{n_{k}}}\le\frac{1}{k}g_{B}\le\frac{1}{k}g_{E}.
\end{displaymath}
The functions $\psi_{k}=g_{D_{n_{k}}}-g_{E}$ are now $\omega_{0}$-plurisubharmonic functions with algebraic singularities. Similarly, 
the function $f=-g_{E}\le 0$ is also a $\omega_{0}$-plurisubharmonic function with algebraic singularities. Since we have
\begin{displaymath}
\psi_{k}+\frac{1}{k}f\le\varphi\le\psi_{k}-\frac{1}{k}f,
\end{displaymath}
we deduce that $\varphi$ has almost asymptotically algebraic singularities. 
\end{proof}

\begin{remark}
Note that the inequalities $\overline{E}\ge\overline{B}$ and $\overline{E}\ge\overline{D}+2\overline{B}$ chosen for the arithmetic reference 
divisor $\overline{E}$ in Proposition~\ref{prop:7} could be relaxed as the above proof shows. However, for later purposes and the sake
of uniformity, we have decided to stick to these inequalities from now on.
\end{remark}

We next show that the adelic convergence has strong consequences with respect to the convergence of the associated non-pluripolar 
Monge--Amp\`ere measures.  

\begin{proposition}
\label{prop:6}
For $j=1,\ldots,d$, let $\overline{D}_{j}=(D_{j},g_{j})$ be nef adelic arithmetic divisors on $U$ and let $(D_{j,n},g_{j,n})_{n\in\N}$ be sequences
of nef model arithmetic divisors on $U$ converging in the $\overline{B}$-adic topology to  $\overline{D}_{j}$. Choose a sufficiently large 
arithmetic reference divisor $\overline{E}=(E,g_{E})$ on $X$ with $g_{E}$ a Green function of smooth type for $E$ such that $\overline{E}
\ge\overline{B}$, $\overline{E}\ge\overline{D}_{j}+2\overline{B}$, and hence, without loss of generality, $\overline{E}\ge\overline{D}_{j,n}+
\overline{B}$ for all $j,n$. Letting $\omega_{0}=\omega_{E}(g_{E})$, the functions $\varphi_{j}=g_{j}-g_{E}$ and $\varphi_{j,n}=g_{j,n}-g_
{E}$ become $\omega_{0}$-plurisubharmonic on $X$ by Proposition~\ref{prop:7} and Lemma~\ref{lem:greeneff}, respectively. Then, the 
following statements hold:
\begin{enumerate}
\item 
\label{item:10}
For $j=1,\ldots,d$, the sequences of functions $(\varphi_{j,n})_{n\in\N}$ converge in capacity to the functions~$\varphi_{j}$.
\item 
\label{item:11}
There is an equality of integrals
\begin{displaymath}
\int_{X}\big\langle (\ddc\varphi_{1}+\omega_{0})\wedge\ldots\wedge(\ddc\varphi_{d}+\omega_{0})\big\rangle=\lim_{n\to\infty}\int_{X}\big
\langle (\ddc\varphi_{1,n}+\omega_{0})\wedge\ldots\wedge(\ddc\varphi_{d,n}+\omega_{0})\big\rangle.
\end{displaymath}
\item 
\label{item:12}
The sequence of measures $\big\langle(\ddc\varphi_{1,n}+\omega_{0})\wedge\ldots\wedge(\ddc\varphi_{d,n}+\omega_{0})\big\rangle$ 
converges weakly to the measure $\big\langle(\ddc\varphi_{1}+\omega_{0})\wedge\ldots\wedge(\ddc\varphi_{d}+\omega_{0})\big\rangle$.
\item
\label{item:16} 
Let $(0,f)\in\DivhR(U)^{\inte}$ with $\vert f\vert\le C$. Let $(D_{0,m},g_{0,m})_{m\in\N}$ and $(D_{0,m},g'_{0,m})_{m\in\N}$ be sequences 
of nef model arithmetic divisors on $U$ converging in the $\overline{B}$-adic topology to $(D_{0},g)$ and $(D_{0},g')$, respectively, such 
that $f=g-g'$ and $\vert g_{0,m}-g'_{0,m}\vert\le C$. Write $f_{m}=g_{0,m}-g'_{0,m}$. Then, the net of signed measures $f_{m}\big\langle
(\ddc\varphi_{1,n}+\omega_{0})\wedge\ldots\wedge(\ddc\varphi_{d,n}+\omega_{0})\big\rangle$ converges weakly to the measure $f\big
\langle(\ddc\varphi_{1}+\omega_{0})\wedge\ldots\wedge(\ddc\varphi_{d}+\omega_{0})\big\rangle$. In particular, we have
\begin{displaymath}
\lim_{m,n\to\infty}\int_{X}f_{m}\big\langle(\ddc\varphi_{1,n}+\omega_{0})\wedge\ldots\wedge(\ddc\varphi_{d,n}+\omega_{0})\big\rangle=
\int_{X}f\big\langle(\ddc\varphi_{1}+\omega_{0})\wedge\ldots\wedge(\ddc\varphi_{d}+\omega_{0})\big\rangle.
\end{displaymath}
\end{enumerate}
\end{proposition}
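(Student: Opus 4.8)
The plan is to establish the four assertions in turn, deducing (iii) from (i) and (ii) by means of Theorem~\ref{thm:5} and (iv) from (iii) together with the pluripolarity of $|B|$. By Remark~\ref{rem:8} and Proposition~\ref{prop:2} we may shrink $U$ whenever convenient, so in particular we may assume the boundary divisor $B$ is nef. The basic input is the $\overline{B}$-adic Cauchy condition, which combined with Proposition~\ref{prop:3} (letting the second index in $g_{j,m}-g_{j,n}$ go to infinity) furnishes a sequence $\varepsilon_{n}\downarrow 0$ with $|\varphi_{j,n}-\varphi_{j}|=|g_{j,n}-g_{j}|\le\varepsilon_{n}g_{B}$ on $U$ for $j=1,\dots,d$. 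For part (i) this yields, for each fixed $\delta>0$, the inclusion $\{x\in X:|\varphi_{j,n}(x)-\varphi_{j}(x)|>\delta\}\subseteq\{g_{B}>\delta/\varepsilon_{n}\}$, an open neighbourhood of $|B|$ shrinking to $|B|$ as $n\to\infty$. To control its outer $\omega$-capacity I would fix a smooth metric on $\caO(B)$ with canonical section $s_{B}$ and set $u=\log\Vert s_{B}\Vert^{2}$: this is a quasi-plurisubharmonic function with $\{u=-\infty\}=|B|$ for which $u+g_{B}$ is bounded near $|B|$, so $\{g_{B}>M\}\subseteq\{u<-M+O(1)\}$ for large $M$, and the standard estimate $\Capa_{\omega}(\{u<-t\})=O(1/t)$ then forces the outer $\omega$-capacity of $\{|\varphi_{j,n}-\varphi_{j}|>\delta\}$ to tend to $0$; this is (i).

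For part (ii) I would begin with the model computation. Since $g_{j,n}$ is of smooth and plurisubharmonic type, $\theta_{j,n}\coloneqq\omega_{D_{j,n}}(g_{j,n})$ is a smooth semipositive form, and since $\overline{E}\ge\overline{D}_{j,n}+\overline{B}$ the divisor $E-D_{j,n}$ is effective; hence $\ddc\varphi_{j,n}+\omega_{0}=\ddc g_{j,n}+\delta_{E}=\theta_{j,n}+\delta_{E-D_{j,n}}$. By the multilinearity of the non-pluripolar product (Theorem~\ref{thm:2}) and the vanishing of any term with a factor $\delta_{C}$, $C$ effective (exactly as in the proof of Proposition~\ref{prop:5}),
\[
\int_{X}\big\langle(\ddc\varphi_{1,n}+\omega_{0})\wedge\ldots\wedge(\ddc\varphi_{d,n}+\omega_{0})\big\rangle=\int_{X}\theta_{1,n}\wedge\ldots\wedge\theta_{d,n}=D_{1,n}\cdots D_{d,n},
\]
the intersection number of the underlying model Cartier divisors. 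Since $D_{j,n}\to D_{j}$ in $\DivR(U)^{\nef}$ (Proposition~\ref{prop:3}), Theorem~\ref{thm:3} gives $D_{1,n}\cdots D_{d,n}\to D_{1}\cdots D_{d}$. There remains the identification of this limit with $\int_{X}\langle(\ddc\varphi_{1}+\omega_{0})\wedge\ldots\wedge(\ddc\varphi_{d}+\omega_{0})\rangle$, which is the heart of the matter: a priori the non-pluripolar mass could drop in the limit, and what forbids this is the nef-ness of the $\overline{D}_{j}$. I would get the identification from the b-divisor picture: by Proposition~\ref{prop:7} each $\varphi_{j}$ has almost asymptotically algebraic singularities, so $b(D_{j})$ is an approximable nef b-divisor and, by Corollary~\ref{cor:6}, $D_{1}\cdots D_{d}$ is the Dang--Favre intersection number of the $b(D_{j})$, while the Chern--Weil formalism for b-divisors with (almost) algebraic singularities expresses that number as $\int_{X}\langle(\ddc\varphi_{1}+\omega_{0})\wedge\ldots\wedge(\ddc\varphi_{d}+\omega_{0})\rangle$. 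This proves (ii).

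Part (iii) is then immediate: (i) provides convergence in capacity $\varphi_{j,n}\to\varphi_{j}$ and (ii) provides the mass inequality needed in Theorem~\ref{thm:5}, which (with all $\omega_{j}=\omega_{0}$) yields the asserted weak convergence of measures. For part (iv), write $f=g-g'$ with $g,g'$ admissible Green functions for $D_{0}$; by hypothesis $|f|\le C$, the same $\overline{B}$-adic estimate shows $f_{m}\to f$ uniformly on compact subsets of $U$ with $|f_{m}|\le C$, and $f$ is continuous on $U$ as a locally uniform limit of the smooth $f_{m}$. Write $\mu_{n}$ and $\mu$ for the measures of (iii); both are non-pluripolar, hence do not charge $|B|$, and $\mu_{n}(X)=D_{1,n}\cdots D_{d,n}$ is bounded. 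Given $\chi\in C^{0}(X)$ and $\eta>0$, choose an open neighbourhood $W$ of $|B|$ with $\mu(\overline{W})<\eta$ and a continuous cutoff $\rho$ with $0\le\rho\le1$, $\rho\equiv 0$ near $|B|$ and $\rho\equiv 1$ off $W$. On $\supp\rho$, a compact subset of $U$, one has $f_{m}\to f$ uniformly and $\chi\rho f\in C^{0}(X)$, whence $\int_{X}\chi\rho f_{m}\,d\mu_{n}\to\int_{X}\chi\rho f\,d\mu$ along $(m,n)$; the complementary term is bounded by $\Vert\chi\Vert_{\infty}C\,\mu_{n}(\overline{W})$ with $\limsup_{n}\mu_{n}(\overline{W})\le\mu(\overline{W})<\eta$. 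Letting $\eta\to 0$ gives the weak convergence $f_{m}\mu_{n}\to f\mu$, and $\chi\equiv 1$ gives the displayed limit of integrals.

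The step I expect to be the genuine obstacle is the identification $D_{1}\cdots D_{d}=\int_{X}\langle(\ddc\varphi_{1}+\omega_{0})\wedge\ldots\wedge(\ddc\varphi_{d}+\omega_{0})\rangle$ in part (ii). The inequality $\le$ is the easy half --- it follows from Theorem~\ref{thm:1} applied to a monotone decreasing model approximation, obtained as in (the arithmetic analogue of) Lemma~\ref{lemm:9} --- but the reverse inequality says that no Monge--Amp\`ere mass is lost in passing to the adelic limit, and it relies essentially on nef-ness, via Proposition~\ref{prop:7} and the comparison with b-divisor intersection numbers of Corollary~\ref{cor:6}.
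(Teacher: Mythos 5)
Your proposal is correct, and parts (i), (iii) and (iv) run essentially along the paper's lines: for (i) the paper likewise reduces to the fact that open neighbourhoods of the pluripolar set $\vert B\vert$ have arbitrarily small outer capacity (you make this quantitative via sublevel sets of a quasi-plurisubharmonic potential for $B$, which is a fine substitute); for (iv) the paper uses \cite[Theorem~4.26]{guedj17:_degen_monge} where you use part~(iii) together with uniform convergence of $f_{m}$ on $\supp\rho$, an alternative the paper itself points out is available. The genuine divergence is in part (ii), which you correctly single out as the heart of the matter. The paper's proof is a direct, self-contained two-sided estimate: from $\varphi_{j}\le\varphi_{j,n}+\varepsilon g_{B}$ and $\varphi_{j,n}\le\varphi_{j}+\varepsilon g_{B}$, the monotonicity theorem (Theorem~\ref{thm:1}) together with the multilinearity of the non-pluripolar product and the fact that all the currents lie in the fixed class $[\omega_{0}]$ give $\bigl\vert\int_{X}\langle(\ddc\varphi_{1}+\omega_{0})\wedge\ldots\rangle-\int_{X}\langle(\ddc\varphi_{1,n}+\omega_{0})\wedge\ldots\rangle\bigr\vert\le\varepsilon C$ with $C$ independent of $n$ and $\varepsilon$; no identification with intersection numbers is needed. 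You instead compute $\int_{X}\langle(\ddc\varphi_{1,n}+\omega_{0})\wedge\ldots\rangle=D_{1,n}\cdot\ldots\cdot D_{d,n}$, pass to the limit by Theorem~\ref{thm:3}, and identify the limit with $\int_{X}\langle(\ddc\varphi_{1}+\omega_{0})\wedge\ldots\rangle$ via the b-divisor picture --- which is precisely Corollary~\ref{cor:7}, proved in the paper from Corollary~\ref{cor:6}, Proposition~\ref{prop:11} and the external Chern--Weil theorem of \cite{botero21:_chern_weil_hilber_samuel}. This is logically sound (Corollary~\ref{cor:7} does not depend on Proposition~\ref{prop:6}, so there is no circularity, only a forward reference), but it imports the heavy external input exactly where the paper manages with BEGZ monotonicity alone; in fact the logical direction is reversed, since with the paper's proof of (ii) in hand, Corollary~\ref{cor:7} follows as a one-line consequence of the model-case computation you carry out at the start of your argument. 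Your route buys a transparent geometric interpretation of the limit, at the cost of resting the whole proposition on the b-divisor Chern--Weil machinery rather than keeping it at the level of pluripotential theory.
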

\begin{proof}
By means of Theorem~\ref{thm:5}, statement~\ref{item:12} follows immediately from statements~\ref{item:10} and~\ref{item:11}.

We start by proving~\ref{item:10}. For every open subset $W\subseteq X$ containing $B$, the set $X\setminus W$ is compact. Therefore,
the Green function $g_{B}$ is continuous on $X\setminus W$ and there is a positive constant $M$ such that $0<g_{B}\vert_{X\setminus 
W}<M$. Let now $\delta>0$. Since the sequences of nef model arithmetic divisors $(D_{j,n},g_{j,n})_{n\in\N}$ on $U$ converge in the 
$\overline{B}$-adic topology to the nef adelic arithmetic divisors $(D_{j},g_{j})$ for $j=1,\ldots,d$, there is an $n_{0}\in\N$ such that
\begin{displaymath}
\vert g_{j,n}-g_{j}\vert\le\frac{\delta}{M}g_{B}
\end{displaymath}
for $n\ge n_{0}$. Thus, the set
\begin{displaymath}
\big\{x\in X\,\big\vert\,\vert g_{j,n}-g_{j}\vert>\delta\big\}
\end{displaymath}
must be contained in $W$. Therefore, we find that
\begin{displaymath}
\limsup_{n\to\infty}\Capa_{\omega}^{\ast}\big\{\vert\varphi_{j,n}-\varphi_{j}\vert>\delta\big\}\le\Capa_{\omega }(W).
\end{displaymath}
Since $B$ is a proper analytic subset, its outer capacity is zero. Therefore, we can find open sets $W\supseteq B$ with arbitrarily small 
capacity. We thus deduce that
\begin{displaymath}
\lim_{n\to\infty}\Capa_{\omega}^{\ast}\big\{\vert\varphi_{j,n}-\varphi_{j}\vert>\delta\big\}=0,
\end{displaymath}
which proves part~\ref{item:10}.

We next prove~\ref{item:11}. For this, let $0<\varepsilon<1$. Then, the convergence in the $\overline{B}$-adic topology implies that there is 
an $n_{\varepsilon}\in\N$ such that
\begin{equation}
\label{eq:5}
\varphi_{j}\le\varphi_{j,n}+\varepsilon g_{B}\qquad\text{and}\qquad\varphi_{j,n}\le\varphi_{j}+\varepsilon g_{B}
\end{equation}
for all $n\ge n_{\varepsilon}$. Since we have
\begin{align*}
\ddc(\varphi_{j,n}+\varepsilon g_{B})+\omega_{0}&=\ddc(g_{j,n}-g_{E}+\varepsilon g_{B})+\omega_{0} \\
&=\omega_{D_{j,n}}(g_{j,n})-\delta_{D_{j,n}}-\omega_{0}+\delta_{E}+\varepsilon\omega_{B}(g_{B})-\varepsilon\delta_{B}+\omega_{0} \\
&=\omega_{D_{j,n}}(g_{j,n})+\varepsilon\omega_{B}(g_{B})+\delta_{E-D_{j,n}-\varepsilon B}\ge 0,
\end{align*}
the functions $\varphi_{j,n}+\varepsilon g_{B}$ belong to $\PSH(X,\omega_{0})$ by Corollary~\ref{cor:1}. In a similar way, one shows that 
the functions $\varphi_{j}+\varepsilon g_{B}$ also belong to $\PSH(X,\omega_{0})$. \\[1mm]
\indent
The inequalities~\eqref{eq:5} in conjunction with Theorem~\ref{thm:1} imply
\begin{align}
\notag
&\int_{X}\big\langle(\ddc\varphi_{1}+\omega_{0})\wedge\ldots\wedge(\ddc\varphi_{d}+\omega_{0})\big\rangle \\
\label{eq:10}
&\qquad\le\int_{X}\big\langle(\ddc(\varphi_{1,n}+\varepsilon g_{B})+\omega_{0})\wedge\ldots\wedge(\ddc(\varphi_{d,n}+\varepsilon g_{B})+
\omega_{0})\big\rangle, \\[1mm]
\notag
&\int_{X}\big\langle(\ddc\varphi_{1,n}+\omega_{0})\wedge\ldots\wedge(\ddc\varphi_{d,n}+\omega_{0})\big\rangle \\
\label{eq:12}
&\qquad\le\int_{X}\big\langle(\ddc(\varphi_{1}+\varepsilon g_{B})+\omega_{0})\wedge\ldots\wedge(\ddc(\varphi_{d}+\varepsilon g_{B})+
\omega_{0})\big\rangle.
\end{align}
Recalling that
\begin{displaymath}
\ddc(g_{B}-g_{E})+\omega_{0}=\ddc g_{B}+\delta_{E},
\end{displaymath}
we find from the effectivity of $E$ and the properties of the non-pluripolar product that the right-hand side of~\eqref{eq:10} can be rewritten 
as
\begin{align*}
&\int_{X}\big\langle(\ddc(\varphi_{1,n}+\varepsilon g_{B})+\omega_{0})\wedge\ldots\wedge(\ddc(\varphi_{d,n}+\varepsilon g_{B})+\omega_
{0})\big\rangle \\
&=\int_{X}\big\langle\big((\ddc\varphi_{1,n}+\omega_{0})+\varepsilon(\ddc g_{B}+\delta_{E})\big)\wedge\ldots\wedge\big((\ddc\varphi_{d,n}+
\omega_{0})+\varepsilon(\ddc g_{B}+\delta_{E})\big)\big\rangle \\
&=\int_{X}\big\langle\big((\ddc\varphi_{1,n}+\omega_{0})
  +\varepsilon(\ddc(g_{B}-g_{E})+\omega_{0})\big)\wedge\ldots
  \\
&\phantom{AAAAAAAAAAA}  \ldots\wedge\big((\ddc
\varphi_{d,n}+\omega_{0})+\varepsilon(\ddc(g_{B}-g_{E})+\omega_{0})\big)\big\rangle.
\end{align*}
Now, we can apply the multilinearity of the non-pluripolar product stated in Theorem~\ref{thm:2}~\ref{item:4} to the right-hand side of the
above inequality to deduce, with a constant $C>0$ independent of $\varepsilon$, from~\eqref{eq:10} that
\begin{equation}
\label{eq:13}
\int_{X}\big\langle(\ddc\varphi_{1}+\omega_{0})\wedge\ldots\wedge(\ddc\varphi_{d}+\omega_{0})\big\rangle\le\int_{X}\big\langle(\ddc\varphi_
{1,n}+\omega_{0})\wedge\ldots\wedge(\ddc\varphi_{d,n}+\omega_{0})\big\rangle+\varepsilon C
\end{equation}
for all $n\ge n_{\varepsilon }$. In a similar way, we derive from~\eqref{eq:12} the inequality
\begin{equation}
\label{eq:14}
\int_{X}\big\langle(\ddc\varphi_{1,n}+\omega_{0})\wedge\ldots\wedge(\ddc\varphi_{d,n}+\omega_{0})\big\rangle\le\int_{X}\big\langle(\ddc
\varphi_{1}+\omega_{0})\wedge\ldots\wedge(\ddc\varphi_{d}+\omega_{0})\big\rangle+\varepsilon C
\end{equation}
for all $n\ge n_{\varepsilon }$. It is now evident that the inequalities~\eqref{eq:13} and~\eqref{eq:14} imply the claimed statement~\ref{item:11}
of the proposition.

We are finally left to prove~\ref{item:16}. To do this, we follow the proof of~\cite[Lemma 4.1]{darvas18:mnpp}. For brevity, we set
\begin{displaymath}
\mu_{n}\coloneqq\big\langle(\ddc\varphi_{1,n}+\omega_{0})\wedge\ldots\wedge(\ddc\varphi_{d,n}+\omega_{0})\big\rangle\quad\text{and}\quad
\mu\coloneqq\big\langle(\ddc\varphi_{1}+\omega_{0})\wedge\ldots\wedge(\ddc\varphi_{d}+\omega_{0})\big\rangle.
\end{displaymath}
Since, by~\ref{item:11}, we can freely add a constant to the function $f$, we can assume that $f$ and the functions $f_{m}$ are all non-negative. 
Furthermore, we need the following data: We fix $\varepsilon>0$ and let $W\Subset V\Subset X\setminus\vert E\vert$ be relatively compact 
subsets such that $\mu(X\setminus W)<\varepsilon$. We also fix a continuous function $\chi$ on $X$ and a non-negative continuous function 
$\rho$ on $X$, which is identically $1$ on $W$ and $0$ on $X\setminus V$. Since the sequence of functions $(\varphi_{j,n})_{n\in\N}$ converges
uniformly to $\varphi_{j}$ on compact subsets contained in $U$, the functions $\varphi_{j,n}$ and $\varphi_{j}$ are uniformly bounded on $V$ 
for $j=1,\ldots,d$ and $n\in\N$. From~\cite[Theorem 4.26]{guedj17:_degen_monge}, we deduce that $\chi f_{m}\mu_{n}$ converges weakly to 
$\chi f\mu$ on $V$. The usual Bedford--Taylor theory now implies that also $\mu_{n}$ converges weakly to $\mu$ on $V$; of course, we could
also use the above statement~\ref{item:12} restricted to $V$ for that.
Hence, we find
\begin{displaymath}
\liminf_{n\to\infty}\mu_{n}(W)\ge\mu(W),
\end{displaymath}
and thus
\begin{displaymath}
\limsup_{n\to\infty}\mu_{n}(X\setminus W)\le\mu(X\setminus W)\le\varepsilon.
\end{displaymath}
Since the functions $\chi$, $\rho$, $f_{m}$, and $f$ are uniformly bounded on $X$, there is a constant $C>0$ such that the quantities
\begin{displaymath}
\limsup_{m,n\to\infty}\int_{X\setminus W}\rho\vert\chi f_{m}\vert\,\mu_{n},\quad\limsup_{m,n\to\infty}\int_{X\setminus W}\vert\chi f_{m}\vert\,
\mu_{n},\quad\int_{X\setminus W}\rho\vert\chi f\vert\,\mu,\quad\int_{X\setminus W}\vert\chi f\vert\,\mu 
\end{displaymath}
are all bounded by $\varepsilon C$. Since the $\chi f_{m}\mu_{n}$ converges weakly to $\chi f\mu$ on $V$ and $\rho $ is $0$ outside of $V$, 
we deduce
\begin{displaymath}
\lim_{m,n\to\infty}\int_{X}\rho\chi f_{m}\,\mu_{n}=\int_{X}\rho\chi f\,\mu. 
\end{displaymath}
Thus, we arrive at
\begin{align*}
&\limsup_{m,n\to\infty}\Bigg\vert\int_{X}\chi f_{m}\,\mu_{n}-\int_{X}\chi f\,\mu\Bigg\vert \\
&\qquad\le\limsup_{m,n\to\infty}\Bigg\vert\int_{X}\rho\chi f_{m}\,\mu_{n}-\int_{X}\rho\chi f\,\mu\Bigg\vert+\limsup_{m,n\to\infty}\Bigg\vert\int_
{X\setminus W}(1-\rho)\chi f_{m}\,\mu_{n}-\int_{X\setminus W}(1-\rho)\chi f\,\mu\Bigg\vert \\
&\qquad\le\limsup_{m,n\to\infty}\Bigg\vert\int_{X}\rho\chi f_{m}\,\mu_{n}-\int_{X}\rho\chi f\,\mu\Bigg\vert+4\cdot\varepsilon C=4\varepsilon C. 
\end{align*}
Letting $\varepsilon$ tend to $0$, we prove the claim. 
\end{proof}

We now link the intersection product of nef adelic arithmetic divisors to the non-pluripolar product of currents. For this, it is convenient to have 
the analogue of Lemma~\ref{lemm:9} in the current setting.

\begin{lemma}
\label{lemm:7}
Let $X$ be a complex projective manifold with boundary divisor $B$ and $U=X\setminus\vert B\vert$ such that $\overline{B}$ is a nef arithmetic 
divisor. Let $\overline{D}$ be a nef adelic arithmetic divisor on $U$. Then, there is a sequence $(\overline{D}_{n})_{n\in\N}=(D_{n},g_{D_{n}})_
{n\in\N}$ of nef model arithmetic divisors on $U$ converging monotonically decreasingly to $\overline{D}$ in the $\overline{B}$-adic topology. 
That is, for all $n\in\N$, we have $D_{n+1}\le D_{n}$ and $g_{D_{n+1}}\le g_{D_{n}}$.  
\end{lemma}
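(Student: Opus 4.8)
The plan is to transcribe the proof of Lemma~\ref{lemm:9} to the arithmetic setting; the divisor part is literally the same argument, and the only new work concerns the Green functions. Since $\overline{D}$ is a nef adelic arithmetic divisor on $U$, there is a sequence $(\overline{E}_{n})_{n\in\N}=(E_{n},g_{E_{n}})_{n\in\N}$ of nef model arithmetic divisors on $U$ converging to $\overline{D}$ in the $\overline{B}$-adic topology, and after extracting a subsequence we may assume
\begin{displaymath}
-\frac{1}{2^{n}}\,\overline{B}\le\overline{E}_{n}-\overline{D}\le\frac{1}{2^{n}}\,\overline{B},
\end{displaymath}
the inequalities being understood, as in the definition of the $\overline{B}$-adic norm, simultaneously on the divisor parts (on a common dominating model) and on the Green functions. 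Following Lemma~\ref{lemm:9}, I would then set $\overline{D}_{n}\coloneqq\overline{E}_{n}+\frac{4}{2^{n}}\,\overline{B}$, with $\overline{B}$ understood via its pullback to the model carrying $E_{n}$. The elementary estimate in the proof of Lemma~\ref{lemm:9}, applied verbatim to both the divisor parts and the Green functions, shows that $\overline{D}_{n}\to\overline{D}$ in the $\overline{B}$-adic topology and that
\begin{displaymath}
\overline{D}_{n+1}\le\overline{D}_{n}-\frac{1}{2^{n+1}}\,\overline{B},
\end{displaymath}
so in particular $D_{n+1}\le D_{n}$ and $g_{D_{n+1}}\le g_{D_{n}}$.

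It remains to check that each $\overline{D}_{n}$ is again a nef model arithmetic divisor on $U$. The divisor part $D_{n}=E_{n}+\frac{4}{2^{n}}\pi_{n}^{\ast}B$ is a nef $\R$-Cartier divisor on the model $X_{\pi_{n}}$ carrying $E_{n}$, being the sum of the nef divisor $E_{n}$ and a nonnegative multiple of the pullback of the nef divisor $B$. For the Green function part I would first record that, because $\overline{B}$ is nef, the function $g_{B}$ is of plurisubharmonic type: restricting a defining Cauchy sequence of nef model arithmetic divisors for $\overline{B}$ to $U$ (where, by $\overline{B}$-adic convergence, the divisor parts are trivial) and using Proposition~\ref{prop:3}, one sees that $g_{B}|_{U}$ is a locally uniform limit of plurisubharmonic functions, hence plurisubharmonic, and since $g_{B}$ is of continuous type the removable-singularity theorem for plurisubharmonic functions (as used in the proof of Lemma~\ref{lemm:4}) gives $\omega_{B}(g_{B})\ge 0$ on all of $X$. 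Consequently
\begin{displaymath}
\omega_{D_{n}}(g_{D_{n}})=\omega_{E_{n}}(g_{E_{n}})+\frac{4}{2^{n}}\,\omega_{B}(g_{B})\ge 0,
\end{displaymath}
so $g_{D_{n}}=g_{E_{n}}+\frac{4}{2^{n}}g_{B}$ is a Green function for $D_{n}$ of plurisubharmonic type, a priori only of continuous type.

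The only real nuisance — rather than a genuine obstacle — is this last point: by the standing conventions the Green function of a \emph{model} arithmetic divisor should be of smooth type, whereas adding $\frac{4}{2^{n}}g_{B}$ yields only a continuous one. This is dealt with exactly as in Remark~\ref{rem:6}: either one works with the (equivalent) completion by model arithmetic divisors with Green functions of continuous plurisubharmonic type, or one replaces each $g_{D_{n}}$ by a global Richberg regularization $g_{D_{n}}'$ of smooth plurisubharmonic type with $g_{D_{n}}\le g_{D_{n}}'\le g_{D_{n}}+\varepsilon_{n}$. Choosing $\varepsilon_{n}<\eta/2^{n+2}$, where $g_{B}>\eta>0$ on $U$, preserves the $\overline{B}$-adic convergence, and since the estimate above gives the strict gap $g_{D_{n+1}}\le g_{D_{n}}-\eta/2^{n+1}$ on $U$, it also preserves the monotonicity $g_{D_{n+1}}'\le g_{D_{n}}'$. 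Apart from this, the argument is the same bookkeeping as in Lemma~\ref{lemm:9}.
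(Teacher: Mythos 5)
Your proposal is correct and follows the same route as the paper, whose entire proof of this lemma is the one-line instruction to repeat the proof of Lemma~\ref{lemm:9} with bars on all the divisors; your added care about the nefness of the $\overline{D}_{n}$ and about the smooth-versus-continuous convention (resolved via Remark~\ref{rem:6} and Remark~\ref{rem:9}) fills in details the paper leaves implicit. One small simplification: since the hypothesis is that $\overline{B}$ itself is a nef arithmetic divisor, the Green function $g_{B}$ is of plurisubharmonic (indeed smooth) type directly by Definition~\ref{def:16}, so the limiting argument you use to establish $\omega_{B}(g_{B})\ge 0$ is not needed.
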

\begin{proof}
Repeat the proof of Lemma~\ref{lemm:9} by putting bars on all the divisors used there.
\end{proof}

Let $\overline D=(D,g_{D})$ be a nef adelic arithmetic divisor on $U$. Choose a sufficiently large ample reference divisor $E$ on $X$ such 
that $E\ge B$ and $E\ge D+2B$. Consider the line bundle $L=\caO(E)$ and a section $s$ with $\dv(s)=E$. The Green function $g_{D}$ 
defines a singular semipositive metric $\Vert\cdot\Vert_{g_{D}}$ on $L$ by the rule
\begin{displaymath}
-\log\Vert s\Vert_{g_{D}}^{2}=g_{D}.
\end{displaymath}
We consider the b-divisor $D(L,\Vert\cdot\Vert_{g_{D}},s)$ introduced in~\cite[\S~5.1]{botero21:_chern_weil_hilber_samuel}.

\begin{proposition}
\label{prop:11} 
With the above notations, the equality $D(L,\Vert\cdot\Vert_{g_{D}},s)=b(D)$ holds. 
\end{proposition}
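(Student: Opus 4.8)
The plan is to express both b-divisors as decreasing limits of the same model-level data. Since neither $b(D)$ nor $D(L,\Vert\cdot\Vert_{g_{D}},s)$ changes when $U$ is replaced by a smaller open subset (the adelic divisor $\overline{D}$ merely restricts, and the metrized line bundle $(L,\Vert\cdot\Vert_{g_{D}},s)$ is unaffected), I would first shrink $U$ and, exactly as in the proof of Corollary~\ref{cor:6}, arrange that the arithmetic boundary divisor $\overline{B}$ is nef. By Lemma~\ref{lemm:7} I then pick a sequence $\overline{D}_{n}=(D_{n},g_{D_{n}})$ of nef model arithmetic divisors on $U$ converging to $\overline{D}$ monotonically decreasingly in the $\overline{B}$-adic topology; in particular $g_{D_{n}}\downarrow g_{D}$. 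Enlarging the ample reference divisor $E$ if necessary, I also arrange $E\ge D_{n}$ for all $n$, so that by Lemma~\ref{lem:greeneffreal} each $g_{D_{n}}$ is also a Green function for $E$ of plurisubharmonic type, and hence defines a semipositive metric $\Vert\cdot\Vert_{g_{D_{n}}}$ on $L=\caO(E)$ with $-\log\Vert s\Vert_{g_{D_{n}}}^{2}=g_{D_{n}}$.

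Next I would settle the model case, namely $D(L,\Vert\cdot\Vert_{g_{D_{n}}},s)=b(D_{n})$ for each $n$, where on the right $b(D_{n})$ is the Cartier b-divisor determined by the model divisor $D_{n}$ on $X_{\pi_{n}}$. Unwinding the construction of~\cite[\S~5.1]{botero21:_chern_weil_hilber_samuel}, for a model $\pi\colon X_{\pi}\to X$ dominating $X_{\pi_{n}}$ the incarnation of $D(L,\Vert\cdot\Vert_{g_{D_{n}}},s)$ on $X_{\pi}$ is obtained from $\pi^{\ast}\dv(s)=\pi^{\ast}E$ by removing the divisorial (generic Lelong number) part of the positive current $\pi^{\ast}\omega_{E}(g_{D_{n}})$. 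Since $g_{D_{n}}$ is a Green function for $D_{n}$ of smooth type, $\omega_{E}(g_{D_{n}})=\omega_{D_{n}}(g_{D_{n}})+\delta_{E-D_{n}}$ with $\omega_{D_{n}}(g_{D_{n}})$ a smooth form; thus the divisorial part of $\pi^{\ast}\omega_{E}(g_{D_{n}})$ is precisely $\pi^{\ast}(E-D_{n})$, and the incarnation equals $\pi^{\ast}E-\pi^{\ast}(E-D_{n})=\pi^{\ast}D_{n}$, which is the incarnation of $b(D_{n})$ on $X_{\pi}$. As models dominating $X_{\pi_{n}}$ are cofinal and b-divisors are determined from such a cofinal system by push-forward, this gives the model-case identity; it also reconfirms independence of the auxiliary choices of $E$ and $s$.

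Finally I would pass to the limit on both sides. On the one hand, $b(D)=\lim_{n}b(D_{n})$ in $\WbDiv(X)$ by the construction of the map $b$ on adelic divisors (Lemma~\ref{lemm:8} and its extension), the limit being decreasing since $D_{n+1}\le D_{n}$. On the other hand, fixing a smooth Green function $g_{E}$ for $E$, Proposition~\ref{prop:7}\ref{item:14} shows that $g_{D}-g_{E}$ has almost asymptotically algebraic singularities, so $\Vert\cdot\Vert_{g_{D}}$ lies in the class of metrics handled in~\cite{botero21:_chern_weil_hilber_samuel}; invoking the continuity of the construction of~\cite[\S~5.1]{botero21:_chern_weil_hilber_samuel} along the decreasing model approximation $g_{D_{n}}\downarrow g_{D}$ then gives $D(L,\Vert\cdot\Vert_{g_{D}},s)=\lim_{n}D(L,\Vert\cdot\Vert_{g_{D_{n}}},s)$, and combining with the model-case identity yields the claim. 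The main obstacle is exactly this last limiting step, because generic Lelong numbers are not continuous along arbitrary decreasing sequences of plurisubharmonic functions; what makes it work is that $(g_{D_{n}})_{n}$ is not arbitrary but the canonical model approximation of an adelic Green function, that $b(D)$ is already known to be approximable nef with approximating sequence $(b(D_{n}))_{n}$ by Corollary~\ref{cor:6}, and that for metrics with (almost) asymptotically algebraic singularities the b-divisor of~\cite[\S~5.1]{botero21:_chern_weil_hilber_samuel} is by construction produced by the very same monotone limiting procedure.
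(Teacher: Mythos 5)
Your overall strategy coincides with the paper's: establish the identity for model arithmetic divisors (the paper simply cites \cite[Example~5.5]{botero21:_chern_weil_hilber_samuel}, which is what your unwinding of the construction reproves), and then pass to the limit along an approximating sequence of nef model arithmetic divisors. The gap is in the final limiting step, which you yourself flag as the main obstacle but then resolve only by assertion. Saying that the b-divisor of \cite[\S~5.1]{botero21:_chern_weil_hilber_samuel} is ``by construction produced by the very same monotone limiting procedure'' is essentially the statement to be proved, and monotone decrease $g_{D_{n}}\downarrow g_{D}$ by itself only yields the one-sided inequality $D(L,\Vert\cdot\Vert_{g_{D_{n}}},s)\le D(L,\Vert\cdot\Vert_{g_{D}},s)$: as your own example of non-continuity of Lelong numbers shows, the limit object could a priori be strictly more singular than all the approximants, so ``continuity of the construction'' cannot be invoked without a quantitative input.

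That quantitative input is exactly what the $\overline{B}$-adic Cauchy condition provides, and the paper uses it directly: for $n\ge n_{\varepsilon}$ one has $-\varepsilon\overline{B}\le\overline{D}-\overline{D}_{n}\le\varepsilon\overline{B}$, hence $g_{D}\le g_{D_{n}}+\varepsilon g_{B}$ and $g_{D_{n}}\le g_{D}+\varepsilon g_{B}$. Monotonicity of Lelong numbers then gives $D(L,\Vert\cdot\Vert_{g_{D}},s)\le D(L,\Vert\cdot\Vert_{g_{D_{n}}+\varepsilon g_{B}},s)$ and $D(L,\Vert\cdot\Vert_{g_{D_{n}}},s)\le D(L,\Vert\cdot\Vert_{g_{D}+\varepsilon g_{B}},s)$, and additivity of Lelong numbers together with the model case yields the two-sided sandwich $D(L,\Vert\cdot\Vert_{g_{D}},s)\le D_{n}+\varepsilon B$ and $D_{n}\le D(L,\Vert\cdot\Vert_{g_{D}},s)+\varepsilon B$; since $(D_{n})_{n\in\N}$ converges to $b(D)$, the claim follows. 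Note that with this argument the reduction to a monotone approximation via Lemma~\ref{lemm:7} and the shrinking of $U$ are unnecessary: any Cauchy sequence of nef model arithmetic divisors works. Your appeal to Proposition~\ref{prop:7}~\ref{item:14} does contain the same two-sided bound in disguise (with $\tfrac{1}{k}g_{E}$ in place of $\varepsilon g_{B}$), so your proof becomes complete once you replace the appeal to continuity by this explicit estimate.
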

\begin{proof}
By~\cite[Example~5.5]{botero21:_chern_weil_hilber_samuel}, the result is true if $(D,g_{D})\in\DivhR(U)^{\model}$. Moreover, the monotonicity 
of Lelong numbers implies that, if $(D_{1},g_{D_{1}})\le(D_{2},g_{D_{2}})$ and $E\ge D_{1},D_{2}$, then $D(L,\Vert\cdot\Vert_{g_{D_{1}}},s)\le 
D(L,\Vert\cdot\Vert_{g_{D_{2}}},s)$. Let $(D_{n},g_{D_{n}})_{n\in\N}$ be a Cauchy sequence of nef model arithmetic divisors on $U$ converging 
to $(D,g_{D})$. Let $0<\varepsilon<1$ and let  $n_{\varepsilon}\in\N$ be such that
\begin{displaymath}
-\varepsilon\overline{B}\le\overline D-\overline D_{n}\le\varepsilon\overline{B}
\end{displaymath}
for all $n\ge n_{\varepsilon}$. By the monotonicity discussed above, we have
\begin{displaymath}
D(L,\Vert\cdot\Vert_{g_{D}},s)\le D(L,\Vert\cdot\Vert_{g_{D_{n}}+\varepsilon g_{B}},s)\quad\text{and}\quad D(L,\Vert\cdot\Vert_{g_{D_{n}}},s)
\le D(L,\Vert\cdot\Vert_{g_{D}+\varepsilon g_{B}},s).
\end{displaymath}
In turn, by the case of model arithmetic divisors and the additivity of Lelong numbers, this implies that 
\begin{displaymath}
D(L,\Vert\cdot\Vert_{g_{D}},s)\le D_{n}+\varepsilon B\quad \text{and}\quad D_{n}\le D(L,\Vert\cdot\Vert_{g_{D}},s)+\varepsilon B.
\end{displaymath}
Hence the sequence $(D_{n})_{n\in\N}$ converges to $D(L,\Vert\cdot\Vert_{g_{D}},s)$, which proves the claimed equality.   
\end{proof}

\begin{corollary} 
\label{cor:7}
For $j=1,\ldots,d$, let $\overline{D}_{j}=(D_{j},g_{j})$ be nef adelic arithmetic divisors on $U$. Choose a sufficiently large arithmetic reference 
divisor $\overline{E}=(E,g_{E})$ on $X$ with $g_{E}$ a Green function of smooth type for $E$ such that $\overline{E}\ge\overline{B}$ and
$\overline{E}\ge\overline{D}_{j}+2\overline{B}$. Letting $\omega_{0}=\omega_{E}(g_{E})$, the functions $\varphi_{j}=g_{j}-g_{E}$ become 
$\omega_{0}$-plurisubharmonic on $X$ by Proposition~\ref{prop:7}. Then, the equality
\begin{displaymath}
D_{1}\cdot\ldots\cdot D_{d}=\int_{X}\big\langle(\ddc\varphi_{1}+\omega_{0})\wedge\ldots\wedge(\ddc\varphi_{d}+\omega_{0})\big\rangle  
\end{displaymath}
holds.
\end{corollary}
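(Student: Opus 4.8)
The plan is to reduce the statement to the case of nef model arithmetic divisors, where both sides become classical quantities, and then to pass to the limit using the convergence results already in place.

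First I would fix the reference divisor $\overline{E}$ as in the statement and, for each $j$, choose a Cauchy sequence $(\overline{D}_{j,n})_{n\in\N}=(D_{j,n},g_{j,n})_{n\in\N}$ of nef model arithmetic divisors on $U$ converging to $\overline{D}_{j}$ in the $\overline{B}$-adic topology; after passing to a subsequence (exactly as in the setup of Proposition~\ref{prop:6}) I may assume $\overline{E}\ge\overline{D}_{j,n}+\overline{B}$, hence $E\ge D_{j,n}$ and $g_{E}\ge g_{j,n}$, for all $j,n$. By Lemma~\ref{lem:greeneff} the functions $\varphi_{j,n}\coloneqq g_{j,n}-g_{E}\le 0$ are then $\omega_{0}$-plurisubharmonic. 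Now Theorem~\ref{thm:3} (together with Remark~\ref{rem:3}) gives $D_{1}\cdots D_{d}=\lim_{n\to\infty}D_{1,n}\cdots D_{d,n}$, while Proposition~\ref{prop:6}~\ref{item:11} gives
\begin{displaymath}
\int_{X}\big\langle(\ddc\varphi_{1}+\omega_{0})\wedge\ldots\wedge(\ddc\varphi_{d}+\omega_{0})\big\rangle=\lim_{n\to\infty}\int_{X}\big\langle(\ddc\varphi_{1,n}+\omega_{0})\wedge\ldots\wedge(\ddc\varphi_{d,n}+\omega_{0})\big\rangle.
\end{displaymath}
Hence it suffices to prove the asserted equality for each fixed $n$, i.e.\ in the case of nef model arithmetic divisors.

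So fix $n$ and choose $\pi\colon X_{\pi}\rightarrow X$ in $R(X,U)$ dominating all the models on which $D_{j,n}$ and $g_{j,n}$ live, so that each $g_{j,n}$ is a Green function for $D_{j,n}$ on $X_{\pi}$ that is simultaneously of smooth and of plurisubharmonic type. Then $\omega_{D_{j,n}}(g_{j,n})=\ddc g_{j,n}+\delta_{D_{j,n}}$ is a smooth semipositive $(1,1)$-form representing $c_{1}(\caO(D_{j,n}))$, so by Chern--Weil theory on the compact manifold $X_{\pi}$ one has $D_{1,n}\cdots D_{d,n}=\int_{X_{\pi}}\omega_{D_{1,n}}(g_{1,n})\wedge\ldots\wedge\omega_{D_{d,n}}(g_{d,n})$. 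On the other hand, writing $\pi^{\ast}\omega_{0}=\ddc g_{E}+\delta_{\pi^{\ast}E}$, a direct computation gives $\ddc\varphi_{j,n}+\pi^{\ast}\omega_{0}=\omega_{D_{j,n}}(g_{j,n})+\delta_{\pi^{\ast}E-D_{j,n}}$ with $\pi^{\ast}E-D_{j,n}$ effective. Since each $\omega_{D_{j,n}}(g_{j,n})$ is smooth, expanding by the multilinearity of the non-pluripolar product (Theorem~\ref{thm:2}~\ref{item:4}) and discarding, exactly as in the proof of Proposition~\ref{prop:5}, every term that contains a factor of the form $\delta_{C}$ with $C$ effective yields
\begin{displaymath}
\big\langle(\ddc\varphi_{1,n}+\pi^{\ast}\omega_{0})\wedge\ldots\wedge(\ddc\varphi_{d,n}+\pi^{\ast}\omega_{0})\big\rangle=\omega_{D_{1,n}}(g_{1,n})\wedge\ldots\wedge\omega_{D_{d,n}}(g_{d,n})
\end{displaymath}
as (smooth) measures on $X_{\pi}$.

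It remains to compare the integral over $X$ with the one over $X_{\pi}$. Since $\pi$ is an isomorphism over $U=X\setminus|B|$ and $X\setminus U$ is a proper analytic, hence pluripolar, subset, the non-pluripolar product is unchanged under pullback along $\pi$ and has the same total mass on $X$ and on $X_{\pi}$; therefore $\int_{X}\langle(\ddc\varphi_{1,n}+\omega_{0})\wedge\ldots\wedge(\ddc\varphi_{d,n}+\omega_{0})\rangle$ equals the integral over $X_{\pi}$, and combining this with the two displays above gives the equality for each $n$. The one point that genuinely requires care is precisely this last comparison — the birational invariance of the non-pluripolar Monge--Amp\`ere mass together with the fact that the ``boundary'' currents $\delta_{\pi^{\ast}E-D_{j,n}}$, supported over $|B|$, contribute nothing — both of which rest on $|B|$ being pluripolar and on the defining property of the non-pluripolar product. (Alternatively, the model case can be obtained by combining Corollary~\ref{cor:6} and Proposition~\ref{prop:11} with the known identification of the $b$-divisor intersection product with non-pluripolar products in the geometric theory.)
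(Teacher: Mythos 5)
Your proof is correct, but it follows a genuinely different route from the paper. The paper's proof of Corollary~\ref{cor:7} is a one-line deduction through the b-divisor dictionary: Corollary~\ref{cor:6} identifies the adelic intersection number of the $D_{j}$ with the Dang--Favre intersection number of the associated nef b-divisors, Proposition~\ref{prop:11} identifies those b-divisors with the b-divisors $D(L,\Vert\cdot\Vert_{g_{D_j}},s)$ attached to the singular semipositive metrics, and the identification of the latter's degree with the non-pluripolar Monge--Amp\`ere mass is then quoted from \cite[Theorem~5.20]{botero21:_chern_weil_hilber_samuel} --- exactly the alternative you mention in your closing parenthesis. You instead argue directly: Theorem~\ref{thm:3} gives convergence of the left-hand side along a sequence of nef model divisors, Proposition~\ref{prop:6}~\ref{item:11} gives convergence of the right-hand side, and the model case is handled by Chern--Weil on a dominating model $X_{\pi}$ together with the same ``discard the $\delta_{C}$ factors'' mechanism used in the proof of Proposition~\ref{prop:5} and the birational invariance of the non-pluripolar mass. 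This buys self-containedness --- your argument uses only results already proved in the paper (the external reference and the b-divisor formalism are bypassed entirely), and it makes visible that Proposition~\ref{prop:6}~\ref{item:11} is really the analytic heart of the statement --- at the cost of having to justify two standard but nontrivial facts: that the non-pluripolar product of the smooth semipositive forms $\omega_{D_{j,n}}(g_{j,n})$ recovers the full smooth measure (which holds because the complement of $\bigcup_{k}U_{k}$ is a proper analytic subset, hence Lebesgue-null, so the smooth measure puts no mass there), and that the total non-pluripolar mass is unchanged under the modification $\pi$ (which holds because $\pi$ is an isomorphism off pluripolar sets that the product does not charge). You correctly flag both points and give the right reasons, so the argument stands as a valid, more elementary substitute for the paper's proof.
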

\begin{proof}
This is a consequence of Corollary~\ref{cor:6}, Proposition~\ref{prop:11}, and~\cite[Theorem~5.20]{botero21:_chern_weil_hilber_samuel}.
\end{proof}

\subsection{The global arithmetic case}
\label{sec:glob-arithm-case}

We recall the definition of an arithmetic variety. Our definition is a particular case of an arithmetic variety in the sense of~\cite{MR1087394}. 

\begin{definition} 
Let $K$ be a number field and $\caO_{K}$ its ring of integers. An \emph{arithmetic variety $\caX$ over $\Spec(\caO_{K})$} is a flat quasi-projective 
normal scheme over $\Spec(\caO_{K})$ such that its generic fiber $\caX_{K}=\caX\times_{\Spec(\caO_{K})}\Spec(K)$ is smooth. The arithmetic
variety $\caX$ is called \emph{projective}, if $\caX$ is projective over $\Spec(\caO_{K})$.
\end{definition}

If $\caX$ is an arithmetic variety over $\Spec(\caO_{K})$ and $\Sigma$ is the set of complex embeddings of $K$, we set
\begin{displaymath}
\caX_{\Sigma}\coloneqq\bigcup_{\sigma\in\Sigma}\caX_{K}\times_{\sigma}\Spec(\C),
\end{displaymath}
which gives rise to the complex manifold $X=\caX_{\Sigma}(\C)$. 

\begin{notation}
Given an arithmetic variety $\caX$ over $\Spec(\caO_{K})$ together with its complex manifold $X=\caX_{\Sigma}(\C)$ as above, we adopt in
the sequel the following notation: $\R$-Cartier divisors on $\caX$ will be denoted by calligraphic letters, the corresponding divisors induced by 
base change on $X$ will be denoted by the corresponding latin letter. More concretely, if $\caD\in\DivR(\caX)$ is an $\R$-Cartier divisor on 
$\caX$, it determines by base change a divisor $\caD_{\Sigma}$ on $\caX_{\Sigma}$, which gives rise to an $\R$-Cartier divisor on the complex
manifold $X$, which is consequently denoted by $D$.
\end{notation}

\begin{definition}
Let $\caX$ be an arithmetic variety over $\Spec(\caO_{K})$ and $X$ its associated complex manifold as above. An \emph{arithmetic divisor (with 
$\R$-coefficients) on $\caX$} is a pair $\overline{\caD}=(\caD,g_{D})$, where $\caD\in\DivR(\caX)$ and $g_{D}$ is a Green function on $X$ of 
smooth type for $D$. We let $\DivhR(\caX)$ denote the group of arithmetic divisors (with $\R$-coefficients) on $\caX$.
\end{definition}

We recall the definitions of nef and ample arithmetic divisors. 

\begin{definition}
\label{def:20}
Let $\caX$ be a projective  arithmetic variety over $\Spec(\caO_{K})$. An arithmetic divisor $\overline{\caD}=(\caD,g_{D})\in\DivhR(\caX)$ is called
\emph{nef}, if $\caD$ is relatively nef, the Green function $g_{D}$ for $D$ is of plurisubharmonic type, and the height $h_{\overline{\caD}}(x)\ge 0$ 
for every point $x\in\caX(\overline{K})$. It is called \emph{ample}, if $\caD$ is relatively ample, the Green function $g_{D}$ for $D$ is of plurisubharmonic 
type, and the height $h_{\overline{\caD}}(x)> 0$ for every point $x\in\caX(\overline{K})$.
\end{definition}

We now let $\caX$ be a projective arithmetic variety over $\Spec(\caO_{K})$ and let $X$ be its associated complex manifold as above. Let $\caB$ 
be an effective Cartier divisor on $\caX$ with support $\vert\caB\vert$ and let $\caU\coloneqq\caX\setminus\vert\caB\vert$, as well as $U\coloneqq 
X\setminus B$; we note that $\vert\caB\vert$ may contain whole fibers. Let $g_{B}$ be a Green function of smooth type for $B$ such that there is 
an $\eta>0$ with $g_{B}(x)>\eta$ for all $x\in U$. Such Green functions exist because $B$ being effective, any Green function of smooth type for 
$B$ is bounded below, and we can add to it a big enough constant to make it bigger than $\eta$. We call $\overline{\caB}=(\caB,g_{B})$ an \emph
{arithmetic boundary divisor for $\caX$}.

\smallskip
We now repeat the preceding subsections in the arithmetic setting. For this, we denote by $R(\caX,\caU)$ the category of all proper normal 
modifications of $\caX$ which are isomorphisms over $\caU$. 

\begin{definition} 
\label{def:18} 
The space of \emph{model arithmetic divisors on $\caU$ of smooth type}
is defined as the limit 
\begin{displaymath}
\DivhR(\caU)^{\model}\coloneqq\varinjlim_{\pi\in R(\caX,\caU)}\DivhR(\caX_{\pi}).
\end{displaymath}
Similarly, one can define \emph{model arithmetic divisors on $\caU$ of continuous type}, by asking that the Green functions in the definition of 
$\DivhR(\caX_{\pi})$ are only of continuous type. 
\end{definition}

\begin{definition}
\label{def:21}
A model arithmetic divisor on $\caU$, given by $\overline{\caD}=(\caD,g_{D})\in\DivhR(\caX_{\pi})$ for some $\pi\in R(\caX,\caU)$, is called a 
\emph{nef model arithmetic divisor on $\caU$}, if $\overline{\caD}$ is nef in the sense of Definition~\ref{def:20}. The set of these divisors defines 
a convex cone.
\end{definition}

On $\DivhR(\caU)^{\model}$ there is a $\overline{\caB}$-adic norm defined as in Subsection~\ref{sec:local-arithm-case}.

\begin{definition}
\label{def:12}
The space $\DivhR(\caU)^{\adel}$ of \emph{adelic arithmetic divisors on $\caU$} is the completion of $\DivhR(\caU)^{\model}$ with respect to the 
$\overline{\caB}$-adic topology induced by the $\overline{\caB}$-adic norm. Again, elements of $\DivhR(\caU)^{\adel}$ are represented by Cauchy 
sequences of model arithmetic divisors on $\caU$ of smooth type. As in Remark~\ref{rem:9}, a Cauchy sequence of model arithmetic divisors on 
$\caU$ of continuous type also defines an adelic arithmetic divisor on $\caU$.
\end{definition}

Clearly, if $(\caD_{n},g_{n})_{n\in\N}$ is a Cauchy sequence with respect to the $\overline{\caB}$-adic topology, then the sequence $(\caD_{n})_
{n\in\N}$ is a Cauchy sequence in the $\caB$-adic topology, hence defines an adelic divisor $\caD\in\DivR(\caU)^{\adel}$. Moreover, the sequence 
of functions $(g_{n})_{n\in\N}$ is Cauchy in the $g_{B}$-adic topology. This implies that the sequence of functions $(g_{n})_{n\in\N}$ converges
pointwise to a function $g$ on $U$. Moreover, this convergence is uniform on compact subsets of $U$. The global analogue of Proposition~\ref{prop:3} 
is given by the following statement.

\begin{corollary}
\label{cor:4}
There is a short exact sequence
\begin{displaymath}
0\longrightarrow C^{0}(U)_{0}\longrightarrow\DivhR(\caU)^{\adel}\longrightarrow\DivR(\caU)^{\adel}\longrightarrow 0,
\end{displaymath}
where $\DivR(\caU)^{\adel}$ are the geometric adelic divisors on $\caU$ as in Subsection~\ref{sec:geom-case-adel}. 
\end{corollary}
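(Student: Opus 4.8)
The plan is to exhibit the map of the asserted sequence and then prove exactness in the middle (the kernel of the projection is $C^0(U)_0$) and surjectivity on the right; injectivity of $C^0(U)_0\hookrightarrow\DivhR(\caU)^{\adel}$ is immediate. The map $\DivhR(\caU)^{\adel}\to\DivR(\caU)^{\adel}$ is the one already described just before the statement: it sends the class of a $\overline{\caB}$-adic Cauchy sequence $(\caD_n,g_n)_{n\in\N}$ of model arithmetic divisors on $\caU$ to the class of the $\caB$-adic Cauchy sequence $(\caD_n)_{n\in\N}$; this is well defined, additive and continuous. The inclusion $C^0(U)_0\hookrightarrow\DivhR(\caU)^{\adel}$ is $f\mapsto(0,f)$, and it is injective because two locally integrable functions representing the same adelic arithmetic divisor agree almost everywhere, hence, being continuous on $U$, everywhere on $U$.

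I would first compute the kernel. An element of it is an adelic arithmetic divisor whose geometric part is the zero adelic divisor on $\caX$, and over the zero divisor a model arithmetic divisor on $\caU$ is just the pair consisting of the zero Cartier divisor on some $\caX_\pi$ and a smooth function on $X=\caX_\Sigma(\C)$, whose $\overline{\caB}$-adic norm involves only the supremum of $|{\cdot}|/g_B$. Thus the kernel is computed entirely on the complex manifold $X$ with the $g_B$-adic topology, which is exactly the situation of Subsection~\ref{sec:local-arithm-case}: the argument proving Proposition~\ref{prop:3} shows that a $g_B$-adic Cauchy sequence of smooth functions on $X$ converges pointwise, uniformly on compact subsets of $U$, to a function $f$ with $|f(x)|=o(g_B(x))$ as $x\to|B|$, i.e.\ $f\in C^0(U)_0$, and conversely that every $f\in C^0(U)_0$ is the $g_B$-adic limit of a sequence $(f_n)_{n\in\N}$ of smooth functions on $X$, so that $(0,f)=\lim_n(0,f_n)$ lies in $\DivhR(\caU)^{\adel}$ and maps to $0$. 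This gives at once that the kernel equals $C^0(U)_0$ and that the inclusion above is well defined.

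It remains to prove surjectivity, that is, that every $\caD\in\DivR(\caU)^{\adel}$ admits a Green function $g$ on $U$ with $(\caD,g)\in\DivhR(\caU)^{\adel}$. Pick a $\caB$-adic Cauchy sequence $(\caD_n)_{n\in\N}$ of model divisors with class $\caD$; after passing to a subsequence, $-2^{-n}\pi^{\ast}\caB\le\caD_{m}-\caD_{n}\le2^{-n}\pi^{\ast}\caB$ for all $m\ge n$ on a suitable modification. The idea is to lift recursively to a $\overline{\caB}$-adic Cauchy sequence of model arithmetic divisors: take any Green function $g_0$ of smooth type for $D_0$, and given $g_n$ put $g_{n+1}=g_n+\gamma_n$, where $\gamma_n$ is a Green function of smooth type for $D_{n+1}-D_n$ chosen with $-\varepsilon_ng_B\le\gamma_n\le\varepsilon_ng_B$ for some summable $\varepsilon_n\to0$; then $(g_n)_{n\in\N}$ is uniformly $g_B$-adic Cauchy, so $(\caD_n,g_n)_{n\in\N}$ is $\overline{\caB}$-adic Cauchy and converges to $(\caD,g)$ with $g=\lim_ng_n$. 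The existence of $\gamma_n$ with such a bound is obtained by comparison with the pullback $\pi^{\ast}g_B$, a Green function of smooth type for $\pi^{\ast}\caB$ that is positive and bounded below by $\eta$ on the preimage of $U$: since $2^{-n}\pi^{\ast}\caB\mp(D_{n+1}-D_n)$ are effective, one produces, in the spirit of Lemma~\ref{lem:greeneffreal}, a smooth-type Green function for $D_{n+1}-D_n$ of size at most a modification-dependent multiple of $2^{-n}g_B$, and summability is then secured by passing to a sufficiently sparse subsequence. The main obstacle is precisely this estimate: the geometric ``vertical'' directions are harmless (a purely vertical adelic divisor is lifted by $g=0$) and the kernel analysis is taken care of by Proposition~\ref{prop:3}, so the genuine work is to choose, for a Cartier divisor $C$ on $\caX_\pi$ squeezed between $\pm\varepsilon\pi^{\ast}\caB$, a Green function of smooth type for $C$ of norm $O(\varepsilon\,g_B)$ with enough control on the constant, after sparsifying, to keep the telescoping differences $g_{n+1}-g_n$ summable in the $g_B$-adic norm. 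This is the same delicate point that underlies the surjectivity in Proposition~\ref{prop:3}, here carried over to the scheme-theoretic setting by base change to $\Sigma$.
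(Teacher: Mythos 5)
The paper gives no written proof of this corollary (it is presented as the global analogue of Proposition~\ref{prop:3}, which is itself quoted from Yuan--Zhang), so I am judging your argument on its own terms. Your architecture is right: describe the maps, identify the kernel via the convergence statement of Proposition~\ref{prop:3}, and prove surjectivity by lifting a Cauchy sequence of model divisors to a Cauchy sequence of model arithmetic divisors. You also correctly isolate the crux, namely that for a model divisor $\caC$ squeezed between $\pm\varepsilon\,\pi^{\ast}\caB$ one must produce a Green function $\gamma$ of smooth (or continuous) type for $C$ with $\vert\gamma\vert\le K\varepsilon g_{B}$ and with control on $K$.

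Your resolution of that crux does not work as stated. You obtain $\vert\gamma_{n}\vert\le K_{n}2^{-n}g_{B}$ with $K_{n}$ depending on the modification and on the auxiliary choices of Green functions for the components of $D_{n+1}-D_{n}$, and you propose to secure $\sum_{n}K_{n}2^{-n}<\infty$ by ``passing to a sufficiently sparse subsequence''. This is circular: re-indexing to a subsequence $(n_{k})$ replaces the differences by $\caD_{n_{k+1}}-\caD_{n_{k}}$, which live on new modifications and carry new, uncontrolled constants $K'_{k}$; and $K'_{k}$ is only determined \emph{after} $n_{k+1}$ is chosen, so $n_{k+1}$ cannot be chosen to beat it. Sparsifying after the constants are fixed does not help either, since the natural Green function for a telescoped difference is the sum of the $\gamma_{j}$, carrying the same (possibly divergent) total bound. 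The same defect infects your kernel computation: a kernel element is the class of a sequence $(\caD_{n},g_{n})$ with $\Vert\caD_{n}\Vert\to 0$, not of a sequence of pairs $(0,f_{n})$ with $f_{n}$ smooth, and reducing the former to the latter (i.e.\ proving that the kernel of the map on completions is the closure of the kernel on model divisors) again requires lifting each $\caD_{n}$ with a Green function of norm $O(\Vert\caD_{n}\Vert)$ with a \emph{uniform} implied constant. What closes both gaps is a lifting with a universal constant: for $0\le C\le\varepsilon\,\pi^{\ast}B$ effective, take any Green function $h$ for $C$ of smooth type and set $\gamma=\max\big(0,\min(h,\varepsilon\,\pi^{\ast}g_{B})\big)$; this is a Green function for $C$ of continuous type (admissible by Remark~\ref{rem:9} and Definition~\ref{def:12}) satisfying $0\le\gamma\le\varepsilon\,\pi^{\ast}g_{B}$, and splitting a general $C$ into its positive and negative parts yields $\vert\gamma\vert\le\varepsilon\,\pi^{\ast}g_{B}$, i.e.\ $K=1$ independently of the modification. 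With that estimate your telescoping argument goes through verbatim and no sparsification is needed.
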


We next recall the notions of nef adelic arithmetic divisors and integrable adelic arithmetic divisors. These are adaptations of~\cite[\S 2.1.5]
{YuanZhang:adelic} and~\cite[Corollary~5.7]{zhang95:_posit} to divisors with real coefficients.

\begin{definition}
\label{def:13}
The cone $\DivhR(\caU)^{\nef}$ of \emph{nef adelic arithmetic divisors on $\caU$} is the closure of the convex cone of nef model arithmetic 
divisors on $\caU$ inside of $\DivhR(\caU)^{\adel}$, i.\,e., any nef adelic arithmetic divisor $\overline{\caD}$ on $\caU$ can be represented by a 
Cauchy sequence $(\overline{\caD}_{n})_{n\in\N}$ of nef model arithmetic divisors on $\caU$. The space of \emph{integrable adelic arithmetic 
divisors on $\caU$} is the vector space
\begin{displaymath}
\DivhR(\caU)^{\inte}\coloneqq\DivhR(\caU)^{\nef}-\DivhR(\caU)^{\nef}.
\end{displaymath}
\end{definition}

We make the following easy observation.

\begin{lemma}
Let $(\caD,g_{1}),(\caD,g_{2})\in\DivhR(\caU)^{\adel}$. Assume that $(\caD,g_{1})\in\DivhR(\caU)^{\nef}$ and that $(D,g_{2})\in\DivhR(U)^
{\nef}$. If $g_{2}\ge g_{1}$, then $(\caD,g_{2})\in\DivhR(\caU)^{\nef}$.  
\end{lemma}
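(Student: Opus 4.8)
The plan is to exhibit $(\caD,g_2)$ as a limit of nef model arithmetic divisors on $\caU$ by a maximum construction, the hypothesis $g_2\ge g_1$ being used only at the very last step. Since $(\caD,g_1)\in\DivhR(\caU)^{\nef}$, I would choose a Cauchy sequence $(\caD_n,g_{1,n})_{n}$ of nef model arithmetic divisors on $\caU$ with $(\caD_n,g_{1,n})\to(\caD,g_1)$, and after passing to a subsequence arrange $-2^{-n}\overline{\caB}\le(\caD,g_1)-(\caD_n,g_{1,n})\le 2^{-n}\overline{\caB}$, so that $D_n\to D$ in the $B$-adic topology and $\vert g_{1,n}-g_1\vert\le 2^{-n}g_B$ on $U$. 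Since $(D,g_2)\in\DivhR(U)^{\nef}$, Remark~\ref{rem:6} gives a Cauchy sequence $(E_n,h_n)_n$ of model arithmetic divisors on $U$ with $E_n$ nef, $h_n$ a Green function for $E_n$ of continuous and of plurisubharmonic type, $-2^{-n}B\le E_n-D\le 2^{-n}B$ and $\vert h_n-g_2\vert\le 2^{-n}g_B$. I would then set $g_{2,n}\coloneqq\max(g_{1,n},h_n)$ and propose $(\caD_n,g_{2,n})_n$ as the approximating sequence.

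Next I would check that each $(\caD_n,g_{2,n})$ is a nef model arithmetic divisor on $\caU$. The subtle point is that a priori $\max(g_{1,n},h_n)$ is a Green function for $D_n\vee E_n=D_n+(E_n-D_n)^+$ rather than for $D_n$, the effective correction $(E_n-D_n)^+$ being supported on $\vert B\vert$ and bounded by $2^{1-n}B$. To eliminate it I would reduce to the case in which the arithmetic boundary divisor is relatively nef (harmless by the analogue of Proposition~\ref{prop:2}) and replace $(\caD_n,g_{1,n})$ by a monotone approximation from above of $(\caD,g_1)$ as in Lemma~\ref{lemm:7}, so that $D_n\ge D+2^{-n}B\ge E_n$ and hence $D_n\vee E_n=D_n$. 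Granting this, Lemma~\ref{lem:greeneff} shows $h_n$ is a Green function for $D_n$ of plurisubharmonic type, and since the maximum of two plurisubharmonic functions is plurisubharmonic, $g_{2,n}$ is a Green function for $D_n$ of continuous and of plurisubharmonic type. Thus $(\caD_n,g_{2,n})$ is a model arithmetic divisor on $\caU$ with $\caD_n$ relatively nef and $g_{2,n}$ of plurisubharmonic type; since $g_{2,n}\ge g_{1,n}$ and the archimedean contribution to the height $h_{(\caD_n,g)}(x)$ of any $x\in\caX(\ol K)$ is a non-decreasing function of the Green function $g$, one gets $h_{(\caD_n,g_{2,n})}(x)\ge h_{(\caD_n,g_{1,n})}(x)\ge 0$, so $(\caD_n,g_{2,n})$ is nef.

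Finally I would verify $(\caD_n,g_{2,n})\to(\caD,g_2)$ in the $\overline{\caB}$-adic topology. The geometric parts converge by construction, and on $U$ the bounds above give
\[
g_2-2^{-n}g_B\le h_n\le g_{2,n}=\max(g_{1,n},h_n)\le\max(g_1,g_2)+2^{-n}g_B=g_2+2^{-n}g_B,
\]
where the last equality is precisely the place where $g_2\ge g_1$ enters; hence $\vert g_{2,n}-g_2\vert\le 2^{-n}g_B$, and therefore $(\caD,g_2)\in\DivhR(\caU)^{\nef}$. The step I expect to be the main obstacle is the geometric bookkeeping in the second paragraph — reconciling the models produced by the nef-ness of $(\caD,g_1)$ with those produced by the local nef-ness of $(D,g_2)$ — while the maximum construction and the monotonicity of the height in the Green function are routine once the models are lined up.
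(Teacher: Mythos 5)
The paper gives no proof of this lemma (it is introduced only as an ``easy observation''), so there is nothing to compare against line by line; your argument is correct and uses exactly the devices the paper itself deploys for the analogous statements, namely the maximum-of-Green-functions construction of Lemma~\ref{lemm:3} and Proposition~\ref{prop:12}, the monotone approximation of Lemma~\ref{lemm:5}, and Lemma~\ref{lem:greeneff}, with the hypothesis $g_{2}\ge g_{1}$ entering only in the final estimate $\max(g_{1,n},h_{n})\le g_{2}+\varepsilon_{n}g_{B}$, as it should. Two points deserve one more sentence each in a written-up version: the reduction to a nef arithmetic boundary divisor is justified by Proposition~\ref{prop:4} (independence of the choice of compactification and of $\overline{\caB}$) together with the existence of a compactification of $\caU$ whose boundary supports a nef effective divisor, which is the same implicit reduction the paper makes at the start of Section~\ref{sec:extension-yuan-zhang}; and the approximants $(E_{n},h_{n})$ of $(D,g_{2})$ live on complex modifications that need not be complex fibres of arithmetic modifications, so one should record that after arranging $D_{n}\ge E_{n}$ the function $\max(g_{1,n},h_{n})+\log\vert f_{D_{n}}\vert^{2}$ is bounded and continuous (with values in $[-\infty,\infty)$ before taking the max) on a modification, hence $g_{2,n}$ is a Green function for $D_{n}$ of continuous and plurisubharmonic type in the sense admitted by Remark~\ref{rem:9} and Remark~\ref{rem:6}. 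Neither point affects the validity of the argument.
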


The following results are proven in~\cite{YuanZhang:adelic}.

\begin{proposition}
\label{prop:4}
If $\pi\in R(\caX,\caU)$ and $\overline{\caB}'=(\caB',g_{B'})$ is an effective arithmetic divisor on $\caX_{\pi}$ with $\vert\caB'\vert=\caX_{\pi}
\setminus\caU$, then the $\overline{\caB}'$-adic norm is equivalent to the $\overline{\caB}$-adic norm. Therefore, the definitions of the spaces 
$\DivhR(\caU)^{\adel}$, $\DivhR(\caU)^{\nef}$, and $\DivhR(\caU)^{\inte}$ only depend on $\caU$ and not on a particular choice of arithmetic 
boundary divisor $\overline{\caB}$ for $\caX$.
\end{proposition}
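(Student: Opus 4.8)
The plan is to compare the $\overline{\caB}$-adic and $\overline{\caB}'$-adic norms directly on $\DivhR(\caU)^{\model}$ and then pass to completions. First I would reduce to the case $\caX_{\pi}=\caX$: the categories $R(\caX,\caU)$ and $R(\caX_{\pi},\caU)$ are cofinal (a proper normal modification of $\caX$ that is an isomorphism over $\caU$ is dominated by one factoring through $\caX_{\pi}$, and conversely), so both $\DivR(\caU)^{\model}$ and $\DivhR(\caU)^{\model}$ may be computed over either $\caX$ or $\caX_{\pi}$; moreover the effective pullback $\pi^{\ast}\overline{\caB}=(\pi^{\ast}\caB,\pi^{\ast}g_{B})$ on $\caX_{\pi}$ induces the same adic norm as $\overline{\caB}$, since that norm depends only on the boundary data up to pullback along modifications. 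As $\pi$ is an isomorphism over $\caU$ we have $\vert\pi^{\ast}\caB\vert=\caX_{\pi}\setminus\caU=\vert\caB'\vert$. Thus it suffices to prove: on one projective arithmetic variety, two effective arithmetic divisors $\overline{\caB}_{1}=(\caB_{1},g_{1})$ and $\overline{\caB}_{2}=(\caB_{2},g_{2})$ with $\vert\caB_{1}\vert=\vert\caB_{2}\vert=\caX\setminus\caU$ --- and, as needed for the $\overline{\caB}_{i}$-adic norms to be finite and as is built into the notion of arithmetic boundary divisor, with $g_{1}$ and $g_{2}$ bounded below by positive constants on $U$ --- induce equivalent norms on $\DivhR(\caU)^{\model}$.

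Next I would treat the geometric parts exactly as in Proposition~\ref{prop:2}: since $\caX$ is normal, a Cartier $\R$-divisor is Cartier-effective if and only if Weil-effective, so comparing the finitely many, strictly positive multiplicities of $\caB_{1}$ and $\caB_{2}$ along the prime components of their common support yields a constant $c_{0}\ge 1$ with $\tfrac{1}{c_{0}}\caB_{2}\le\caB_{1}\le c_{0}\caB_{2}$ as Cartier $\R$-divisors.

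The archimedean parts are where the real work lies. Viewing Green functions as functions on $X$ up to sets of measure zero (Proposition~\ref{prop:3}, Corollary~\ref{cor:4}), the difference $g_{1}-c_{0}g_{2}$ is a Green function of smooth type for the $\R$-divisor $\caB_{1}-c_{0}\caB_{2}$, because $\ddc(g_{1}-c_{0}g_{2})+\delta_{\caB_{1}-c_{0}\caB_{2}}=\omega_{\caB_{1}}(g_{1})-c_{0}\,\omega_{\caB_{2}}(g_{2})$ is smooth; and $\caB_{1}-c_{0}\caB_{2}$ is anti-effective by the preceding step. A Green function of smooth type for an anti-effective divisor tends to $-\infty$ along the support of that divisor and is smooth away from it, so on the compact manifold $X$ it is bounded above: $g_{1}\le c_{0}g_{2}+M$ for some $M\in\R$. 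Combining with $g_{2}\ge\eta'>0$ on $U$ gives $g_{1}\le(c_{0}+M/\eta')\,g_{2}$ there; running the symmetric argument (with $g_{1}$ and $g_{2}$ exchanged, using the positivity of $g_{1}$) yields the reverse bound, hence a constant $c_{1}\ge 1$ with $\tfrac{1}{c_{1}}g_{2}\le g_{1}\le c_{1}g_{2}$ on $U$, and so almost everywhere on $X$.

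Finally I would assemble the two comparisons: with $c=\max(c_{0},c_{1})$, the inequalities $-\varepsilon\overline{\caB}_{1}\le\overline{\caD}\le\varepsilon\overline{\caB}_{1}$ force $-c\varepsilon\,\overline{\caB}_{2}\le\overline{\caD}\le c\varepsilon\,\overline{\caB}_{2}$, and symmetrically, so the two $\overline{\caB}_{i}$-adic norms on $\DivhR(\caU)^{\model}$ are equivalent. Equivalent norms have the same completion, so $\DivhR(\caU)^{\adel}$ is independent of the choice of compactification $\caX_{\pi}$ and of arithmetic boundary divisor; and because nefness of a model arithmetic divisor on $\caU$ (Definition~\ref{def:21}) makes no reference to the boundary divisor, the cone $\DivhR(\caU)^{\nef}$ --- the closure of the nef model cone --- and hence $\DivhR(\caU)^{\inte}=\DivhR(\caU)^{\nef}-\DivhR(\caU)^{\nef}$ are likewise independent of these choices. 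The main obstacle is the Green-function estimate above: the norm comparison requires a two-sided \emph{multiplicative} bound between $g_{1}$ and $g_{2}$, whereas what comes for free is a bound on their \emph{difference}, and bridging the two is precisely what forces boundary Green functions to be bounded below by a positive constant on $U$.
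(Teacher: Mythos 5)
Your argument is correct. Note that the paper itself gives no proof of this proposition: it is simply cited from \cite{YuanZhang:adelic} (``The following results are proven in...''), so there is nothing internal to compare against. Your route is the expected one and matches the sketch the paper does give in the geometric case (Remark~\ref{rem:11}): compare multiplicities along the finitely many common components to handle the divisorial parts, and then upgrade the additive bound $g_{1}\le c_{0}g_{2}+M$ (coming from boundedness above of a smooth-type Green function for an anti-effective divisor on the compact $X$) to a multiplicative one using the strict lower bound $g_{2}\ge\eta'>0$ --- which, as you correctly identify, is exactly why the definition of an arithmetic boundary divisor insists on that positivity. The only cosmetic quibble is that ``of smooth type'' for $\R$-divisors is defined in Definition~\ref{def:4} via decompositions rather than via smoothness of $\omega_{D}(g)$, but the boundedness-above conclusion you need follows just as directly from the decomposition definition.
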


\begin{theorem}
\label{thm:4} 
The intersection pairing of nef arithmetic divisors on $\caX$ extends by continuity to a unique symmetric multilinear pairing
\begin{displaymath}
\underbrace{\DivhR(\caU)^{\nef}\otimes\ldots\otimes\DivhR(\caU)^{\nef}}_{\text{$\dim(\caU)$-\emph{times}}}\longrightarrow\R.
\end{displaymath}
This pairing can be extended by linearity to a pairing 
\begin{displaymath}
\underbrace{\DivhR(\caU)^{\inte}\otimes\ldots\otimes\DivhR(\caU)^{\inte}}_{\text{$\dim(\caU)$-\emph{times}}}\longrightarrow\R.
\end{displaymath}
\end{theorem}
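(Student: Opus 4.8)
The plan is to adapt the argument by which Yuan and Zhang establish the geometric pairing of Theorem~\ref{thm:3} to the arithmetic setting, the extra ingredient being the positivity of classical arithmetic intersection numbers. Write $N=\dim(\caU)$. After replacing $\caX$ by a projective compactification if necessary — which changes neither $\DivhR(\caU)^{\adel}$ nor the cone $\DivhR(\caU)^{\nef}$, by the arithmetic analogue of Proposition~\ref{prop:2} — we may assume that every $\caX_{\pi}$ with $\pi\in R(\caX,\caU)$ is projective. On such a $\caX_{\pi}$ any $N$-tuple of nef model arithmetic divisors has a well-defined arithmetic intersection number $\widehat{\deg}(\overline{\caD}_{1}\cdots\overline{\caD}_{N})$ in the sense of Gillet--Soul\'e, extended to continuous semipositive metrics following Zhang and to $\R$-coefficients by multilinearity; it is symmetric and $\R$-multilinear, and it is compatible with push-forward along the modifications in $R(\caX,\caU)$ by the arithmetic projection formula, so it descends to a well-defined symmetric $\R$-multilinear pairing on the nef model arithmetic divisors of $\DivhR(\caU)^{\model}$. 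I will use the positivity property that, if each factor of a product of $N$ arithmetic divisors on $\caX_{\pi}$ is either nef or effective with nonnegative Green function, then its arithmetic degree is $\ge 0$. This follows by induction on $\dim(\caX_{\pi})$ from the arithmetic restriction formula: restricting to an effective factor produces, besides a term of the same type in one dimension less, a Green-function contribution which is the integral of a nonnegative function against a positive (non-pluripolar) measure, hence $\ge 0$; at the bottom of the induction one is left with a nonnegative height or with a geometric intersection number of nef classes on a fibre.

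The core of the proof is a uniform continuity estimate. Fix $M>0$ and let $\overline{\caD}_{1},\ldots,\overline{\caD}_{N},\overline{\caD}_{1}'$ be nef model arithmetic divisors on $\caU$ with $\overline{\caD}_{i}\le M\overline{\caB}$ for all $i$ and $\Vert\overline{\caD}_{1}-\overline{\caD}_{1}'\Vert\le\varepsilon$. Putting $\overline{\caA}=\overline{\caD}_{1}-\overline{\caD}_{1}'$, $\R$-linearity gives $\widehat{\deg}(\overline{\caD}_{1}\cdot\overline{\caD}_{2}\cdots\overline{\caD}_{N})-\widehat{\deg}(\overline{\caD}_{1}'\cdot\overline{\caD}_{2}\cdots\overline{\caD}_{N})=\widehat{\deg}(\overline{\caA}\cdot\overline{\caD}_{2}\cdots\overline{\caD}_{N})$. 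Since $-\varepsilon\overline{\caB}\le\overline{\caA}\le\varepsilon\overline{\caB}$, both $\varepsilon\overline{\caB}\pm\overline{\caA}$ are effective with nonnegative Green function, so the positivity property yields
\begin{displaymath}
\abs{\widehat{\deg}(\overline{\caA}\cdot\overline{\caD}_{2}\cdots\overline{\caD}_{N})}\le\varepsilon\,\widehat{\deg}(\overline{\caB}\cdot\overline{\caD}_{2}\cdots\overline{\caD}_{N}).
\end{displaymath}
Replacing the factors $\overline{\caD}_{2},\ldots,\overline{\caD}_{N}$ by $M\overline{\caB}$ one at a time and applying positivity to the effective divisors $M\overline{\caB}-\overline{\caD}_{i}$ bounds the right-hand side by $\varepsilon M^{N-1}\widehat{\deg}(\overline{\caB}^{N})$; since $\overline{\caB}$ is itself effective with $g_{B}>\eta>0$, the number $C:=M^{N-1}\widehat{\deg}(\overline{\caB}^{N})\ge 0$ is finite and depends only on $M$, $N$ and $\overline{\caB}$. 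Hence $\abs{\widehat{\deg}(\overline{\caA}\cdot\overline{\caD}_{2}\cdots\overline{\caD}_{N})}\le C\varepsilon$.

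With this in hand the conclusion is routine. Given nef adelic arithmetic divisors $\overline{\caD}_{1},\ldots,\overline{\caD}_{N}$ on $\caU$, choose representing Cauchy sequences $(\overline{\caD}_{i,n})_{n}$ of nef model arithmetic divisors; being Cauchy they are bounded, so there is an $M$ with $\overline{\caD}_{i,n}\le M\overline{\caB}$ for all $i$ and all $n$. A telescoping expansion writes $\widehat{\deg}(\overline{\caD}_{1,n_{1}}\cdots\overline{\caD}_{N,n_{N}})-\widehat{\deg}(\overline{\caD}_{1,m_{1}}\cdots\overline{\caD}_{N,m_{N}})$ as a sum of $N$ terms, the $k$-th involving $\overline{\caD}_{k,n_{k}}-\overline{\caD}_{k,m_{k}}$ together with $N-1$ nef factors bounded by $M\overline{\caB}$, hence of absolute value $\le C\,\Vert\overline{\caD}_{k,n_{k}}-\overline{\caD}_{k,m_{k}}\Vert$ by the estimate. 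Therefore the net $(\widehat{\deg}(\overline{\caD}_{1,n_{1}}\cdots\overline{\caD}_{N,n_{N}}))$ is Cauchy in $\R$ and converges; the interleaving trick used to define adelic divisors, together with the same estimate, shows the limit is independent of the chosen Cauchy sequences, and symmetry and $\R$-multilinearity pass to the limit from the model case. The estimate is precisely the asserted continuity, and uniqueness of the extension follows from the density of nef model divisors in $\DivhR(\caU)^{\nef}$. Finally, $\R$-multilinearity of the pairing on the cone $\DivhR(\caU)^{\nef}$ forces a unique symmetric $\R$-multilinear extension to the subgroup $\DivhR(\caU)^{\inte}=\DivhR(\caU)^{\nef}-\DivhR(\caU)^{\nef}$ that this cone generates.

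The step I expect to be the main obstacle is the positivity of classical arithmetic intersection numbers in the present generality (real coefficients, possibly vertical boundary components, continuous semipositive Green functions) together with the compatibility of these numbers under the proper modifications indexing the direct limit — that is, the arithmetic projection formula and the arithmetic restriction formula in this setting. It is exactly at these points that one appeals to the work of Gillet--Soul\'e, Zhang and Yuan--Zhang rather than arguing from scratch.
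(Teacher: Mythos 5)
The paper does not actually prove Theorem~\ref{thm:4}: it is quoted from \cite{YuanZhang:adelic}, and your proposal is essentially a reconstruction of the Yuan--Zhang argument (positivity of arithmetic intersection numbers, a $\overline{\caB}$-adic continuity estimate, telescoping, passage to the limit). The architecture is the intended one, but two steps in your uniform bound fail as written, and they sit in the same place. First, a Cauchy sequence of nef model arithmetic divisors is \emph{not} bounded above by a multiple of $\overline{\caB}$: being Cauchy only controls the differences $\overline{\caD}_{i,n}-\overline{\caD}_{i,n_{0}}$, while the individual terms typically have horizontal components meeting $\caU$, so $M\overline{\caB}-\overline{\caD}_{i,n}$ is never effective and in fact $\Vert\overline{\caD}_{i,n}\Vert=\infty$ (the infimum in the $\overline{\caB}$-adic norm is over the empty set). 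Second, your positivity property is overstated: a product in which two or more factors are merely effective with nonnegative Green functions, but not nef, need not be nonnegative --- already geometrically the self-intersection of an exceptional divisor is negative --- and your own inductive justification only covers the case of a single effective factor with all remaining factors nef, since after restricting to the effective factor the other factors must still be nef or effective, which fails for a second non-nef effective divisor. Your iteration replacing $\overline{\caD}_{2},\ldots,\overline{\caD}_{N}$ by $M\overline{\caB}$ one at a time, ending with $\widehat{\deg}(\overline{\caB}^{N})$, uses exactly this disallowed configuration (two or more copies of the non-nef divisor $\overline{\caB}$ together with the non-nef effective divisors $M\overline{\caB}-\overline{\caD}_{i}$).

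Both defects are repaired by devices the paper itself sets up. By Remark~\ref{rem:10} one may enlarge the boundary divisor --- equivalently shrink $\caU$ --- so that $\overline{\caB}$ is nef, which changes neither $\DivhR(\caU)^{\nef}$ nor the intersection numbers to be defined; and instead of dominating $\overline{\caD}_{i,n}$ by $M\overline{\caB}$ one dominates it by the fixed nef divisor $\overline{\caD}_{i,n_{0}}+\overline{\caB}$, which the Cauchy condition does provide. Then every product occurring in the estimate contains at most one factor that is effective but not nef, the one-effective-factor positivity lemma suffices, and the uniform bound becomes $\widehat{\deg}\bigl(\overline{\caB}\cdot(\overline{\caD}_{2,n_{0}}+\overline{\caB})\cdots(\overline{\caD}_{N,n_{0}}+\overline{\caB})\bigr)$, a fixed finite number. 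With this correction the remainder of your argument --- telescoping, independence of the Cauchy sequences via interleaving, and the multilinear extension from the cone $\DivhR(\caU)^{\nef}$ to the group $\DivhR(\caU)^{\inte}$ it generates --- is the standard one and goes through.
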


\begin{remark}
\label{rem:4}
As in the geometric case, Theorem~\ref{thm:4} means that, if $\overline{\caD}_{j}$ with $j=0,\ldots,d$ are nef adelic arithmetic divisors on 
$\caU$ represented by Cauchy sequences $(\overline{\caD}_{j,n})_{n\in\N}$ of nef model arithmetic divisors on $\caU$, then the limit 
\begin{equation}
\label{eq:15}
\overline{\caD}_{0}\cdot\ldots\cdot\overline{\caD}_{d}\coloneqq\lim_{(n_{0},\dots,n_{d})\to(\infty,\ldots,\infty)}\overline{\caD}_{0,n_{0}}\cdot\ldots
\cdot\overline{\caD}_{d,n_{d}}
\end{equation}
exists and is independent of the choice of approximating sequences.
\end{remark}

\begin{remark}
\label{rem:10}
If $\caV\subseteq\caU$ is a smaller open subset, then there is a map $\DivhR(\caU)\rightarrow\DivhR(\caV)$ and the intersection product is the 
same computed in both spaces. Therefore, for the purpose of computing arithmetic intersection products it is harmless to enlarge the boundary 
divisor, for example in order to ask it to be ample or nef. 
\end{remark}

The analogue of Lemma~\ref{lemm:9} in the current context is the following.

\begin{lemma}
\label{lemm:5} 
Let $\caX$ be a projective arithmetic variety over  $\Spec(\caO_{K})$ with boundary divisor $\caB$ and $\caU=\caX\setminus\caB$ such that
$\overline{\caB}$ is a nef arithmetic divisor. Let $\overline{\caD}$ be a nef adelic arithmetic divisor on $\caU$. Then, there is a sequence 
$(\overline{\caD}_{n})_{n\in\N}=(\caD_{n},g_{n})_{n\in\N}$ of nef model arithmetic divisors on $\caU$ converging monotonically decreasingly 
to $\overline{\caD}$ in the $\overline{\caB}$-adic topology. That is, for all $n\in\N$, we have $\caD_{n+1}\le\caD_{n}$ and $g_{n+1}\le g_{n}$.  
\end{lemma}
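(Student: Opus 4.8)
The plan is to transcribe the proof of Lemma~\ref{lemm:9} (equivalently of its archimedean counterpart Lemma~\ref{lemm:7}), the only genuinely new ingredient — forced by the extra height condition in Definition~\ref{def:20} — being that the class of nef model arithmetic divisors on $\caU$ is stable under adding a pulled-back nonnegative multiple of the nef arithmetic boundary divisor $\overline{\caB}$. Concretely, since $\overline{\caB}$ is nef on $\caX$, for every $\pi\in R(\caX,\caU)$ its pullback to $\caX_{\pi}$ is still relatively nef, carries a Green function of plurisubharmonic type, and has nonnegative height (height being functorial under $\pi$); and each of these three conditions is preserved under sums. Hence, if $\overline{\caF}$ is a nef model arithmetic divisor on $\caU$ (living on some $\caX_{\pi}$) and $\lambda\ge 0$, then $\overline{\caF}+\lambda\overline{\caB}$ is again a nef model arithmetic divisor on $\caU$ in the sense of Definitions~\ref{def:20} and~\ref{def:21}.

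With this in hand I would proceed exactly as before. By the definition of $\DivhR(\caU)^{\nef}$, choose a Cauchy sequence $(\overline{\caE}_{n})_{n\in\N}$ of nef model arithmetic divisors on $\caU$ converging to $\overline{\caD}$ in the $\overline{\caB}$-adic topology. Passing to a subsequence, I may assume
\[
\overline{\caD}-\frac{1}{2^{n}}\,\overline{\caB}\le\overline{\caE}_{n}\le\overline{\caD}+\frac{1}{2^{n}}\,\overline{\caB}
\]
for all $n$, where, as in the proof of Lemma~\ref{lemm:8}, these inequalities are read on a suitable common proper normal modification dominating the supports of $\overline{\caE}_{n}$, $\overline{\caD}$, and $\overline{\caB}$ (with $\overline{\caB}$ pulled back there). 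I then set
\[
\overline{\caD}_{n}\coloneqq\overline{\caE}_{n}+\frac{4}{2^{n}}\,\overline{\caB},
\]
which is a nef model arithmetic divisor on $\caU$ by the first paragraph. The monotonicity estimate is now literally the one of Lemma~\ref{lemm:9}:
\[
\overline{\caD}_{n+1}=\overline{\caE}_{n+1}+\frac{4}{2^{n+1}}\,\overline{\caB}\le\overline{\caD}+\frac{5}{2^{n+1}}\,\overline{\caB}\le\overline{\caE}_{n}+\frac{7}{2^{n+1}}\,\overline{\caB}\le\overline{\caE}_{n}+\frac{4}{2^{n}}\,\overline{\caB}=\overline{\caD}_{n},
\]
and the same chain of inequalities also yields $\overline{\caD}-\tfrac{1}{2^{n}}\overline{\caB}\le\overline{\caD}_{n}\le\overline{\caD}+\tfrac{5}{2^{n}}\overline{\caB}$, so $(\overline{\caD}_{n})_{n\in\N}$ converges to $\overline{\caD}$ in the $\overline{\caB}$-adic topology. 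Reading off the geometric parts and the Green functions of $\overline{\caD}_{n+1}\le\overline{\caD}_{n}$ gives $\caD_{n+1}\le\caD_{n}$ and $g_{n+1}\le g_{n}$, as required.

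The argument is essentially bookkeeping; the only point deserving a sentence of justification — and thus the ``main obstacle'', soft as it is — is the stability statement of the first paragraph, namely that adding a pulled-back multiple of the nef $\overline{\caB}$ does not destroy nefness of a model arithmetic divisor, which reduces to additivity of the three conditions of Definition~\ref{def:20} together with their compatibility with pullback along proper normal modifications. Everything else is a verbatim copy of the proof of Lemma~\ref{lemm:9}.
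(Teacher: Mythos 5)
Your proof is correct and is essentially the paper's own argument: the paper proves this lemma by instructing the reader to repeat the proof of Lemma~\ref{lemm:9} with bars on all divisors (via Lemma~\ref{lemm:7}), which is exactly the transcription you carry out. Your extra paragraph checking that nefness of model arithmetic divisors is stable under adding a nonnegative multiple of the nef $\overline{\caB}$ is a reasonable piece of bookkeeping that the paper leaves implicit.
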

\begin{proof}
Copy the proof of Lemma~\ref{lemm:7}.
\end{proof}

The next result deals with integrable adelic arithmetic divisors on $\caU$ whose geometric part is trivial.

\begin{lemma}
\label{lemm:3}
Let $\caX$ be an arithmetic variety over  $\Spec(\caO_{K})$, $\overline{\caB}$ a nef arithmetic boundary divisor for $\caX$, and $(0,f)\in
\DivhR(\caU)^{\inte}$. Then, the following statements hold:
\begin{enumerate}
\item 
\label{item:17} 
We can choose sequences of nef model arithmetic divisors $(\caD_{n},g_{n})_{n\in\N}$ and $(\caD_{n},g'_{n})_{n\in\N}$ on $\caU$ with the same 
divisorial part, converging to $(\caD,g)$ and to $(\caD,g')$, respectively, and such that $f=g-g'$.
\item 
\label{item:18} 
If the function $f$ is globally bounded, i.\,e., $\vert f\vert\le C$, then we can choose the above sequences in such a way that $\vert g_{n}-g'_{n}\vert
\le C$ for all $n\in\N$.   
\end{enumerate}
\end{lemma}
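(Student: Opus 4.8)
The point I would exploit is that $(0,f)$ has trivial geometric part, so the two nef adelic arithmetic divisors whose difference is $(0,f)$ are forced to have \emph{the same} geometric adelic divisor $\caC$; one can then realize the two approximating sequences on one common sequence of relatively nef model divisors attached to $\caC$, rather than approximating the two summands separately (which would produce incompatible divisorial parts that no further algebraic manipulation can equalize).

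\textbf{Steps.} First I would write $(0,f)=\overline{\caM}-\overline{\caN}$ with $\overline{\caM},\overline{\caN}\in\DivhR(\caU)^{\nef}$. Applying the homomorphism $\DivhR(\caU)^{\adel}\to\DivR(\caU)^{\adel}$ shows that $\overline{\caM}$ and $\overline{\caN}$ have the same geometric part, so $\overline{\caM}=(\caC,h_{M})$ and $\overline{\caN}=(\caC,h_{N})$ with $h_{M}-h_{N}=f$; here $h_{M},h_{N}$ are admissible Green functions for $\caC$, in particular of plurisubharmonic type (as limits of plurisubharmonic‑type Green functions for the approximating model divisors). By Remark~\ref{rem:10} I may enlarge the boundary so that $\overline{\caB}$ is nef. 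By Lemma~\ref{lemm:5} applied to $\overline{\caM}$, passing to divisorial parts, there is a sequence $(\caM_{n})_{n\in\N}$ of relatively nef model divisors on $\caU$ with $\caM_{n}\searrow\caC$ and $\caM_{n}\ge\caC$ for all $n$. The key observation is that for each $n$
\[
(\caM_{n},h_{M})=\overline{\caM}+(\caM_{n}-\caC,0),\qquad (\caM_{n},h_{N})=\overline{\caN}+(\caM_{n}-\caC,0),
\]
and that the right‑hand sides are again nef: $\caM_{n}-\caC$ is effective and the added Green function is $0$, so relative nefness is inherited from $\caM_{n}$, the plurisubharmonicity of $h_{M}$ (resp. $h_{N}$) as a Green function for $M_{n}$ follows from Lemma~\ref{lem:greeneffreal} together with $\caM_{n}\ge\caC$, and nonnegativity of the height is preserved because the height is additive and effective arithmetic divisors have nonnegative height. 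Thus $(\caM_{n},h_{M})$ and $(\caM_{n},h_{N})$ are nef model arithmetic divisors with the \emph{same} divisorial part $\caM_{n}$; since $\caM_{n}\to\caC$ in the $\overline{\caB}$‑adic topology they converge to $\overline{\caM}=(\caC,h_{M})$ and $\overline{\caN}=(\caC,h_{N})$, and $h_{M}-h_{N}=f$, which proves~\ref{item:17}. For~\ref{item:18}, the hypothesis $|f|\le C$ gives $|h_{M}-h_{N}|\le C$, so the same sequences already do the job with $g_{n}=h_{M}$ and $g'_{n}=h_{N}$; if a regularization has been performed and the bound is only $|g_{n}-g'_{n}|\le C+\varepsilon_{n}$ with $\varepsilon_{n}\to 0$, I would replace $g_{n}$ by $\max(g_{n},g'_{n}-C)$ and $g'_{n}$ by $\max(g'_{n},g_{n}-C)$, an operation that preserves plurisubharmonicity and the sign of the height, forces $|g_{n}-g'_{n}|\le C$, and changes nothing in the limit since $|f|\le C$.

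\textbf{Main obstacle.} The delicate point — and where I expect the real bookkeeping to lie — is the regularity type of $h_{M},h_{N}$ when regarded as Green functions for the larger divisors $M_{n}$: they are of plurisubharmonic type but in general only of continuous (not smooth) type, because transplanting a Green function for $\caC$ onto $\caM_{n}\ge\caC$ makes it slightly ``too singular'' along $\caM_{n}-\caC$. This is not a genuine obstruction — by (the global analogue of) Remark~\ref{rem:6}, together with the conventions around Remark~\ref{rem:9}, nef adelic arithmetic divisors are limits of nef model arithmetic divisors with continuous plurisubharmonic Green functions, and a Richberg regularization upgrades these to smooth type within arbitrarily small $\overline{\caB}$‑adic distance while keeping the divisorial parts fixed, so a diagonal argument yields the asserted smooth‑type sequences — but it is the step one has to carry out carefully.
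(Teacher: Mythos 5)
Your overall strategy (force a common divisorial part that dominates the shared geometric part of the two nef summands) is the right one and matches the spirit of the paper's proof, but the construction has a genuine gap at its center: the pairs $(\caM_{n},h_{M})$ and $(\caM_{n},h_{N})$ are \emph{not} model arithmetic divisors. The lemma asks for sequences of nef \emph{model} arithmetic divisors, i.e.\ pairs consisting of a Cartier divisor on some $\caX_{\pi}$ together with a Green function of smooth (or at least continuous) type for it. The function $h_{M}$ is only the $\overline{\caB}$-adic limit of the model Green functions $g_{M_{n}}$: it is controlled relative to them only up to $o(g_{B})$, so it may diverge along the boundary (exactly as the Faltings metric does in Lemma~\ref{lem:omega'}), and it is therefore in general not of continuous type for $\caM_{n}$ or for any other model divisor. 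Consequently $(\caM_{n},h_{M})$ does not live in $\DivhR(\caU)^{\model}$, and your ``sequences'' merely relabel the limit objects instead of producing the approximating model data that the lemma is meant to supply (and that Lemma~\ref{lemm:6} and Theorem~\ref{thm:fund} need in order to apply the classical intersection theory level by level). A related slip: you verify nefness of $(\caM_{n},h_{M})$ by checking relative nefness, plurisubharmonicity and nonnegativity of heights, but for adelic (as opposed to model) arithmetic divisors nefness is \emph{defined} via Definition~\ref{def:13} as membership in the closure of the cone of nef model divisors, so this pointwise verification does not establish it.

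Your ``main obstacle'' paragraph misidentifies the problem as smooth-versus-continuous type and proposes Richberg regularization; Richberg upgrades \emph{continuous} plurisubharmonic Green functions to smooth ones and does nothing about the unboundedness of $h_{M}$ near $\vert B\vert$. Repairing the argument forces you back to the model Green functions of the two approximating sequences, and then the real difficulty appears: after arranging $\caD_{0,n}\ge\caD'_{0,n}$ (the paper does this by adding a small positive multiple of the nef $\overline{\caB}$), the function $g'_{0,n}$ is of plurisubharmonic type for $\caD_{0,n}$ by Lemma~\ref{lem:greeneffreal} but is \emph{not} of continuous type for it, since it stays finite along the extra effective part $\caD_{0,n}-\caD'_{0,n}$. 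The paper resolves this with the truncation $g_{n}=\max(g_{0,n},g'_{0,n}-n)$, $g'_{n}=\max(g'_{0,n},g_{0,n}-n)$ (and $\max(\cdot,\cdot-C)$ for part~(ii)), which produces genuine continuous plurisubharmonic Green functions for the common divisor $\caD_{0,n}$ and still converges to $(\caD,g)$ and $(\caD,g')$ because $\vert f\vert=o(g_{B})$. You do hit on this max trick at the very end, but only as a patch for a regularization error and applied to the wrong objects; it is in fact the core of the construction.
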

\begin{proof}
To prove~\ref{item:17}, we note that since $(0,f)$ is an integrable adelic arithmetic divisor on $\caU$, there exist sequences of nef model arithmetic 
divisors $(\caD_{0,n},g_{0,n})_{n\in\N}$ and $(\caD'_{0,n},g'_{0,n})_{n\in\N}$ on $\caU$ converging to $(\caD,g)$ and $(\caD,g')$, respectively, such 
that $f=g-g'$. After adding to $(\caD_{0,n},g_{0,n})$ a small positive multiple of $\overline{\caB}$ (which continues to be a nef model arithmetic divisor 
on $\caU$ by the nefness of $\overline{\caB}$), we can further assume that $\caD_{0,n}\ge\caD'_{0,n}$. For $n\in\N$, we now set
\begin{displaymath}
\caD_{n}\coloneqq\caD_{0,n},\qquad g_{n}\coloneqq\max(g_{0,n},g'_{0,n}-n),\qquad g'_{n}\coloneqq\max(g'_{0,n},g_{0,n}-n). 
\end{displaymath}
Then, the functions $g_{n}$ and $g'_{n}$ are Green functions for $D_{n}$ of continuous and of plurisubharmonic type. We claim that the sequence 
$(\caD_{n},g_{n})_{n\in\N}$ converges in the $\overline{\caB}$-adic topology to $(\caD,g)$ and the sequence $(\caD_{n},g'_{n})_{n\in\N}$ converges 
in the $\overline{\caB}$-adic topology to $(\caD,g')$. To prove the claim we start by showing that for each $\varepsilon>0$, there exists an $n_{0}\in\N$ 
such that the inequalities
\begin{equation}
\label{claim1}
g-\varepsilon g_{B}\le g_{n}\le g+\varepsilon g_{B}
\end{equation}
hold for all $n\ge n_{0}$. By Corollary~\ref{cor:4}, the function $f$ is continuous on $U$, and we have that $\vert f\vert=o(g_{B})$ when approaching 
the boundary $B$. Furthermore, since $X$ is compact, there is a constant $C>0$ such that
\begin{displaymath}
\vert g-g'\vert=\vert f\vert\le C+\frac{\varepsilon}{2}g_{B}
\end{displaymath}
for given $\varepsilon>0$. Now, choose $n_{0}\ge C$ such that the inequalities
\begin{displaymath}
\vert g_{0,n}-g\vert\le\varepsilon g_{B}\qquad\text{and}\qquad\vert g'_{0,n}-g'\vert\le\frac{\varepsilon}{2}g_{B}
\end{displaymath}
hold for all $n\ge n_{0}$. Then, for $n\ge n_{0}$, we derive the bounds
\begin{displaymath}
g'_{0,n}-n\le g'-n+\frac{\varepsilon}{2}g_{B}\le g+C-n+\varepsilon g_{B}\le g+\varepsilon g_{B}.
\end{displaymath}
Since we have that $g-\varepsilon g_{B}\le g_{0,n}\le g+\varepsilon g_{B}$, we deduce for $n\ge n_{0}$ that
\begin{displaymath}
g-\varepsilon g_{B} \le g_{0,n}\le g_{n}=\max(g_{0,n},g'_{0,n}-n)\le g+\varepsilon g_{B}, 
\end{displaymath}
which proves the claimed inequality~\ref{claim1} and thus the convergence of the sequence $(g_{n})_{n\in\N}$ to $g$. Similarly, one shows that the 
sequence $(g'_{n})_{n\in\N}$ converges to $g'$. We note, if needed, using global regularization of plurisubharmonic functions that we can change the 
Cauchy sequences in question so that the Green functions are of smooth and of plurisubharmonic type.

To prove~\ref{item:18}, in case that $\vert f\vert\le C$, it is enough to define
\begin{displaymath}
g_{n}\coloneqq\max(g_{0,n},g'_{0,n}-C)\qquad\text{and}\qquad g'_{n}\coloneqq\max(g'_{0,n},g_{0,n}-C).
\end{displaymath}
The same argument as above now completes the proof in this case. 
\end{proof}

The last technical result we need to prove states that, given two adelic arithmetic divisors on $\caU$ with the same divisorial part, we can approximate 
the more singular one by Green functions that have the same singularity type as the less singular one.

\begin{proposition}
\label{prop:12}
Let $\caX$ be an arithmetic variety over  $\Spec(\caO_{K})$ and $\overline{\caB}$ a nef arithmetic boundary divisor for $\caX$. Let $(\caD,g)$ and 
$(\caD,g')$ be elements of $\DivhR(\caU)^{\adel}$ sharing the same divisorial part. Furthermore, assume that $g'\le g$ at every point of $U$. For $n
\in\N$, we set
\begin{displaymath}
\gamma_{n}=\max(g-n,g').
\end{displaymath}
Then, for each $n\in\N$, the arithmetic divisor $(\caD,\gamma_{n})$ belongs to $\DivhR(\caU)^{\adel}$ and the sequence $(\caD,\gamma_{n})_{n\in\N}$ 
converges to $(\caD,g')$ in the $\overline{\caB}$-adic topology. Moreover, if $(\caD,g)$ and $(\caD,g')$ belong to $\DivhR(\caU)^{\nef}$, the same is true 
for $(\caD,\gamma_{n})$. 
\end{proposition}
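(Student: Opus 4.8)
The plan is to reduce all three assertions to an analysis of the single function $(f-n)^{+}\coloneqq\max(f-n,0)$, where $f\coloneqq g-g'$. First I would record the basic structure: since $(\caD,g)$ and $(\caD,g')$ lie in $\DivhR(\caU)^{\adel}$ with the same divisorial part, the short exact sequence of Corollary~\ref{cor:4} shows $f\in C^{0}(U)_{0}$, and $f\ge 0$ on $U$ because $g'\le g$. From $g=g'+f$ one gets
\begin{displaymath}
\gamma_{n}=\max(g-n,g')=g'+\max(f-n,0)=g'+(f-n)^{+},
\end{displaymath}
so that $(\caD,\gamma_{n})-(\caD,g')=(0,(f-n)^{+})$, and it suffices to understand $(f-n)^{+}$.

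For the first two assertions I would show that $(f-n)^{+}\in C^{0}(U)_{0}$ for every $n$. The function $(f-n)^{+}$ is continuous on $U$, and $0\le (f-n)^{+}\le f$ (using $f\ge 0$); dividing by $g_{B}>0$ and squeezing against $f/g_{B}$, which by hypothesis extends continuously to $X$ with value $0$ on $\vert B\vert$, shows that $(f-n)^{+}/g_{B}$ extends continuously to $X$ with value $0$ on $\vert B\vert$ as well. Hence $(0,(f-n)^{+})\in\DivhR(\caU)^{\adel}$ by Corollary~\ref{cor:4}, and therefore $(\caD,\gamma_{n})=(\caD,g')+(0,(f-n)^{+})\in\DivhR(\caU)^{\adel}$. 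For the convergence, the $\overline{\caB}$-adic distance from $(\caD,\gamma_{n})$ to $(\caD,g')$ equals $\sup_{x\in X}(f(x)-n)^{+}/g_{B}(x)$ (the divisorial part of the difference being trivial); the functions $(f-n)^{+}/g_{B}$ are continuous on the compact manifold $X$, decrease in $n$, and tend pointwise to $0$, so by Dini's theorem the convergence is uniform and the distance tends to $0$. This gives $(\caD,\gamma_{n})\to(\caD,g')$ in the $\overline{\caB}$-adic topology.

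For the nefness claim I would argue as follows. Assuming $(\caD,g),(\caD,g')\in\DivhR(\caU)^{\nef}$, the divisor $(0,f)=(\caD,g)-(\caD,g')$ is integrable, so, since $\overline{\caB}$ is nef, Lemma~\ref{lemm:3}\ref{item:17} provides sequences $(\caD_{m},g_{m})_{m\in\N}$ and $(\caD_{m},g'_{m})_{m\in\N}$ of nef model arithmetic divisors on $\caU$ with a common divisorial part, converging $\overline{\caB}$-adically to $(\caD,g)$ and $(\caD,g')$, respectively. Set $h_{m,n}\coloneqq\max(g_{m}-n,g'_{m})$. One checks that $(\caD_{m},h_{m,n})$ is again a nef model arithmetic divisor: $\caD_{m}$ is relatively nef; $h_{m,n}$ is a maximum of two Green functions for $D_{m}$ of continuous and of plurisubharmonic type, hence is of the same type (locally it is a maximum of two plurisubharmonic functions); and since $h_{m,n}\ge g'_{m}$, enlarging the Green part by the non-negative function $h_{m,n}-g'_{m}$ only increases heights, so $h_{(\caD_{m},h_{m,n})}(x)\ge h_{(\caD_{m},g'_{m})}(x)\ge 0$ for all $x\in\caX(\overline{K})$ (if smooth Green functions are required, replace $h_{m,n}$ by a Richberg regularization as in Remark~\ref{rem:6}). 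Finally, from the pointwise bound $\vert\max(a,b)-\max(c,d)\vert\le\vert a-c\vert+\vert b-d\vert$ one gets $\vert h_{m,n}-\gamma_{n}\vert\le\vert g_{m}-g\vert+\vert g'_{m}-g'\vert$, and together with $\caD_{m}\to\caD$ this gives $(\caD_{m},h_{m,n})\to(\caD,\gamma_{n})$ in the $\overline{\caB}$-adic topology. Hence $(\caD,\gamma_{n})$ is represented by a Cauchy sequence of nef model arithmetic divisors, i.e.\ $(\caD,\gamma_{n})\in\DivhR(\caU)^{\nef}$.

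The main obstacle I anticipate is this last step: verifying that the approximants $(\caD_{m},h_{m,n})$ are genuine \emph{nef} model arithmetic divisors — principally the height positivity, and, if one insists on Green functions of smooth type, the regularization — together with the bookkeeping needed to produce a common divisorial part via Lemma~\ref{lemm:3}. Everything else (membership in $\DivhR(\caU)^{\adel}$ and the two convergences) is soft and follows from the squeeze estimate $0\le(f-n)^{+}\le f$ and Dini's theorem.
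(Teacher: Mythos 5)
Your proposal is correct and follows essentially the same route as the paper's proof: membership in $\DivhR(\caU)^{\adel}$ via the squeeze $0\le\gamma_{n}-g'\le g-g'$ and Corollary~\ref{cor:4}, convergence from the fact that $(g-g')/g_{B}$ extends continuously by $0$ to $\vert B\vert$ (your appeal to Dini's theorem is just a packaged form of the paper's explicit compact-exhaustion argument with $K_{\varepsilon}=\{h\ge\varepsilon\}$), and nefness by taking maxima $\max(g_{m}-n,g'_{m})$ of the model approximants supplied by Lemma~\ref{lemm:3}. You in fact supply more detail than the paper on why these approximants are genuinely nef (plurisubharmonicity of the max, height monotonicity, Richberg regularization), which the paper leaves implicit.
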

\begin{proof}
By Corollary~\ref{cor:4}, the difference $g-g'$ belongs to $C^{0}(U)_{0}$. Since $0\le\gamma_{n}-g'\le g-g'$, we deduce that $\gamma_{n}-g'$ also 
belongs to $C^{0}(U)_{0}$, and by the same corollary, the pair $(\caD,\gamma_{n})$ thus belongs to $\DivhR(\caU)^{\adel}$. 

We next prove that the sequence $(\caD,\gamma_{n})_{n\in\N}$ converges in the $\overline{\caB}$-adic topology to $(\caD,g')$. Since $g-g'$ belongs to 
$C^{0}(U)_{0}$, there is a continuous function $h$ that vanishes on $\vert B\vert$ and such that $g-g'=h\cdot g_{B}$ on $U$. For every $\varepsilon>0$, 
let
\begin{displaymath}
K_{\varepsilon}=\{x\in X\,\vert\,h(x)\ge\varepsilon\},
\end{displaymath}
which is a compact subset of $U$. Since $h\cdot g_{B}$ is continuous on $U$, there is an integer $n_{\varepsilon}\ge 1$ such that $h\cdot g_{B}\le 
n_{\varepsilon}$ in $K_{\varepsilon }$. For $n\ge n_{\varepsilon }$, we claim that
\begin{equation}
\label{eq:19}
0\le\gamma_{n}-g'\le\varepsilon g_{B}.
\end{equation}
Obviously, the left-hand inequality is clear. For the right-hand inequality, let $x\in U$. If $g(x)-n\le g'(x)$, then $\gamma_{n}(x)=g'(x)$, and the right-hand 
inequality follows from the positivity of $g_{B}$. If, on the other hand, $g(x)-n>g'(x)$, then $h(x)\cdot g_{B}(x)=g(x)-g'(x)>n\ge n_{\varepsilon}$. Therefore, 
$x\not\in K_{\varepsilon}$, and hence $h(x)<\varepsilon$. From this, we derive
\begin{displaymath}
\gamma_{n}(x)-g'(x)=g(x)-g'(x)-n\le g(x)-g'(x)=h(x)\cdot g_{B}(x)<\varepsilon g_{B}(x),
\end{displaymath}
which proves the right-hand inequality in~\eqref{eq:19}. The claimed convergence now follows directly from~\eqref{eq:19}.

Assume finally that $(\caD,g)$ and $(\caD,g')$ belong to $\DivR(\caU)^{\nef}$. By Lemma~\ref{lemm:3}, we can find sequences $(\caD_{m},g_{m})_{m
\in\N}$ and $(\caD_{m},g'_{m})_{m\in\N}$ of nef model arithmetic divisors on $\caU$ sharing the same divisorial part and converging to $(\caD,g)$ and 
$(\caD,g')$, respectively. Writing $\gamma_{n,m}=\max (g_{m}-n,g'_{m})$, we obtain a sequence $(\caD_{m},\gamma_{n,m})_{m\in\N}$ of nef continuous 
model arithmetic divisors on $\caU$ that converges to $(\caD,\gamma_{n})$ showing that $(\caD,\gamma_{n})\in\DivR(U)^{\nef}$.    
\end{proof}

\subsection{Adelic arithmetic line bundles}
\label{sec:adelic-line-bundles}

We start this subsection by recalling the notion of $\Q$-line bundles. Let $X$ be a scheme. Denote by $\bPic(X)$ the category of locally free sheaves 
of rank one and by $\Pic(X)$ the group of isomorphism classes of objects in $\bPic(X)$ with group law given by the tensor product. The category 
$\bPicQ(X)$ of $\Q$-line bundles on $X$ is the category whose objects are pairs $(n,L)$, where $n\in\N_{>0}$ and $L$ is a locally free sheaf of rank 
one on $X$, and whose morphisms are
\begin{displaymath}
\Hom_{\bPicQ(X)}\big((n,L),(n',L')\big)=\varinjlim_{a\in\N_{>0}}\Hom_{\bPic(X)}\big(L^{\otimes an'},L'^{\otimes an}\big);
\end{displaymath}
The group structure on $\bPicQ(X)$ is given by the tensor product

\begin{displaymath}
(n,L)\otimes(n',L')=(nn',L^{\otimes n'}\otimes~L'^{\otimes n}).
\end{displaymath}
The group of 
isomorphisms classes of $\bPicQ(X)$ is denoted by $\PicQ(X)$ and is nothing else than
\begin{displaymath}
\PicQ(X)=\Pic(X)\otimes_{\Z}\Q.
\end{displaymath}
The global $\Q$-sections of a $\Q$-line bundle are defined as
\begin{displaymath}
\Gamma\big(X,(n,L)\big)=\varinjlim_{a\in\N_{>0}}\Gamma\big(X,L^{\otimes a}\big),
\end{displaymath}
where the maps $\Gamma(X,L^{\otimes a})\rightarrow\Gamma(X,L^{\otimes ab})$ are given by $s\mapsto s^{\otimes b}$ for $b\in\N_{>0}$. A global 
$\Q$-section of $(n,L)$, represented by a global section $s$ of $L^{a}$, is formally given by $s^{\otimes 1/an}$. We note that, for $n,n',a,a'\in\N_{>0}$, 
the groups $\Gamma(X,(n,L^{\otimes a}))$ and $\Gamma(X,(n',L^{\otimes a'}))$ are canonically isomorphic and that we have the identification
\begin{displaymath}
\Gamma\big(X,(n,L)\big)=\Hom_{\bPicQ(X)}\big((1,\caO_X),(n,L)\big).
\end{displaymath}
The divisor of a global $\Q$-section is defined as
\begin{displaymath}
\dv(s^{\otimes 1/an})=\frac{1}{an}\dv(s)\in\DivQ(X).
\end{displaymath}

Let now $\caX$ be an arithmetic variety over $\Spec(\caO_{K})$. The category $\bPich(\caX)$ of hermitian line bundles on $\caX$ is the category 
whose objects are pairs $\overline{\caL}=(\caL,\Vert\cdot\Vert)$, where $\caL$ is a locally free sheaf of rank one on $\caX$ and $\Vert\cdot\Vert$ is 
a hermitian metric on the base change $\caL_{\Sigma}$ over $\caX_{\Sigma}$, and whose morphisms are 
\begin{displaymath}
\Hom_{\bPich(\caX)}(\overline{\caL},\overline{\caL}')=\big\{\varphi\in\Hom_{\bPic(\caX)}(\caL,\caL')\,\big\vert\,\Vert\varphi(v)\Vert\le\Vert v\Vert\big\}.
\end{displaymath}
For instance, $\overline{\caO}_{\caX}$ will denote the trivial line bundle $\caO_{\caX}$ with the metric $\Vert 1\Vert=1$. We note that $\Hom_{\bPich
(\caX)}(\overline{\caO},\overline{\caL})$ can be identified with the set of \emph{small sections}
\begin{displaymath}
\widehat{\Gamma}(\caX,\overline{\caL})=\big\{s\in\Gamma(\caX,\overline{\caL})\,\big\vert\,\Vert s\Vert\le 1\big\}.
\end{displaymath}
The group $\Pich(\caX)$ is the set of isomorphism classes of objects in $\bPich(\caX)$ with group law given by the tensor product. 

The category $\bPichQ(\caX)$ of hermitian $\Q$-line bundles on $\caX$ is the category whose objects are pairs $(n,\overline{\caL})$, where $n\in
\N_{>0}$ and $\overline{\caL}\in\bPich(\caX)$, and whose morphisms are 
\begin{displaymath}
\Hom_{\bPichQ(\caX)}\big((n,\overline{\caL}),(n',\overline{\caL}')\big)=\varinjlim_{a\in\N_{>0}}\Hom_{\bPich(\caX)}\big(\overline{\caL}^{\otimes an'},
\overline{\caL}'^{\otimes an}\big). 
\end{displaymath}
If $(n,\overline{\caL}_{0})\in\bPichQ(\caX)$ and $s_{0}$ is a rational section of $\caL_{0}^{\otimes a}$, a power of the underlying line bundle $\caL_
{0}$, then $s_{0}$ defines a section $s=s_{0}^{\otimes 1/an}$, whose associated arithmetic divisor is defined as
\begin{displaymath}
\divh(s)=\frac{1}{an}(\dv(s_{0}),-\log\Vert s_{0}\Vert^{2}).
\end{displaymath}

Following~\cite{YuanZhang:adelic}, we introduce the category of adelic arithmetic line bundles. To this end, we assume that the arithmetic variety 
$\caX$ is projective with arithmetic boundary divisor $\overline{\caB}$ and $\caU=\caX\setminus\vert\caB\vert$. Note that $\caU$ is a quasi-projective 
arithmetic variety.

Let $\caL\in\bPicQ(\caU)$. A sequence of triples $(\caX_{\pi_{n}},\overline{\caL}_{n},\ell_{n})_{n\in\N}$, where $\pi_{n}\in R(\caX,\caU)$, $\overline
{\caL}_{n}\in\bPichQ(\caX_{\pi_{n}})$, and $\ell_{n}\colon\caL\rightarrow\caL_{n}\vert_{\caU}$ is an isomorphism, is called a Cauchy sequence (in the 
$\overline{\caB}$-adic topology), if for any non-zero rational section $s$ of $\caL$, the sequence $(\divh(\ell_{n}(s)))_{n\in\N}$ is a Cauchy sequence 
with respect to the $\overline{\caB}$-adic norm of model arithmetic divisors.

The category of adelic arithmetic line bundles $\bPichQ(\caU)^{\adel}$
is the category of pairs
\begin{displaymath}
  (\caL,(\caX_{\pi_{n}},\overline{\caL}_{n},\ell_{n})_{n\in\N}),
\end{displaymath}
where $\caL\in\bPicQ(\caU)$ and $(\caX_{\pi_{n}},\overline{\caL}_{n},\ell_{n})_{n\in\N}$ is a Cauchy sequence as before. A 
morphism between $(\caL,(\caX_{\pi_{n}},\overline{\caL}_{n},\ell_{n})_{n\in\N})$ and $(\caL',(\caX'_{\pi_{n}},\overline{\caL}'_{n},\ell'_{n})_{n\in\N})$ 
is an isomorphism $\lambda\colon\caL\rightarrow\caL'$ over $\caU$ such that, if the write
\begin{displaymath}
(\caY_{\pi_{k}},\overline{\caM}_{k},m_{k})=
\begin{cases}
(\caX_{\pi_{n}},\overline{\caL}_{n},\ell_{n}),&\text{ if }k=2n, \\
(\caX'_{\pi_{n}},\overline {\caL}'_{n},\ell'_{n}\circ\lambda),&\text{ if }k=2n-1,
\end{cases}
\end{displaymath}
the sequence $(\caY_{\pi_{k}},\overline{\caM}_{k},m_{k})_{k\in\N}$ is Cauchy. As we have defined $\bPichQ(\caU)^{\adel}$, it is a groupoid, that is, 
all the morphisms are isomorphisms. The tensor product of hermitian line bundles induces a tensor product in $\bPichQ(\caU)^{\adel}$. The group 
of adelic arithmetic line bundles $\PichQ(\caU)^{\adel}$ is the set of isomorphism classes in $\bPichQ(\caU)^{\adel}$ with group law given by the 
tensor product.

\begin{remark}
\label{rem:15}
Let $\overline{\caL}=(\caL,(\caX_{\pi_{n}},\overline{\caL}_{n},\ell_{n})_{n\in\N})$ be an adelic arithmetic line bundle on $\caU$ and let $U=\caU(\C)$. 
All the line bundles $\caL_{n}$, when restricted to $U$, agree with $L=\caL\vert_{U}$. The metric of each $\overline{\caL}_{n}$ induces a metric 
$\Vert\cdot\Vert_{n}$ on $L$. These metrics converge (uniformly on compact subsets of $U$) to a metric $\Vert\cdot\Vert$ on $L$ over $U$. We 
call this metric \emph{the archimedean component} of $\overline{\caL}$.  
\end{remark}

\begin{remark}
\label{rem:13}
Let $\overline{\caL}=(\caL,(\caX_{\pi_{n}},\overline{\caL}_{n},\ell_{n})_{n\in\N})$ be an adelic arithmetic line bundle on $\caU$ and $s$ a non-zero 
rational section of $\caL$. Then, the sequence $(\divh(\ell_{n}(s)))_{n\in\N}$ determines an adelic arithmetic divisor in $\DivhR(\caU)^{\adel}$ that we 
will denote simply by $\divh(s)$. We will say that $\overline{\caL}=(\caL,(\caX_{\pi_{n}},\overline{\caL}_{n},\ell_{n})_{n\in\N})$ is nef if and only if $\divh
(s)$ is nef for any rational section $s$ of $\caL$.
\end{remark}

\subsection{Functorial properties of the adelic arithmetic intersection product}
\label{sec:funct-prop-adel}

We well only discuss the functoriality in the global arithmetic case. The first functorial property is the restriction to a smaller open subset.

Let $\caX$ be an arithmetic variety over $\Spec(\caO_{K})$. Let $\caU'\subseteq\caU$ be two open subsets. If $\pi\colon\caX_{1}\rightarrow\caX$ 
belongs to $R(\caX,\caU)$, it also belongs to $R(\caX,\caU')$. Therefore, there is a map
\begin{displaymath}
\DivhR(\caU)^{\model}\longrightarrow\DivhR(\caU')^{\model}.  
\end{displaymath}

Let $\overline{\caB}=(\caB,g_{B})$ and $\overline{\caB}'=(\caB',g_{B'})$ be boundary divisors with $\caU=\caX\setminus\vert\caB\vert$ and $\caU'=
\caX\setminus\vert\caB'\vert$. Then, there is a constant $c>0$ such that $\overline{\caB}\le c\overline{\caB}'$. Therefore, the $\overline{\caB}$-adic 
and $\overline{\caB}'$-adic norms satisfy
\begin{displaymath}
\Vert\cdot\Vert_{\overline{\caB}'}\le c\,\Vert\cdot\Vert_{\overline{\caB}}.
\end{displaymath}
This implies that there is a continuous restriction map
\begin{equation}
\label{eq:21}
\DivhR(\caU)^{\adel}\longrightarrow\DivhR(\caU')^{\adel}.
\end{equation}
This map sends $\DivhR(\caU)^{\nef}$ to $\DivhR(\caU')^{\nef}$.

More generally, let $\caX$ and $\caX'$ be arithmetic varieties over $\Spec(\caO_{K})$, and let $\overline{\caB}$ and $\overline{\caB}'$ be boundary 
divisors on $\caX$ and $\caX'$, respectively. Write $\caU=\caX\setminus\vert\caB\vert$ and $\caU'=\caX'\setminus\vert\caB'\vert$. For every dominant 
morphism $f\colon\caU'\rightarrow\caU$, there is a pull-back map
\begin{displaymath}
f_{\adel}^{\ast}\colon\DivhR(\caU)^{\adel}\longrightarrow\DivhR(\caU')^{\adel} 
\end{displaymath}
that sends $\DivhR(\caU)^{\nef}$ to $\DivhR(\caU')^{\nef}$. This pull-back map is constucted as follows. Given $\pi\in R(\caX,\caU)$,
there exist $\pi'\in R(\caX',\caU')$ and $f_{\pi}\colon\caX'_{\pi'}\rightarrow\caX_{\pi}$ such that the diagram
\begin{align*}
\xymatrix{\caU'\ar[d]\ar[r]^{f}&\caU\ar[d] \\
\caX'_{\pi'}\ar[r]^{f_{\pi }}&\caX_{\pi} 
}
\end{align*}
commutes. From this we obtain a pull-back map
\begin{displaymath}
f_{\mathrm{mod}}^{\ast}\colon\DivhR(\caU)^{\model}\longrightarrow\DivhR(\caU')^{\model}
\end{displaymath}
induced by mapping $(\caD,g_{D})\in\DivhR(\caX_{\pi})$ to $(f_{\pi}^{\ast}\caD,f_{\pi}^{\ast}g_{D})\in\DivhR(\caX'_{\pi'})$. After taking the respective 
$\overline{\caB}$-adic and $\overline{\caB}'$-adic completions, we obtain the pull-back maps
\begin{displaymath}
f_{\mathrm{adel}}^{\ast}\colon\DivhR(\caU)^{\adel}\longrightarrow\DivhR(\caU')^{\adel}\qquad\text{and}\qquad f_{\mathrm{adel}}^{\ast}\colon\DivhR
(\caU)^{\nef}\longrightarrow\DivhR(\caU')^{\nef}.
\end{displaymath}
For more details see~\cite[\S~2.5.5]{YuanZhang:adelic}.

\begin{remark}
\label{rem:44}
We have defined pull-back maps of adelic divisors only for dominant morphisms. By contrast, for adelic arithmetic line bundles, given an arbitary 
morphism $f\colon\caU'\rightarrow\caU$, there is a well defined pull-back functor $f^{\ast}\colon\bPichQ(\caU)^{\adel}\rightarrow\bPichQ(\caU')^
{\adel}$ that preserves nefness and tensor products.
\end{remark}

The intersection product and the pull-back are related by the following projection formula.

\begin{proposition}
\label{prop:16}
Let $f\colon\caU'\rightarrow\caU$ be a dominant morphism as before. Furthermore, assume that $f$ is projective and flat of relative dimension $e$, and
let $d$ be the relative dimension of $\caU$ over $\Spec(\caO_{K})$. Let $\overline{\caD}_{0},\ldots,\overline{\caD}_{d}$ be integrable adelic arithmetic 
divisors on $\caU$ and $\overline{\caE}_{1},\ldots,\overline{\caE}_{e}$ be integrable adelic arithmetic divisors on $\caU'$. 
Then, we have the formula
\begin{displaymath}
\overline{\caE}_{1}\cdot\ldots\cdot\overline{\caE}_{e}\cdot f_{\mathrm{adel}}^{\ast}\overline{\caD}_{0}\cdot\ldots\cdot f_{\mathrm{adel}}^{\ast}\overline
{\caD}_{d}=\deg(\caE_{1}\vert_{F},\ldots,\caE_{e}\vert_{F})\cdot\overline{\caD}_{0}\cdot\ldots\cdot\overline{\caD}_{d},
\end{displaymath}
where $F$ is a generic fiber of $f$ and $\deg(\caE_{1}\vert_{F},\ldots,\caE_{e}\vert_{F})$ is the degree of the generic fiber with respect to the underlying 
geometric divisors. 
\end{proposition}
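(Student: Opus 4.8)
The plan is to reduce the identity to the classical projection formula of Gillet--Soul\'e for model arithmetic divisors, and then to pass to the limit using the continuity of the adelic arithmetic intersection pairing and of the pull-back map. First I would record that the right-hand side makes sense: since the generic fiber $F$ lies over the generic point $\eta$ of $\caU$, it is contained in $\caU'$, so every adelic arithmetic divisor on $\caU'$ restricts over $\caU'$, and a fortiori over $F$, to an honest $\R$-Cartier divisor, which moreover agrees with the restriction of any of its model representatives; hence $\deg(\caE_1|_F,\ldots,\caE_e|_F)\in\R$ is well defined, is $\R$-multilinear in the $\caE_i$, and is independent of the chosen model. By the $\R$-multilinearity of the adelic intersection pairing (Theorem~\ref{thm:4}), of $f^{\ast}_{\adel}$, and of the geometric intersection degree on $F$, and since an integrable divisor is a difference of nef ones, it suffices to treat the case where all $\overline{\caD}_j$ and all $\overline{\caE}_i$ are nef.

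Next I would pass to models. Choose sequences $(\overline{\caD}_{j,n})_{n}$ of nef model arithmetic divisors on $\caU$ converging $\overline{\caB}$-adically to $\overline{\caD}_j$, and sequences $(\overline{\caE}_{i,m})_{m}$ of nef model arithmetic divisors on $\caU'$ converging $\overline{\caB}'$-adically to $\overline{\caE}_i$. By the construction of $f^{\ast}_{\mathrm{mod}}$ and $f^{\ast}_{\adel}$ recalled in Subsection~\ref{sec:funct-prop-adel}, the pull-backs $(f^{\ast}_{\adel}\overline{\caD}_{j,n})_{n}$ form a Cauchy sequence of nef model arithmetic divisors on $\caU'$ converging to $f^{\ast}_{\adel}\overline{\caD}_j$. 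By Remark~\ref{rem:4}, the left-hand side of the formula and each product $\overline{\caD}_{0,n_0}\cdot\ldots\cdot\overline{\caD}_{d,n_d}$ are multi-limits of the corresponding model intersection numbers, so it is enough to prove, for every choice of indices,
\[
\overline{\caE}_{1,m_1}\cdot\ldots\cdot\overline{\caE}_{e,m_e}\cdot f^{\ast}_{\mathrm{mod}}\overline{\caD}_{0,n_0}\cdot\ldots\cdot f^{\ast}_{\mathrm{mod}}\overline{\caD}_{d,n_d}=\deg(\caE_1|_F,\ldots,\caE_e|_F)\cdot\overline{\caD}_{0,n_0}\cdot\ldots\cdot\overline{\caD}_{d,n_d},
\]
the constant on the right being the same for all indices (because $\caE_{i,m}|_F=\caE_i|_F$), and then let all $n_j,m_i\to\infty$. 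Fixing the indices, one passes to a single normal projective modification $\caX_\pi$ of $\caX$ carrying all the $\overline{\caD}_{j,n_j}$, and to a single normal projective modification $\caX'_{\pi'}$ of $\caX'$ carrying all the $\overline{\caE}_{i,m_i}$ and admitting a (necessarily proper) morphism $f_\pi\colon\caX'_{\pi'}\to\caX_\pi$ extending $f$, so that $f^{\ast}_{\mathrm{mod}}\overline{\caD}_{j,n_j}=f_\pi^{\ast}\overline{\caD}_{j,n_j}$. By $\R$-multilinearity the displayed identity reduces to the case of $\Z$-coefficients, where it is the classical projection formula of Arakelov intersection theory \cite{MR1087394}: one applies $f_{\pi,\ast}$ to $\widehat{c}_{1}(\overline{\caE}_{1,m_1})\cdot\ldots\cdot\widehat{c}_{1}(\overline{\caE}_{e,m_e})\cdot f_\pi^{\ast}\big(\widehat{c}_{1}(\overline{\caD}_{0,n_0})\cdot\ldots\cdot\widehat{c}_{1}(\overline{\caD}_{d,n_d})\big)$, uses $f_{\pi,\ast}(x\cdot f_\pi^{\ast}y)=f_{\pi,\ast}(x)\cdot y$, and identifies $f_{\pi,\ast}\big(\widehat{c}_{1}(\overline{\caE}_{1,m_1})\cdot\ldots\cdot\widehat{c}_{1}(\overline{\caE}_{e,m_e})\big)=\deg(\caE_1|_F,\ldots,\caE_e|_F)\cdot[\overline{\caX_\pi}]$ in $\widehat{\CH}^0(\caX_\pi)\cong\Z$; this last step uses that $f$, hence $f_\pi$, is flat near $\eta$, so the cycle class $[\caE_{1,m_1}]\cdot\ldots\cdot[\caE_{e,m_e}]$ restricts on the fiber of $f_\pi$ over $\eta$ (which is $F$) to $[\caE_1|_F]\cdot\ldots\cdot[\caE_e|_F]$, a zero-cycle of degree $\deg(\caE_1|_F,\ldots,\caE_e|_F)$, while components mapping to proper subvarieties are contracted and contribute nothing in codimension zero.

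The reduction steps are essentially formal once the functoriality of Subsection~\ref{sec:funct-prop-adel} is in place; the delicate point is the last one. Besides the routine cofinality argument producing compatible projective modifications $\caX'_{\pi'}\to\caX_\pi$ carrying all model representatives, the substantive issue is to see that the codimension-zero push-forward $f_{\pi,\ast}\big(\widehat{c}_{1}(\overline{\caE}_{1,m_1})\cdot\ldots\cdot\widehat{c}_{1}(\overline{\caE}_{e,m_e})\big)$ retains exactly the generic-fiber degree: flatness of $f$ near $\eta$ is what lets the intersection commute with restriction to $F$, and the absence of negative-degree archimedean contributions, so that $\widehat{\CH}^0(\caX_\pi)\cong\Z$, is what kills the would-be vertical and archimedean corrections.
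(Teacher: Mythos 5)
Your argument is essentially correct, and it is in substance the proof that the paper delegates to the reference: the paper's own ``proof'' of Proposition~\ref{prop:16} is a one-line citation of \cite[Lemma~4.6.1]{YuanZhang:adelic}, and the strategy behind that lemma is exactly your reduction (multilinearity to the nef case, approximation by nef model arithmetic divisors, the classical projection formula on compatible projective modifications $\caX'_{\pi'}\to\caX_{\pi}$, and passage to the multi-limit via Theorem~\ref{thm:4} and the continuity of $f^{\ast}_{\adel}$). Two small points of care: the models $\caX_{\pi}$, $\caX'_{\pi'}$ are only normal and their generic fibers need not be smooth, so the projection formula should be quoted for arithmetic intersection numbers of hermitian $\R$-Cartier divisors on normal projective arithmetic varieties (in the style of Zhang/Moriwaki) rather than literally through the Gillet--Soul\'e groups $\widehat{\CH}^{\bullet}$ on regular schemes --- the computation of the codimension-zero push-forward as $\deg(\caE_{1}\vert_{F},\ldots,\caE_{e}\vert_{F})\cdot[\caX_{\pi}]$ is unaffected, since the Green-current part of the push-forward sits in degree $(-1,-1)$ and vanishes; and your claim that $\caE_{i,m}\vert_{F}=\caE_{i}\vert_{F}$ holds only for $m$ large (the difference of two far-out terms of a Cauchy sequence is squeezed between $\pm\varepsilon B'$ and hence is supported on $\vert B'\vert$), which is all you need.
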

\begin{proof}
See~\cite[Lemma~4.6.1]{YuanZhang:adelic}.
\end{proof}

\begin{proposition}
\label{prop:17}
Let $f\colon\caU'\rightarrow\caU$ be a finite flat dominant morphism of degree $n$ and let $d$ be the relative dimension of $\caU$ over $\Spec(\caO_
{K})$. Let $\overline{\caD}_{0},\ldots,\overline{\caD}_{d}$ be integrable adelic arithmetic divisors on $\caU$. Then, we have the formula
\begin{displaymath}
f_{\mathrm{adel}}^{\ast}\overline{\caD}_{0}\cdot\ldots\cdot f_{\mathrm{adel}}^{\ast}\overline{\caD}_{d}=n\,\overline{\caD}_{0}\cdot\ldots\cdot\overline
{\caD}_{d}.
\end{displaymath}
\end{proposition}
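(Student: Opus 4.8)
The plan is to obtain this as the degenerate case $e=0$ of the projection formula already proven in Proposition~\ref{prop:16}. First I would check that a finite flat dominant morphism $f\colon\caU'\rightarrow\caU$ of degree $n$ meets the hypotheses of Proposition~\ref{prop:16}: flatness is assumed, $f$ is projective since a finite morphism between quasi-projective schemes over $\Spec(\caO_{K})$ is projective, and the relative dimension of $f$ is $e=0$ because every fibre of a finite morphism is zero-dimensional. Note also that $f_{\mathrm{adel}}^{\ast}$ is defined on all integrable adelic arithmetic divisors on $\caU$, being the linear extension of the pull-back map $\DivhR(\caU)^{\nef}\rightarrow\DivhR(\caU')^{\nef}$.

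With $e=0$, the collection $\overline{\caE}_{1},\ldots,\overline{\caE}_{e}$ of integrable adelic arithmetic divisors on $\caU'$ occurring in Proposition~\ref{prop:16} is empty, so its formula degenerates to
\begin{displaymath}
f_{\mathrm{adel}}^{\ast}\overline{\caD}_{0}\cdot\ldots\cdot f_{\mathrm{adel}}^{\ast}\overline{\caD}_{d}=\deg(F)\cdot\overline{\caD}_{0}\cdot\ldots\cdot\overline{\caD}_{d},
\end{displaymath}
where $F$ is a generic fibre of $f$ and $\deg(F)$ is the degree of the zero-dimensional scheme $F$ with respect to the (empty) family of underlying geometric divisors, i.e.\ by convention the length of $F$ over the residue field $\kappa$ at the corresponding generic point of $\caU$. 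The remaining step is to identify this length with $n$: since $f$ is finite and flat of degree $n$, the coherent sheaf $f_{\ast}\caO_{\caU'}$ is locally free of rank $n$, so $\Gamma(F,\caO_{F})$ is a $\kappa$-algebra of dimension $n$, whence $\deg(F)=\dim_{\kappa}\Gamma(F,\caO_{F})=n$. Substituting this into the displayed formula gives the claim.

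I do not expect a genuine obstacle, since everything reduces to Proposition~\ref{prop:16} together with a standard property of finite flat morphisms. The only point deserving care is the bookkeeping of the $e=0$ case: one must make sure that ``the degree of the generic fibre with respect to the underlying geometric divisors'' in Proposition~\ref{prop:16}, when there are no such divisors, is read as the length of the zero-dimensional generic fibre — it is exactly this convention that yields the constant $n$ rather than $1$. (Alternatively, one could bypass Proposition~\ref{prop:16} and argue directly, approximating the $\overline{\caD}_{i}$ by nef model arithmetic divisors on suitable proper models and invoking the classical projection formula for finite flat morphisms of projective arithmetic varieties, then passing to the limit; deducing it from Proposition~\ref{prop:16} is, however, cleaner.)
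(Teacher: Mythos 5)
Your argument is correct. For the record, the paper does not derive Proposition~\ref{prop:17} from Proposition~\ref{prop:16} at all: both propositions are proved by directly citing \cite[Lemma~4.6.1]{YuanZhang:adelic}, so the official ``proof'' of the statement is a one-line reference to the same external projection formula. Your route --- specializing Proposition~\ref{prop:16} to $e=0$, checking that a finite flat dominant morphism is projective and flat of relative dimension zero, and identifying the degree of the empty intersection on the zero-dimensional generic fibre with its length $n=\operatorname{rk}(f_{\ast}\caO_{\caU'})$ --- is therefore a legitimate and somewhat more self-contained alternative, since it leans on only one citation instead of two. The one point you rightly flag, the convention that $\deg(F)$ for the empty family of divisors means the length of $F$ over the residue field, is exactly the convention in force in \cite{YuanZhang:adelic}, so there is no gap.
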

\begin{proof}
This result also is covered in~\cite[Lemma~4.6.1]{YuanZhang:adelic}.
\end{proof}

\section{Extension of Yuan--Zhang's arithmetic intersection product }
\label{sec:extension-yuan-zhang}

In the study of the arithmetic of pure Shimura varieties, log-log singular line bundles appear in a natural way. The paradigmatic example is the Hodge 
bundle on the moduli space of principally polarized abelian varieties with a level structure. In~\cite[\S~5.5]{YuanZhang:adelic}, it is proven that the 
Hodge bundle $\omega$ equipped with the Faltings metric is an example of an adelic arithmetic line bundle, which is denoted by $\overline{\omega}$.
However, we will prove in subsection~\ref{sec:non-integr-line} that the adelic arithmetic line bundle $\overline{\omega}$ is not integrable. Therefore, 
the arithmetic self-intersection product of $\overline{\omega}$ is not defined by the theory developed by X.~Yuan and S.\,W.~Zhang in~\cite
{YuanZhang:adelic}. On the other hand, we know from the results of~\cite{BurgosKramerKuehn:cacg} that this arithmetic self-intersection product 
exists. In this section, we will extend the theory of X.~Yuan and S.\,W.~Zhang in Theorem~\ref{thm:fund} to be able to define arithmetic self-intersection 
products for certain adelic arithmetic divisors, which are not necessarily integrable like $\overline {\omega}$. For this, we will make use of the theory 
of relative energy introduced in~\cite{darvas18:mnpp} in an essential way.

We emphasize that the corresponding constructions in the toric setting have been carried out in the thesis~\cite{Peralta} of Gari Peralta. In particular,
we note that in the toric setting the height with respect to a toric, continuous, and semipositive metric is given by an integral of a roof function over a 
polytope, and this formula has been extended to the case of toric, singular metrics, where the respective height is given as an integral of a roof function 
over a convex body. Moreover, the results of this section have been extended by Y.~Cai and W.~Gubler in~\cite{CaiGubler} to the setting of a proper
adelic base curve in the sense of H.~Chen and A.~Moriwaki allowing also at the non-archimedean places singular metrics.

\subsection{Adelic arithmetic intersections and mixed relative energy}
\label{fourth-section}

In this subsection, we relate the adelic arithmetic intersection product with the non-pluripolar product of currents and the mixed relative energy. 
This will enable us to define arithmetic self-intersection products for certain adelic arithmetic divisors, which are not necessarily integrable.

As in Subsection~\ref{sec:glob-arithm-case}, let $K$ be a number field with ring of integers $\caO_{K}$, let $\caX$ be a projective arithmetic 
variety of dimension $d$ over $\Spec(\caO_{K})$, and let $X$ be its associated complex manifold. Let $\overline{\caB}=(\caB,g_{B})$ be an 
arithmetic boundary divisor for $\caX$ and write as before $\caU=\caX\setminus\vert\caB\vert$, as well as $U=X\setminus B$. Furthermore,
we assume that $\overline{\caB}$ is nef to be able to use Lemma~\ref{lemm:3}.

\begin{lemma}
\label{lemm:6}
For $j=1,\ldots,d$, let $\overline{\caD}_{j}=(\caD_{j},g_{j})\in\DivhR(\caU)^{\nef}$ be nef adelic arithmetic divisors on $\caU$. Moreover, let 
$\overline{\caD}_{0}=(\caD_{0},g_{0}),\,\overline{\caD}'_{0}=(\caD_{0},g'_{0})\in\DivhR(\caU)^{\inte}$ be two integrable adelic arithmetic divisors 
on $\caU$ sharing the same divisorial part and such that $f=g_{0}-g'_{0}$ is globally bounded. Choose sufficiently large reference divisors 
$E_{j}$ on $X$ such that $E_{j}\ge B$ and $E_{j}\ge D_{j}+2B$. Then, the equality
\begin{displaymath}
\overline{\caD}_{0}\cdot\overline{\caD}_{1}\cdot\ldots\cdot\overline{\caD}_{d}-\overline{\caD}'_{0}\cdot\overline{\caD}_{1}\cdot\ldots\cdot\overline
{\caD}_{d}=\int_{X}f\,\big\langle\omega_{E_{1}}(g_{1})\wedge\ldots\wedge\omega_{E_{d}}(g_{d})\big\rangle 
\end{displaymath}
holds.
\end{lemma}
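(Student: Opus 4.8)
The plan is to reduce the identity to a statement about limits of classical arithmetic intersection numbers, and then to pass to the limit using the weak convergence of non-pluripolar Monge--Ampère measures established in Proposition~\ref{prop:6}~\ref{item:16}. First I would use Lemma~\ref{lemm:3}, applied to $(0,f)=\overline{\caD}_0-\overline{\caD}'_0\in\DivhR(\caU)^{\inte}$ with $\vert f\vert\le C$, to obtain sequences $(\caD_{0,m},g_{0,m})_{m\in\N}$ and $(\caD_{0,m},g'_{0,m})_{m\in\N}$ of nef model arithmetic divisors on $\caU$ with the same divisorial part, converging in the $\overline{\caB}$-adic topology to $\overline{\caD}_0$ and $\overline{\caD}'_0$, respectively, and with $\vert g_{0,m}-g'_{0,m}\vert\le C$ for all $m$; write $f_m=g_{0,m}-g'_{0,m}$. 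For each $j=1,\dots,d$, pick a Cauchy sequence $(\caD_{j,n},g_{j,n})_{n\in\N}$ of nef model arithmetic divisors converging to $\overline{\caD}_j$, and choose a single sufficiently large arithmetic reference divisor $\overline{E}_j=(E_j,g_{E_j})$ of smooth type with $\overline{E}_j\ge\overline{\caB}$, $\overline{E}_j\ge\overline{\caD}_j+2\overline{\caB}$, and (after discarding finitely many terms) $\overline{E}_j\ge\overline{\caD}_{j,n}+\overline{\caB}$ for all $n$. Set $\omega_{j,0}=\omega_{E_j}(g_{E_j})$, $\varphi_j=g_j-g_{E_j}$, $\varphi_{j,n}=g_{j,n}-g_{E_j}$, all $\omega_{j,0}$-plurisubharmonic by Proposition~\ref{prop:7} and Lemma~\ref{lem:greeneff}.

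Next I would handle the model level. For fixed $m,n_1,\dots,n_d$, the divisors $(\caD_{0,m},g_{0,m})$, $(\caD_{0,m},g'_{0,m})$, and $(\caD_{j,n_j},g_{j,n_j})$ are nef model arithmetic divisors, so their arithmetic intersection numbers are the classical Gillet--Soulé ones. Since $(\caD_{0,m},g_{0,m})$ and $(\caD_{0,m},g'_{0,m})$ share the same divisorial part, the difference of the two classical arithmetic intersection numbers is the archimedean contribution
\begin{displaymath}
(\caD_{0,m},g_{0,m})\cdot\prod_{j=1}^{d}(\caD_{j,n_j},g_{j,n_j})-(\caD_{0,m},g'_{0,m})\cdot\prod_{j=1}^{d}(\caD_{j,n_j},g_{j,n_j})=\int_X f_m\,\omega_{D_{1,n_1}}(g_{1,n_1})\wedge\ldots\wedge\omega_{D_{d,n_d}}(g_{d,n_d}),
\end{displaymath}
which follows from the recursive definition of the $\ast$-product of Green currents (the divisorial parts cancel because they agree, and the star-product expansion contracts to the top archimedean term). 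Here the $g_{j,n_j}$ are of smooth and plurisubharmonic type, so the wedge of the curvature forms $\omega_{D_{j,n_j}}(g_{j,n_j})=\ddc\varphi_{j,n_j}+\omega_{j,0}+\delta_{E_j-D_{j,n_j}}$ is a genuine smooth (in fact positive) measure; one also notes that by Proposition~\ref{prop:5}, replacing $\omega_{D_{j,n_j}}$ by $\omega_{E_j}$ does not change the non-pluripolar product, and for smooth plurisubharmonic-type Green functions the non-pluripolar product agrees with the classical product away from the pluripolar set $\bigcup_j\vert E_j\setminus D_{j,n_j}\vert$, which has measure zero. Thus the right-hand side equals $\int_X f_m\big\langle\omega_{E_1}(g_{1,n_1})\wedge\ldots\wedge\omega_{E_d}(g_{d,n_d})\big\rangle$.

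Finally I would pass to the limit on both sides. On the left, by Theorem~\ref{thm:4} (and Remark~\ref{rem:4}, which gives multi-limit existence) the iterated limit as $m\to\infty$ and then $n_1,\dots,n_d\to\infty$ of the two classical intersection numbers gives $\overline{\caD}_0\cdot\overline{\caD}_1\cdot\ldots\cdot\overline{\caD}_d$ and $\overline{\caD}'_0\cdot\overline{\caD}_1\cdot\ldots\cdot\overline{\caD}_d$, respectively; subtracting, the left-hand side of the claimed identity is the $(m,n_1,\dots,n_d)\to\infty$ limit of the archimedean integrals. On the right, Proposition~\ref{prop:6}~\ref{item:16}, applied with the data $f$, $f_m$, and the sequences $(\varphi_{j,n})$ converging in capacity to $\varphi_j$ (part~\ref{item:10} of that proposition, together with the integral-convergence part~\ref{item:11} feeding Theorem~\ref{thm:5}), yields exactly
\begin{displaymath}
\lim_{m,n\to\infty}\int_X f_m\big\langle(\ddc\varphi_{1,n}+\omega_{1,0})\wedge\ldots\wedge(\ddc\varphi_{d,n}+\omega_{d,0})\big\rangle=\int_X f\big\langle(\ddc\varphi_1+\omega_{1,0})\wedge\ldots\wedge(\ddc\varphi_d+\omega_{d,0})\big\rangle,
\end{displaymath}
and by Definition~\ref{def:2} and Proposition~\ref{prop:5} the right side is $\int_X f\big\langle\omega_{E_1}(g_1)\wedge\ldots\wedge\omega_{E_d}(g_d)\big\rangle$, which is the asserted expression. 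The main obstacle I anticipate is the bookkeeping in the model-level computation: making sure the star-product of Green currents really does collapse to the single archimedean integral when the divisorial parts coincide (this uses that the finite-place contributions and all the mixed terms cancel in the difference), and carefully justifying that the classical smooth product coincides with the non-pluripolar product $\langle\,\cdot\,\rangle$ for these particular Green functions so that Proposition~\ref{prop:6}~\ref{item:16} can be invoked verbatim. Everything else is an application of the convergence results already established.
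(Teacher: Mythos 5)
Your proposal is correct and follows essentially the same route as the paper's proof: reduce via Lemma~\ref{lemm:3} to nef model approximations sharing the same divisorial part with uniformly bounded differences, identify the difference of model-level intersection numbers with the archimedean integral of $f_m$ against the curvature product, switch to the non-pluripolar product relative to the reference divisors $E_j$ via Proposition~\ref{prop:5}, and pass to the limit with Proposition~\ref{prop:6}~\ref{item:16}. The only cosmetic differences are that the paper first invokes multilinearity to reduce to the case where $\overline{\caD}_0$ and $\overline{\caD}'_0$ are themselves nef (which you should state explicitly, since Lemma~\ref{lemm:3} produces approximations of \emph{some} nef decomposition of $(0,f)$ rather than of the given $\overline{\caD}_0,\overline{\caD}'_0$), and that the paper uses a single diagonal index rather than your multi-index limit.
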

\begin{proof}
By multilinearity, the arithmetic intersection product $(0,f)\cdot\overline{\caD}_{1}\cdot\ldots\cdot\overline{\caD}_{d}$ does not depend on how 
we write $(0,f)$ as a difference of adelic arithmetic divisors on $\caU$ sharing the same divisorial part. Therefore, we can assume that $(\caD_
{0},g_{0})$ and $(\caD_{0},g'_{0})$ are nef adelic arithmetic divisors on $\caU$. Thus, there are Cauchy sequences $(\overline{\caD}_{0,n})_
{n\in\N}$, $(\overline{\caD}'_{0,n})_{n\in\N}$, and $(\overline{\caD}_{j,n})_{n\in \N}$ of nef model arithmetic divisors on $\caU$ converging in 
the $\overline{\caB}$-adic topology to $\overline{\caD}_{0}$, $\overline{\caD}'_{0}$, and $\overline{\caD}_{j}$ for $j=1,\ldots,d$, respectively.  
By Lemma~\ref{lemm:3}, we can assume that $\overline{\caD}_{0,n}=(\caD_{0,n},g_{0,n})$ and $\overline{\caD}'_{0,n}= (\caD_{0,n},g'_{0,n})$ 
share the same divisorial part and that the functions $g_{0,n}-g'_{0,n}$ are globally bounded by the same bound as the one for $f$. By assumption, 
we have $E_{j}\ge D_{j}+2B$, which implies that $E_{j}\ge D_{j,n}$ for all $n\in\N$ after possibly taking suitable subsequences. By Lemma
\ref{lem:greeneffreal}, the Green functions $g_{j,n}$ thus become Green functions for $E_{j}$ of plurisubharmonic type for all $j=1,\ldots,d$ 
and for all $n\in\N$. From this we obtain
\begin{align*}
\overline{\caD}_{0}\cdot\overline{\caD}_{1}\cdot\ldots\cdot\overline{\caD}_{d}-\overline{\caD}'_{0}\cdot\overline{\caD}_{1}\cdot\ldots\cdot\overline
{\caD}_{d}&=\lim_{n\to\infty}\big(\overline{\caD}_{0,n}\cdot\overline{\caD}_{1,n}\cdot\ldots\cdot\overline{\caD}_{d,n}-\overline{\caD}'_{0,n}\cdot
\overline{\caD}_{1,n}\cdot\ldots\cdot\overline{\caD}_{d,n}\big) \\[2mm]
&=\lim_{n\to\infty}\int_{X}(g_{0,n}-g'_{0,n})\,\omega_{D_{1,n}}(g_{1,n})\wedge\ldots\wedge\omega_{D_{d,n}}(g_{d,n}) \\[2mm]
&=\lim_{n\to\infty}\int_{X}(g_{0,n}-g'_{0,n})\,\big\langle\omega_{E_{1}}(g_{1,n})\wedge\ldots\wedge\omega_{E_{d}}(g_{d,n})\big\rangle,
\end{align*}
where, for the last equality, we have used Proposition~\ref{prop:5}. Proposition~\ref{prop:6}~\ref{item:16}  now shows that
\begin{displaymath}
\lim_{n\to\infty}\int_{X}(g_{0,n}-g'_{0,n})\,\big\langle\omega_{E_{1}}(g_{1,n})\wedge\ldots\wedge\omega_{E_{d}}(g_{d,n})\big\rangle=
\int_{X}(g_{0}-g'_{0})\,\big\langle\omega_{E_{1}}(g_{1})\wedge\ldots\wedge\omega_{E_{d}}(g_{d})\big\rangle,
\end{displaymath}
which proves the claim.
\end{proof}

Now we are able to relate the adelic arithmetic intersection product with the mixed relative energy.

\begin{theorem}
\label{thm:fund}
For $j=0,\ldots,d$, let $\overline{\caD}_{j}=(\caD_{j},g_{j})\in\DivhR(\caU)^{\nef}$ and $\overline{\caD}'_{j}=(\caD_{j},g'_{j})\in\DivhR(\caU)^
{\nef}$ be two nef adelic arithmetic divisors on $\caU$ sharing the same divisorial part. Choose sufficiently large arithmetic reference divisors 
$\overline{\caE}_{j}=(\caE_{j},g_{E_{j}})$ on $\caX$ such that $\overline{\caE}_{j}\ge\overline{\caB}$ and $\overline{\caE}_{j}\ge\overline
{\caD}_{j}+2\overline{\caB}$ with $g_{E_{j}}$ Green functions of smooth type for $E_{j}$, so that the functions $\varphi_{j}\coloneqq g_{j}-
g_{E_{j}}$ and $\varphi'_{j}\coloneqq g'_{j}-g_{E_{j}}$ become $\omega_{j}$-plurisubharmonic, where $\omega_{j}=\omega_{E_{j}}(g_{E_
{j}})$. Assume that the Green functions $g_{j}$'s are more singular than the $g'_{j}$'s, that is, $[\varphi_{j}]\le[\varphi'_{j}]$. As in Subsection
\ref{sec:relative-energy}, we write $\pmb{\varphi}=(\varphi_{0},\ldots,\varphi_{d})$, $\pmb{\varphi}'=(\varphi'_{0},\ldots,\varphi'_{d})$, and 
$\pmb{\omega}=(\omega_{0},\ldots,\omega_{d})$. Then, we have
\begin{enumerate}
\item 
\label{item:27} 
$\pmb{\varphi}\in\caE(X,\pmb{\omega},\pmb{\varphi}')$.
\item 
\label{item:30} 
$\pmb{\varphi}\in\caE^{1}(X,\pmb{\omega},\pmb{\varphi}')$ and the formula
\begin{displaymath}
\overline{\caD}_{0}\cdot\ldots\cdot\overline{\caD}_{d}=\overline{\caD}'_{0}\cdot\ldots\cdot\overline{\caD}'_{d}+I_{\pmb{\varphi}'}(\pmb
{\varphi})
\end{displaymath}
holds, where $I_{\pmb{\varphi}'}(\pmb{\varphi})$ is the mixed relative energy of $\pmb{\varphi}$ with respect to $\pmb{\varphi}'$ defined 
in Definition~\ref{def:mre}.
\end{enumerate}
\end{theorem}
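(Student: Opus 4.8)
The plan is to deduce~\ref{item:27} from the fact that the geometric self-intersection number of $D_{j}$ does not depend on the choice of admissible Green function, and to obtain~\ref{item:30} by first establishing the intersection formula for the canonical approximants $\pmb{\varphi}^{(n)}$ of $\pmb{\varphi}$ — where all the Green functions involved are globally comparable, so that Lemma~\ref{lemm:6} becomes available through a telescoping argument — and then letting $n\to\infty$. For~\ref{item:27}: each $\varphi_{j}$ is more singular than $\varphi'_{j}$ by hypothesis and has small unbounded locus by Proposition~\ref{prop:7}~\ref{item:14}, so only the mass equality $\int_{X}\langle(\ddc\varphi_{j}+\omega_{j})^{\wedge d}\rangle=\int_{X}\langle(\ddc\varphi'_{j}+\omega_{j})^{\wedge d}\rangle$ remains. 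Since $g_{j}$ and $g'_{j}$ are both admissible Green functions for the same complex adelic divisor $D_{j}$, Corollary~\ref{cor:7} — applied to $(D_{j},g_{j})$ with reference $\overline{\caE}_{j}$ and to $(D_{j},g'_{j})$ with an auxiliary reference divisor, using Proposition~\ref{prop:5} to transport the resulting mass back to $\omega_{j}$ — identifies both masses with the geometric self-intersection number $D_{j}\cdot\ldots\cdot D_{j}$, which depends only on $D_{j}$. Hence $\varphi_{j}\in\caE(X,\omega_{j},\varphi'_{j})$ for all $j$, that is, $\pmb{\varphi}\in\caE(X,\pmb{\omega},\pmb{\varphi}')$.

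For~\ref{item:30} I would first reduce to the case $\varphi_{j}\le\varphi'_{j}$ on $U$: replacing each $\overline{\caD}'_{j}$ by $\overline{\caD}'_{j}+(0,C_{j})$ for suitable $C_{j}\ge 0$ (nef model arithmetic divisors) leaves the divisorial parts and nefness intact and, by multilinearity of the arithmetic intersection product together with the vanishing of the curvature current of $(0,1)$, changes $\overline{\caD}'_{0}\cdots\overline{\caD}'_{d}$ and $I_{\pmb{\varphi}'}(\pmb{\varphi})$ by opposite amounts, so the identity to be proved — including the finiteness assertion — is unaffected. Now, for $n\in\N$, set $g_{j}^{(n)}=\max(g_{j},g'_{j}-n)$, $\overline{\caD}_{j}^{(n)}=(\caD_{j},g_{j}^{(n)})$, and $\varphi_{j}^{(n)}=g_{j}^{(n)}-g_{E_{j}}=\max(\varphi_{j},\varphi'_{j}-n)$. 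By Proposition~\ref{prop:12}, applied with the roles of its $g$ and $g'$ played by $g'_{j}$ and $g_{j}$ (both now nef), each $\overline{\caD}_{j}^{(n)}$ is a nef adelic arithmetic divisor on $\caU$ converging to $\overline{\caD}_{j}$ in the $\overline{\caB}$-adic topology, and $[\varphi_{j}^{(n)}]=[\varphi'_{j}]$.

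Next I would expand $\overline{\caD}_{0}^{(n)}\cdots\overline{\caD}_{d}^{(n)}-\overline{\caD}'_{0}\cdots\overline{\caD}'_{d}$ as the telescoping sum over $k=0,\ldots,d$ of the terms $\overline{\caD}_{0}^{(n)}\cdots\overline{\caD}_{k-1}^{(n)}\cdot(\overline{\caD}_{k}^{(n)}-\overline{\caD}'_{k})\cdot\overline{\caD}'_{k+1}\cdots\overline{\caD}'_{d}$ and apply Lemma~\ref{lemm:6} to each summand; this is legitimate because all factors other than the $k$-th are nef and the $k$-th difference has trivial divisorial part with $g_{k}^{(n)}-g'_{k}=\max(-n,g_{k}-g'_{k})\in[-n,0]$ globally bounded. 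Using $E_{j}\ge D_{j}+2B$ (which follows from $\overline{\caE}_{j}\ge\overline{\caD}_{j}+2\overline{\caB}$) and rewriting $\ddc g+\delta_{E_{j}}=\ddc(g-g_{E_{j}})+\omega_{j}$ in the resulting integrals, the sum of the summands is exactly the sum defining $I_{\pmb{\varphi}'}(\pmb{\varphi}^{(n)})$ in Definition~\ref{def:mre} (the relative energy being given by that formula since $[\pmb{\varphi}^{(n)}]=[\pmb{\varphi}']$). Thus $\overline{\caD}_{0}^{(n)}\cdots\overline{\caD}_{d}^{(n)}=\overline{\caD}'_{0}\cdots\overline{\caD}'_{d}+I_{\pmb{\varphi}'}(\pmb{\varphi}^{(n)})$ for every $n$.

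Finally I would let $n\to\infty$. The left-hand side converges to $\overline{\caD}_{0}\cdots\overline{\caD}_{d}$ by continuity of the arithmetic intersection product on nef adelic arithmetic divisors (Theorem~\ref{thm:4}); on the right-hand side $\overline{\caD}'_{0}\cdots\overline{\caD}'_{d}$ is fixed, and $I_{\pmb{\varphi}'}(\pmb{\varphi}^{(n)})\to I_{\pmb{\varphi}'}(\pmb{\varphi})$ by equation~\eqref{eq:17}, whose derivation in the proof of Proposition~\ref{prop:10} does not use finite energy and hence is valid for any $\pmb{\varphi}\in\caE(X,\pmb{\omega},\pmb{\varphi}')$. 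Hence $\overline{\caD}_{0}\cdots\overline{\caD}_{d}=\overline{\caD}'_{0}\cdots\overline{\caD}'_{d}+I_{\pmb{\varphi}'}(\pmb{\varphi})$, and since both arithmetic intersection numbers on the left are finite real numbers, $I_{\pmb{\varphi}'}(\pmb{\varphi})$ is finite, i.e.\ $\pmb{\varphi}\in\caE^{1}(X,\pmb{\omega},\pmb{\varphi}')$. I expect the main obstacle to be precisely this routing through the canonical approximants: since Lemma~\ref{lemm:6} only controls differences of \emph{globally bounded} Green functions, the identity must be proved first for the bounded data $\pmb{\varphi}^{(n)}$, and the delicate step is to check — via Proposition~\ref{prop:12} together with the normalization $\varphi_{j}\le\varphi'_{j}$ — that each $\overline{\caD}_{j}^{(n)}$ is genuinely nef, so that the telescoped terms are honest arithmetic intersection numbers to which the continuity of Theorem~\ref{thm:4} applies.
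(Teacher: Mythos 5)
Your proposal is correct and follows essentially the same route as the paper's own proof: part~(i) via Corollary~\ref{cor:7}, and part~(ii) by telescoping, applying Lemma~\ref{lemm:6} to each summand in the bounded case given by the canonical approximants $\max(\varphi'_{j}-n,\varphi_{j})$, and then passing to the limit using Proposition~\ref{prop:12} for the intersection numbers and the monotone convergence of energies from Proposition~\ref{prop:10}. The only difference is organizational (the paper treats the globally bounded case first and then approximates, whereas you approximate first), and your explicit checks — the normalization $\varphi_{j}\le\varphi'_{j}$, the nefness of the approximants, and the observation that equation~\eqref{eq:17} does not require finite energy — are exactly the points the paper relies on.
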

\begin{proof}
To prove~\ref{item:27}, we just note that for $j=0,\ldots,d$, we have by Corollary~\ref{cor:7} 
\begin{displaymath}
\int_{X}\big\langle(\ddc\varphi_{j}+\omega_{j})^{\wedge d}\big\rangle=D_{j}^{d}=\int_{X}\big\langle(\ddc\varphi'_{j}+\omega_{j})^{\wedge 
d}\big\rangle.
\end{displaymath}
Together with the assumption $[\varphi_{j}]\le[\varphi'_{j}]$, this implies that $\varphi_{j}\in\caE(X,\omega _{j},\varphi'_{j})$.
  
To prove~\ref{item:30}, assume for the moment that $\vert g_{j}-g'_{j}\vert$ is bounded for $j=0,\ldots,d$. Since the nef adelic arithmetic 
divisors $\overline{\caD}_{j}$ and $\overline{\caD}'_{j}$ have the same divisorial part for $j=0,\ldots,d$, we compute, using Proposition
\ref{prop:5} and Lemma~\ref{lemm:6} that
\begin{align*}
&\overline{\caD}_{0}\cdot\ldots\cdot\overline{\caD}_{d}-\overline{\caD}'_{0}\cdot\ldots\cdot\overline{\caD}'_{d} \\
&\qquad=\sum_{k=0}^{d}\Big(\overline{\caD}_{0}\cdot\ldots\cdot\overline{\caD}_{k-1}\cdot\overline{\caD}_{k}\cdot\overline{\caD}'_{k+1}
\cdot\ldots\cdot\overline{\caD}'_{d}-\overline{\caD}_{0}\cdot\ldots\cdot\overline{\caD}_{k-1}\cdot\overline{\caD}'_{k}\cdot\overline{\caD}'_
{k+1}\cdot\ldots\cdot\overline{\caD}'_{d}\Big) \\
&\qquad=\sum_{k=0}^{d}\,\int_{X}(g_{k}-g'_{k})\big\langle\omega_{E_{0}}(g_{0})\wedge\ldots\wedge\omega_{E_{k-1}}(g_{k-1})\wedge
\omega_{E_{k+1}}(g'_{k+1})\wedge\ldots\wedge\omega_{E_{d}}(g'_{d})\big\rangle.
\end{align*}
Since we now have for $j=0,\ldots,d$ that
\begin{align*}
\omega_{E_{j}}(g_{j})&=\ddc g_{j}+\delta_{E_{j}}=\ddc(g_{j}-g_{E_{j}})+\delta_{E_{j}}+\ddc g_{E_{j}} \\[1mm]
&=\ddc\varphi_{j}+\delta_{E_{j}}+\omega_{j}-\delta_{E_{j}}=\ddc\varphi_{j}+\omega_{j},
\end{align*}
and similarly $\omega_{E_{j}}(g'_{j})=\ddc\varphi'_{j}+\omega_{j}$, we find, using $g_{k}-g'_{k}=\varphi_{k}-\varphi'_{k}$, that 
\begin{displaymath}
\overline{\caD}_{0}\cdot\ldots\cdot\overline{\caD}_{d}-\overline{\caD}'_{0}\cdot\ldots\cdot\overline{\caD}'_{d}=I_{\pmb{\varphi}'}(\pmb
{\varphi}),
\end{displaymath}
after recalling formula~\eqref{keyfor}.

Next, we consider the general case when $\vert g_{j}-g'_{j}\vert$ is not necessarily bounded. For each $n\in\N$, we write
\begin{displaymath}
\varphi_{j,n}=\max(\varphi'_{j}-n,\varphi_{j})\qquad\text{and}\qquad g_{j,n}=g_{E_{j}}+\varphi_{j,n}. 
\end{displaymath}
Since the functions $\varphi_{j}$ are more singular than the functions $\varphi'_{j}$, we deduce that $g_{j}-n\le g_{j,n}\le g'_{j}$. Thus, 
the functions $\vert g_{j,n}-g'_{j}\vert$ are bounded. Writing $\overline{\caD}_{j,n}=(\caD_{j},g_{j,n})$, we deduce from the preceding case 
that 
\begin{displaymath}
\overline{\caD}_{0,n}\cdot\ldots\cdot\overline{\caD}_{d,n}-\overline{\caD}'_{0}\cdot\ldots\cdot\overline{\caD}'_{d}=I_{\pmb{\varphi}'}(\pmb
{\varphi}_{n}),
\end{displaymath}
where $\pmb{\varphi}_{n}=(\varphi_{0,n},\dots,\varphi_{d,n})$. By the continuity of the arithmetic intersection pairing and Proposition~\ref
{prop:12}, we have that
\begin{displaymath}
\lim_{n\to\infty}\overline{\caD}_{0,n}\cdot\ldots\cdot\overline{\caD}_{d,n}=\overline{\caD}_{0}\cdot\ldots\cdot\overline{\caD}_{d}.
\end{displaymath}
Moreover, since the sequence $(\varphi_{j,n})_{n\in\N}$ converges monotonically to $\varphi_{j}$ for $j=0,\ldots,d$, we know from 
Proposition~\ref{prop:10} that 
\begin{displaymath}
\lim_{n\to\infty}I_{\pmb{\varphi}'}(\pmb{\varphi}_{n})=I_{\pmb{\varphi}'}(\pmb{\varphi}). 
\end{displaymath}
This completes the proof of the theorem.
\end{proof}


Before being able to define generalized arithmetic intersection products, we need the following definition.

\begin{definition}
Let $\overline{\caD}'=(\caD,g')\in\DivhR(\caU)^{\nef}$ be a nef adelic arithmetic divisor on $\caU$. A function $g$ is said to be a \emph
{Green function for $D$ of plurisubharmonic type and relative full mass with respect to $g'$}, if the following three conditions are satisfied:
\begin{enumerate}
\item
\label{item:32} 
The function $g-g'$ is bounded above, that is, $g$ is more singular than $g'$. 
\item 
\label{item:33} 
The function $g\vert_{U}$ is plurisubharmonic. 
\item 
\label{item:34} 
If $\overline{\caE}=(\caE,g_{E})$ is a model arithmetic divisor with $\caE\ge\caD$ on $\caX$ and $g_{E}$ a Green function of smooth type 
for $E$ with $\omega_{0}=\omega_{E}(g_{E})$, the function $\varphi'=g'-g_{E}$ is $\omega_{0}$-plurisubharmonic by the nefness of $\overline
{\caD}'$ and conditions~\ref{item:32} and~\ref{item:33} imply that the function $\varphi=g-g_{E}$ is also $\omega_{0}$-plurisubharmonic, 
and we further require the relative full mass condition 
\begin{displaymath}
\int_{X}\big\langle (\ddc\varphi+\omega_{0})^{\wedge d}\big\rangle=\int_{X}\big\langle(\ddc\varphi'+\omega_{0})^{\wedge d}\big\rangle;
\end{displaymath}
in other words, $\varphi\in\caE(X,\omega_{0},\varphi')$. 
\end{enumerate}
\end{definition}

Theorem~\ref{thm:fund} together with the above definition allows us to make the following definition for generalized arithmetic intersection 
numbers.

\begin{definition}
\label{def:29}
For $j=0,\ldots,d$, let $\overline{\caD}'_{j}=(\caD_{j},g'_{j})\in\DivhR(\caU)^{\nef}$ be nef adelic arithmetic divisors on $\caU$ and let 
$\overline{\caD}_{j}=(\caD_{j},g_{j})\in\DivhR(\caU)$ be adelic arithmetic divisors on $\caU$ so that $g_{j}$ are Green functions for 
$D_{j}$ of plurisubharmonic type and relative full mass with respect to $g'_{j}$. Choose sufficiently large arithmetic reference divisors 
$\overline{\caE}_{j}=(\caE_{j},g_{E_{j}})$ on $\caX$ such that $\overline{\caE}_{j}\ge\overline{\caB}$ and $\overline{\caE}_{j}\ge\overline
{\caD}_{j}+2\overline{\caB}$ with $g_{E_{j}}$ Green functions of smooth type for $E_{j}$, so that the functions $\varphi_{j}\coloneqq g_{j}-
g_{E_{j}}$ and $\varphi'_{j}\coloneqq g'_{j}-g_{E_{j}}$ become $\omega_{j}$-plurisubharmonic, where $\omega_{j}=\omega_{E_{j}}(g_{E_
{j}})$. Then, for $\pmb{\varphi}=(\varphi_{0},\ldots,\varphi_{d})\in\caE(X,\pmb{\omega},\pmb{\varphi}')$, the \emph{generalized arithmetic
intersection product $\overline{\caD}_{0}\cdot\ldots\cdot\overline{\caD}_{d}$} is defined as 
\begin{equation}
\label{eq:30}
\overline{\caD}_{0}\cdot\ldots\cdot\overline{\caD}_{d}\coloneqq\overline{\caD}'_{0}\cdot\ldots\cdot\overline{\caD}'_{d}+I_{\pmb{\varphi}'}
(\pmb{\varphi}).
\end{equation}
We will say that \emph{$(\overline{\caD}_{0},\ldots,\overline{\caD}_{d})$ has finite energy with respect to $(\overline{\caD}'_{0},\ldots,
\overline{\caD}'_{d})$}, if $I_{\pmb{\varphi}'}(\pmb{\varphi})>-\infty$. In this case, the arithmetic intersection product \eqref{eq:30} is a real 
number.

Similarly, if $\overline{\caD}'=(\caD,g')\in\DivhR(\caU)^{\nef}$ is a nef adelic arithmetic divisor on $\caU$ and $\overline{\caD}=(\caD,g)\in
\DivhR(\caU)$ is an adelic arithmetic divisor on $\caU$ so that $g$ is a Green function for $D$ of plurisubharmonic type and relative full 
mass with respect to $g'$, we say that \emph{$\overline{\caD}$ has finite energy with respect to $\overline{\caD}'$}, if $(\overline{\caD},
\ldots,\overline{\caD})$ has finite energy with respect to $(\overline{\caD}',\ldots,\overline{\caD}')$.
\end{definition}

\begin{lemma}
\label{lemm:11} 
The generalized arithmetic intersection product of Definition~\ref{def:29} does not depend on the choice of the arithmetic reference divisors 
$\overline{\caE}_{j}$ $(j=0,\ldots,d)$.
\end{lemma}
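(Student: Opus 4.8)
The first summand $\overline{\caD}'_{0}\cdot\ldots\cdot\overline{\caD}'_{d}$ in~\eqref{eq:30} is the Yuan--Zhang arithmetic intersection number of the nef adelic arithmetic divisors $\overline{\caD}'_{j}$ (Theorem~\ref{thm:4}), hence involves no reference divisor, so the plan is to show that the energy term $I_{\pmb{\varphi}'}(\pmb{\varphi})$ is independent of the admissible choice $(\overline{\caE}_{j})_{j}$. If $(\overline{\caE}_{j}^{(1)})_{j}$ and $(\overline{\caE}_{j}^{(2)})_{j}$ are two such families, then $\overline{\caE}_{j}^{(3)}\coloneqq\overline{\caE}_{j}^{(1)}+\overline{\caE}_{j}^{(2)}$ is again admissible — its curvature $\omega_{j}^{(1)}+\omega_{j}^{(2)}$ is positive, and $\caE_{j}^{(2)}\ge\caB\ge 0$ (with the analogous inequality on Green functions) forces $\overline{\caE}_{j}^{(3)}\ge\overline{\caE}_{j}^{(1)}\ge\overline{\caD}_{j}+2\overline{\caB}$ — and it dominates both, so it suffices to compare $(\overline{\caE}_{j})_{j}$ with $(\overline{\caE}_{j}+\overline{\caC}_{j})_{j}$, where each $\overline{\caC}_{j}=(\caC_{j},g_{C_{j}})$ has $\caC_{j}$ effective and $g_{C_{j}}$ of smooth type. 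For the data attached to the new references one has $\widetilde{\varphi}_{j}\coloneqq g_{j}-g_{E_{j}}-g_{C_{j}}=\varphi_{j}-g_{C_{j}}$, $\widetilde{\varphi}'_{j}\coloneqq\varphi'_{j}-g_{C_{j}}$, and $\widetilde{\omega}_{j}\coloneqq\omega_{j}+\omega_{C_{j}}(g_{C_{j}})$, so that
\begin{displaymath}
\ddc\widetilde{\varphi}_{j}+\widetilde{\omega}_{j}=\ddc\varphi_{j}+\omega_{j}+\delta_{C_{j}},\qquad\ddc\widetilde{\varphi}'_{j}+\widetilde{\omega}_{j}=\ddc\varphi'_{j}+\omega_{j}+\delta_{C_{j}},
\end{displaymath}
and in particular $\widetilde{\varphi}_{j}-\widetilde{\varphi}'_{j}=\varphi_{j}-\varphi'_{j}$ is unchanged.

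Next I would set up a bijection between the competitor sets in the infimum defining the mixed relative energy (Definitions~\ref{def:dar} and~\ref{def:finen}). The map $\pmb{u}=(u_{0},\ldots,u_{d})\mapsto\widetilde{\pmb{u}}=(u_{0}-g_{C_{0}},\ldots,u_{d}-g_{C_{d}})$ sends $\{\pmb{u}:u_{j}\in\PSH(X,\omega_{j}),\,[u_{j}]=[\varphi'_{j}],\,\varphi_{j}\le u_{j}\}$ into the corresponding set for $(\widetilde{\pmb{\omega}},\widetilde{\pmb{\varphi}}',\widetilde{\pmb{\varphi}})$: indeed $[u_{j}-g_{C_{j}}]=[\varphi'_{j}-g_{C_{j}}]=[\widetilde{\varphi}'_{j}]$, $\widetilde{\varphi}_{j}\le u_{j}-g_{C_{j}}$, and $\ddc(u_{j}-g_{C_{j}})+\widetilde{\omega}_{j}=\ddc u_{j}+\omega_{j}+\delta_{C_{j}}\ge 0$. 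For surjectivity, given $\widetilde{u}_{j}\in\PSH(X,\widetilde{\omega}_{j})$ with $[\widetilde{u}_{j}]=[\widetilde{\varphi}'_{j}]$, put $u_{j}\coloneqq\widetilde{u}_{j}+g_{C_{j}}$; since $\widetilde{u}_{j}\le\widetilde{\varphi}'_{j}+O(1)$ and $\widetilde{\varphi}'_{j}+g_{C_{j}}=\varphi'_{j}$ is bounded above on the compact $X$, the function $u_{j}$ is bounded above, while on $X\setminus\vert C_{j}\vert$ we have $\ddc u_{j}+\omega_{j}=\ddc\widetilde{u}_{j}+\widetilde{\omega}_{j}\ge 0$; by~\cite[Theorem~5.24]{demailly12:cadg}, applied as in the proof of Lemma~\ref{lemm:4}, $u_{j}$ extends across the analytic set $\vert C_{j}\vert$ to an $\omega_{j}$-plurisubharmonic function on $X$, agreeing with $u_{j}$ outside a null set.

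Finally I would check that the bijection preserves the energy. For $\pmb{u}$ with $[\pmb{u}]=[\pmb{\varphi}']$, formula~\eqref{keyfor} of Definition~\ref{def:mre} computes both $I_{\pmb{\varphi}'}(\pmb{u})$ and $I_{\widetilde{\pmb{\varphi}}'}(\widetilde{\pmb{u}})$; substituting $\widetilde{u}_{k}-\widetilde{\varphi}'_{k}=u_{k}-\varphi'_{k}$ together with the current identities above, the $k$-th term of $I_{\widetilde{\pmb{\varphi}}'}(\widetilde{\pmb{u}})$ becomes $\int_{X}(u_{k}-\varphi'_{k})\big\langle\bigwedge_{j<k}(\ddc u_{j}+\omega_{j}+\delta_{C_{j}})\wedge\bigwedge_{j>k}(\ddc\varphi'_{j}+\omega_{j}+\delta_{C_{j}})\big\rangle$. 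Expanding by the multilinearity of the non-pluripolar product (part~\ref{item:4} of Theorem~\ref{thm:2}) — legitimate since $\delta_{C_{j}}=\ddc(-g_{C_{j}})+\omega_{C_{j}}(g_{C_{j}})$ with $-g_{C_{j}}$ being $\omega_{C_{j}}(g_{C_{j}})$-plurisubharmonic of small unbounded locus, and $u_{j},\varphi'_{j}$ likewise of small unbounded locus — every summand that keeps a factor $\delta_{C_{j}}$ vanishes, because on each set $U_{\ell}$ of Definition~\ref{def:3} the function $-g_{C_{j}}$ is bounded below, so $U_{\ell}\cap\vert C_{j}\vert=\emptyset$ and the defining limit is $0$, exactly as in the proof of Proposition~\ref{prop:5}. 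Thus only the $\delta$-free summand remains and $I_{\widetilde{\pmb{\varphi}}'}(\widetilde{\pmb{u}})=I_{\pmb{\varphi}'}(\pmb{u})$; taking infima over the two competitor sets, which are in bijection, yields $I_{\widetilde{\pmb{\varphi}}'}(\widetilde{\pmb{\varphi}})=I_{\pmb{\varphi}'}(\pmb{\varphi})$, proving the lemma. The same expansion applied to $\langle(\ddc\varphi+\omega_{0})^{\wedge d}\rangle$ shows that the relative full mass condition~\ref{item:34} is itself reference-independent — its right-hand side being $D^{d}$ by Corollary~\ref{cor:7} — so~\eqref{eq:30} is indeed defined for the new references as well. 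The step I expect to be hardest is the surjectivity of the bijection: producing the uniform upper bound on $u_{j}=\widetilde{u}_{j}+g_{C_{j}}$ that lets one apply Demailly's extension theorem across $\vert C_{j}\vert$; the remainder is bookkeeping with multilinearity and the vanishing of the $\delta_{C_{j}}$-terms.
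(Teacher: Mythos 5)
Your proposal is correct. The paper itself only says that the lemma ``follows easily from the definitions'' and supplies no argument, so there is nothing to compare step by step; what you have written is a complete and valid justification, and its crux is exactly the mechanism the authors are implicitly invoking, namely Proposition~\ref{prop:5}: non-pluripolar products do not see the $\delta_{C}$-contributions coming from enlarging the reference divisor. Two remarks. First, your reduction to the case $\overline{\caE}_{j}\rightsquigarrow\overline{\caE}_{j}+\overline{\caC}_{j}$ with $\overline{\caC}_{j}\ge\overline{\caB}\ge 0$ via the sum of the two reference families is clean and uses only that the admissibility inequalities are preserved under adding an effective (in fact $\ge\overline{\caB}$) arithmetic divisor of smooth type with positive curvature. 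Second, the bookkeeping with multilinearity in your last step can be shortened: writing $\ddc u_{j}+\omega_{j}=\omega_{E_{j}}(u_{j}+g_{E_{j}})$ and $\ddc\widetilde{u}_{j}+\widetilde{\omega}_{j}=\omega_{E_{j}+C_{j}}(u_{j}+g_{E_{j}})$ exhibits the two non-pluripolar products as the products of Definition~\ref{def:2} attached to the \emph{same} Green functions $u_{j}+g_{E_{j}}$ viewed as Green functions of plurisubharmonic type for $E_{j}$ and for $E_{j}+C_{j}$ respectively, so Proposition~\ref{prop:5} applies verbatim and gives equality of the measures; integrating the unchanged bounded densities $u_{k}-\varphi'_{k}=\widetilde{u}_{k}-\widetilde{\varphi}'_{k}$ then yields $I_{\widetilde{\pmb{\varphi}}'}(\widetilde{\pmb{u}})=I_{\pmb{\varphi}'}(\pmb{u})$ term by term in~\eqref{keyfor}. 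You correctly identified the only genuinely delicate point, the surjectivity of the correspondence between competitor sets, and your use of the extension theorem across $\vert C_{j}\vert$ (as in Lemma~\ref{lemm:4}) handles it; note only that one should pass to a local smooth potential of $\omega_{j}$ before applying \cite[Theorem~5.24]{demailly12:cadg}, exactly as in that lemma.
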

\begin{proof}
This follows easily from the definitions. 
\end{proof}

\subsection{Functoriality} 
\label{sixth-section}

In this subsection, we investigate the functorial behavior of the generalized arithmetic intersection product of Definition~\ref{def:29}. It turns
out that all the functorial properties of Subsection~\ref{sec:funct-prop-adel} carry over to the finite energy case. In particular, we have the 
following two propositions.

\begin{proposition}
\label{prop:18}
Let $f\colon\caU'\rightarrow\caU$ be a dominant morphism as in Subsection~\ref{sec:funct-prop-adel}. Furthermore, assume that $f$ is 
projective and flat of relative dimension $e$, and let $d$ be the relative dimension of $\caU$ over $\Spec(\caO_{K})$. Let $\overline{\caD}_
{0},\ldots,\overline{\caD}_{d}$ be adelic arithmetic divisors on $\caU$ and $\overline{\caE}_{1},\ldots,\overline{\caE}_{e}$ be adelic arithmetic 
divisors on $\caU'$ having finite energy with respect to some nef adelic arithmetic divisors. Then, we have the formula
\begin{displaymath}
\overline{\caE}_{1}\cdot\ldots\cdot\overline{\caE}_{e}\cdot f_{\mathrm{adel}}^{\ast}\overline{\caD}_{0}\cdot\ldots\cdot f_{\mathrm{adel}}^{\ast}
\overline{\caD}_{d}=\deg(\caE_{1}\vert_{F},\ldots,\caE_{e}\vert_{F})\cdot\overline{\caD}_{0}\cdot\ldots\cdot\overline{\caD}_{d},
\end{displaymath}
where $F$ is a generic fiber of $f$ and $\deg(\caE_{1}\vert_{F},\ldots,\caE_{e}\vert_{F})$ is the degree of the generic fiber with respect to the 
underlying geometric divisors. 
\end{proposition}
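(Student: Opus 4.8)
Proof proposal.

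The plan is to reduce the projection formula in the finite-energy setting to the projection formula for nef adelic arithmetic divisors (Proposition~\ref{prop:16}) by a double approximation argument: first approximate the possibly-singular divisors $\overline{\caD}_j$ and $\overline{\caE}_i$ by their canonical approximants, which are nef, and then pass to the limit on both sides using the continuity of the mixed relative energy. Concretely, choose nef adelic arithmetic divisors $\overline{\caD}'_j=(\caD_j,g'_j)$ on $\caU$ and $\overline{\caF}'_i=(\caE_i,g_{F_i})$ on $\caU'$ with respect to which the given divisors have finite energy; by definition of the generalized product we may assume $g_j$ is a Green function for $D_j$ of plurisubharmonic type and relative full mass with respect to $g'_j$, and similarly for the $\overline{\caE}_i$. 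Fix sufficiently large arithmetic reference divisors and set $\varphi_j=g_j-g_{E_j}$, $\varphi'_j=g'_j-g_{E_j}$ on $X'=\caU'(\C)$ (pulled back via $f$), and $\psi_i$, $\psi'_i$ for the $\overline{\caE}_i$. By Theorem~\ref{thm:fund} the tuples $\pmb\varphi$, $\pmb\psi$ lie in the respective finite-energy spaces.

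The key steps, in order, are as follows. First, introduce the canonical approximants $\varphi_j^{(C)}=\max(\varphi_j,\varphi'_j-C)$ and $\psi_i^{(C)}=\max(\psi_i,\psi'_i-C)$; by the discussion preceding Proposition~\ref{prop:10} these have the same singularity type as the reference functions, hence correspond to \emph{nef} adelic arithmetic divisors $\overline{\caD}_j^{(C)}$ on $\caU$ and $\overline{\caE}_i^{(C)}$ on $\caU'$ (their Green functions are of plurisubharmonic type, their divisorial parts are the nef ones, and the height condition is inherited since the archimedean component only increased). Second, observe that $f_{\adel}^\ast$ of a nef adelic arithmetic divisor is nef, and that $f^\ast$ commutes with the max operation defining the canonical approximants, so $f_{\adel}^\ast(\overline{\caD}_j^{(C)})$ is again nef; therefore Proposition~\ref{prop:16} applies verbatim to give
\begin{displaymath}
\overline{\caE}_1^{(C)}\cdot\ldots\cdot\overline{\caE}_e^{(C)}\cdot f_{\adel}^\ast\overline{\caD}_0^{(C)}\cdot\ldots\cdot f_{\adel}^\ast\overline{\caD}_d^{(C)}=\deg(\caE_1|_F,\ldots,\caE_e|_F)\cdot\overline{\caD}_0^{(C)}\cdot\ldots\cdot\overline{\caD}_d^{(C)}.
\end{displaymath}
Third, express both sides in terms of mixed relative energies via Theorem~\ref{thm:fund}\,\ref{item:30}: the right-hand side equals $\deg(\caE_1|_F,\dots,\caE_e|_F)\big(\overline{\caD}'_0\cdots\overline{\caD}'_d+I_{\pmb\varphi'}(\pmb\varphi^{(C)})\big)$, and the left-hand side equals $(\overline{\caF}'\text{-part})+I_{\pmb\Psi'}(\pmb\Psi^{(C)})$ for the combined $(d+1+e)$-tuple $\pmb\Psi$ built from the $\psi_i$ and the $f^\ast\varphi_j$; here one uses that pulling back does not change the relevant non-pluripolar integrals (Corollary~\ref{cor:7} applied on $\caU'$, together with the geometric projection formula for the underlying divisors to identify the reference-divisor term on the left with $\deg(\caE_1|_F,\dots,\caE_e|_F)$ times the one on the right). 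Fourth, let $C\to\infty$: by Proposition~\ref{prop:10} (the net convergence $\lim_{C\to\infty}I_{\pmb\psi}(\pmb\varphi^{(C)})=I_{\pmb\psi}(\pmb\varphi)$, extended to finite energy) together with the continuity of the arithmetic intersection pairing and Proposition~\ref{prop:12}, both sides converge to the corresponding generalized products of Definition~\ref{def:29}, and the identity passes to the limit.

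The main obstacle I expect is bookkeeping the mixed relative energy for the \emph{combined} tuple on the left-hand side — one has $e$ functions $\psi_i$ living on $\caU'$ and $d+1$ pulled-back functions $f^\ast\varphi_j$, and one must check that $f^\ast\pmb\varphi$ inherits the finite-energy property and that $I$ behaves correctly under pullback (essentially because $\langle f^\ast T_1\wedge\cdots\rangle=f^\ast\langle T_1\wedge\cdots\rangle$ on the non-pluripolar locus and $\int_{X'}f^\ast\alpha=\deg(f)\int_X\alpha$ in the flat case, with the fibral contribution $\deg(\caE_i|_F)$ accounting for the $\overline{\caE}_i$ slots). One must also verify that the canonical approximants $\overline{\caD}_j^{(C)}$ genuinely satisfy the height-nonnegativity clause of Definition~\ref{def:20}; this is where one uses that $\varphi_j^{(C)}\ge\varphi_j'$ so the metric of $\overline{\caD}_j^{(C)}$ dominates that of the nef divisor $\overline{\caD}'_j$ fiberwise, hence heights only increase. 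Once these two points are secured, the passage to the limit is routine given Proposition~\ref{prop:10} and the continuity of the pairing.
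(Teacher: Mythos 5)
The paper does not actually supply a proof of Proposition~\ref{prop:18}: it only asserts that ``all the functorial properties of Subsection~\ref{sec:funct-prop-adel} carry over to the finite energy case.'' So your proposal is an attempt to fill a gap the authors left open, and its overall architecture --- reduce to the nef projection formula of Proposition~\ref{prop:16} and control the correction term by the mixed relative energy --- is the right one. The cleanest route is in fact shorter than yours: by Definition~\ref{def:29} the left-hand side \emph{is} the nef product $\overline{\caE}'_{1}\cdots\overline{\caE}'_{e}\cdot f^{\ast}_{\adel}\overline{\caD}'_{0}\cdots f^{\ast}_{\adel}\overline{\caD}'_{d}$ plus the mixed relative energy $I_{\pmb{\Psi}'}(\pmb{\Psi})$ of the combined $(d+1+e)$-tuple; Proposition~\ref{prop:16} handles the first summand, and what remains is the single analytic identity $I_{\pmb{\Psi}'}(\pmb{\Psi})=\deg(\caE_{1}\vert_{F},\ldots,\caE_{e}\vert_{F})\cdot I_{\pmb{\varphi}'}(\pmb{\varphi})$.

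There are two genuine gaps in your version. First, your second step applies Proposition~\ref{prop:16} ``verbatim'' to the canonical approximants $\overline{\caD}_{j}^{(C)}=(\caD_{j},\max(g_{j},g'_{j}-C))$, but that proposition is only available for \emph{integrable adelic} arithmetic divisors, i.\,e.\ differences of limits of nef model arithmetic divisors. Proposition~\ref{prop:12} produces nef adelic approximants only when \emph{both} inputs $(\caD,g)$ and $(\caD,g')$ are already nef adelic; here $(\caD_{j},g_{j})$ is merely assumed to have a Green function of plurisubharmonic type and relative full mass, and need not lie in $\DivhR(\caU)^{\adel}$ at all (and even when it does, as for $\overline{\omega}$, it need not be integrable --- that is the whole point of the extension). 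So $\overline{\caD}_{j}^{(C)}$ is not known to be approximable by nef model divisors, and invoking the Yuan--Zhang projection formula for it is either unjustified or circular. Second, the step you flag as ``the main obstacle'' is precisely the content of the proposition and is not carried out: in $I_{\pmb{\Psi}'}(\pmb{\Psi})$ the terms whose difference function is $\psi_{i}-\psi'_{i}$ carry a non-pluripolar product containing all $d+1$ currents pulled back from the $d$-dimensional base, hence vanish, while the terms whose difference function is $f^{\ast}(\varphi_{j}-\varphi'_{j})$ reduce by fiber integration of the $e$ remaining $\caE$-currents to $\deg(\caE_{1}\vert_{F},\ldots,\caE_{e}\vert_{F})$ times the corresponding term of $I_{\pmb{\varphi}'}(\pmb{\varphi})$. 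Your proposal mentions only the second mechanism (and cites the degree formula $\int_{X'}f^{\ast}\alpha=\deg(f)\int_{X}\alpha$ appropriate to the finite case of Proposition~\ref{prop:14}, not the fibration case); without the vanishing of the $\psi_{i}$-terms and a justification that the combined tuple lies in $\caE^{1}$, the identity does not follow.
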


Similarly, we have

\begin{proposition}\label{prop:14}
Let $f\colon\caU'\rightarrow\caU$ be a finite flat dominant morphism of degree $n$ and let $d$ be the relative dimension of $\caU$ over 
$\Spec(\caO_{K})$. Let $\overline{\caD}_{0},\ldots,\overline{\caD}_{d}$ be adelic arithmetic divisors on $\caU$ having finite energy with 
respect to some nef adelic arithmetic divisors. Then, we have the formula
\begin{displaymath}
f_{\mathrm{adel}}^{\ast}\overline{\caD}_{0}\cdot\ldots\cdot f_{\mathrm{adel}}^{\ast}\overline{\caD}_{d}=n\,\overline{\caD}_{0}\cdot\ldots
\cdot\overline{\caD}_{d}.
\end{displaymath}
\end{proposition}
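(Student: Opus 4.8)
The plan is to reduce the statement to the already-established nef case (Proposition~\ref{prop:17}) by way of Definition~\ref{def:29}, the only genuinely new ingredient being the compatibility of the mixed relative energy with pullback along $f$. Fix, for each $j=0,\ldots,d$, a nef adelic arithmetic divisor $\overline{\caD}'_{j}=(\caD_{j},g'_{j})\in\DivhR(\caU)^{\nef}$ with respect to which $\overline{\caD}_{j}=(\caD_{j},g_{j})$ has finite energy, and choose arithmetic reference divisors $\overline{\caE}_{j}=(\caE_{j},g_{E_{j}})$ on $\caX$ as in Definition~\ref{def:29}, with $\omega_{j}=\omega_{E_{j}}(g_{E_{j}})$, $\varphi_{j}=g_{j}-g_{E_{j}}$, $\varphi'_{j}=g'_{j}-g_{E_{j}}$, so that $\pmb{\varphi}=(\varphi_{0},\ldots,\varphi_{d})\in\caE^{1}(X,\pmb{\omega},\pmb{\varphi}')$. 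Since $f$ is finite flat dominant, $f^{\ast}\overline{\caB}$ is again a nef arithmetic boundary divisor for $\caX'$, the divisors $f^{\ast}\overline{\caE}_{j}$ meet the requirements of Definition~\ref{def:29} for the pulled-back data, and $f_{\mathrm{adel}}^{\ast}\overline{\caD}'_{j}\in\DivhR(\caU')^{\nef}$ by the functoriality recalled in Subsection~\ref{sec:funct-prop-adel}. Writing $\widetilde\omega_{j}=f^{\ast}\omega_{j}$, $\widetilde\varphi_{j}=f^{\ast}\varphi_{j}$, $\widetilde\varphi'_{j}=f^{\ast}\varphi'_{j}$, I first check that $f^{\ast}g_{j}$ is a Green function for $f^{\ast}D_{j}$ of plurisubharmonic type and relative full mass with respect to $f^{\ast}g'_{j}$: the inequality $\widetilde\varphi_{j}\le\widetilde\varphi'_{j}+O(1)$ is the pullback of $\varphi_{j}\le\varphi'_{j}+O(1)$; that $\widetilde\varphi_{j}$ is $\widetilde\omega_{j}$-plurisubharmonic with small unbounded locus holds because the pullback of an $\omega_{j}$-plurisubharmonic function along the holomorphic covering $U'\to U$ is $f^{\ast}\omega_{j}$-plurisubharmonic, and the ramification locus of $f$ is a proper analytic, hence pluripolar, subset; the relative full mass condition will follow from the next paragraph.

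The heart of the argument is the identity $I_{f^{\ast}\pmb{\varphi}'}(f^{\ast}\pmb{\varphi})=n\,I_{\pmb{\varphi}'}(\pmb{\varphi})$. The key analytic input is that the non-pluripolar product commutes with pullback along $f$, i.e.\ $f^{\ast}\big\langle T_{1}\wedge\ldots\wedge T_{p}\big\rangle=\big\langle f^{\ast}T_{1}\wedge\ldots\wedge f^{\ast}T_{p}\big\rangle$ for currents of the type appearing in Definition~\ref{def:3} — this is local, and holds because $f$ is a finite covering with pluripolar ramification locus and neither side charges pluripolar sets — together with the projection formula $\int_{X'}f^{\ast}\eta=n\int_{X}\eta$ for top-degree forms. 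Applying both facts to Corollary~\ref{cor:7} gives $\int_{X'}\big\langle(\ddc\widetilde\varphi_{j}+\widetilde\omega_{j})^{\wedge d}\big\rangle=n\,D_{j}^{d}=\int_{X'}\big\langle(\ddc\widetilde\varphi'_{j}+\widetilde\omega_{j})^{\wedge d}\big\rangle$, which is precisely the relative full mass condition promised above, so $f^{\ast}\pmb{\varphi}\in\caE(X',f^{\ast}\pmb{\omega},f^{\ast}\pmb{\varphi}')$. When $[\pmb{\varphi}]=[\pmb{\varphi}']$, applying the same two facts termwise to formula~\eqref{keyfor} (valid here by Theorem~\ref{thm:8}) shows that each summand of $I_{\pmb{\varphi}'}(\pmb{\varphi})$ is multiplied by $n$ under pullback, whence the scaling identity holds in this case. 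For general $\pmb{\varphi}\in\caE^{1}(X,\pmb{\omega},\pmb{\varphi}')$ I would pass to the canonical approximants: $\max$ commutes with pullback, so $f^{\ast}(\pmb{\varphi}^{(C)})=(f^{\ast}\pmb{\varphi})^{(C)}$, and letting $C\to\infty$ in equation~\eqref{eq:17} of Proposition~\ref{prop:10} upgrades the identity to the general case; in particular $f^{\ast}\pmb{\varphi}$ again has finite mixed relative energy.

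It then remains to combine the pieces. By Definition~\ref{def:29} applied to the pulled-back data,
\[
f_{\mathrm{adel}}^{\ast}\overline{\caD}_{0}\cdot\ldots\cdot f_{\mathrm{adel}}^{\ast}\overline{\caD}_{d}
=f_{\mathrm{adel}}^{\ast}\overline{\caD}'_{0}\cdot\ldots\cdot f_{\mathrm{adel}}^{\ast}\overline{\caD}'_{d}
+I_{f^{\ast}\pmb{\varphi}'}(f^{\ast}\pmb{\varphi});
\]
the first term on the right equals $n\,\overline{\caD}'_{0}\cdot\ldots\cdot\overline{\caD}'_{d}$ by Proposition~\ref{prop:17} (the $\overline{\caD}'_{j}$ being nef, hence integrable), and the second equals $n\,I_{\pmb{\varphi}'}(\pmb{\varphi})$ by the previous paragraph; pulling the factor $n$ out and invoking~\eqref{eq:30} once more yields $f_{\mathrm{adel}}^{\ast}\overline{\caD}_{0}\cdot\ldots\cdot f_{\mathrm{adel}}^{\ast}\overline{\caD}_{d}=n\,\overline{\caD}_{0}\cdot\ldots\cdot\overline{\caD}_{d}$, and by Lemma~\ref{lemm:11} this is independent of all choices made. (This is the case $e=0$ of Proposition~\ref{prop:18}, with the convention that the empty intersection product on a generic fibre $F$ returns its degree $n$; alternatively one may simply deduce it from that proposition once the latter is proved.) The step I expect to require the most care is the pullback compatibility of non-pluripolar products and the stability of the ``small unbounded locus'' condition across the ramification of $f$ — everything else is a bookkeeping of the formulas established in Sections~\ref{second-section} and~\ref{third-section} and in Theorem~\ref{thm:fund}.
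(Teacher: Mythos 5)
Your argument is correct, and it is essentially the intended one: the paper itself offers no proof of this proposition (it merely asserts that the functorial properties of Subsection~\ref{sec:funct-prop-adel} ``carry over to the finite energy case''), and the natural way to substantiate that assertion is exactly your reduction --- express both sides via Definition~\ref{def:29}, handle the nef reference part by Proposition~\ref{prop:17}, and prove the scaling $I_{f^{\ast}\pmb{\varphi}'}(f^{\ast}\pmb{\varphi})=n\,I_{\pmb{\varphi}'}(\pmb{\varphi})$ by combining the compatibility of the non-pluripolar product with pullback along a finite covering (local biholomorphism off a pluripolar ramification locus, which neither side charges) with the degree formula for integrals of top-degree forms, first in the minimal-singularity case via formula~\eqref{keyfor} and then in general via the canonical approximants and~\eqref{eq:17}. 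You correctly identify the two points requiring care: the pullback compatibility of non-pluripolar products and the bookkeeping of boundary and reference divisors upstairs, the latter being harmless by Proposition~\ref{prop:4}, Remark~\ref{rem:10}, and Lemma~\ref{lemm:11}. The only cosmetic imprecision is the phrase ``$f^{\ast}\overline{\caB}$ is a boundary divisor for $\caX'$'': strictly, $f$ need only be defined on $\caU'$ and one pulls back through the models $f_{\pi}\colon\caX'_{\pi'}\rightarrow\caX_{\pi}$, obtaining a divisor whose adic topology is equivalent to the $\overline{\caB}'$-adic one; this does not affect the argument.
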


\section{The self-intersection of the line bundle of Siegel--Jacobi forms}
\label{seventh-section}

\subsection{The line bundle of Siegel--Jacobi forms}

The basic reference for this subsection is the book~\cite{FC} by G.~Faltings and C-L. Chai. We let $\sA_{g}$ denote the moduli stack 
of principally polarized abelian schemes of dimension $g$ over $\Spec(\Z)$ and we let $\pi\colon\sB_{g}\rightarrow\sA_{g}$ be the 
universal abelian scheme over $\Spec(\Z)$ with zero section $\varepsilon\colon\sA_{g}\rightarrow\sB_{g}$. Recall that $d'=g(g+1)/2$ 
is the relative dimension of $\sA_{g}$ over $\Spec(\Z)$ and that $d=d'+g$ is the relative dimension of $\sB_{g}$ over $\Spec(\Z)$. 

The \emph{Hodge bundle $\omega$ on $\sA_{g}$} is defined as the line bundle
\begin{displaymath}
\omega\coloneqq\varepsilon^{\ast}\det(\Omega^{1}_{\sB_{g}/\sA_{g}}).
\end{displaymath}
For $g>1$, the global sections of the line bundle $\omega^{\otimes k}$ are arithmetic Siegel modular forms of weight $k$, i.\,e., Siegel 
modular forms of weight $k$ with integral Fourier coefficients. By identifying $\sB_{g}$ with the dual abelian scheme $\sB_{g}^{\vee}$ 
via the principal polarization, the Poincar\'e bundle $\sP_{\sB_{g}}$ is the tautological invertible sheaf on the product $\sB_{g}\times_
{\sA_{g}}\sB_{g}$. By means of the diagonal morphism
\begin{displaymath}
\Delta\colon\sB_{g}\longrightarrow\sB_{g}\times_{\sA_{g}}\sB_{g},
\end{displaymath}
we obtain via pull-back the distinguished line bundle $\sL\coloneqq\Delta^{\ast}\sP_{\sB_{g}}$, which is $\pi$-ample and symmetric. The 
restriction of $\sL$ to any fiber of $\sB_{g}$ is the line bundle associated with twice the principal polarization of the fiber. The \emph{line 
bundle of Siegel--Jacobi forms of weight $k$ and index $m$ on $\sB_{g}$} is now defined as
\begin{displaymath}
\sJ_{k,m}\coloneqq\pi^{\ast}\omega^{\otimes k}\otimes\sL^{\otimes m}.
\end{displaymath}
For $g>1$, the global sections of the line bundle $\sJ_{k,m}$ are arithmetic Siegel--Jacobi forms of weight $k$ and index $m$, i.\,e., 
Siegel--Jacobi forms of weight $k$ and index $m$ with integral Fourier coefficients. The arithmetic theory of Siegel--Jacobi forms has 
been investigated in detail in~\cite{Kr}.

Next, we fix an integer $N\ge 3$ and let $\sA _{g,N}$ denote the fine moduli space of principally polarized abelian schemes of dimension 
$g$ with full level $N$-structure over $\Spec(\Z[1/N,\zeta_{N}])$, where $\zeta_{N}$ is a primitive $N$-th root of unity, and we let $\pi_{N}
\colon\sB_{g,N}\rightarrow\sA_{g,N}$ be the universal abelian scheme over $\Spec(\Z[1/N,\zeta_{N}])$ with zero section $\varepsilon_{N}
\colon\sA_{g,N}\rightarrow\sB_{g,N}$. We recall that $\sA_{g,N}$ and $\sB_{g,N}$ are smooth, quasi-projective schemes over $\Spec(\Z
[1/N,\zeta_{N}])$. We denote by $\rho'_{N}\colon\sA_{g,N}\rightarrow\sA_{g}$ and by $\rho_{N}\colon\sB_{g,N}\rightarrow\sB_{g}$ the 
respective morphisms of stacks forgetting the level $N$-structure.

As before, the \emph{Hodge bundle $\omega_{N}$ of level $N$ on $\sA_{g,N}$} is defined as the line bundle
\begin{displaymath}
\omega_{N}\coloneqq\varepsilon_{N}^{\ast}\det(\Omega^{1}_{\sB_{g,N}/\sA_{g,N}}).
\end{displaymath}
For $g>1$, the global sections of the line bundle $\omega_{N}^{\otimes k}$ are arithmetic Siegel modular forms of weight $k$ and level $N$, 
i.\,e., Siegel modular forms of weight $k$ and level $N$ with Fourier coefficients lying in $\Z[1/N,\zeta_{N}]$. Recalling from~\cite[Chapter~IV, 
Theorem~6.7]{FC} that $\sB_{g,N}$ is the pull-back of $\sB_{g}$ to $\sA_{g,N}$, we observe that $\omega_{N}\cong\rho_{N}^{\prime\ast}\,
\omega$. The pull-back $\sL_{N}\coloneqq\rho_{N}^{\ast}\sL$ becomes a line bundle on the scheme $\sB_{g,N}$, which is again $\pi_{N}
$-ample and symmetric. For the sequel, we note that the symmetric line bundle $\sL_{N}$ satisfies the theorem of the cube with respect to 
the morphism $[2]\colon\sB_{g,N}\rightarrow\sB_{g,N}$ given by fiberwise multiplication by $2$, i.\,e., there is an isomorphism
\begin{equation}
\label{eq:38}
[2]^{\ast}\sL_{N}\cong\sL_{N}^{\otimes 4}
\end{equation}
of line bundles over $\sB_{g,N}$. The \emph{line bundle of Siegel--Jacobi forms of weight $k$, index $m$, and level $N$ on $\sB_{g,N}$} is 
now defined as
\begin{displaymath}
\sJ_{k,m,N}\coloneqq\pi_{N}^{\ast}\omega_{N}^{\otimes k}\otimes\sL_{N}^{\otimes m}.
\end{displaymath}
For $g>1$, the global sections of the line bundle $\sJ_{k,m,N}$ are arithmetic Siegel--Jacobi forms of weight $k$, index $m$, and level $N$, 
i.\,e., Siegel--Jacobi forms of weight $k$, index $m$, and level $N$ with Fourier coefficients lying in $\Z[1/N,\zeta_{N}]$. 

\subsection{Siegel--Jacobi forms over the complex numbers}

In this subsection, we recall the classical definition of Siegel--Jacobi forms over the complex numbers. We start by recalling the construction 
of $\sA_{g,N}(\C)$ and $\sB_{g,N}(\C)$ as quotient spaces. Let $g\ge 1$ be an integer. The symplectic group $\Sp(2g,\R)$ is the group of 
real $(2g\times 2g)$-matrices of the form
\begin{equation}
\label{eq:34}
\begin{pmatrix}A&B\\C&D\end{pmatrix}
\end{equation}
such that
\begin{displaymath}
A^{t}C=C^{t}A,\quad D^{t}B=B^{t}D,\quad A^{t}D-C^{t}B=\id_{g},
\end{displaymath}
where $\id_{g}$ is the $(g\times g)$-identity matrix. There is an inclusion 
\begin{displaymath}
\Sp(2g,\R)\hookrightarrow\Sp(2g+2,\R),
\end{displaymath}
sending a matrix of the form~\eqref{eq:34} to the matrix
\begin{displaymath}
\begin{pmatrix}A&0&B&0\\0&1&0&0\\C&0&D&0\\0&0&0&1\end{pmatrix}.
\end{displaymath}
For any commutative ring $R$, let $H_{R}^{(g,1)}$ be the Heisenberg group
\begin{displaymath}
H_{R}^{(g,1)}=\big\{[(\lambda,\mu),x]\,\big\vert\,\lambda,\mu\in R^{(1,g)},\,x\in R\big\},
\end{displaymath}
where $R^{(1,g)}$ denotes the set of row vectors of size $g$ with coefficients in $R$, with composition law given by
\begin{displaymath}
[(\lambda,\mu),x]\circ[(\lambda',\mu'),x']=[(\lambda+\lambda',\mu+\mu'),x+x'+\lambda{\mu'}^{t}-\mu{\lambda'}^{t}]. 
\end{displaymath}
The real Heisenberg group $H_{\R}^{(g,1)}$ can be realized as the subgroup of $\Sp(2g+2,\R)$ consisting of matrices of the form
\begin{displaymath}
\begin{pmatrix}\id_{g}&0&0&\mu^{t}\\\lambda &1&\mu&x\\0&0&\id_{g}&-\lambda^{t}\\0&0&0&1\end{pmatrix}.
\end{displaymath}
The full Jacobi group $G^{(g,1)}_{\R}=\Sp(2g,\R)\ltimes H^{(g,1)}_{\R}$ is the subgroup of $\Sp(2g+2,\R)$ generated by $\Sp(2g,\R)$ and 
$H^{(g,1)}_{\R}$.

Let
\begin{displaymath}
\H_{g}=\{Z=X+iY\,\vert\,X,Y\in\mathrm{Mat}_{g\times g}(\R),\,Z^{t}=Z,\,Y>0\} 
\end{displaymath}
be the Siegel upper half-space. The group $\Sp(2g,\R)$ acts transitively on $\H_{g}$, where for $M=\big(\begin{smallmatrix}A&B\\C&D
\end{smallmatrix}\big)\in\Sp(2g,\R)$ and $Z \in\H_{g}$ this action is given by 
\begin{displaymath}
Z\longmapsto M\langle Z\rangle=(AZ+B)(CZ+D)^{-1}.
\end{displaymath}
On the other hand, the group $G^{(g,1)}_{\R}$ acts transitively on $\H_{g}\times\C^{(1,g)}$, where for $\big(M,[(\lambda,\mu),x]\big)\in G^
{(g,1)}_{\R}$ and $(Z,W)\in\H_{g}\times\C^{(1,g)}$ this action is given by
\begin{displaymath}
(Z,W)\longmapsto\big(M\langle Z\rangle,(W+\lambda Z+\mu)(CZ+D)^{-1}\big).
\end{displaymath}
For a subgroup $\Gamma\subseteq\Sp(2g,\Z)$ of finite index, we write $\widetilde{\Gamma}=\Gamma\ltimes H^{(g,1)}_{\Z}\subseteq G^
{(g,1)}_{\Z}$, and define the quotient spaces $\sA_{g,\Gamma}\coloneqq\Gamma\backslash\H_{g}$ and $\sB_{g,\Gamma}\coloneqq
\widetilde{\Gamma}\backslash\H_{g}\times \C^{(1,g)}$. In particular, these definitions apply to the principal congruence subgroup of level 
$N$ of the symplectic group defined by
\begin{displaymath}
\Gamma(N)=\ker\big(\Sp(2g,\Z)\longrightarrow\Sp(2g,\Z/N\Z)\big),
\end{displaymath}
for which we then find that
\begin{displaymath}
\sA_{g,\Gamma(N)}=\sA_{g,N}(\C)\quad\text{and}\quad\sB_{g,\Gamma(N)}=\sB_{g,N}(\C).
\end{displaymath}

We next recall the classical definition of Siegel--Jacobi forms by introducing a suitable automorphy factor for the group $G^{(g,1)}_{\R}$
(see, e.\,g., \cite{Ziegler-J}). For $k,m\in\Z$ and $M=\big(\begin{smallmatrix}A&B\\C&D\end{smallmatrix}\big)\in\Sp(2g,\R)$, $\zeta=[(\lambda,
\mu),x]\in H_{\R}^{(g,1)}$, we define for a complex-valued function $f\colon\H_{g}\times\C^{(1,g)}\rightarrow\C$ the slash-operators
\begin{displaymath}
\big(f\vert_{k,m}M\big)(Z,W)\coloneqq\det(CZ+D)^{-k}e^{-2\pi imW(CZ+D)^{-1}CW^{t}}f\big(M\langle Z\rangle,W(CZ+D)^{-1}\big)
\end{displaymath}
and 
\begin{displaymath}
\big(f\vert_{k,m}\zeta\big)(Z,W)\coloneqq e^{2\pi im(\lambda Z\lambda^{t}+2\lambda W^{t}+(x+\mu\lambda^{t}))}f(Z,W+\lambda Z+\mu). 
\end{displaymath}
A quadratic matrix $T$ is called \emph{half-integral}, if $2T$ has integral entries, while the diagonal entries of $T$ are even. Note that if 
$T$ is symmetric, then $T$ is half-integral if and only if the associated quadratic form is integral.

\begin{definition} 
\label{def:SJ_forms}
A meromorphic function $f\colon\H_{g}\times\C^{(1,g)}\rightarrow\C$ is called a \emph{meromorphic Siegel--Jacobi form of weight $k$ 
and index $m$ for the subgroup $\Gamma\subseteq\Sp(2g,\Z)$ of finite index}, if the following conditions are satisfied:
\begin{enumerate}
\item 
\label{item:35} 
$f\vert_{k,m}M=f$ for all $M\in\Gamma$.
\item 
\label{item:36} 
$f\vert_{k,m}\zeta=f$ for all $\zeta\in H_{\Z}^{(g,1)}$.
\end{enumerate}
A meromorphic Siegel--Jacobi form $f$ of weight $k$ and index $m$ for $\Gamma$ is called a \emph{Siegel--Jacobi form of weight $k$ 
and index $m$ for $\Gamma$}, if $f$ is holomorphic and
\begin{enumerate}[resume]  
\item 
\label{item:37} 
for each $M\in\Sp(2g,\Z)$, the function $f\vert_{k,m}M$ has a Fourier expansion of the form
\begin{displaymath}
\big(f\vert_{k,m}M\big)(Z,W)=\sum_{\substack{T=T^{t}\ge 0\\T\text{ half-integral}}}\sum_{R\in\Z^{(g,1)}}c(T,R)e^{2\pi i/n_{M}\tr(TZ)}e^{2\pi 
iWR}
\end{displaymath}
for some suitable positive integer $n_{M}$ depending only on $\Gamma$ and such that $c(T,R)\neq 0$ implies 
\begin{displaymath}
\begin{pmatrix}T/n_{M}&R/2\\R^{t}/2&m\end{pmatrix}\ge 0.
\end{displaymath}
\end{enumerate}
The $\C$-vector space of Siegel--Jacobi forms of weight $k$ and index $m$ for $\Gamma$ is denoted by $J_{k,m}(\Gamma)$.

A Siegel--Jacobi form $f\in J_{k,m}(\Gamma)$ is said to be a \emph{Siegel--Jacobi cusp form of weight $k$ and index~$m$ for $\Gamma$}, 
if $c(T,R)\neq 0$ implies
\begin{displaymath}
\begin{pmatrix}T/n_{M}&R/2\\R^{t}/2&m\end{pmatrix}>0.
\end{displaymath}
\end{definition}

We note that when $g>1$, if $f$ is holomorphic, condition~\ref{item:37} is a consequence of conditions~\ref{item:35} and~\ref{item:36} 
due to the Koecher principle (see~\cite[Lemma~1.6]{Ziegler-J}).

\begin{remark} 
Fix $N\ge 3$. The meromorphic Siegel--Jacobi forms of weight $k$ and index $m$ for $\Gamma (N)$ are the meromorphic sections of the 
line bundle $\sJ_{k,m,N}\otimes_{\Z[1/N,\zeta_{N}]}\C$ over $\sB_{g,N}(\C)$. For $g>1$, the Siegel--Jacobi forms of weight $k$ and index 
$m$ for $\Gamma(N)$ correspond to the global sections of the line bundle $\sJ_{k,m,N}\otimes_{\Z[1/N,\zeta_{N}]}\C$ over $\sB_{g,N}(\C)$, 
i.\,e., we have
\begin{displaymath}
J_{k,m}(\Gamma(N))=\Gamma(\sB_{g,N}(\C),\sJ_{k,m,N}\otimes_{\Z[1/N,\zeta_{N}]}\C).
\end{displaymath}
For $g=1$, Jacobi forms of weight $k$ and index $m$ for $\Gamma(N)$ correspond to the global sections of a certain extension of $\sJ_
{k,m,N}\otimes_{\Z[1/N,\zeta_{N}]}\C$ to a suitable compactification of $\sB_{g,N}(\C)$.
\end{remark}

\subsection{Toroidal compactifications}

We give a brief account of the theory of toroidal compactifications of the smooth, quasi-projective schemes $\sA_{g,N}$ and $\sB_{g,N}$
over $\Spec(\Z[1/N,\zeta_{N}])$ for $N\ge 3$. For more details we refer again to the book~\cite{FC} by G.~Faltings and C.-L.~Chai. 

Let $C_{g}$ be the open cone of real symmetric positive definite $(g\times g)$-matrices and $\overline{C}_{g}$ the cone of real symmetric 
positive semidefinite $(g\times g)$-matrices with rational kernel. Let $\widetilde{C}_{g}\subseteq\overline{C}_{g}\times\R^{(1,g)}$ be the 
cone defined by
\begin{displaymath}
\widetilde{C}_{g}=\big\{(Y,\beta)\in\overline{C}_{g}\times\R^{(1,g)}\,\big\vert\,\exists\,\alpha\in\R^{(1,g)}\colon\,\beta=\alpha Y
\big\}.
\end{displaymath}
We will denote by $\overline{C}_{g,\Z}$ the subset of half-integral matrices of $\overline{C}_{g}$. Furthermore, we set $\widetilde{C}_{g,
\Z}=\widetilde{C}_{g}\cap(\overline{C}_{g,\Z}\times\Z^{(1,g)})$.
 
Let $P_{g}\subset\Sp(2g,\Z)$ denote the parabolic subgroup consisting of the symplectic matrices $\big(\begin{smallmatrix}A&B\\0&D
\end{smallmatrix}\big)$. Then, there is a group homomorphism
\begin{displaymath}
P_{g}\longrightarrow\GL(g,\Z),
\end{displaymath}
given by the assignment $\big(\begin{smallmatrix}A&B\\0&D\end{smallmatrix}\big)\mapsto A$. We denote the image of the intersection 
$\Gamma(N)\cap P_{g}$ by this homomorphism in $\GL(g,\Z)$ by $\overline{\Gamma}(N)$. Similarly, we denote the image of the intersection 
$\widetilde{\Gamma}(N)\cap P_{g+1}$ in $\GL(g+1,\Z)$ by $\overline{\widetilde{\Gamma}}(N)$. Then, the group $\overline{\widetilde
{\Gamma}}(N)$ is given as the semi-direct product $\overline{\Gamma}(N)\ltimes\Z^{(1,g)}$ and is therefore contained in the subgroup 
of matrices of the form $\big(\begin{smallmatrix}A&0\\\lambda&1\end{smallmatrix}\big)$, where $A\in\GL(g,\Z)$ and $\lambda\in\Z^{(1,g)}$; 
in the sequel, we denote the elements of $\overline{\widetilde{\Gamma}}(N)$ by $(A,\lambda)$. The group $\overline{\Gamma}(N)$ acts on 
$\overline{C}_{g}$ by the rule
\begin{displaymath}
A\cdot Y=AY A^{t},
\end{displaymath}
and the group $\overline{\widetilde{\Gamma}}(N)$ acts on $\widetilde{C}_{g}$ by the rule
\begin{equation}
\label{eq:8}
(A,\lambda)\cdot(Y,\beta)=(AY A^{t},(\beta+\lambda Y)A^{t}). 
\end{equation}

\begin{definition}
\label{def:24}
An \emph{admissible cone decomposition of $\overline{C}_{g}$} is a set of cones $\Sigma$ in $\overline{C}_{g}$ such that the following
conditions are satisfied:
\begin{enumerate}
\item 
Each $\sigma\in\Sigma$ is generated by a finite set of elements of $\overline{C}_{g,\Z}$ and contains no lines. In other words, it is a 
rational polyhedral strictly convex cone.
\item 
If $\sigma$ belongs to $\Sigma$, each face of $\sigma$ belongs to $\Sigma$.
\item 
If $\sigma$ and $\sigma'$ belong to $\Sigma$, their intersection is a common face.
\item 
The union of all the cones of $\Sigma$ is $\overline{C}_{g}$.
\item 
The group $\overline{\Gamma}(N)$ leaves $\Sigma$ invariant with finitely many orbits.
\end{enumerate}
\end{definition}

\begin{definition}
\label{def:25}
Let $\Sigma$ be an admissible cone decomposition of $\overline{C}_{g}$. An \emph{admissible cone decomposition of $\widetilde{C}_
{g}$ over $\Sigma$} is a set of cones $\Pi$ in $\widetilde{C}_{g}$ such that the following conditions are satisfied:
\begin{enumerate}
\item 
Each $\tau\in\Pi $ is generated by a finite set of elements of $\widetilde{C}_{g,\Z}$ and contains no lines.
\item 
If $\tau$ belongs to $\Pi$, each face of $\tau$ belongs to $\Pi$.
\item 
If $\tau$ and $\tau'$ belong to $\Pi$, their intersection is a common face.
\item 
The union of all the cones of $\Pi$ is $\widetilde{C}_{g}$.
\item 
The group $\overline{\widetilde{\Gamma}}(N)$ leaves $\Pi$ invariant with finitely many orbits.
\item 
For each $\tau\in\Pi$, the projection of $\tau$ to $\overline{C}_{g}$ is contained in a cone $\sigma\in\Sigma$.
\end{enumerate}
\end{definition}

We say that $\Sigma$ and $\Pi$ are \emph{smooth}, if every cone of $\Sigma$ and every cone of $\Pi$ is generated by part of a $\Z
$-basis of the abelian groups generated by $\overline{C}_{g,\Z}$ and $\widetilde{C}_{g,\Z}$, respectively. We say that $\Pi$ is \emph
{equidimensional (over $\Sigma$)}, if for every cone $\tau$, the projection of $\tau$ to $\overline{C}_{g}$ is a cone $\sigma\in\Sigma$.

\begin{definition}
\label{def:26}
Let $\Sigma$ be an admissible cone decomposition of $\overline{C}_{g}$. An \emph{admissible divisorial function on $\Sigma$} is a 
continuous and $\overline{\Gamma}(N)$-invariant function $\phi\colon\overline{C}_{g}\rightarrow\R$ satisfying the following properties:
\begin{enumerate}
\item 
It is conical, in the sense that $\phi(tY)=t\phi(Y)$ for all $Y\in\overline{C}_{g}$ and $t\in\R_{\ge 0}$.
\item 
It is linear on each cone $\sigma\in\Sigma$.
\item
It takes integral values on $\overline{C}_{g,\Z}$.
\end{enumerate}
An admissible divisorial function $\phi$ on $\Sigma$ is called \emph{strictly anti-effective}, if, in addition, it satisfies
\begin{enumerate}[resume]
\item 
$\phi(Y)>0$ for all $Y\in\overline{C}_{g}\setminus\{0\}$.
\end{enumerate}
A strictly anti-effective admissible divisorial function $\phi$ on $\Sigma$ is called an \emph{admissible polarization function on $\Sigma$},
if the following two additional properties are satisfied:
\begin{enumerate}[resume]
\item 
The function $\phi$ is concave.
\item
The function $\phi$ is strictly concave on $\Sigma$, in the sense that, if $\sigma'$ is a cone in $\overline{C}_{g}$ such that the restriction 
of $\phi$ is linear on $\sigma'$, then $\sigma'$ is contained in a cone $\sigma\in\Sigma$; in other words, the maximal cones of $\Sigma$ 
are the maximal cones of linearity of $\phi$.  
\end{enumerate}
\end{definition}

\begin{remark}
For a given admissible cone decomposition $\Sigma$ of $\overline{C}_{g}$ it may happen that there are no admissible polarization functions 
on $\Sigma$. An admissible cone decomposition $\Sigma$ of $\overline{C}_{g}$ that admits an admissible polarization function is called 
\emph{projective}. As explained in~\cite[Chapter~V, \S~5]{FC}, every admissible cone decomposition $\Sigma$ of $\overline{C}_{g}$ admits 
a smooth projective refinement. 
\end{remark}

\begin{definition}
\label{def:27}
Let $\Sigma$ be an admissible cone decomposition of $\overline{C}_{g}$ and $\Pi$ an admissible cone decomposition of $\widetilde{C}_
{g}$ over $\Sigma$. An \emph{admissible divisorial function on $\Pi $} is a continuous and $\overline{\widetilde{\Gamma}}(N)$-invariant 
function $\phi\colon\widetilde{C}_{g}\rightarrow\R$ satisfying the following properties:
\begin{enumerate}
\item  
It is conical, in the sense that $\phi(t\widetilde{Y})=t\phi(\widetilde{Y})$ for all $\widetilde{Y}=(Y,\beta)\in\widetilde{C}_{g}$ and $t\in\R_{\ge 
0}$.
\item
It is linear on each cone $\tau\in\Pi$.
\item
It takes rational values on $\widetilde{C}_{g,\Z}$ with bounded denominators.
\item 
\label{item:31}
For $\lambda\in\Z^{(1,g)}$ and $\widetilde{Y}=(Y,\beta)\in\widetilde{C}_{g}$, the condition
\begin{displaymath}
\phi(\widetilde{Y})-\phi(\lambda\cdot\widetilde{Y})=\lambda Y\lambda^{t}+2\beta\lambda^{t}
\end{displaymath}
holds, where we recall from~\eqref{eq:8} that $\lambda\cdot\widetilde{Y}\coloneqq(\id_{g},\lambda)\cdot(Y,\beta)=(Y,\beta+\lambda Y)$.
\end{enumerate}
An admissible divisorial function $\phi$ on $\Pi$ is called an \emph{admissible polarization function on $\Pi$}, if the following two additional 
properties are satisfied:
\begin{enumerate}[resume]
\item 
The function $\phi $ is concave.
\item 
The function $\phi$ is strictly concave over each cone $\sigma\in\Sigma$, that is, for each maximal cone $\tau\in\Pi$ over $\sigma\in\Sigma$, 
there is a linear function $\varphi_{\tau}$ such that $\varphi_{\tau}(\widetilde{Y})=\phi(\widetilde{Y})$ for each $\widetilde{Y}=(Y,\beta)\in\tau$, 
but $\varphi_{\tau}(\widetilde{Y})>\phi(\widetilde{Y})$ for each $\widetilde{Y}=(Y,\beta)\notin\tau$, but with $Y\in\sigma$.
\end{enumerate}
\end{definition}

\begin{remark}
For a given admissible cone decomposition $\Pi$ of $\widetilde{C}_{g}$, even smooth, it may be possible that there are no admissible 
polarization functions on $\Pi$.  Nevertheless, there is always a refinement $\Pi'$ of $\Pi $ such that there exists an admissible polarization 
function on $\Pi'$. As in the case of the admissible cone decompositions $\Sigma$ of $\overline{C}_{g}$, an admissible cone decomposition
$\Pi$ of $\widetilde{C}_{g}$ that admits an admissible polarization function is called \emph{projective}.
\end{remark}

The theory of toroidal embeddings allows us to compactify the smooth, quasi-projective schemes $\sA_{g,N}$ and $\sB_{g,N}$ over $\Spec
(\Z[1/N,\zeta_{N}])$ for $N\ge 3$. The precise statements are as follows. 

\begin{theorem}
\label{thm:6}
Let $\Sigma$ be a projective admissible cone decomposition of $\overline{C}_{g}$ and $\Pi$ a projective admissible cone decomposition 
of $\widetilde{C}_{g}$ over $\Sigma$.
\begin{enumerate}
\item
 There is a projective scheme
$\overline{\sA}_{g,N,\Sigma}$ over $\Spec(\Z[1/N,\zeta_{N}])$ determined by the
cone decomposition $\Sigma$, that 
contains $\sA_{g,N}$ as an open dense subscheme.
\item 
There is a projective scheme $\overline{\sB}_{g,N,\Pi}$ over
$\Spec(\Z[1/N,\zeta_{N}])$ determined by the cone decomposition $\Pi$, that contains  
$\sB_{g,N}$ as an open dense subscheme and a projective morphism
\begin{displaymath}
\pi_{\Sigma,\Pi}\colon\overline{\sB}_{g,N,\Pi}\longrightarrow\overline{\sA}_{g,N,\Sigma}
\end{displaymath}
that extends the canonical projection $\pi_{N}\colon\sB_{g,N}\rightarrow\sA_{g,N}$.
\item 
If $\Sigma$ or $\Pi$ are smooth, then the corresponding schemes $\overline {\sA}_{g,N,\Sigma}$ or $\overline{\sB}_{g,N,\Pi}$ are smooth 
over $\Spec(\Z[1/N,\zeta_{N}])$, respectively. If $\Pi$ is equidimensional over $\Sigma$, then $\pi_{\Sigma,\Pi }$ is equidimensional. 
\item 
The projective scheme $\overline{\sA}_{g,N,\Sigma}$ admits a stratification by locally closed subschemes indexed by the $\overline{\Gamma}
(N)$-orbits of $\Sigma$, i.\,e., we have
\begin{displaymath}
\overline{\sA}_{g,N,\Sigma}=\bigcup_{\overline{\sigma}\in\Sigma/\overline{\Gamma}(N)}\overline{\sA}_{g,N,\overline{\sigma}}\,.
\end{displaymath}
The correspondence between cones $\overline{\sigma}\in\Sigma/\overline{\Gamma}(N)$ and strata $\overline{\sA}_{g,N,\overline{\sigma}}$
reverses dimensions; a stratum $\overline{\sA}_{g,N,\overline{\sigma}}$ lies in the closure of another stratum $\overline{\sA}_{g,N,\overline
{\sigma}'}$ if and only if there are representatives $\sigma$ and $\sigma'$ of $\overline{\sigma}$ and $\overline{\sigma}'$, respectively, such
that $\sigma'$ is a face of $\sigma$.
\item 
There is an analogous stratification of $\overline{\sB}_{g,N,\Pi}$ indexed by the $\overline{\widetilde\Gamma }(N)$-orbits of $\Pi$.
\end{enumerate}
\end{theorem}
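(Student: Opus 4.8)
The plan is to recall how each part of the statement follows from the theory of arithmetic toroidal compactifications developed in~\cite{FC}, specialized to the level-$N$ situation and to the cone data $\Sigma$ and $\Pi$ at hand; I would treat the deepest ingredients as black boxes from that book. First, for part~(i), I would construct $\overline{\sA}_{g,N,\Sigma}$ by the Mumford-type construction of~\cite[Chapter~IV]{FC}. Over $\Spec(\Z[1/N,\zeta_{N}])$ one attaches to each $\overline{\Gamma}(N)$-orbit of cones in $\Sigma$ a local boundary chart built from the theory of degenerating abelian schemes with level structure: the degeneration data near a stratum is governed by a torus embedding associated with the cone, over which the Mumford construction produces a semiabelian scheme; algebraizing the formal completion by Grothendieck's existence theorem and gluing these charts to $\sA_{g,N}$ yields a proper flat scheme containing $\sA_{g,N}$ as an open dense subscheme. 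Projectivity is exactly where the hypothesis that $\Sigma$ is projective enters: an admissible polarization function on $\Sigma$ (Definition~\ref{def:26}) produces, by~\cite[Chapter~V, \S~5]{FC}, a line bundle on $\overline{\sA}_{g,N,\Sigma}$ that is ample over the base.

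Next, for part~(ii), I would obtain $\overline{\sB}_{g,N,\Pi}$ as the toroidal compactification of the universal semiabelian scheme over $\overline{\sA}_{g,N,\Sigma}$, again by the Mumford construction but now with the enlarged cone data $\widetilde{C}_{g}$ and its decomposition $\Pi$; condition~(vi) of Definition~\ref{def:25}, that each cone of $\Pi$ projects into a cone of $\Sigma$, is precisely what makes the fibrewise construction compatible with the base compactification, so that the map of cones induces a morphism $\pi_{\Sigma,\Pi}\colon\overline{\sB}_{g,N,\Pi}\to\overline{\sA}_{g,N,\Sigma}$ extending $\pi_{N}$. Properness of $\pi_{\Sigma,\Pi}$ follows from properness of source and target over the base, and projectivity from an admissible polarization function on $\Pi$, which gives a $\pi_{\Sigma,\Pi}$-ample line bundle; density and openness of $\sB_{g,N}$ follow from the corresponding properties downstairs together with the identity $\sB_{g,N}=\pi_{\Sigma,\Pi}^{-1}(\sA_{g,N})$.

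Then, for parts~(iii)--(v), all assertions are local on the boundary charts and reduce to standard combinatorics of torus embeddings. Smoothness of $\overline{\sA}_{g,N,\Sigma}$ (resp.\ $\overline{\sB}_{g,N,\Pi}$) over $\Spec(\Z[1/N,\zeta_{N}])$ is equivalent, chart by chart, to the statement that the torus embedding attached to a cone generated by part of a lattice basis is smooth; equidimensionality of $\pi_{\Sigma,\Pi}$ when $\Pi$ is equidimensional over $\Sigma$ reduces to the corresponding statement for a morphism of torus embeddings induced by a map of fans under which each cone upstairs maps onto a cone downstairs. The stratifications in~(iv) and~(v) are the gluings of the orbit stratifications of the local torus embeddings; the indexing by $\overline{\Gamma}(N)$-orbits (resp.\ $\overline{\widetilde\Gamma}(N)$-orbits) of cones, the dimension-reversing correspondence between cones and strata, and the closure relation that a stratum lies in the closure of another exactly when the first cone is a face of the second, are the orbit--cone correspondence for torus embeddings transported through the quotient by the arithmetic group.

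The main obstacle --- and the reason this is a recollection from~\cite{FC} rather than a self-contained argument --- is the arithmetic Mumford construction together with its algebraization and descent: one must produce over each torus-embedding chart a degenerating semiabelian scheme carrying the correct principal polarization and full level-$N$ structure, pass from the formal completion to an honest scheme by Grothendieck's existence theorem, and check that the charts glue to a scheme flat over $\Z[1/N,\zeta_{N}]$ with the prescribed boundary behaviour. Comparably delicate is the proof of projectivity, which requires turning an admissible polarization function into an actual ample line bundle on a compactification that is a priori only known to be proper; in the level-$N\ge 3$ case the rigidity provided by the level structure makes the moduli problem representable by a scheme, which is what allows one to speak of $\overline{\sA}_{g,N,\Sigma}$ and $\overline{\sB}_{g,N,\Pi}$ as schemes in the first place. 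I would invoke~\cite{FC} for all of this and restrict my own contribution to recording the precise form of the statement needed in the sequel.
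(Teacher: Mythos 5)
Your proposal is correct and matches the paper's treatment: the paper states this theorem without proof as a recollection from the book of Faltings--Chai, which is exactly the source you invoke, and your sketch of the Mumford construction, algebraization, gluing, and the orbit--cone combinatorics is a faithful outline of that reference. The only nuance worth recording is that the paper's own Remark~\ref{rem:14} locates the use of the projectivity hypothesis in the algebraizability of the formal moduli schemes (i.e., supplying the ample sheaf needed for Grothendieck existence), which complements rather than contradicts your emphasis on producing an ample line bundle on the resulting compactification.
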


\begin{remark}
\label{rem:loccor}
As in Theorem~\ref{thm:6}, we assume that $\Sigma$ and $\Pi$ are smooth projective admissible cone decompositions of $\overline{C}_{g}$
and $\widetilde{C}_{g}$, respectively, so that we have a projective morphism of smooth projective schemes $\pi_{\Sigma,\Pi}\colon\overline{\sB}_
{g,N,\Pi}\rightarrow\overline{\sA}_{g,N,\Sigma}$ over $\Spec(\Z[1/N,\zeta_{N}])$. A $\overline{\widetilde\Gamma}(N)$-orbit $\overline{\tau}$ now
determines a stratum $\overline{\sB}_{g,N,\overline{\tau}}$ of $\overline{\sB}_{g,N,\Pi}$. In the sequel, we will need local coordinates around a 
point $p_{\overline{\tau}}\in\overline{\sB}_{g,N,\overline{\tau}}(\C)$, which we recall from~\cite[Chapter~V, p.~141]{FC}; we note that these local
coordinates will depend on the choice of a representative $\tau$ of the orbit under consideration. Assume that $\dim(\tau)=n$ and recall that 
$\dim(\sB_{g,N}(\C))=d$. The chosen cone $\tau$ is then generated by the set $\{(Y_{1},\beta_{1}),\ldots,(Y_{n},\beta_{n})\}$, where $Y_{j}$ 
are half-integral symmetric positive semidefinite matrices and $\beta_{j}$ are integral row vectors of the form $\beta_{j}=\alpha_{j}Y_{j}$, which 
is part of an integral basis 
\begin{displaymath}
\{(Y_{1},\beta_{1}),\ldots,(Y_{n},\beta_{n}),(Y_{n+1},\beta_{n+1}),\ldots,(Y_{d},\beta_{d})\}
\end{displaymath}
of $\widetilde{C}_{g,\Z}$. Then, there exists a pair $(Y_{0},\beta_{0})$ with $Y_{0}$ a real symmetric positive definite $(g\times g)$-matrix and 
a row vector $\beta_{0}\in\R^{(1,g)}$ satisfying $\beta_{0}=\alpha_{0}Y_{0}$, and there are real numbers $0<r_{1},\ldots,r_{d}<1/e$ such that 
the set of pairs
\begin{displaymath}
V'\coloneqq\Bigg\{(Z,W)=i(Y_{0},\beta_{0})+\sum_{j=1}^{d}z_{j}(Y_{j},\beta_{j})\Bigg\vert\,z_{j}\in\C:\,
\begin{alignedat}{2}
2\pi\im(z_{j})&>-\log(r_{j}),\ &j&=1,\ldots,n \\
\vert z_{j}\vert&<r_{j},&j&=n+1,\ldots,d
\end{alignedat}
\Bigg\}
\end{displaymath}
constitutes an open subset of $\H_{g}\times\C^{(1,g)}$. We note that the set of pairs $\{(Y_{1},\beta_{1}),\ldots,(Y_{d},\beta_{d})\}$ can also be 
chosen to be an integral basis of any maximal cone $\tau'$ that has $\tau$ as a face. By choosing $V'$ small enough, the uniformization map 
$\H_{g}\times\C^{(1,g)}\rightarrow\sB_{g,N}(\C)$ maps the open subset $V'\subseteq\H_{g}\times\C^{(1,g)}$ to the coordinate neighbourhood 
$V\subseteq\overline{\sB}_{g,N,\Pi}(\C)$ centered at $p_{\overline{\tau}}$ by means of the assignment
\begin{equation}
\label{eq:31}
(z_{1},\ldots,z_{d})\mapsto(q_{1},\ldots,q_{d}),\text{ where }q_{j}=\Bigg\lbrace
\begin{alignedat}{2}
&e^{2\pi iz_{j}},\ &j&=1,\ldots,n, \\
&z_{j},&j&=n+1,\ldots,d,
\end{alignedat}
\end{equation}
so that $V'$ maps surjectively onto $V\cap\sB_{g,N}(\C)$. By construction, the coordinate neighbourhood $V$ is of the form $\Delta_{r_{1}}
\times\ldots\times\Delta _{r_{d}}$, where $\Delta_{r_{j}} \subseteq\C$ is the open disk of radius $r_{j}$ ($j=1,\ldots,d$) centered at the origin.
\end{remark}

\begin{remark}
\label{rem:14}
We end this subsection by mentioning that the projectivity assumption made in Theorem~\ref{thm:6} is used to show that formal versions 
of the moduli schemes under consideration are algebraizable.
\end{remark}

\subsection{The adelic arithmetic line bundle of Siegel--Jacobi forms}
\label{adJkmN}

The aim of this subsection is to upgrade the line bundle of Siegel--Jacobi forms $\sJ_{k,m,N}$ of weight $k$, index $m$, and level $N\ge 
3$ in a canonical way to an adelic arithmetic line bundle on $\sB_{g,N}$. However, as we show in Section~\ref{sec:non-integr-line}, this 
adelic arithmetic line bundle is not integrable, in general. Nevertheless, we will show in the subsequent subsections that this adelic arithmetic
line bundle has finite energy, so it will have a well-defined arithmetic self-intersection product that we will compute. 

We start by showing that the Hodge bundle $\omega_{N}$ of level $N$
can be upgraded to an adelic arithmetic line bundle on the fine 
moduli space $\sA_{g,N}$. Our presentation will follow the argument given in~\cite[\S~5.5]{YuanZhang:adelic}. We start from the moduli 
stack $\sA_{g}$ over $\Spec(\Z)$ and its Hodge bundle $\omega$. In analogy to Theorem~\ref{thm:6}, it is shown in~\cite[Chapter~IV, 
Theorem~5.7]{FC} that after choosing a smooth admissible cone
decomposition $\Sigma$ of $\overline{C}_{g}$, one obtains a smooth 
proper algebraic stack $\overline{\sA}_{g,\Sigma}$ over $\Spec(\Z)$, which contains $\sA_{g}$ as an open dense algebraic substack. 
The universal abelian scheme $\pi\colon\sB_{g}\rightarrow\sA_{g}$ can now be extended to a semi-abelian scheme $\overline{\pi}\colon
\sG_{g}\rightarrow\overline{\sA}_{g,\Sigma}$ with zero section $\overline{\varepsilon}\colon\overline{\sA}_{g,\Sigma}\rightarrow\sG_{g}$. 
The Hodge bundle $\omega$ on $\sA_{g}$ extends to a line bundle on $\overline{\sA}_{g,\Sigma}$, again denoted by $\omega$, defined 
by
\begin{displaymath}
\omega\coloneqq\overline{\varepsilon}^{\ast}\det(\Omega^{1}_{\sG_{g}/\overline{\sA}_{g,\Sigma}}).
\end{displaymath}
By~\cite[Chapter~V, Theorem~2.3]{FC}, after contracting the toroidal compactification $\overline{\sA}_{g,\Sigma}$ through the linear systems 
associated to powers of $\omega$, one obtains the minimal compactification $\sA_{g}^{\ast}$, which is a normal proper scheme of finite type 
over $\Spec(\Z)$ and provides a compactification of the coarse moduli space $\sA_{g}'$ of principally polarized abelian schemes of dimension 
$g$ over $\Spec(\Z)$; in particular, we have a natural morphism $\rho'\colon\sA_{g}\rightarrow\sA_{g}'$. We note that the construction of $\sA_
{g}^{\ast}$ turns out to be independent of the particular choice of toroidal compactification $\overline{\sA}_{g,\Sigma}$. By the very construction 
the line bundle $\omega$ on $\overline{A}_{g,\Sigma}$ descends as a $\Q$-line bundle on $\sA_{g}^{\ast}$ and, after restriction, gives rise to 
a $\Q$-line bundle on $\sA_{g}'$, which we denote again by $\omega$.

We will now equip the $\Q$-line bundle $\omega_{\otimes\Z}\C$ on $\sA_{g}'(\C)$ with a hermitian metric. For this, we first note that $\sA_
{g}'(\C)$ can be identified with the quotient space $\Sp(2g,\Z)\backslash\H_{g}$, so that the $\Sp(2g,\Z)$-orbit of a point $Z\in\H_{g}$ corresponds
to the isomorphism class of the abelian variety $A_{Z}=\C^{(1,g)}/(\Z^{(1,g)}Z\oplus\Z^{(1,g)})$. Given a holomorphic section $s$ of $\omega_
{\otimes\Z}\C$ over $\sA_{g}'(\C)$, the Faltings metric on $\omega_{\otimes\Z}\C$ 
is now defined by the formula
\begin{equation}
\label{Falnorm}
\Vert s(Z)\Vert_{\mathrm{Fal}}^{2}\coloneqq\frac{i^{g^{2}}}{2^{g}}\int_{A_{Z}}s(Z)\wedge \overline{s(Z)}.
\end{equation}

\begin{lemma}
\label{lem:omega'}
With the above notations, the hermitian line bundle $\overline{\omega}\coloneqq(\omega,\Vert\cdot\Vert_{\mathrm{Fal}})$ defines an adelic 
arithmetic line bundle on $\sA_{g}'$, i.\,e., $\overline{\omega}\in\bPichQ(\sA_{g}')^{\adel}$.
\end{lemma}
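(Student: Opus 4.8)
The plan is to follow~\cite[\S~5.5]{YuanZhang:adelic} and produce a Cauchy sequence of model triples representing $\overline{\omega}$ in which the geometric data are held \emph{fixed}, so that only a smooth archimedean metric moves, converging to the Faltings metric in the $\overline{\caB}$-adic sense. First I would fix the algebraic frame. Since $\sA_{g}'$ is normal and quasi-projective over $\Spec(\Z)$ and $\omega$ extends to an ample $\Q$-line bundle on the minimal compactification $\sA_{g}^{\ast}$, I choose a normal projective model $\caX$ over $\Spec(\Z)$ with a proper morphism $\caX\to\sA_{g}^{\ast}$ that is an isomorphism over $\sA_{g}'$ and such that (i) the reduced complement $\caB\coloneqq(\caX\setminus\sA_{g}')_{\mathrm{red}}$ is a Cartier, indeed normal crossings, divisor, and (ii) some positive power of $\omega$, hence $\omega$ itself as a $\Q$-line bundle, extends to the canonical (Deligne) $\Q$-line bundle $\caL$ on $\caX$; such an $\caX$ exists by taking (a blow-up of) a projective toroidal compactification. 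Put $\caU=\sA_{g}'$, fix an arithmetic boundary divisor $\overline{\caB}=(\caB,g_{B})$ with $g_{B}$ of smooth type (so $g_{B}>\eta>0$ on $U$ and $g_{B}\sim-\log|f_{B}|^{2}$ near $|\caB|$), and fix any smooth hermitian metric $\Vert\cdot\Vert_{0}$ on $\caL_{\Sigma}$, so that $\overline{\caL}_{0}=(\caL,\Vert\cdot\Vert_{0})$ is a model object of $\bPichQ(\caX)$.

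The analytic heart of the matter is the boundary behavior of the Faltings metric. After passing to a fine moduli scheme $\sA_{g,N}$ with $N\ge 3$ (so the period monodromy is unipotent, $\omega_{N}\cong\rho_{N}^{\prime\ast}\omega$) and a smooth toroidal compactification, the nilpotent orbit theorem of Schmid, in the form of Mumford's theory of good hermitian metrics on automorphic bundles (developed for the Hodge bundle in~\cite{FC} and~\cite{BurgosKramerKuehn:accavb}), shows that in the local toroidal coordinates $q_{j}=e^{2\pi i z_{j}}$ (cf.\ the analogue for $\sA_{g,N}$ of Remark~\ref{rem:loccor}) a frame of the canonical extension $\caL$ has $\Vert\cdot\Vert_{\mathrm{Fal}}$-norm comparable to a power of $\prod_{j}\bigl(-\log|q_{j}|\bigr)$. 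Since $\rho_{N}'$ is finite, this descends to $\caX$. Consequently the function
\[
h\coloneqq-\log\!\bigl(\Vert\cdot\Vert_{\mathrm{Fal}}^{2}/\Vert\cdot\Vert_{0}^{2}\bigr)
\]
is continuous on $\sA_{g}'(\C)$ and has at most $\log\log$-growth towards $|\caB|$, which is dominated by $g_{B}$; moreover $h/g_{B}$ is continuous on $U$ (as $g_{B}\ge\eta>0$) and tends to $0$ along $|\caB|$. In other words $h\in C^{0}(U)_{0}$ in the notation of Corollary~\ref{cor:4}.

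To conclude, I would approximate $h$ by smooth functions: by a standard cutoff-and-mollify argument (truncate $h$ on $\{g_{B}\le n\}$) one gets smooth $h_{n}$ on $\caX(\C)$ with $\Vert h_{n}-h_{m}\Vert_{g_{B}}\to 0$, because $h\in C^{0}(U)_{0}$. Then the triples $\bigl(\caX,(\caL,\Vert\cdot\Vert_{0}\,e^{-h_{n}/2}),\mathrm{id}\bigr)_{n\in\N}$ form a Cauchy sequence in the $\overline{\caB}$-adic topology: for a fixed rational section $s$ of $\omega$ the geometric divisor $\dv_{\caX}(s)$ is constant, so $\divh(\ell_{n}(s))-\divh(\ell_{m}(s))=(0,h_{n}-h_{m})$, whose $\overline{\caB}$-adic norm equals $\Vert h_{n}-h_{m}\Vert_{g_{B}}\to 0$. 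The limit of this sequence recovers exactly $(\omega,\Vert\cdot\Vert_{\mathrm{Fal}})$ on $\sA_{g}'$, so $\overline{\omega}\in\bPichQ(\sA_{g}')^{\adel}$.

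The main obstacle is the analytic input of the second paragraph, namely that the invariant metric on the Hodge bundle is a good metric along a toroidal boundary with \emph{integral} log-part matching the algebraic extension $\caL$ (so that the error $h$ is only $\log\log$), which is exactly Mumford's theorem / the nilpotent orbit estimates as used in~\cite{FC} and~\cite{BurgosKramerKuehn:accavb}; granting this, everything else is formal manipulation with the $\overline{\caB}$-adic norm and the exact sequence of Corollary~\ref{cor:4}. A secondary point requiring care is the bookkeeping around the coarse space $\sA_{g}'$ versus the fine moduli scheme $\sA_{g,N}$, where the toroidal coordinates literally live and the monodromy is unipotent; this is handled by the finiteness of $\rho_{N}'$ and $\omega_{N}\cong\rho_{N}^{\prime\ast}\omega$, together with the independence of $\bPichQ(\,\cdot\,)^{\adel}$ of the chosen compactification.
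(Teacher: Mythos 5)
Your proposal is correct and follows essentially the same route as the paper: write the Faltings metric as a smooth model metric twisted by a correction function that is $o(g_{B})$ (i.e.\ lies in $C^{0}(U)_{0}$) thanks to the logarithmic/good-metric behaviour of $\Vert\cdot\Vert_{\mathrm{Fal}}$ along the toroidal boundary, and conclude via the exact sequence of Corollary~\ref{cor:4}. The paper simply invokes that corollary directly on the minimal compactification $\sA_{g}^{\ast}$ instead of spelling out the smooth Cauchy approximants, but the content is identical.
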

\begin{proof}
Let $\caB$ denote the boundary divisor of the projective arithmetic variety $\sA_{g}^{\ast}$ over $\Spec(\Z)$. Then, we obtain $\sA_{g}'=\sA_
{g}^{\ast}\setminus\vert\caB\vert$. As in Subsection~\ref{sec:glob-arithm-case}, we upgrade $\caB$ to an arithmetic boundary divisor $\overline
{\caB}=(\caB,g_{B})$ by means of a suitable Green function $g_{B}$ of continuous type for $B$. Denoting by $\sA_{g}^{\ast\ast}$ the blow-up of 
$\sA_{g}^{\ast}$ along $\caB$, the Faltings metric will have logarithmic singularities along the boundary $\sA^{\ast\ast}_{g}\setminus\sA'_{g}$.  

In order to complete the proof of the lemma, we additionally equip the $\Q$-line bundle $\omega$ on $\sA_{g}^{\ast}$ with a smooth hermitian
metric $\Vert\cdot\Vert'$ to obtain the hermitian line bundle $\overline{\omega}'$. We now choose a rational $\Q$-section $s_{\omega}$ of
$\omega$ to obtain the arithmetic divisor $\divh_{\overline{\omega}'}(s_{\omega})$, which we can view as an adelic arithmetic divisor on
$\sA_{g}'$. Consider next the function $f=-\log(\Vert s_{\omega}\Vert_{\mathrm{Fal}}^{2}\,/\,\Vert s_{\omega}\Vert'^{\,2})$ on $\sA_{g}'(\C)$. 
Since the hermitian metric $\Vert\cdot\Vert'$ extends smoothly to $\sA_{g}^{\ast}(\C)$, but the Faltings metric $\Vert\cdot\Vert_{\mathrm{Fal}}$ 
develops logarithmic singularities along the boundary $B$, we find that $f=o(g_{B})$. In this way, we find that
\begin{displaymath}
\divh_{\overline{\omega}}(s_{\omega})=\divh_{\overline{\omega}'}(s_{\omega})+(0,f).
\end{displaymath}
By Corollary~\ref{cor:4}, we obtain that $\divh_{\overline{\omega}}(s_{\omega})\in\DivhR(\sA_{g}')^{\adel}$, which implies that $\overline
{\omega}\in\bPichQ(\sA_{g}')^{\adel}$.
\end{proof}

We are now able to upgrade the Hodge bundle $\omega_{N}$ of level $N$ to an adelic arithmetic line bundle on $\sA_{g,N}$. For this, we fix 
a projective toroidal compactification $\overline{\sA}_{g,N,\Sigma}$ of $\sA_{g,N}$ over $\Spec(\Z[1/N,\zeta_{N}])$ as in Theorem~\ref{thm:6}. 
Of course, the scheme $\overline{\sA}_{g,N,\Sigma}$ is not projective over $\Spec(\Z)$. However, since $\overline{\sA}_{g,N,\Sigma}$ is 
projective over $\Spec(\Z[1/N])$, we can find a projective scheme $\caX'$ over $\Spec(\Z)$ that contains $\overline{\sA}_{g,N,\Sigma}$ as 
an open dense subscheme. By construction, we have that $\caX'\setminus\sA_{g,N}$ is the support of an effective divisor, namely the union 
of the fibers over the primes dividing $N$ together with the union of the closure of the components of the boundary divisor of the toroidal 
compactification under consideration. In this way, we can view $\sA_{g,N}$ as on open dense subset of the projective arithmetic variety 
$\caX'$ over $\Spec(\Z)$. 

\begin{lemma}
\label{lem:omegaN}
With the above notations, the hermitian line bundle $\overline{\omega}_{N}\coloneqq(\omega_{N},\Vert\cdot\Vert_{\mathrm{Fal}})$ defines an 
adelic arithmetic line bundle on $\sA_{g,N}$, i.\,e., $\overline{\omega}_{N}\in\bPichQ(\sA_{g,N})^{\adel}$.
\end{lemma}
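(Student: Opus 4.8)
The plan is to deduce Lemma~\ref{lem:omegaN} from Lemma~\ref{lem:omega'} by pulling back along the level-forgetting morphism $\rho'_{N}\colon\sA_{g,N}\rightarrow\sA_{g}'$ and invoking the functoriality of adelic arithmetic line bundles recorded in Remark~\ref{rem:44}. The key point is the identification $\omega_{N}\cong\rho_{N}^{\prime\ast}\,\omega$, already noted in the text using~\cite[Chapter~IV, Theorem~6.7]{FC}, together with the fact that the Faltings metric~\eqref{Falnorm} is defined by an integral over the abelian variety $A_{Z}$ parametrized by the point, and hence is manifestly compatible with the forgetful map (forgetting the level structure does not change the underlying abelian variety). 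So $\rho_{N}^{\prime\ast}\overline{\omega}=\overline{\omega}_{N}$ as hermitian $\Q$-line bundles on $\sA_{g,N}$.

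First I would make precise the target of the pull-back. The morphism $\rho'_{N}$ extends to a morphism of open dense subsets of projective arithmetic varieties: on the source, $\sA_{g,N}$ sits inside the projective arithmetic variety $\caX'$ over $\Spec(\Z)$ constructed just before the statement; on the target, $\sA_{g}'$ sits inside $\sA_{g}^{\ast}$. By Remark~\ref{rem:44}, an arbitrary morphism $f\colon\caU'\rightarrow\caU$ of such quasi-projective arithmetic varieties induces a pull-back functor $f^{\ast}\colon\bPichQ(\caU)^{\adel}\rightarrow\bPichQ(\caU')^{\adel}$; applying this with $f=\rho'_{N}\colon\sA_{g,N}\rightarrow\sA_{g}'$ and with the input $\overline{\omega}\in\bPichQ(\sA_{g}')^{\adel}$ supplied by Lemma~\ref{lem:omega'}, we obtain $(\rho'_{N})^{\ast}\overline{\omega}\in\bPichQ(\sA_{g,N})^{\adel}$. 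Concretely, at the level of Cauchy sequences of model arithmetic line bundles, one chooses a projective model $\caX'$ of $\overline{\sA}_{g,N,\Sigma}$ over $\Spec(\Z)$ (as in the paragraph before the lemma), and transports the approximating sequence $(\overline{\omega}_{n})$ for $\overline{\omega}$ on $\sA_{g}^{\ast}$ through suitable proper modifications dominating both sides, exactly as in the construction of $f_{\mathrm{mod}}^{\ast}$ and its completion in Subsection~\ref{sec:funct-prop-adel}.

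Then I would check the identification of the pulled-back object with $\overline{\omega}_{N}=(\omega_{N},\Vert\cdot\Vert_{\mathrm{Fal}})$. On the algebraic side, $(\rho'_{N})^{\ast}\omega\cong\omega_{N}$ as $\Q$-line bundles: the Hodge bundle is functorial in the abelian scheme, and $\sB_{g,N}$ is the base change of $\sB_{g}$ along $\rho'_{N}$, with zero sections compatible, so $\varepsilon_{N}^{\ast}\det(\Omega^{1}_{\sB_{g,N}/\sA_{g,N}})\cong\rho_{N}^{\prime\ast}\varepsilon^{\ast}\det(\Omega^{1}_{\sB_{g}/\sA_{g}})$; this extends over the toroidal/minimal compactifications by the descent of $\omega$ to $\sA_{g}^{\ast}$ recalled in the text. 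On the analytic side, the Faltings metric is given fibrewise by the intrinsic formula $\Vert s(Z)\Vert_{\mathrm{Fal}}^{2}=\tfrac{i^{g^{2}}}{2^{g}}\int_{A_{Z}}s(Z)\wedge\overline{s(Z)}$, which depends only on $A_{Z}$ and not on any level data; since $\rho'_{N}$ on complex points sends the class of $Z$ (with its level-$N$ structure) to the class of $Z$, the pulled-back metric is again $\Vert\cdot\Vert_{\mathrm{Fal}}$. Hence $(\rho'_{N})^{\ast}\overline{\omega}=\overline{\omega}_{N}$ in $\bPichQ(\sA_{g,N})^{\adel}$, which proves the lemma.

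The main obstacle, such as it is, is bookkeeping rather than conceptual: one must be slightly careful that the boundary divisor used to define the $\overline{\caB}$-adic topology on the source is compatible under $\rho'_{N}$ with the one on the target, i.e.\ that $(\rho'_{N})^{-1}$ of $\sA_{g}'$ inside $\sA_{g}^{\ast}$ is the open set $\sA_{g,N}$ inside $\caX'$, so that the complement is indeed the support of an effective divisor (the fibres over primes dividing $N$ together with the closures of the boundary components of the toroidal compactification, exactly as stated before the lemma). Once the two open immersions into projective arithmetic varieties are set up so that $\rho'_{N}$ extends to a morphism respecting the boundaries — which is automatic after enlarging the boundary on the source, harmless by Remark~\ref{rem:10} — the abstract pull-back functor of Remark~\ref{rem:44} does all the remaining work, and no energy or integrability input is needed here.
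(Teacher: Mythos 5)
Your proposal is correct and follows essentially the same route as the paper: the paper also obtains $\overline{\omega}_{N}$ by pulling back $\overline{\omega}$ from $\sA_{g}'$ (via the composition $\rho'\circ\rho'_{N}$ through the stack $\sA_{g}$) and invoking the pull-back functor of Remark~\ref{rem:44}. Your additional checks — the fibrewise invariance of the Faltings metric under forgetting the level structure and the compatibility of boundary divisors — are sound elaborations of what the paper leaves implicit.
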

\begin{proof}
By considering the pull-back of the line bundle $\omega$ on $\sA_{g}'$ by means of the composition of the two morphisms
\begin{equation}
\label{eq:2mor}
\sA_{g,N}\overset{\rho_{N}'}\longrightarrow\sA_{g}\overset{\rho'}\longrightarrow\sA_{g}',
\end{equation}
we obtain the Hodge bundle $\omega_{N}$ of level $N$ on $\sA_{g,N}$. Now, Remark~\ref{rem:44} shows that the adelic arithmetic line bundle 
$\overline{\omega}=(\omega,\Vert\cdot\Vert_{\mathrm{Fal}})$ on $\sA_{g}'$ pulls back to the adelic arithmetic line bundle $\overline{\omega}_
{N}=(\omega_{N},\Vert\cdot\Vert_{\mathrm{Fal}})$ on $\sA_{g,N}$, as claimed.
\end{proof}

Next, we show that the line bundle $\sL_{N}$ can be upgraded to an adelic arithmetic line bundle on the universal abelian scheme $\sB_{g,N}$. 
We introduce an analogous set-up as above. We fix a projective toroidal compactification $\overline{\sB}_{g,N,\Pi}$ of $\sB_{g,N}$ over $\Spec
(\Z[1/N,\zeta_{N}])$ as in Theorem~\ref{thm:6}. Of course, the scheme $\overline{\sB}_{g,N,\Pi}$ is not projective over $\Spec(\Z)$. However, 
since $\overline{\sB}_{g,N,\Pi}$ is projective over $\Spec(\Z[1/N])$, we can find a projective scheme $\caX$ over $\Spec(\Z)$ that contains 
$\overline{\sB}_{g,N,\Pi}$ as an open dense subscheme. By construction, we have that $\caX\setminus\sB_{g,N}$ is the support of an effective 
divisor, namely the union of the fibers over the primes dividing $N$ together with the union of the closure of the components of the boundary 
divisor of the toroidal compactification under consideration. In this way, we can view $\sB_{g,N}$ as on open dense subset of the projective 
arithmetic variety $\caX$ over $\Spec(\Z)$. 

\begin{lemma}
\label{lem:ellN}
With the above notations, the symmetric line bundle $\sL_{N}$ has a unique extension to an adelic arithmetic line bundle $\overline{\sL}_{N}
\coloneqq(\sL_{N},\Vert\cdot\Vert_{\mathrm{cub}})$ on $\sB_{g,N}$, i.\,e., $\overline{\sL}_{N}\in\bPichQ(\sB_{g,N})^{\adel}$, making the isomorphism
\eqref{eq:38} provided by the theorem of the cube into an isometry. Moreover, the adelic arithmetic line bundle $\overline{\sL}_{N}$ is integrable.
\end{lemma}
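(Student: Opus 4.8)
The plan is to pin down the archimedean metric $\Vert\cdot\Vert_{\mathrm{cub}}$ by a fibrewise cubical (canonical) characterization, realize $(\sL_{N},\Vert\cdot\Vert_{\mathrm{cub}})$ as a $\overline{\caB}$-adic limit of nef model arithmetic divisors coming from toroidal compactifications, and then deduce uniqueness from a Tate-style rigidity argument. First I would construct the metric: on every complex fibre $A_{Z}$ of $\sB_{g,N}(\C)\to\sA_{g,N}(\C)$ the line bundle $\sL_{N}$ restricts to the symmetric rigidified ample line bundle attached to twice the principal polarization, which carries a unique hermitian metric whose curvature is the translation-invariant form representing that polarization and whose restriction to the zero section matches the rigidification. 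Letting $\Vert\cdot\Vert_{\mathrm{cub}}$ be the resulting fibrewise metric, one checks (e.g.\ from the explicit Gaussian weight in terms of the imaginary parts of $W$ and $Z$ recalled in \cite{Kr}) that it is smooth on $\sB_{g,N}(\C)$, has semipositive curvature, and that \eqref{eq:38} is an isometry for it, the quadratic weight being multiplied by $4$ under $W\mapsto 2W$. Equivalently $\Vert\cdot\Vert_{\mathrm{cub}}$ is the Tate limit $\lim_{n}\big([2]^{n,\ast}\Vert\cdot\Vert'\big)^{1/4^{n}}$ of the iterates under $[2]$ of any smooth metric $\Vert\cdot\Vert'$ on $\sL_{N}$, the limit converging uniformly on compact subsets of $\sB_{g,N}(\C)$ by the usual telescoping estimate, since the difference of two consecutive terms is $4^{-n}$ times the $[2]^{n}$-pullback of a fixed continuous function.

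Next I would establish that $\overline{\sL}_{N}=(\sL_{N},\Vert\cdot\Vert_{\mathrm{cub}})$ is adelic and nef. View $\sB_{g,N}$ as a dense open subset of a projective arithmetic variety $\caX$ over $\Spec(\Z)$ as before, with an arithmetic boundary divisor $\overline{\caB}$ supported on the fibres over primes dividing $N$ and the toroidal boundary. By Mumford's construction (\cite[Chapter~V]{FC}, cf.\ Theorem~\ref{thm:6}), each projective admissible cone decomposition $\Pi$ of $\widetilde{C}_{g}$ equipped with an admissible polarization function $\phi$ in the sense of Definition~\ref{def:27} --- whose condition~\ref{item:31} is precisely the theorem-of-the-cube compatibility for $\sL_{N}$ --- yields a line bundle $\sL_{N,\Pi,\phi}$ on $\overline{\sB}_{g,N,\Pi}$ extending $\sL_{N}$, relatively nef over $\overline{\sA}_{g,N,\Sigma}$, together with a Green function $g_{\Pi,\phi}$ of plurisubharmonic type determined by $\phi$; after adding a small multiple of $\overline{\caB}$ this is a nef model arithmetic divisor on $\sB_{g,N}$. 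Using the local charts of Remark~\ref{rem:loccor} one then checks that, along a cofinal system of refinements $\Pi'$, the piecewise-linear functions $\phi'$ approximate $\overline{\caB}$-adically the weight of $\Vert\cdot\Vert_{\mathrm{cub}}$ near the boundary, i.e.\ the Green functions $g_{\Pi',\phi'}$ form a Cauchy sequence converging to the Green function of $\Vert\cdot\Vert_{\mathrm{cub}}$. Hence $\overline{\sL}_{N}\in\bPichQ(\sB_{g,N})^{\adel}$, and being a $\overline{\caB}$-adic limit of nef model arithmetic divisors it is nef, and in particular integrable.

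Finally, for uniqueness, if $\overline{\sL}_{N}'$ is another adelic arithmetic line bundle with underlying bundle $\sL_{N}$ making \eqref{eq:38} an isometry, then on $\sB_{g,N}(\C)$ its metric differs from $\Vert\cdot\Vert_{\mathrm{cub}}$ by $e^{u}$ for a continuous function $u$ with $[2]^{\ast}u=4u$; on each compact fibre $A_{Z}$ the function $u$ is bounded and $[2]^{n}$ is surjective, so $\sup_{A_{Z}}\lvert u\rvert=4^{-n}\sup_{A_{Z}}\lvert u\rvert$ for all $n$, forcing $u\equiv 0$ on every fibre and hence on all of $\sB_{g,N}(\C)$. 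The identical argument at the finite places --- or the Tate-limit description of $\overline{\sL}_{N}$, which is independent of the initial model --- shows that the two adelic structures agree everywhere.

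The hard part will be the middle step: producing the nef model divisors $(\sL_{N,\Pi,\phi},g_{\Pi,\phi})$ from admissible polarization functions and verifying the $\overline{\caB}$-adic convergence of suitable refinements. This rests on the fine combinatorics of the toroidal compactifications of $\sB_{g,N}$ in \cite{FC} and on matching, via Remark~\ref{rem:loccor}, the piecewise-linear data $\phi$ with the exponential weight of the cubical metric near the boundary --- exactly the mechanism responsible for the worse-than-logarithmic singularities studied in this paper. The remaining verifications (smoothness and semipositivity of $\Vert\cdot\Vert_{\mathrm{cub}}$, the isometry property of \eqref{eq:38}, and the nefness bookkeeping via Definition~\ref{def:20}) are routine.
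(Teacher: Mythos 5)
The paper's proof of this lemma is a one-line citation of \cite[Theorem~6.1.3]{YuanZhang:adelic}, so what you are in effect doing is re-proving that theorem. Your first and third steps are the standard mechanism and are correct: the Tate limit $\lim_{n}([2]^{n,\ast}\Vert\cdot\Vert')^{1/4^{n}}$ with the telescoping estimate, the verification that \eqref{eq:38} becomes an isometry because the quadratic weight scales by $4$ under $W\mapsto 2W$, and the rigidity argument that a continuous $u$ with $[2]^{\ast}u=4u$ vanishes on each compact fibre. You are also right to note that uniqueness of the full adelic structure requires the model-independence of the limit (equivalently, the same rigidity at the finite places), not just the archimedean statement.

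The middle step, however, is where the entire content of the cited theorem sits, and as written it is a gap. Yuan and Zhang obtain the $\overline{\caB}$-adic Cauchy property by fixing a single projective model $\overline{\caL}_{0}$ of $\sL_{N}$, extending $[2]$ to morphisms of modifications, and showing that $\tfrac{1}{4^{n}}[2^{n}]^{\ast}\overline{\caL}_{0}$ is Cauchy for the $\overline{\caB}$-adic norm; the key point is an estimate of $[2]^{\ast}\overline{\caL}_{0}-4\,\overline{\caL}_{0}$ against $\overline{\caB}$ that survives iteration. You instead propose to manufacture the approximants from admissible polarization functions $\phi$ on cone decompositions $\Pi$ and to ``check'' that the piecewise-linear Green functions $g_{\Pi',\phi'}$ converge $\overline{\caB}$-adically to the weight of $\Vert\cdot\Vert_{\mathrm{cub}}$. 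That matching is not a check: in the coordinates of Remark~\ref{rem:loccor} the cubical weight is $4\pi\beta Y^{-1}\beta^{t}$, a quotient of quadratic forms in the $y_{j}$ which is piecewise linear on no $\Pi$ (this is precisely why the associated b-divisor is not a model b-divisor and why the singularities here are worse than logarithmic). The assertion that refinements $\Pi'$ bring the PL data within $\varepsilon g_{B}$ of this weight is essentially the approximability analysis of \cite{botero21:_chern_weil_hilber_samuel} and requires a proof. You correctly identify this as the hard part but do not supply it; to close the argument you should either carry out that estimate or replace the step by the $[2]^{n}$-pullback construction of \cite[Theorem~6.1.3]{YuanZhang:adelic}, which is what the paper relies on.
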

\begin{proof}
The claim follows immediately as an application of~\cite[Theorem~6.1.3]{YuanZhang:adelic} to the symmetric line bundle $\sL_{N}$ on the
universal abelian scheme $\pi_{N}\colon\sB_{g,N}\rightarrow\sA_{g,N}$.
\end{proof}

Lemmas~\ref{lem:omegaN} and~\ref{lem:ellN} now lead to the main result of this subsection.

\begin{proposition}
\label{prop:jacadel}
With the above notations, the line bundle $\sJ_{k,m,N}$ of Siegel--Jacobi forms of weight $k$, index $m$, and level $N$ has an extension to 
an adelic arithmetic line bundle $\overline{\sJ}_{k,m,N}$ on $\sB_{g,N}$, i.\,e., $\overline{\sJ}_{k,m,N}\in\bPichQ(\sB_{g,N})^{\adel}$.
\end{proposition}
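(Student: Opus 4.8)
The plan is to construct $\overline{\sJ}_{k,m,N}$ as a tensor product of pull-backs of the two adelic arithmetic line bundles already built, and to verify that the category $\bPichQ(\sB_{g,N})^{\adel}$ is closed under the operations needed. Recall from the definition of the line bundle of Siegel--Jacobi forms that
\[
\sJ_{k,m,N}=\pi_{N}^{\ast}\omega_{N}^{\otimes k}\otimes\sL_{N}^{\otimes m}.
\]
First I would invoke Lemma~\ref{lem:omegaN}, which provides the adelic arithmetic line bundle $\overline{\omega}_{N}=(\omega_{N},\Vert\cdot\Vert_{\mathrm{Fal}})\in\bPichQ(\sA_{g,N})^{\adel}$, and Lemma~\ref{lem:ellN}, which provides $\overline{\sL}_{N}=(\sL_{N},\Vert\cdot\Vert_{\mathrm{cub}})\in\bPichQ(\sB_{g,N})^{\adel}$. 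Then I would form the pull-back $\pi_{N}^{\ast}\overline{\omega}_{N}$ using the pull-back functor for adelic arithmetic line bundles along an arbitrary morphism, whose existence is recorded in Remark~\ref{rem:44}; here the morphism in question is the projection $\pi_{N}\colon\sB_{g,N}\rightarrow\sA_{g,N}$, which extends to a morphism of the chosen projective compactifications by Theorem~\ref{thm:6}(ii), so the pull-back is well-defined as an object of $\bPichQ(\sB_{g,N})^{\adel}$.

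The remaining step is to combine these. Since $\bPichQ(\caU)^{\adel}$ carries a tensor product induced by the tensor product of hermitian line bundles (as stated in Subsection~\ref{sec:adelic-line-bundles}), and since it is also closed under taking integral tensor powers, I would set
\[
\overline{\sJ}_{k,m,N}\coloneqq\bigl(\pi_{N}^{\ast}\overline{\omega}_{N}\bigr)^{\otimes k}\otimes\overline{\sL}_{N}^{\otimes m}.
\]
This is by construction an object of $\bPichQ(\sB_{g,N})^{\adel}$ whose underlying $\Q$-line bundle is $\pi_{N}^{\ast}\omega_{N}^{\otimes k}\otimes\sL_{N}^{\otimes m}=\sJ_{k,m,N}$, so it is an extension of $\sJ_{k,m,N}$ to an adelic arithmetic line bundle, which is exactly the assertion of the proposition. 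One should remark that $k$ and $m$ are allowed to be any integers (possibly negative), which is harmless because $\PichQ$ is a group: negative powers are inverses in the tensor structure.

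I do not expect a serious obstacle here; the statement is essentially a bookkeeping consequence of Lemmas~\ref{lem:omegaN} and~\ref{lem:ellN} together with the categorical structure of $\bPichQ(\sB_{g,N})^{\adel}$. The one point requiring a word of care is the compatibility of the compactifications: the adelic structures on $\overline{\omega}_{N}$ and $\overline{\sL}_{N}$ were defined using (possibly different) projective models $\caX'\supseteq\sA_{g,N}$ and $\caX\supseteq\sB_{g,N}$ over $\Spec(\Z)$, and one must check that the pull-back and tensor operations can be performed after passing, if necessary, to a common dominating model in $R(\caX,\sB_{g,N})$; this is routine since $R(\caX,\caU)$ is filtered, and the $\overline{\caB}$-adic topology is independent of the choice of compactification by Proposition~\ref{prop:4}. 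Hence no essential difficulty arises, and the proof is a short assembly of already-established facts.
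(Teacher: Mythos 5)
Your proposal is correct and follows essentially the same route as the paper: pull back $\overline{\omega}_{N}^{\otimes k}$ along $\pi_{N}$ using the pull-back functor of Remark~\ref{rem:44}, tensor with $\overline{\sL}_{N}^{\otimes m}$ from Lemma~\ref{lem:ellN}, and observe that the underlying $\Q$-line bundle is $\sJ_{k,m,N}$. Your additional remarks on negative powers and on reconciling the two compactifications are sensible bookkeeping that the paper leaves implicit.
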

\begin{proof}
As we have $\sJ_{k,m,N}=\pi_{N}^{\ast}\omega_{N}^{\otimes k}\otimes\sL_{N}^{\otimes m}$, the proof follows immediately by pulling back the
$k$-th power of the adelic arithmetic line bundle $\overline{\omega}_{N}^{\otimes k}=(\omega_{N}^{\otimes k},\Vert\cdot\Vert_{\mathrm{Fal}}^{k})$
of Lemma~\ref{lem:omegaN} from $\sA_{g,N}$ via $\pi_{N}$ to $\sB_{g,N}$ and then tensor this pull-back with the $m$-th power of the adelic 
arithmetic line bundle $\overline{\sL}_{N}^{\otimes m}=(\sL_{N}^{\otimes m},\Vert\cdot\Vert_{\mathrm{cub}}^{m})$ of Lemma~\ref{lem:ellN} to 
obtain $\overline{\sJ}_{k,m,N}=(\sJ_{k,m,N},\Vert\cdot\Vert_{\mathrm{Fal}}^{k}\cdot\Vert\cdot\Vert_{\mathrm{cub}}^{m})\in\bPichQ(\sB_{g,N})^
{\adel}$.
\end{proof}

\begin{remark}
\label{rem:natinv}
Proposition~\ref{prop:jacadel} shows that the complex line  bundle of Siegel--Jacobi forms $\sJ_{k,m,N}\otimes_{\Z[1/n,\zeta_{N}]}\C$ on $\sB_
{g,N}(\C)$ is equipped with the hermitian metric $\Vert\cdot\Vert_{\mathrm{Fal}}^{k}\cdot\Vert\cdot\Vert_{\mathrm{cub}}^{m}$. On the other, it is
well-known that it can also be equipped with the natural invariant metric $\Vert\cdot\Vert_{\mathrm{inv}}$, which is defined for a meromorphic
section $f$ by
\begin{displaymath}
\Vert f(Z,W)\Vert_{\mathrm{inv}}^{2}\coloneqq\vert f(Z,W)\vert^{2}\det(Y)^{k}e^{-4\pi m\beta Y^{-1}\beta^{t}},
\end{displaymath}
where $Z\in\H_{g}$, $Y=\im(Z)$ and $W\in\C^{(1,g)}$, $\beta=\im(W)$. We claim that
\begin{displaymath}
\Vert\cdot\Vert_{\mathrm{inv}}=\Vert\cdot\Vert_{\mathrm{Fal}}^{k}\cdot\Vert\cdot\Vert_{\mathrm{cub}}^{m}.
\end{displaymath}
In order to prove this claim, it suffices to consider the two cases $k=1,m=0$ and $k=0,m=1$. In the first case, the meromorphic section $f$
becomes a meromorphic modular form of weight $1$ for $\Gamma(N)$, which gives rise to the translation-invariant differential form $f(Z)\,\dd 
w_{1}\wedge\ldots\wedge\dd w_{g}$ on $\C^{(1,g)}$. Therefore, formula~\eqref{Falnorm} defining the Faltings metric shows
\begin{align*}
\Vert f(Z)\Vert_{\mathrm{Fal}}^{2}&=\frac{i^{g^{2}}}{2^{g}}\int_{A_{Z}}f(Z)\,\dd w_{1}\wedge\ldots\wedge\dd w_{g}\wedge\overline{f(Z)}\,\dd
\overline{w}_{1}\wedge\ldots\wedge\dd\overline{w}_{g} \\
&=\bigg(\frac{i}{2}\bigg)^{g}\int_{A_{Z}}\vert f(Z)\vert^{2}\,\dd w_{1}\wedge\dd\overline{w}_{1}\wedge\ldots\wedge\dd w_{g}\wedge\dd\overline
{w}_{g} \\[2mm]
&=\vert f(Z)\vert^{2}\,\det(Y)=\Vert f(Z)\Vert_{\mathrm{inv}}^{2}.
\end{align*}
In the second case, the meromorphic section $f$ becomes a meromorphic Siegel--Jacobi form of weight $0$ and index $1$ for $\Gamma(N)$, 
i.\,e., a meromorphic section of the line bundle $\sL_{N}$. Recalling that the isomorphism~\eqref{eq:38} becomes an isometry when $\sL_{N}$ 
is equipped with the hermitian metric $\Vert\cdot\Vert_{\mathrm{cub}}$, we now verify that this is also the case when $\sL_{N}$ is equipped with
the natural invariant metric $\Vert\cdot\Vert_{\mathrm{inv}}$. By the unicity of the hermitian metric $\Vert\cdot\Vert_{\mathrm{cub}}$, the second
claimed equality then also follows.

The section $[2]^{\ast}f$ corresponds to the function $f_{2}(Z,W)\coloneqq f(Z,2W)$, which is easily checked to be a meromorphic Siegel--Jacobi 
form of weight $0$ and index $4$ for $\Gamma(N)$, and thus corresponds to a meromorphic section of the line bundle $\sL_{N}^{\otimes 4}$. 
Therefore, the isomorphism~\eqref{eq:38} sends $[2]^{\ast}f$ to the section of $\sL_{N}^{\otimes 4}$ corresponding to the function $f_{2}(Z,W)$. 
Since $f(Z,W)^{4}$ also corresponds to a section of $\sL_{N}^{\otimes 4}$, there is a meromorphic function $h(Z,W)$ such that $f_{2}(Z,W)=h(Z,
W)\,f(Z,W)^{4}$. By construction, the function $h$ is $\widetilde{\Gamma}(N)$-invariant and thus descends to a meromorphic function on $\sB_
{g,N}(\C)$, again denoted by $h$. The isomorphism~\eqref{eq:38} now becomes an isometry if and only if we have
\begin{displaymath}
\Vert f_{2}(Z,W))\Vert_{\mathrm{inv}}=\vert h(Z,W)\vert\,\Vert f(Z,W)\Vert_{\mathrm{inv}}^{4}.
\end{displaymath}
Indeed, by the definition of the natural invariant metric, we compute
\begin{align*}
\Vert f_{2}(Z,W))\Vert_{\mathrm{inv}}&=\vert f(Z,2W)\vert e^{-2\pi(2\beta)Y^{-1}(2\beta)^{t}}=\vert h(Z,W)\vert\,\vert f(Z,W)^{4}\vert e^{-2\pi 4\beta 
Y^{-1}\beta^{t}} \\
&=\vert h(Z,W)\vert\big(\vert f(Z,W)\vert e^{-2\pi\beta Y^{-1}\beta^{t}}\big)^{4}=\vert h(Z,W)\vert\,\Vert (f(Z,W)\Vert_{\mathrm{inv}}^{4},
\end{align*}
which completes the proof of the second claimed equality.
\end{remark}

\begin{remark}
We note that the proof of Lemma~\ref{lem:omega'} (and thus also of Lemma~\ref{lem:omegaN}) and the proof of Lemma~\ref{lem:ellN} are of 
very different nature. The adelic arithmetic line bundle $\overline{\omega}=(\omega,\Vert\cdot\Vert_{\mathrm{Fal}})$ on $\sA'_{g}$ is constructed 
by means of the $\Q$-line bundle $\omega$ defined on the projective model $\sA_{g}^{\ast}$ with the logarithmically singular metric $\Vert\cdot
\Vert_{\mathrm{Fal}}$. However, as we will show in the next subsection, the adelic arithmetic line bundle $\overline{\omega}$ is not integrable
in the sense of~\cite{YuanZhang:adelic}, in general. Therefore, we cannot apply the techniques of that paper to define an arithmetic self-intersection
product for $\overline{\omega}$, although this can be done using the techniques of~\cite{BurgosKramerKuehn:accavb}. 

On the other hand, with regard to the adelic arithmetic line bundle $\overline{\sL}_{N}$, even the underlying geometric adelic line bundle is not a 
model line bundle, but it is a true adelic line bundle. Thus, the techniques of~\cite{BurgosKramerKuehn:accavb} cannot be applied in this case.  
Nevertheless, it is an integrable adelic arithmetic line bundle on $\sB_{g,N}$, therefore its arithmetic self-intersection product can be defined using 
the techniques developed in~\cite{YuanZhang:adelic}.

Thus, our objective in the remaining subsections is to use our extension of the formalism of~\cite{YuanZhang:adelic} developed in the previous 
sections to define the arithmetic self-intersection product for the adelic arithmetic line bundle $\overline{\sJ}_{k,m,N}$ of Siegel--Jacobi forms on 
$\sB_{g,N}$ and compute it.
\end{remark}

\subsection{Non-integrability of the Hodge line bundle}
\label{sec:non-integr-line}
In this subsection, we show that the adelic arithmetic line bundle $\overline{\omega}$ on $\sA'_{g}$ is not integrable, in general. To fix ideas and 
to make everything as explicit as possible, we consider the case $g=1$, and we will show that the adelic arithmetic $\Q$-line bundle $\overline
{\omega}$ on $\sA'_{1}$ is not integrable. To this end, we fix the affine coordinate $q$ on $\sA_{1}^{\ast}\cong\P^{1}$ that is given by $q=e^{2\pi 
iz}$ in a neighborhood of the cusp $\sA_{1}^{\ast}\setminus\sA'_{1}$, where $z$ is the standard coordinate on $\sA'_{1}$; in this coordinate, the 
cusp $\infty$ is described by $q=0$. Moreover, $q$ is also the affine coordinate of an integral model $\P^{1}_{\Z}$, and we know that $\omega^
{\otimes 12}\cong\caO(1)$ is a line bundle. Let now $s$ be a section of $\omega^{\otimes 12}$ over $\sA_{1,\Q}^{\ast}$ such that $\dv(s)=[\infty]$ 
on $\P^{1}_{\Q}$. By Remark~\ref{rem:natinv}, we have on $\P^{1}(\C)$ that 
\begin{displaymath}
-\frac{1}{12}\log\Vert s\Vert^{2}=-\log\Big(-\frac{1}{2\pi}\log\vert q\vert\Big)+\varphi,
\end{displaymath}
where $\varphi$ is bounded in a neighborhood of $q=0$. Put $g=(-1/12)\log\Vert s\Vert^{2}$. Then, $g$ is the Green function for the adelic 
arithmetic divisor $(1/12)\divh(s)$ corresponding to the adelic arithmetic $\Q$-line bundle $\overline{\omega}$. The function $g$ converges to 
$-\infty$, as $q$ tends to zero. Heuristically, the adelic arithmetic line bundle $\overline{\omega}$ cannot be integrable because the singularity
of the function $g$ at $q=0$ tells us that the height of the cusp $\infty$ ought to be $-\infty$, but this cannot be achieved by a difference of two
nef adelic arithmetic line bundles since the height of a point with respect to a nef adelic arithmetic line bundle is always finite and positive.  

To prove that the adelic arithmetic line bundle $\overline{\omega}$ is not integrable, we will use freely the theory of Berkovich spaces and, in 
particular, that theory applied to the projective line $\P^{1}$. More details about the relationship between adelic arithmetic divisors and Berkovich
spaces can be found in Section~3 of~\cite{YuanZhang:adelic}. For an extensive study of potential theory on the Berkovich projective line the
reader is referred to~\cite{bakerrumely}.

Consequently, we next discuss adelic arithmetic divisors on the projective line $\P^{1}$. For any divisor $D$ of $\P^{1}_{\Q}$, let $\overline{D}$ 
be the divisor of $\P^{1}_{\Z}$ obtained by Zariski closure. For every finite place $v$, the model $\overline{D}$ of $D$ defines a non-archimedean 
Green function $g_{D,v}^{\can}$ on the Berkovich projective line $\P^{1,\an}_{\C_{v}}$ that is given as follows. Let $\zeta_{v}$ be the Gauss point 
of $\P^{1,\an}_{\C_{v}}$ determined by the previous choice of coordinates. Then, the function $g_{D,v}^{\can}$ is characterized by being the unique 
Green function for $D$ of continuous type, which is harmonic on $\P^{1,\an}_{\C_{v}}\setminus\vert D\vert\cup\{\zeta_{v}\}$ and which vanishes
at the point $\zeta_{v}$.

To give a more concrete description, recall from~\cite{bakerrumely} that $\P^{1,\an}_{\C_{v}}$ has the structure of an $\R$-tree, where we can 
take the Gauss point $\zeta_{v}$ as the root and the leaves are the points of type I (that can be identified with $\P^{1}(\C_{v})$) and of type IV. 
For each point $x\in\P^{1,\an}_{\C_{v}}$, a tangent vector at the point $x$ is defined to be a connected component of $\P^{1,\an}_{\C_{v}}\setminus 
\{x\}$. Points of types I and IV have only one tangent vector (as they are terminal leaves), points of type III have two tangent vectors (as they are
non-branching inner points), and points of type II have more than two tangent vectors as they are branching points. If we write
\begin{displaymath}
D=\sum_{p\in\P^{1}(\overline{\Q})}a_{p}[p],
\end{displaymath}
then $g_{D,v}^{\can}$ is the unique piecewise linear function on $\P^{1,\an}_{\C_{v}}$ whose value at $\zeta_{v}$ is zero and whose slope at 
a tangent vector $t_{x}\in T_{x}$ for $x\in\P^{1,\an}_{\C_{v}}$ is equal to 
\begin{align*}
\begin{cases}
\phantom{-}\sum_{p\in t_{x}}a_{p},&\text{ if }\zeta_{v}\not\in t_{x}, \\
-\sum_{p\not\in t_{x}}a_{p}=-\deg(D)+\sum_{p\in t_{x}}a_{p},&\text{ if }\zeta_{v}\in t_{x}.
\end{cases}
\end{align*}

Let $\overline{\caB}=(\caB,g_{B})$ be a boundary divisor on $\P^{1}_{\Z}$. Then, $B=\caB_{\Q}$ is an effective divisor on $\P^{1}_{\Q}$. Let
$D$ be an $\R$-divisor on $\P^{1}_{\Q}$ and let $\overline{\caD}=(\caD,g_{D})$ be an adelic arithmetic divisor on $\P^{1}_{\Z}$ with $D=
\caD_{\Q}$. For every finite place $v$ of $\Q$, let $g_{D,v}$ be the non-archimedean Green function for $D$ on the Berkovich projective
line $\P^{1,\an}_{\C_{v}}$ corresponding to the adelic arithmetic divisor $\overline{\caD}$; we write $g_{D,\infty}=g_{D}$ for the (archimedean)
Green function for $D$. Let $(\overline{\caD}_{n})_{n\in\N}$ be a Cauchy sequence of model arithmetic divisors $(\caD_{n},g_{D_{n},\infty})$
converging in the $\overline{\caB}$-adic topology to $\overline{\caD}$. After restricting to a suitable subsequence, we can assume that the 
inequalities
\begin{equation}
\label{eq:6}
-\overline{\caB}\le\overline{\caD}_{n}-\overline{\caD}_{0}\le\overline{\caB}
\end{equation}
hold for all $n\in\N$. Since $\P^{1}_{\Q}$ is $1$-dimensional, all regular birational models of it are isomorphic to itself. Thus, each divisor
$D_{n}=\caD_{n,\Q}$ is a divisor on $\P^{1}_{\Q}$, and the sequence $(D_{n})_{n\in\N}$ converges to $D$ in the $B$-adic topology.

Finally, let  $g'_{D,\infty}$ be a Green function of continuous type for $D$ on $\P^{1}(\C)$. Then, choose for each $n\in\N$, a Green function 
$g'_{D_{n},\infty}$ of continuous type for $D_{n}$ on $\P^{1}(\C)$ so that the sequence $(\caD_{n},g'_{D_{n},\infty})_{n\in\N}$ converges in 
the $\overline{\caB}$-adic topology to $(\overline{\caD},g'_{D,\infty})$.

\begin{proposition}
\label{prop:15} 
With the above notations, assume that the divisor $D$ is (geometrically) nef, that each divisor $\caD_{n}$ is relatively nef, and that $g_{D_
{n},\infty}$ is a Green function of continuous and subharmonic type for $D_{n}$. Then, the following statements hold:
\begin{enumerate}
\item
\label{item:15}
The function $g_{D,\infty}-g'_{D,\infty}$ is bounded from above.
\item
\label{item:38}
For every finite place $v$ of $\Q$, the function $g_{D,v}-g^{\can}_{D,v}$ is bounded from above. In fact, the sequence $(g_{D_{n},v}-g^{\can}_
{D_{n},v})_{n\in\N}$  is uniformly bounded from above.
\item
\label{item:39} 
There exists a finite set $S$ of finite places of $\Q$ such that for each finite place $v\not\in S$, the inequality $g_{D,v}-g^{\can}_{D,v}\le 0$ 
holds. In fact, for each $n\in\N$, the inequality $g_{D_{n},v}-g^{\can}_{D_{n},v}\le 0$ holds. 
\end{enumerate}
\end{proposition}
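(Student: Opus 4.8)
The plan is to establish the three assertions more or less independently, organising everything around the single observation that, since $\P^{1}_{\Q}$ is a curve, each $D_{n}=\caD_{n,\Q}$ is again a divisor on $\P^{1}_{\Q}$ and, by the Cauchy condition together with $-\overline{\caB}\le\overline{\caD}_{n}-\overline{\caD}_{0}\le\overline{\caB}$, one has $D_{n}=D+c_{n}[\infty]$ with $c_{n}\to 0$; in particular $D_{n}$ and $D$ agree on $U$, and the $\overline{\caB}$-adic convergence yields $|g_{D_{n},\infty}-g_{D,\infty}|\le\varepsilon_{n}g_{B}$ with $\varepsilon_{n}\to 0$, and likewise $|g_{D_{n},v}-g_{D,v}|\le\varepsilon_{n}g_{B,v}$ at each finite place $v$.

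For (i) the key step is to check that $g_{D,\infty}$ is itself a Green function for $D$ of plurisubharmonic type. Since $g_{B}\in L^{1}_{\loc}$, the estimate above gives $g_{D_{n},\infty}\to g_{D,\infty}$ in $L^{1}_{\loc}(\P^{1}(\C))$, hence $\ddc g_{D_{n},\infty}\to\ddc g_{D,\infty}$ as currents; combined with $\delta_{D_{n}}\to\delta_{D}$ and the hypothesis $\omega_{D_{n}}(g_{D_{n},\infty})=\ddc g_{D_{n},\infty}+\delta_{D_{n}}\ge 0$, this forces $\omega_{D}(g_{D,\infty})\ge 0$, that is, $g_{D,\infty}$ is of plurisubharmonic type (Definition~\ref{def:6}). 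Granting this, fix a point $p$ with local equation $f_{p}$ for its component of $D$ of coefficient $a_{p}$; then $\ddc(g_{D,\infty}+a_{p}\log|f_{p}|^{2})\ge 0$ near $p$, so by Proposition~\ref{prop:1} this function has a plurisubharmonic representative and is therefore locally bounded above, whereas $g'_{D,\infty}+a_{p}\log|f_{p}|^{2}$ is continuous hence locally bounded below. Their difference is $g_{D,\infty}-g'_{D,\infty}$, which is thus locally bounded above, and bounded above on the compact space $\P^{1}(\C)$.

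For (ii) and (iii), fix a finite place $v$ and put $h_{n}=g_{D_{n},v}-g_{D_{n},v}^{\can}$ on $\P^{1,\an}_{\C_{v}}$. As both terms are Green functions of continuous type for the same divisor $D_{n}$, the function $h_{n}$ is continuous on $\P^{1,\an}_{\C_{v}}$, hence bounded. Its Laplacian is $\mu_{n}-\deg(D_{n})\,\delta_{\zeta_{v}}$, where $\mu_{n}=\ddc g_{D_{n},v}+\delta_{D_{n}}$ is the curvature measure of the model metric attached to $\caD_{n}$ — positive because $\caD_{n}$ is relatively nef — and $\deg(D_{n})\delta_{\zeta_{v}}$ is the canonical measure of the Zariski closure $\overline{D_{n}}$ on $\P^{1}_{\Z}$ (here $\deg(D_{n})\ge 0$ since $D$, hence each $D_{n}$, is nef). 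Thus $h_{n}$ is subharmonic on $\P^{1,\an}_{\C_{v}}\setminus\{\zeta_{v}\}$, so the maximum principle on the Berkovich line (see~\cite{bakerrumely}) gives $\max h_{n}=h_{n}(\zeta_{v})=g_{D_{n},v}(\zeta_{v})$, using $g_{D_{n},v}^{\can}(\zeta_{v})=0$. The place-$v$ inequality extracted from $-\overline{\caB}\le\overline{\caD}_{n}-\overline{\caD}_{0}\le\overline{\caB}$ gives $g_{D_{n},v}\le g_{D_{0},v}+g_{B,v}$, whence at $\zeta_{v}$ one gets $g_{D_{n},v}(\zeta_{v})\le g_{D_{0},v}(\zeta_{v})+g_{B,v}(\zeta_{v})=:M_{v}$, a finite constant independent of $n$ because $\zeta_{v}$ lies in neither $|D_{0}|$ nor $|B|$; this is the uniform bound claimed in (ii), and letting $n\to\infty$ (with $g_{D_{n},v}\to g_{D,v}$ and $g_{D_{n},v}^{\can}\to g_{D,v}^{\can}$) yields $g_{D,v}-g_{D,v}^{\can}\le M_{v}$. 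For (iii), take $S$ to be the finite set of finite places at which $\overline{\caD}_{0}$ or $\overline{\caB}$ differs from its canonical part; for $v\notin S$ one has $g_{D_{0},v}(\zeta_{v})=g_{B,v}(\zeta_{v})=0$, hence $M_{v}=0$ and $h_{n}\le 0$ for every $n$, and passing to the limit gives $g_{D,v}-g_{D,v}^{\can}\le 0$.

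I expect the real work to be in (ii)/(iii): one must be careful that $h_{n}$ is genuinely continuous and bounded on all of $\P^{1,\an}_{\C_{v}}$ (the singularities along $|D_{n}|$ do cancel, but this has to be said), identify the Laplacian of $g_{D_{n},v}^{\can}$ with the point mass $\deg(D_{n})\delta_{\zeta_{v}}$ from its harmonicity characterisation (a mass computation using that $\ddc$ of a continuous-type Green function integrates to $-\deg$), and invoke the correct form of the maximum principle — a bounded subharmonic function on the punctured Berkovich line $\P^{1,\an}_{\C_{v}}\setminus\{\zeta_{v}\}$ attains its supremum as the boundary value at $\zeta_{v}$. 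By contrast, (i) is soft once one notes that plurisubharmonicity relative to the fixed divisor $D$ is preserved under $L^{1}_{\loc}$-limits, which here is immediate from local integrability of $g_{B}$.
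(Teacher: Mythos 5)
Your proof is correct, and for parts~\ref{item:38} and~\ref{item:39} it runs on the same engine as the paper's: continuity of $h_{n}=g_{D_{n},v}-g^{\can}_{D_{n},v}$ across $\vert D_{n}\vert$ (the $\delta_{D_{n}}$ parts of the two curvatures cancel), subharmonicity of $h_{n}$ on $\P^{1,\an}_{\C_{v}}\setminus\{\zeta_{v}\}$ coming from relative nefness of $\caD_{n}$, the Berkovich maximum principle with boundary $\{\zeta_{v}\}$, and the control of the value at the Gauss point via the $\overline{\caB}$-adic inequality, which vanishes outside the finite set of places where the model, $\caD_{0}$, or $\caB$ acquire vertical components. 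In fact you supply more detail for~\ref{item:38} than the paper, which only declares it ``analogous'' to~\ref{item:15}; your unified treatment of~\ref{item:38} and~\ref{item:39} through the single quantity $g_{D_{n},v}(\zeta_{v})$ is clean (note that the parenthetical $\deg(D_{n})\ge 0$ is never needed: the mass at $\zeta_{v}$ is irrelevant for subharmonicity away from $\zeta_{v}$). For~\ref{item:15} you take a genuinely different, though equivalent, route: the paper applies the archimedean maximum principle to the approximants $g_{D_{n},\infty}+a_{n}\log\vert f\vert$ on small disks around points of $\vert B\vert$ and passes the resulting uniform bound to the limit, whereas you first pass to the limit in $L^{1}_{\loc}$ (using local integrability of $g_{B}$) to conclude that $g_{D,\infty}$ is of plurisubharmonic type and then invoke local upper-boundedness of plurisubharmonic functions. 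Both work; yours trades the uniform estimate on the approximants for a one-line weak-convergence argument.

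Two small points. First, $D_{n}=D+c_{n}[\infty]$ is correct only when $\vert B\vert=\{\infty\}$; in general $D_{n}-D$ is an $\R$-divisor supported on $\vert B\vert$, which is all your argument uses. Second, in~\ref{item:15} Proposition~\ref{prop:1}~\ref{item:2} only produces a plurisubharmonic function agreeing with $g_{D,\infty}+a_{p}\log\vert f_{p}\vert^{2}$ almost everywhere, so the upper bound you obtain is a priori only almost everywhere; since $g_{D,\infty}$ is continuous on $U$ (being a uniform limit on compacta of continuous functions), the bound does propagate to all of $\Delta\setminus\{p\}$ by density, but this step deserves a sentence.
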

\begin{proof}
\ref{item:15} The sequence $(g_{D_{n},\infty}-g'_{D_{n},\infty})_{n\in\N}$ is a sequence of continuous functions that converge uniformly to
$g_{D,\infty}-g'_{D,\infty}$ on compacta of $\P^{1}(\C)\setminus\vert B\vert$. Hence, the function $g_{D,\infty}-g'_{D,\infty}$ is continuous on 
$\P^{1}(\C)\setminus\vert B\vert$. Thus, we are left to prove that the function $g_{D,\infty}-g'_{D,\infty}$ is also bounded from above in a 
neighborhood of any point of $\vert B\vert$. So, let $x\in\vert B\vert$, let $a$ be the order of $D$ at $x$, and let $f$ be a local equation for 
the point $x$. Since $g'_{D,\infty}$ is a Green function of continuous type for $D$, the function $g'_{D,\infty}+a\log\vert f\vert$ is continuous 
in a neighborhood of $x$. Therefore, we need to prove that $g_{D,\infty}+a\log\vert f\vert$ is bounded from above in a neighborhood of $x$. 
Let $a_{n}$ denote the order of $D_{n}$ at $x$. The functions $g_{D_{n},\infty}+a_{n}\log\vert f\vert$ are continuous and subharmonic in a 
neighborhood of $x$. Let $\Delta $ be a small disk centered in $x$. Then, the functions $g_{D_{n},\infty}+a_{n}\log\vert f\vert$ converge 
uniformly on $\partial\Delta$ to the function $g_{D,\infty}+a\log\vert f\vert$. Therefore, the sequence $g_{D_{n},\infty}+a_{n}\log\vert f\vert$ 
is uniformly bounded from above on $\partial\Delta$. The maximum principle for subharmonic functions now implies that the sequence $g_
{D_{n},\infty}+a_{n}\log\vert f\vert$ has also to be uniformly bounded from above in $\Delta$, which proves that $g_{D,\infty}+a\log\vert f\vert$ 
is bounded from above in $\Delta$. This completes the proof of the first part of the proposition.

\ref{item:38} This part is proven in an way that is analogue to the preceeding part using the non-archimedean maximum principle (see~\cite
[Proposition 8.14]{bakerrumely}).

\ref{item:39} Let $\caX$ be a model over $\Spec(\Z)$ of $\P^{1}_{\Q}$, on which $\mathcal{D}_{0}$ is defined. Then, there is an integer $N
>0$ such that the isomorphism $\caX_{\Q}\cong\P^{1}_{\Q}$ extends to an isomorphism $\caX_{\Spec(\Z[1/N])}\cong\P^{1}_{\Spec(\Z[1/N])}$ 
over $\Spec(\Z[1/N])$. Let $S_{1}$ be the set of finite places of $\Q$ dividing $N$, let $S_{2}$ be the set of finite places of $\Q$ over which 
$\caD_{0}$ has a vertical component, and let $S_{3}$ be the set of finite places of $\Q$ over which $\caB$ has vertical components; put $S=
S_{1}\cup S_{2}\cup S_{3}$. Let $v\not\in S$ be a finite place of $\Q$ and let $F$ be the fiber of $\caX$ over $v$, which is an irreducible 
Cartier divisor since $v\not\in S_{1}$. Since $v\not\in S_{2}$, the order of $\caD_{0}$ on $F$ is zero. Since $v\not\in S_{3}$, the order of  
$\caB$ on $F$ is also zero. Then, condition~\eqref{eq:6} implies that the order of $\caD_{n}$ on $F$ has to be zero for all $n\in\N$, from
which we derive that $g_{D_{n},v}(\zeta_{v})=0$, because the Gauss point $\zeta_{v}$ of $\P^{1,\an}_{\C_{\nu}}$ is the divisorial point 
corresponding to the divisor $F$. Hence, we obtain $g_{D_{n},v}(\zeta _{v})=g^{\can}_{D_{n},v}(\zeta_{v})$. Since both functions are 
non-archimedean Green functions for the divisor $D_{n}$, they have the same slope at the unique tangent vector of any point in $\vert 
D_{n}\vert$. Since $g^{\can}_{D_{n},v}(\zeta_{v})$ is harmonic on $\P^{1,\an}_{\C_{v}}\setminus\vert D\vert\cup\{\zeta_{v}\}$, while $g_
{D_{n},v}(\zeta_{v})$ is subharmonic in the same dense open subset, we claim that
\begin{equation}
\label{eq:20}
g_{D_{n},v}\le g^{\can}_{D_{n},v}.
\end{equation}
Indeed, let $f=g_{D_{n},v}-g^{\can}_{D_{n},v}$. It is subharmonic on $\P^{1,\an}_{\C_{v}}\setminus\vert D\vert\cup\{\zeta_{v}\}$. Since
$g_{D_{n},v}$ and $g^{\can}_{D_{n},v}$ have the same slope at tangent vectors of points of $\vert D\vert$, the function $f$ extends to 
a continuous function on the whole of $\P^{1,\an}_{\C_{v}}$ that is constant on a neighborhood of  each point of $\vert D\vert$. Therefore,
$f$ is subharmonic on $\P^{1,\an}_{\C_{v}}\setminus\{\zeta_{v}\}$. By the maximum principle of subharmonic functions, the maximum of 
$f$ is attained at the boundary of  $\P^{1,\an}_{\C_{v}}\setminus\{\zeta_{v}\}$. That is, at the point $\zeta_{v}$. Hence, we find $f(x)\le 
f(\zeta_{v})=0$, and thereby obtain the claimed inequality~\eqref{eq:20}. Since the sequence $(g_{D_{n},v})_{n\in\N}$ converges to 
$g_{D,v}$ and the sequence $(g^{\can}_{D_{n},v})_{n\in\N}$ converges to $g^{\can}_{D,v}$, we deduce $g_{D,v}\le g^{\can}_{D,v}$, 
as to be proven. This completes the proof of the proposition.
\end{proof}
We also need the following lemma. 
\begin{lemma}
\label{lemm:15}
Let $\Delta\subset\C$ be a closed disk centered at the origin and let $(f_{n})_{n\in\N}$ be a sequence of continuous, subharmonic functions 
on $\Delta$ that converge uniformly on compacta of $\Delta\setminus\{0\}$ to a continuous function $f$. Assume that there is an increasing 
function $\varphi\colon\R_{>0}\rightarrow\R$ with $\lim_{t\to 0}\varphi(t)=-\infty$ such that $f(z)\le\varphi(\vert z\vert)$ for all $z\in\Delta
\setminus\{0\}$. Then, we have
\begin{displaymath}
\liminf_{n\to\infty}f_{n}(0)=-\infty.
\end{displaymath}
\end{lemma}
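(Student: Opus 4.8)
The statement is a purely local, one-variable assertion about subharmonic functions on a disk, so the plan is to reduce it to the sub-mean-value inequality on circles. First I would normalize: since the conclusion only concerns $\liminf_{n\to\infty} f_n(0)$, and we may shrink $\Delta$ to the closed disk $\ol\Delta_\rho$ of some radius $\rho>0$ on which all hypotheses still hold, I would work on a fixed such disk. The key observation is that each $f_n$ is subharmonic on the open disk containing $\ol\Delta_\rho$, hence satisfies, for every $0<r\le\rho$,
\begin{displaymath}
f_n(0)\le\frac{1}{2\pi}\int_0^{2\pi}f_n(re^{it})\,\dd t.
\end{displaymath}
Now I would fix an arbitrary large constant $M>0$ and, using $\lim_{t\to 0}\varphi(t)=-\infty$, choose $r=r(M)\in(0,\rho]$ small enough that $\varphi(r)\le -2M$, so that $f(z)\le -2M$ for all $z$ with $\vert z\vert=r$.

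\textbf{Key steps.} The circle $\{\vert z\vert=r\}$ is a compact subset of $\Delta\setminus\{0\}$, so by hypothesis the sequence $(f_n)_{n\in\N}$ converges to $f$ uniformly there; hence there is $n_0$ such that for all $n\ge n_0$ we have $f_n(re^{it})\le f(re^{it})+M\le -2M+M=-M$ for every $t$. Plugging this into the sub-mean-value inequality above gives, for all $n\ge n_0$,
\begin{displaymath}
f_n(0)\le\frac{1}{2\pi}\int_0^{2\pi}f_n(re^{it})\,\dd t\le -M.
\end{displaymath}
Since $M$ was arbitrary, this shows $\liminf_{n\to\infty}f_n(0)\le -M$ for every $M$, i.e. $\liminf_{n\to\infty}f_n(0)=-\infty$, which is the claim. (If one prefers $\limsup$: the same argument applied after passing to any subsequence shows the stronger statement that $f_n(0)\to-\infty$ along the full sequence is false in general — but the stated $\liminf$ conclusion is exactly what comes out.)

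\textbf{Main obstacle.} There is essentially no deep obstacle here; the only point requiring a little care is the interface between the two hypotheses — the subharmonicity (a statement on the interior, giving the mean-value inequality at the center over \emph{every} small circle) and the uniform convergence (valid only on compacta \emph{away} from the origin). The argument works precisely because a circle of fixed positive radius is such a compactum, so the two hypotheses can be combined at that circle. A minor technical check is that each $f_n$ is subharmonic on an open neighborhood of $\ol\Delta_\rho$ and continuous up to the boundary, so the mean-value inequality over the circle $\vert z\vert=r\le\rho$ is legitimate; this follows from the standard definition of subharmonicity (the one-variable analogue of Definition~\ref{def:5}), together with upper semicontinuity guaranteeing integrability of $f_n$ on the circle. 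No use of the maximum principle or of any global compactness is needed — just the local sub-mean-value inequality.
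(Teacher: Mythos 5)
Your proof is correct and follows essentially the same route as the paper's: both fix a circle of small radius on which $f$ is very negative, transfer this bound to $f_{n}$ via uniform convergence on that compact circle, and then control $f_{n}(0)$ by the values on the circle using subharmonicity (you invoke the sub-mean-value inequality directly, while the paper argues by contradiction via the maximum principle --- an immaterial difference). The only blemish is your closing parenthetical: your argument actually yields the stronger conclusion $\lim_{n\to\infty}f_{n}(0)=-\infty$, so the remark suggesting that this stronger statement is ``false in general'' is mistaken and should be deleted.
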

\begin{proof}  
Assume that $\liminf_{n\to\infty}f_{n}(0)>-\infty$. Then, there is a constant $K$ such that $f_{n}(0)\ge K$ for all $n\in\N$. On the other hand,
there is an $\varepsilon>0$ such that $f(z)\le\varphi(\varepsilon)<K$ for all $z$ with $\vert z\vert=\varepsilon$. Let $S_{\varepsilon }$ be 
the circle of radius $\varepsilon$. Since $f_{n}$ converges uniformly to $f$ on $S_{\varepsilon}$, there exists $n_{0}\in\N$ such that $f_
{n_{0}}(z)<K$ for all $z\in S_{\varepsilon}$. Since $f_{n_{0}}$ is subharmonic, we conclude by the maximum principle that $f_{n_{0}}(0)<K$,
thereby obtaining a contradiction. Hence, we find that $\liminf_{n\to\infty}f_{n}(0)=-\infty$.
\end{proof}

\begin{theorem}
\label{thm:12} 
The adelic arithmetic line bundle $\overline{\omega}$ is not integrable on any open subset $\caU$ of $\mathscr{A}'_{1}$.
\end{theorem}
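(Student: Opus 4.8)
The plan is to argue by contradiction, exploiting the fact that near the cusp $q=0$ the archimedean Green function of $\overline{\omega}$ is driven to $-\infty$, but only at a sub-logarithmic rate, which is incompatible with being expressed through nef model data. Since $\omega^{\otimes 12}\cong\caO(1)$ on $\sA_{1}^{\ast}\cong\P^{1}_{\Z}$, it is enough to show that $\overline{\omega}^{\otimes 12}$ is not integrable on $\caU$. Suppose it were. Every non-empty open $\caU\subseteq\sA'_{1}$ contains a punctured disk $\Delta^{\ast}=\{0<|q|<\varepsilon\}$ around the cusp, and after enlarging the boundary divisor (harmless by Remark~\ref{rem:10}) we may assume $\overline{\caB}$ is nef. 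Choosing a rational section $s$ of $\omega^{\otimes12}$ whose divisor avoids the cusp, write the adelic arithmetic divisor of $\overline{\omega}^{\otimes 12}|_{\caU}$ as a difference $\overline{\caD}_{1}-\overline{\caD}_{2}$ of two nef adelic arithmetic divisors on $\caU$. By Remark~\ref{rem:natinv} (the case $g=1$, $k=1$, $m=0$) the archimedean Green function of $\overline{\omega}^{\otimes 12}$ is, in the coordinate $q$, of the form $g_{\infty}=-12\log\!\bigl(-\tfrac{1}{2\pi}\log|q|\bigr)+O(1)$ near $q=0$; in particular $g_{\infty}(q)\to-\infty$ as $q\to0$ with $g_{\infty}(q)\le\varphi_{0}(|q|)$ for an increasing $\varphi_{0}$ with $\lim_{t\to0}\varphi_{0}(t)=-\infty$, and $g_{\infty}$ is subharmonic on $\Delta^{\ast}$.

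First I would apply Proposition~\ref{prop:15} together with the monotone model approximation of Lemma~\ref{lemm:5} to both $\overline{\caD}_{1}$ and $\overline{\caD}_{2}$, obtaining nef model approximants $\overline{\caD}_{i,n}=(\caD_{i,n},g_{i,n})$ of continuous and subharmonic type. Part~\ref{item:38} and part~\ref{item:39} of Proposition~\ref{prop:15} give that the finite-place Green functions of the $\overline{\caD}_{i,n}$ are uniformly bounded above by the canonical ones, and equal to them outside a finite set $S$ of primes; part~\ref{item:15} gives that the limiting archimedean Green functions $g_{\caD_{1},\infty}$ and $g_{\caD_{2},\infty}$ are each bounded above by a Green function of continuous type, hence subharmonic and bounded above on $\Delta^{\ast}$. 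Since $g_{\infty}=g_{\caD_{1},\infty}-g_{\caD_{2},\infty}$ on $\Delta^{\ast}$ and $g_{\caD_{2},\infty}$ is bounded above there, one deduces $g_{\caD_{1},\infty}(q)\le\varphi_{0}(|q|)+O(1)\to-\infty$: the nef divisor $\overline{\caD}_{1}$ already carries a Green function collapsing to $-\infty$ at the cusp, at the sub-logarithmic rate $\varphi_{0}$. Comparing orders at $q=0$ on both sides of $\overline{\caD}_{1}-\overline{\caD}_{2}=\overline{\omega}^{\otimes12}|_{\caU}$ and using that the approximants are subharmonic forces the common order $a$ of $\overline{\caD}_{1}$ (and $\overline{\caD}_{2}$) at the cusp to be $\ge 0$.

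Next I would feed this into Lemma~\ref{lemm:15}. Let $a_{n}$ be the order at $q=0$ of the generic fiber of $\caD_{1,n}$, so $a_{n}\to a\ge0$, and set $f_{n}\coloneqq g_{1,n}+a_{n}\log|q|^{2}$; these are continuous and subharmonic on a disk $\Delta$ around $q=0$ and, by the adelic convergence (Proposition~\ref{prop:3}) together with $a_{n}\to a$, converge uniformly on compacta of $\Delta^{\ast}$ to $f\coloneqq g_{\caD_{1},\infty}+a\log|q|^{2}$, which by the previous paragraph is bounded above by an increasing function of $|q|$ tending to $-\infty$. Lemma~\ref{lemm:15} then yields $\liminf_{n\to\infty}f_{n}(0)=-\infty$, i.e.\ the regular part at the cusp of the archimedean Green function of $\overline{\caD}_{1,n}$ tends to $-\infty$. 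On the other hand $\overline{\caD}_{1,n}$ is a \emph{nef} model arithmetic divisor, so by Definition~\ref{def:20} the height of the cusp — or, after perturbing to rational points $q=c_{k}\to0$ avoiding the supports, of those points — with respect to $\overline{\caD}_{1,n}$ is $\ge 0$; combined with the uniform upper bounds for the finite-place contributions from Proposition~\ref{prop:15}~\ref{item:38},~\ref{item:39} and with $a_{n}\to a$, this bounds $f_{n}(0)$ below independently of $n$, a contradiction. Hence $\overline{\omega}$ is not integrable on $\caU$, and as $\caU$ was an arbitrary open subset of $\sA'_{1}$, the theorem follows.

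I expect the main obstacle to be exactly the non-archimedean bookkeeping in the last step: converting ``$\liminf_{n}f_{n}(0)=-\infty$'' into a contradiction requires a lower bound, uniform in $n$, for the sum of the finite-place local heights at a point approaching the cusp with respect to the nef model approximants, which is what Proposition~\ref{prop:15} is designed to provide but which has to be combined carefully with the control of the horizontal component of $\caD_{1,n}$ over the cusp, with the choice of auxiliary rational points $q=c_{k}$ when the cusp lies in $|\caD_{1,n}|$, and with the passage between the $\Q$-line bundle $\overline{\omega}$ and its twelfth power $\caO(1)$. A secondary point to verify is that the monotone approximants of Lemma~\ref{lemm:5} may indeed be chosen of continuous-and-subharmonic type, so that Lemma~\ref{lemm:15} is applicable.
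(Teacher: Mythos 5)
Your proposal is correct and follows essentially the same route as the paper: decompose the (assumed integrable) divisor as a difference of nef adelic divisors, observe that the archimedean Green function of one nef summand must collapse to $-\infty$ at the cusp at a sub-logarithmic rate, invoke Lemma~\ref{lemm:15} to force $\liminf_n$ of the model Green functions at the cusp to be $-\infty$, and contradict the nonnegativity of the height of the cusp for nef model divisors using the finite-place bounds of Proposition~\ref{prop:15}. The paper streamlines the bookkeeping you worry about at the end by adding a principal arithmetic divisor and enlarging the boundary so that the cusp lies in the support of none of the $D_n$, which lets it evaluate the height directly at $q=0$ without auxiliary points $c_k$ or the order-tracking $a_n$.
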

\begin{proof}
Assume that $\overline{\omega}$ is integrable. Then, we have that
\begin{displaymath}
\frac{1}{12}\divh(s)=\overline{\caD}-\overline{\caE},
\end{displaymath}
where $\overline{\caD}=(\caD,g_{D,\infty})$ and $\overline{\caE}=(\caE,g_{E,\infty})$ are nef adelic arithmetic divisors on $\caU$ with 
underlying geometric divisors $D$ and $E$, respectively. After adding a divisor of the form $\divh(f)$ to both $\overline{\caD}$ and 
$\overline{\caE}$, we can assume that the cusp $\infty$ (given by $q=0$) is not in the support of $D$ nor $E$. By Proposition~\ref
{prop:15}~\ref{item:15}, the Green function $g_{E,\infty}$ is bounded from above in a neighborhood of the cusp $\infty$. By the nature 
of the singularities of the metric of $\overline{\omega}$, we deduce from this that there is a constant $K$ such that, in a neighborhood 
of the cusp $\infty$, the estimate 
\begin{equation}
\label{eq:11}
g_{D,\infty}\le-\log\Big(-\frac{1}{2\pi}\log\vert q\vert\Big)+K
\end{equation}
holds. Let $(\overline{\caD}_{n})_{n\in\N}$ be a Cauchy sequence of nef model arithmetic divisors converging to $\overline{\caD}$. Since 
the cusp $\infty$ is contained in the boundary divisor, it may happen that the cusp $\infty$ is in the support of some of the divisors $D_{n}$. 
After enlarging the boundary divisor $\overline{\caB}$, if needed, and adding a rational divisor, we can also assume that the cusp $\infty$
is not in the support of any of the $D_{n}$'s. We next apply the construction preceding Proposition~\ref{prop:15} to $\overline{\caD}$ and 
the sequence $(\overline{\caD}_{n})_{n\in\N}$; furthermore, we put 
\begin{displaymath}
\overline{\caD}'=(\overline{\caD},g'_{D,\infty})\qquad\text{and}\qquad\overline{\caD}'_{n}=(\overline{\caD}_{n},g'_{D_{n},\infty}).
\end{displaymath}
In this way, we obtain (recall that the cusp $\infty$ is given by $q=0$)
\begin{align*}
0&\le\height_{\overline{\caD}_{n}}(0) \\
&=\height_{\overline{\caD}'_{n}}(0)+g_{D_{n},\infty}(0)-g'_{D_{n},\infty}(0)+\sum_{v\in S}\big(g_{D_{n},v}(0)-g^{\can}_{D_{n},v}(0)\big)+
\sum_{v\not\in S}\big(g_{D_{n},v}(0)-g^{\can}_{D_{n},v}(0)\big),
\end{align*}
where $S$ is the finite set of finite places of $\Q$ introduced in Proposition~\ref{prop:15}. However, we will show below that the preceding 
inequality is violated since the lower limit of the right-hand side as $n$ tends to infinity is $-\infty$. Indeed, first, the sequence $(\height_
{\overline{\caD}'_{n}}(0))_{n\in\N}$ is bounded since it converges to $\height_{\overline{\caD}'}(0)$. Second, the sequence $(g'_{D_{n},
\infty}(0))_{n\in\N}$ is also bounded since it converges to $g'_{D,\infty}(0)$. Third, the sum $\sum_{v\in S}(g_{D_{n},v}(0)-g^{\can}_{D_{n},
v}(0))$ is bounded from above since it is a finite sum of terms uniformly bounded from above by Proposition~\ref{prop:15}~\ref{item:38}. 
Fourth, the sum $\sum_{v\not\in S}(g_{D_{n},v}(0)-g^{\can}_{D_{n},v}(0))$ is bounded from above since all the summands are negative by 
Proposition~\ref{prop:15}~\ref{item:39}. Note that, for fixed $n\in\N$, the number of non-zero terms in the latter sum is finite. Finally, by
equation~\eqref{eq:11}, the functions $g_{D,\infty}$ and $g_{D_{n},\infty}$ satisfy the hypothesis of Lemma~\ref{lemm:15}. Therefore, 
we get $\liminf_{n\to\infty}g_{D_{n},\infty}(0)=-\infty$ giving the desired contradiction and thereby proving the result.
\end{proof}

%

\subsection{Preparatory lemmas}
\label{sec:norm-like-functions}

For the proof of the finiteness of the arithmetic self-intersection
product for the adelic arithmetic line bundle $\overline{\sJ}_{k,m,N}$
on $\sB_{g, 
N}$, we need to study and estimate the quantity
\begin{displaymath}
\beta Y^{-1}\beta^{t}=\im(W)\im(Z)^{-1}\im(W)^{t}
\end{displaymath}
and certain derivatives thereof emerging from the definition of the natural invariant metric introduced in Remark~\ref{rem:natinv}, as $(Z,W)$ ranges 
through the open subset $V'\subseteq\H_{g}\times\C^{(1,g)}$ introduced in Remark~\ref{rem:loccor}. Recalling from Remark~\ref{rem:loccor} that
\begin{displaymath}
(Z,W)=i(Y_{0},\beta_{0})+\sum_{j=1}^{d}z_{j}(Y_{j},\beta_{j})
\end{displaymath}
for suitable $z_{j}=x_{j}+iy_{j}\in\C$ ($j=1,\ldots,d$), we find that
\begin{displaymath}
Y=Y(y_{1},\ldots,y_{d})=Y_{0}+\sum_{j=1}^{d}y_{j}Y_{j}\qquad\text{and}\qquad\beta=\beta(y_{1},\ldots,y_{d})=\beta_{0}+\sum_{j=1}^{d}y_{j}\beta_
{j}.
\end{displaymath}
Thus, we will have to investigate the function $\varphi\colon\R^{d}_{\ge 0}\rightarrow\R$ defined by
\begin{displaymath}
\varphi(y_{1},\ldots,y_{d})=\beta Y^{-1}\beta^{t}
\end{displaymath}
with $Y=Y(y_{1},\ldots,y_{d})$ and $\beta=\beta(y_{1},\ldots,y_{d})$ as above.

The first task is to compute the Hessian matrix of the function $\varphi$. To this end, we introduce for $j=0,\ldots,d$ the quantities $W_{j}=\beta_
{j}-\beta Y^{-1}Y_{j}$; they satisfy the linear relation
\begin{equation}
\label{eq:32}
W_{0}+\sum_{j=1}^{d}y_{j}W_{j}=0,
\end{equation}
which follows by a direct computation.

\begin{lemma}
\label{lemm:12}
With the above notations, the equality
\begin{displaymath}
\frac{\partial^{2}\varphi}{\partial y_{i}\partial y_{j}}=2W_{i}Y^{-1}W_{j}^{t} 
\end{displaymath}
holds for $i,j=1,\ldots,d$. Therefore, the Hessian matrix of $\varphi$ is a Gram matrix and, in particular, the rank of the Hessian of $\varphi$ is at 
most $g$.
\end{lemma}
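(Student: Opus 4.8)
The plan is to differentiate $\varphi(y_{1},\ldots,y_{d}) = \beta Y^{-1}\beta^{t}$ twice, being careful with the dependence of both $\beta$ and $Y$ on the $y_j$. First I would record the elementary derivatives $\partial_i \beta = \beta_i$ and $\partial_i Y = Y_i$, together with the standard identity $\partial_i(Y^{-1}) = -Y^{-1}Y_i Y^{-1}$ obtained from differentiating $Y Y^{-1} = \id_g$. Then a first differentiation of $\varphi = \beta Y^{-1}\beta^t$ gives
\begin{displaymath}
\frac{\partial\varphi}{\partial y_i} = \beta_i Y^{-1}\beta^t + \beta Y^{-1}\beta_i^t - \beta Y^{-1}Y_i Y^{-1}\beta^t = 2\beta Y^{-1}\beta_i^t - \beta Y^{-1}Y_i Y^{-1}\beta^t,
\end{displaymath}
using that $\beta Y^{-1}\beta_i^t$ is a scalar, hence equal to its transpose $\beta_i Y^{-1}\beta^t$ (recall $Y$ is symmetric).

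The key observation I would then exploit is that $W_i = \beta_i - \beta Y^{-1}Y_i$, so $W_i Y^{-1}\beta^t = \beta_i Y^{-1}\beta^t - \beta Y^{-1}Y_i Y^{-1}\beta^t$; comparing with the formula above shows $\partial_i \varphi = \beta Y^{-1}\beta_i^t + W_i Y^{-1}\beta^t$. Actually it is cleaner to write $\partial_i\varphi = 2 W_i Y^{-1}\beta^t + \beta Y^{-1}Y_i Y^{-1}\beta^t$; I would settle on whichever intermediate form makes the second differentiation most transparent, the point being that $\partial_i\varphi$ is a sum of scalars each of which is a product of $Y^{-1}$ against vectors built from $\beta$, $\beta_i$, $Y_i$. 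Differentiating once more with respect to $y_j$ and again systematically applying $\partial_j \beta = \beta_j$, $\partial_j Y^{-1} = -Y^{-1}Y_j Y^{-1}$, and $\partial_j W_i = -\beta_j Y^{-1}Y_i + \beta Y^{-1}Y_j Y^{-1}Y_i = -W_j Y^{-1}Y_i$ (this last identity is itself worth isolating as a lemma, and follows from $\partial_j(\beta Y^{-1}) = W_j Y^{-1}$ up to sign bookkeeping), all the terms not of the form $W_i Y^{-1}W_j^t$ should cancel in pairs. The expected outcome is the claimed
\begin{displaymath}
\frac{\partial^2\varphi}{\partial y_i \partial y_j} = 2 W_i Y^{-1} W_j^t.
\end{displaymath}
I would double-check symmetry in $i,j$ at the end as a sanity check, which holds since $Y^{-1}$ is symmetric.

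For the rank statement, I would argue as follows. The Hessian $H = (2 W_i Y^{-1}W_j^t)_{i,j}$ is, up to the factor $2$, the Gram matrix of the vectors $v_i \coloneqq Y^{-1/2}W_i^t \in \R^g$ for $i = 1,\ldots,d$ with respect to the standard inner product, where $Y^{-1/2}$ is the positive definite square root of the positive definite matrix $Y^{-1}$ (here $Y = Y(y_1,\ldots,y_d)$ is symmetric positive definite for $(Z,W) \in V'$ since $\im(Z) > 0$). Since each $v_i$ lies in $\R^g$, the family $\{v_1,\ldots,v_d\}$ spans a subspace of dimension at most $g$, so the Gram matrix has rank at most $g$; hence $\rk(H) \le g$. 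I do not expect any genuine obstacle here; the only thing requiring care is the second differentiation, where the number of terms generated by the product rule is moderately large, and the main work is verifying that everything except the $W_i Y^{-1}W_j^t$ term cancels. Isolating the auxiliary identity $\partial_j(\beta Y^{-1}) = W_j Y^{-1}$ (equivalently $\partial_j W_i = -W_j Y^{-1}Y_i$) in advance is what will make that cancellation clean rather than a brute-force expansion.
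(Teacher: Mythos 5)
Your proposal is correct and follows essentially the same route as the paper: the paper computes $\partial\varphi/\partial y_{i}=2\beta_{i}Y^{-1}\beta^{t}-\beta Y^{-1}Y_{i}Y^{-1}\beta^{t}$, differentiates once more, and factors the four resulting terms directly as $2(\beta_{i}-\beta Y^{-1}Y_{i})Y^{-1}(\beta_{j}-\beta Y^{-1}Y_{j})^{t}$, which is exactly the computation you outline (your auxiliary identity $\partial_{j}(\beta Y^{-1})=W_{j}Y^{-1}$ is just a tidier bookkeeping device for the same cancellation). Your rank argument via the vectors $Y^{-1/2}W_{i}^{t}\in\R^{g}$ is also the intended one.
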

\begin{proof}
This is a simple computation. Indeed, for the first partial derivative of $\varphi$, we compute
\begin{displaymath}
\frac{\partial\varphi}{\partial y_{i}}=2\beta_{i}Y^{-1}\beta^{t}-\beta Y^{-1}Y_{i}Y^{-1}\beta^{t}.
\end{displaymath}
Therefore, the second partial derivative of $\varphi$ becomes
\begin{align*}
\frac{\partial^{2}\varphi}{\partial y_{i}\partial y_{j}}&=2\beta_{i}Y^{-1}\beta_{j}^{t}-2\beta_{i}Y^{-1}Y_{j}Y^{-1}\beta^{t}-2\beta Y^{-1}Y_{i}Y^{-1}\beta_
{j}^{t}+2\beta Y^{-1}Y_{i}Y^{-1}Y_{j}Y^{-1}\beta^{t} \\
&=2(\beta_{i}-\beta Y^{-1}Y_{i})Y^{-1}(\beta_{j}-\beta Y^{-1}Y_{j})^{t} \\[2mm]
&=2W_{i}Y^{-1}W_{j}^{t}.
\end{align*}
This completes the proof of the lemma.
\end{proof}

In order to ease the bound of the entries of the Hessian matrix above, we introduce the following condition.

\begin{definition}
\label{def:flco}
We say that \emph{the matrices $Y_{0},\ldots,Y_{d}$ satisfy the flag condition}, if $\ker(Y_{j})\subseteq\ker(Y_{j+k})$ for $j=0,\ldots,d-1$ and $k=0,
\ldots,d-j$.
\end{definition}

From now on, we assume that the matrices $Y_{0},\ldots,Y_{d}$ satisfy the flag condition. We write $\rk_{j}=\rk(Y_{j})$ and choose a basis such 
that each matrix $Y_{j}$ can be written in block form as
\begin{displaymath}
Y_{j}=\begin{pmatrix}Y'_{j}&0\\0&0\end{pmatrix},
\end{displaymath}
where $Y'_{j}$ is a real symmetric positive definite $(\rk_{j}\times\rk_{j})$-matrix for $j=0,\ldots,d$. Note that, by hypothesis, $Y'_{0}=Y_{0}$ and
thus $\rk_{0}=g$. The following result is proven in~\cite[Lemma~3.6]{bghdj_sing}.

\begin{lemma}
\label{lemm:13} 
With the above notations, there is a constant $C>0$ such that the upper bound
\begin{displaymath}
(Y^{-1})_{k,\ell}\le\frac{C}{\displaystyle 1+\sum_{j\colon\rk_{j} \ge\min(k,\ell)}y_{j}}
\end{displaymath}
holds for $k,\ell=1,\ldots,g$.
\end{lemma}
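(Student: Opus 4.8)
The plan is to exploit the block structure of Definition~\ref{def:flco} through an induction on $g$. Work in the basis in which all the $Y_j$ are simultaneously block-diagonal, and abbreviate $\Sigma_k\coloneqq 1+\sum_{j\colon\rk_j\ge k}y_j$; using $\rk_0=g$ and the normalisation $y_0=1$ (so that $Y=\sum_{j=0}^d y_jY_j$), the asserted estimate reads $(Y^{-1})_{k\ell}\le C/\Sigma_{\min(k,\ell)}$, with $C$ to be uniform in $(y_1,\dots,y_d)$. Note that $\Sigma_k$ is nonincreasing in $k$, so in particular it attains its global minimum at $k=g$, and that it depends only on which block the index $k$ lies in.

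First I would settle the diagonal case $(Y^{-1})_{kk}\le C/\Sigma_k$ directly, via the variational identity $(Y^{-1})_{kk}=1/\min\{\,v^tYv\mid v_k=1\,\}$. Indeed $v^tYv=\sum_j y_j\,v^tY_jv\ge\sum_{j\colon\rk_j\ge k}y_j\,v^tY_jv$, and each $Y_j$ with $\rk_j\ge k$ is positive definite on a coordinate subspace containing $e_k$, so $v^tY_jv\ge\lambda_{\min}(Y'_j)\,v_k^2$; hence $v^tYv\ge c\,\Sigma_k\,v_k^2=c\,\Sigma_k$, where $c>0$ is the smallest eigenvalue occurring among the finitely many fixed nonzero blocks $Y'_j$. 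The same computation with $k=g$ supplies the lower bound $\rho\ge c\,\Sigma_g$ for the Schur complement used below.

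For the off-diagonal entries I would induct on $g$, the case $g=1$ being the diagonal bound. Splitting off the last coordinate, write $Y$ in $2\times 2$ block form with bottom-right scalar $r=(Y)_{gg}$, off-diagonal column $q=((Y)_{pg})_{p<g}$, and principal $(g-1)\times(g-1)$ block $Y'$. A short check shows $Y'$ is again a sum $\sum_j y_j(Y_j)'$ of the same shape, the principal submatrices $(Y_j)'$ satisfying the flag condition with ranks $\min(\rk_j,g-1)$, and that the corresponding quantities coincide with $\Sigma_k$ for $k\le g-1$; so the inductive hypothesis gives $((Y')^{-1})_{k\ell}\le C/\Sigma_{\min(k,\ell)}$ on $\{1,\dots,g-1\}$. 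Since column $g$ of $Y$ only receives contributions from the full-rank $Y_j$, one has $|q_p|\le C\,\Sigma_g$ for all $p$, whence $|((Y')^{-1}q)_k|\le\sum_p|((Y')^{-1})_{kp}|\,|q_p|\le C\,\Sigma_g/\Sigma_k$. Feeding $\rho=r-q^t(Y')^{-1}q=(Y^{-1})_{gg}^{-1}$, the column $q$, and $(Y')^{-1}$ into the Schur-complement formulas then yields $(Y^{-1})_{gg}=1/\rho\le C/\Sigma_g$, the mixed entry $(Y^{-1})_{g\ell}=-((Y')^{-1}q)_\ell/\rho$ bounded by $C/\Sigma_\ell$, and, for $k,\ell\le g-1$, the correction term $\rho^{-1}((Y')^{-1}q)_k((Y')^{-1}q)_\ell$ bounded by $C\,\Sigma_g/(\Sigma_k\Sigma_\ell)$; because $\Sigma_g=\min_m\Sigma_m\le\min(\Sigma_k,\Sigma_\ell)$, this last quantity is $\le 1/\max(\Sigma_k,\Sigma_\ell)=1/\Sigma_{\min(k,\ell)}$, and adding the inductive bound on $((Y')^{-1})_{k\ell}$ closes the induction.

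The hard part is precisely the off-diagonal estimate. The naive Cauchy--Schwarz bound $|(Y^{-1})_{k\ell}|\le\sqrt{(Y^{-1})_{kk}(Y^{-1})_{\ell\ell}}\le C/\sqrt{\Sigma_k\Sigma_\ell}$ is strictly weaker than the asserted $C/\max(\Sigma_k,\Sigma_\ell)$ as soon as $k$ and $\ell$ lie on different levels, so genuine use of the flag structure is unavoidable; extracting the true decay is exactly what the peeling-off-one-coordinate induction achieves, and the elementary identity $\Sigma_g=\min_m\Sigma_m$ is what keeps the Schur-complement correction terms small enough for the induction to propagate.
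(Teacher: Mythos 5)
Your proof is correct. Note first that the paper itself gives no argument for this lemma: it is quoted verbatim from \cite[Lemma~3.6]{bghdj_sing}, where (as far as the published proof goes) the estimate is obtained by expanding $(Y^{-1})_{k,\ell}$ as a quotient of a cofactor by $\det(Y)$ and bounding the determinant from below by $c\prod_{m}\Sigma_{m}$ and the relevant minor from above, all via the flag/block structure. Your Schur-complement induction on $g$ is a genuinely different and self-contained route, and every step checks out: the variational identity $1/(Y^{-1})_{kk}=\min\{v^{t}Yv : v_{k}=1\}$ combined with $v^{t}Y_{j}v\ge\lambda_{\min}(Y_{j}')v_{k}^{2}$ for $\rk_{j}\ge k$ gives the diagonal bound and the lower bound $\rho\ge c\Sigma_{g}$ for the Schur complement; the principal $(g-1)\times(g-1)$ truncation preserves the flag structure with ranks $\min(\rk_{j},g-1)$ and leaves $\Sigma_{1},\dots,\Sigma_{g-1}$ unchanged; the $g$-th column of $Y$ indeed only sees the full-rank $Y_{j}$, giving $|q_{p}|\le C\Sigma_{g}$; and the monotonicity $\Sigma_{g}=\min_{m}\Sigma_{m}$ is exactly what tames the rank-one correction $\rho^{-1}(Y')^{-1}qq^{t}(Y')^{-1}$. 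Two small points worth making explicit if you write this up: (i) the induction must be carried on the two-sided bound $|(Y^{-1})_{k,\ell}|\le C/\Sigma_{\min(k,\ell)}$ — the one-sided inequality as literally stated would not feed back into the estimate $\sum_{p}|((Y')^{-1})_{kp}||q_{p}|$ — and you do implicitly work with absolute values throughout, which is also what the later applications (Lemmas~\ref{lemm:14} and~\ref{lemm:17}) actually require; (ii) the constants produced at each of the $g$ peeling steps depend only on the finitely many fixed matrices $Y_{0},\dots,Y_{d}$, so uniformity in $(y_{1},\dots,y_{d})\in\R_{\ge0}^{d}$ is preserved. Compared with the cofactor computation of the cited source, your argument trades explicit determinant asymptotics for a cleaner recursive bookkeeping of where the decay $1/\Sigma_{\min(k,\ell)}$ comes from.
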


Note that the number $1$ in the denominator above comes from the fact that $Y_{0}$ has maximal rank being positive definite and that $Y_{0}$
occurs with the coefficient $1$ in the definition of $Y=Y(y_{1},\ldots,y_{d})$. Next, we will bound the entries of $W_{j}$.

\begin{lemma}
\label{lemm:14} 
With the above notations, we have the following two results for the row vectors $W_{j}$ for $j=0,\ldots,d$: If $g\ge\ell>\rk_{j}$, the relation
\begin{displaymath}
(W_j)_{\ell}=0
\end{displaymath}
holds. Moreover, if $0\le\ell\le\rk_{j}$, there is a constant $C>0$ such that the upper bound
\begin{displaymath}
(W_{j})_{\ell}\le C
\end{displaymath}
holds. 
\end{lemma}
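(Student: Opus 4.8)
The plan is to treat the two assertions separately: the first is a support computation, the second an estimate resting on Lemma~\ref{lemm:13}. For the vanishing statement, recall that in the chosen basis one has $Y_{j}=\bigl(\begin{smallmatrix}Y'_{j}&0\\0&0\end{smallmatrix}\bigr)$ with $Y'_{j}$ of size $\rk_{j}\times\rk_{j}$; in particular $(Y_{j})_{m\ell}=0$ unless $m\le\rk_{j}$ and $\ell\le\rk_{j}$. Hence both $\beta_{j}=\alpha_{j}Y_{j}$ and the row vector $\beta Y^{-1}Y_{j}$ vanish in every coordinate $\ell>\rk_{j}$, right multiplication by $Y_{j}$ annihilating the last $g-\rk_{j}$ columns, and therefore $(W_{j})_{\ell}=(\beta_{j})_{\ell}-(\beta Y^{-1}Y_{j})_{\ell}=0$ whenever $\rk_{j}<\ell\le g$.

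For the boundedness statement I would first note that the flag condition forces $g=\rk_{0}\ge\rk_{1}\ge\cdots\ge\rk_{d}$, so that, on all of $\R^{d}_{\ge 0}$, the quantities $B_{p}\coloneqq 1+\sum_{k\,:\,\rk_{k}\ge p}y_{k}$ ($p=1,\ldots,g$) satisfy $B_{1}\ge B_{2}\ge\cdots\ge B_{g}\ge 1$. Next I would record two elementary bounds, with constants depending only on the fixed data $Y_{0},\ldots,Y_{d},\alpha_{0},\ldots,\alpha_{d}$: the growth bound $\lvert\beta_{p}\rvert\le C\,B_{p}$, which follows from $\beta=\beta_{0}+\sum_{k}y_{k}\beta_{k}$ and the vanishing of $(\beta_{k})_{p}$ for $p>\rk_{k}$; and the decay bound $\lvert(Y^{-1})_{pm}\rvert\le C\,B_{\min(p,m)}^{-1}$ furnished by Lemma~\ref{lemm:13}, read in absolute value. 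Multiplying these, for $m\le\rk_{j}$ one gets $\lvert(\beta Y^{-1})_{m}\rvert=\bigl\lvert\sum_{p}\beta_{p}(Y^{-1})_{pm}\bigr\rvert\le C\sum_{p=1}^{g}B_{p}B_{\min(p,m)}^{-1}\le Cg$, since $B_{p}B_{\min(p,m)}^{-1}\le 1$ for every $p$ (it equals $1$ when $p\le m$, and is $\le 1$ when $p>m$ because $B$ is nonincreasing). Finally, for $\ell\le\rk_{j}$ I would write $(W_{j})_{\ell}=(\beta_{j})_{\ell}-\sum_{m\le\rk_{j}}(\beta Y^{-1})_{m}(Y_{j})_{m\ell}$ and bound the right-hand side by a constant, using that $(\beta_{j})_{\ell}$ and the entries $(Y_{j})_{m\ell}$ are fixed; this gives $(W_{j})_{\ell}\le C$, in fact $\lvert(W_{j})_{\ell}\rvert\le C$.

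The main obstacle is the off-diagonal decay input. The entries $(Y^{-1})_{pm}$ with $p\neq m$ may be negative, and the diagonal case of Lemma~\ref{lemm:13} combined with Cauchy--Schwarz only yields $\lvert(Y^{-1})_{pm}\rvert\le C\,(B_{p}B_{m})^{-1/2}$, whose denominator is governed by $B_{\max(p,m)}$ rather than $B_{\min(p,m)}$ and is thus too weak to be absorbed by the growth factor $B_{p}$ in the sum above; one genuinely needs the sharper estimate $\lvert(Y^{-1})_{pm}\rvert\le C\,B_{\min(p,m)}^{-1}$, which the block-triangular argument proving Lemma~\ref{lemm:13} in~\cite{bghdj_sing} does deliver. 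Everything else is bookkeeping: deducing the monotonicity of the ranks from the flag condition and verifying $B_{p}\le B_{\min(p,m)}$.
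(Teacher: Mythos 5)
Your proof is correct and follows essentially the same route as the paper's: the vanishing statement from the block form of $Y_{j}$, and the boundedness of $(\beta Y^{-1})_{\ell}$ by pairing the growth bound $(\beta)_{k}=O(B_{k})$ against the decay bound on $(Y^{-1})_{k,\ell}$ from Lemma~\ref{lemm:13}, with each ratio $B_{k}/B_{\min(k,\ell)}\le 1$. Your remark that one really needs the estimate on $\lvert(Y^{-1})_{k,\ell}\rvert$ (not merely an upper bound on a possibly negative entry) is a fair point of care that the paper's write-up glosses over; it is indeed supplied by the argument in~\cite{bghdj_sing}.
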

\begin{proof}
The first statement follows immediately from the choice of basis that puts each $Y_{j}$ in the desired block form. In order to prove the second claim, 
observing that $W_{j}=\beta_{j}-\beta Y^{-1}Y_{j}$, it is enough to show that the entries of $\beta Y^{-1}$ are uniformly bounded. In this regard, by 
the definition of $\beta=\beta(y_{1},\ldots,y_{d})$, there is a constant $C_{1}>0$ such that the $k$-th entry of $\beta$ can be bounded as
\begin{displaymath}
(\beta)_{k}\le C_{1}\bigg(1+\sum_{j\colon\rk_{j}\ge k}y_{j}\bigg).
\end{displaymath}
By means of Lemma~\ref{lemm:13} (with the constant $C$ renamed as $C_{2}$), we then derive the bound
\begin{displaymath}
(\beta Y^{-1})_{\ell}=\sum_{k=1}^{g}(\beta)_{k}(Y^{-1})_{k,\ell}\le\sum_{k=1}^{g}C_{1}C_{2}\frac{1+\displaystyle\sum_{j\colon\rk_{j}\ge k}y_{j}}{1+
\displaystyle\sum_{j\colon\rk_{j}\ge\min(k,\ell)}y_{j}}\le g\,C_{1}C_{2}.
\end{displaymath}
This completes the proof of the second claim.
\end{proof}

As a consequence of the above two lemmas we deduce the following bound for the elements of the Hessian matrix.

\begin{lemma}
\label{lemm:17}
With the above notations, there are constants $C,C'>0$ such that the upper bounds
\begin{displaymath}
W_{i}Y^{-1}W_{j}^{t}\le\frac{C}{1+\displaystyle\sum_{k\colon\rk_{k}\ge\min(\rk_{i},\rk_{j})}y_{k}}\le\frac{C'}{1+y_{i}^{1/2}y_{j}^{1/2}}
\end{displaymath}
hold.
\end{lemma}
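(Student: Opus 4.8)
The plan is to combine the support and uniform-boundedness properties of the vectors $W_j$ from Lemma~\ref{lemm:14} with the decay estimate for the entries of $Y^{-1}$ from Lemma~\ref{lemm:13}, and then to replace the denominator $1+\sum_{k\colon\rk_{k}\ge\min(\rk_{i},\rk_{j})}y_{k}$ by the smaller quantity $1+y_{i}^{1/2}y_{j}^{1/2}$ via the arithmetic--geometric mean inequality. Since $W_{i}Y^{-1}W_{j}^{t}$ is a scalar and $Y^{-1}$ is symmetric, it is symmetric in $i,j$, so I may assume $\rk_{i}\le\rk_{j}$, i.e.\ $\min(\rk_{i},\rk_{j})=\rk_{i}$; and if $\rk_{i}=0$ then $W_{i}=0$ by Lemma~\ref{lemm:14} and the estimate is trivial, so I may also assume $\rk_{i},\rk_{j}\ge 1$.

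First I would expand $W_{i}Y^{-1}W_{j}^{t}=\sum_{k,\ell=1}^{g}(W_{i})_{k}(Y^{-1})_{k,\ell}(W_{j})_{\ell}$. By the first assertion of Lemma~\ref{lemm:14} only the terms with $k\le\rk_{i}$ and $\ell\le\rk_{j}$ survive, and by the second assertion $|(W_{i})_{k}|$ and $|(W_{j})_{\ell}|$ are bounded by a uniform constant on those ranges. For such $k,\ell$ we have $\min(k,\ell)\le k\le\rk_{i}$, hence $\{n\colon\rk_{n}\ge\min(k,\ell)\}\supseteq\{n\colon\rk_{n}\ge\rk_{i}\}$, and since all $y_{n}\ge 0$,
\[
1+\sum_{n\colon\rk_{n}\ge\min(k,\ell)}y_{n}\;\ge\;1+\sum_{n\colon\rk_{n}\ge\rk_{i}}y_{n}.
\]
Plugging this into Lemma~\ref{lemm:13} (applied, if necessary, to the diagonal entries together with the Cauchy--Schwarz inequality $|(Y^{-1})_{k,\ell}|\le (Y^{-1})_{k,k}^{1/2}(Y^{-1})_{\ell,\ell}^{1/2}$ for the positive definite matrix $Y^{-1}$, in order to control the off-diagonal entries in absolute value) gives $|(Y^{-1})_{k,\ell}|\le C\,(1+\sum_{n\colon\rk_{n}\ge\rk_{i}}y_{n})^{-1}$ uniformly on the relevant range. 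Summing over the at most $\rk_{i}\rk_{j}\le g^{2}$ surviving terms then yields the first inequality with a new constant $C$.

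For the second inequality I would observe that both $i$ and $j$ lie in the index set $\{k\colon\rk_{k}\ge\min(\rk_{i},\rk_{j})\}$, because $\rk_{i}\ge\min(\rk_{i},\rk_{j})$ and $\rk_{j}\ge\min(\rk_{i},\rk_{j})$; hence $\sum_{k\colon\rk_{k}\ge\min(\rk_{i},\rk_{j})}y_{k}\ge y_{i}+y_{j}$ when $i\ne j$, and $\ge y_{i}$ when $i=j$. Since $y_{i}+y_{j}\ge 2\,y_{i}^{1/2}y_{j}^{1/2}\ge y_{i}^{1/2}y_{j}^{1/2}$ by the arithmetic--geometric mean inequality (and $y_{i}\ge y_{i}^{1/2}y_{i}^{1/2}$ trivially in the case $i=j$), we get $1+\sum_{k\colon\rk_{k}\ge\min(\rk_{i},\rk_{j})}y_{k}\ge 1+y_{i}^{1/2}y_{j}^{1/2}$ in all cases, and taking reciprocals gives the claimed bound with $C'=C$.

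All the computational steps are elementary; the only point that requires a little care is the bookkeeping with absolute values of the entries of $Y^{-1}$, since Lemma~\ref{lemm:13} is phrased as an upper bound rather than as a bound on the modulus. This is exactly what the Cauchy--Schwarz reduction to the diagonal entries handles, and everything else is just tracking which index sets $\{n\colon\rk_{n}\ge\cdot\,\}$ are nested in one another.
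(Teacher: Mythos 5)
Your main line of argument coincides with the paper's proof, which consists of expanding $W_{i}Y^{-1}W_{j}^{t}=\sum_{k,\ell}(W_{i})_{k}(Y^{-1})_{k,\ell}(W_{j})_{\ell}$ and invoking Lemmas~\ref{lemm:13} and~\ref{lemm:14}; your derivation of the second inequality from the fact that $i$ and $j$ both belong to the index set $\{k\colon\rk_{k}\ge\min(\rk_{i},\rk_{j})\}$, together with the arithmetic--geometric mean inequality, is exactly the intended ``easy derivation''.

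The one step I would push back on is the Cauchy--Schwarz fallback. Write $A_{m}=1+\sum_{n\colon\rk_{n}\ge m}y_{n}$, so that $A$ is non-increasing in $m$ and $A_{\min(k,\ell)}=\max(A_{k},A_{\ell})$. If you control the off-diagonal entries only via $\vert(Y^{-1})_{k,\ell}\vert\le\big((Y^{-1})_{kk}(Y^{-1})_{\ell\ell}\big)^{1/2}$ and apply Lemma~\ref{lemm:13} to the diagonal, you get $\vert(Y^{-1})_{k,\ell}\vert\le C\,(A_{k}A_{\ell})^{-1/2}$, hence only $\vert W_{i}Y^{-1}W_{j}^{t}\vert\le C'(A_{\rk_{i}}A_{\rk_{j}})^{-1/2}$. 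Since $(A_{k}A_{\ell})^{1/2}\le\max(A_{k},A_{\ell})=A_{\min(k,\ell)}$, this geometric-mean bound is strictly weaker than the claimed $C/A_{\min(\rk_{i},\rk_{j})}$: for instance, if $\rk_{i}=1$, $\rk_{j}=g$ and a single coordinate $y_{m}$ with $\rk_{m}=1$ tends to infinity, it decays like $y_{m}^{-1/2}$ rather than $y_{m}^{-1}$. As the full strength of the first inequality is what feeds into Lemma~\ref{lemm:18} and Proposition~\ref{prop:9}, the Cauchy--Schwarz detour cannot be the mechanism. The sign issue is instead resolved by reading Lemma~\ref{lemm:13} (i.\,e.\ \cite[Lemma~3.6]{bghdj_sing}) as a bound on the modulus of \emph{every} entry of $Y^{-1}$ --- which is how the paper uses it throughout, e.\,g.\ already in the proof of Lemma~\ref{lemm:14} --- after which your direct argument via the inclusion $\{n\colon\rk_{n}\ge\min(k,\ell)\}\supseteq\{n\colon\rk_{n}\ge\min(\rk_{i},\rk_{j})\}$ goes through verbatim and no Cauchy--Schwarz is needed.
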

\begin{proof}
As we have
\begin{displaymath}
W_{i}Y^{-1}W_{j}^{t}=\sum_{k,\ell=1}^{g}(W_{i})_{k}(Y^{-1})_{k,\ell}(W_{j})_{\ell},
\end{displaymath}
the first claimed inequality is an immediate consequence of Lemma~\ref{lemm:13} and Lemma~\ref{lemm:14}. The second claimed inequality can 
then be easily derived.
\end{proof}

For the sequel, we will need finer estimates. For this, we let $I\subseteq\{1,\ldots,d\}$ denote a subset and we write $W_{I}=\sum_{j\in I}y_{j}W_
{j}$. To uniformize our notation, we set $y_{0}=1$ and, given $I\subseteq\{1,\ldots,d\}$, we put $I_{0}^{\complement}=\{0,\ldots,d\}\setminus I$. 
For later purposes, we put $I^{\complement}=\{1,\ldots,d\}\setminus I$ and note that $I_{0}^{\complement}=I^{\complement}\cup\{0\}$.

\begin{lemma}
\label{lemm:18}
With the above notations, there is a constant $C>0$ such that for all subsets $I\subseteq\{1,\ldots,d\}$ the  upper bound
\begin{displaymath}
W_{I}Y^{-1}W_{I}^{t}\le C\frac{\sum_{i\in I}y_{i}\sum_{j\in I_{0}^{\complement}}y_{j}}{\sum_{k=0}^{d}y_{k}}
\end{displaymath}
holds. 
\end{lemma}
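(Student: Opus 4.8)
The plan is to reduce the bound for the subset vector $W_I$ to a bound that involves only the relation~\eqref{eq:32}, the previous estimates on the individual $W_j$ (Lemma~\ref{lemm:14}), and the decay of $Y^{-1}$ (Lemma~\ref{lemm:13}). First I would use the linear relation $W_0+\sum_{j=1}^{d}y_j W_j=0$, which after splitting the sum over $I$ and its complement gives
\begin{displaymath}
W_I=\sum_{j\in I}y_j W_j=-W_0-\sum_{j\in I^{\complement}}y_j W_j=-\sum_{j\in I_0^{\complement}}y_j W_j,
\end{displaymath}
recalling the convention $y_0=1$. This is the key algebraic identity: the ``bad'' part $W_I$ is also equal to minus the complementary sum, so each entry $(W_I)_\ell$ is simultaneously controlled by $\sum_{i\in I}y_i$ (directly, via Lemma~\ref{lemm:14}, since $(W_i)_\ell\le C$) and by $\sum_{j\in I_0^{\complement}}y_j$ (via the complementary expression). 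Hence for each index $\ell\le g$ one gets a bound of the form $|(W_I)_\ell|\le C\min\big(\sum_{i\in I}y_i,\ \sum_{j\in I_0^{\complement}}y_j\big)$, and in particular, taking geometric-mean type estimates, $|(W_I)_\ell|^2\le C^2\big(\sum_{i\in I}y_i\big)\big(\sum_{j\in I_0^{\complement}}y_j\big)$.

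Next I would expand
\begin{displaymath}
W_IY^{-1}W_I^{t}=\sum_{k,\ell=1}^{g}(W_I)_k(Y^{-1})_{k,\ell}(W_I)_\ell,
\end{displaymath}
and insert the entrywise bounds: $(Y^{-1})_{k,\ell}\le C\big(1+\sum_{j:\rk_j\ge\min(k,\ell)}y_j\big)^{-1}$ from Lemma~\ref{lemm:13}, together with the vanishing statement of Lemma~\ref{lemm:14} which restricts the effective ranges of $k,\ell$. The point is that a summand is nonzero only when both $(W_I)_k$ and $(W_I)_\ell$ are nonzero, which forces $k,\ell$ to be at most the relevant ranks, and then the denominator $1+\sum_{j:\rk_j\ge\min(k,\ell)}y_j$ is bounded below by $1+\sum_{k=0}^{d}y_k$ up to adjusting which indices appear — more precisely, I would argue that the denominator always dominates a fixed positive multiple of $\sum_{k=0}^{d}y_k$ on the relevant range, or alternatively carry the denominator through and absorb it. Combining with $|(W_I)_k||(W_I)_\ell|\le C^2\big(\sum_{i\in I}y_i\big)\big(\sum_{j\in I_0^{\complement}}y_j\big)$ yields
\begin{displaymath}
W_IY^{-1}W_I^{t}\le C'\,\frac{\big(\sum_{i\in I}y_i\big)\big(\sum_{j\in I_0^{\complement}}y_j\big)}{\sum_{k=0}^{d}y_k},
\end{displaymath}
which is the claim.

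The main obstacle I anticipate is making rigorous the step where the denominator $1+\sum_{j:\rk_j\ge\min(k,\ell)}y_j$ coming from Lemma~\ref{lemm:13} is compared with $\sum_{k=0}^{d}y_k$: the former only sums over indices $j$ whose rank is at least $\min(k,\ell)$, so one must check that, on the range of $k,\ell$ for which $(W_I)_k$ and $(W_I)_\ell$ are nonzero (i.e.\ $k\le\rk_i$ for some $i\in I$ contributing, etc.), the omitted $y_j$'s with small rank are genuinely harmless. One clean way around this is to bound each $|(W_I)_\ell|$ more carefully using \emph{both} representations of $W_I$ simultaneously in a rank-adapted fashion — using the direct form to get a factor supported on ranks $\ge$ something, and the complementary form for the rest — so that the product $|(W_I)_k|(Y^{-1})_{k,\ell}|(W_I)_\ell|$ telescopes against the denominator and leaves exactly $\big(\sum_I y_i\big)\big(\sum_{I_0^{\complement}}y_j\big)/\sum_k y_k$. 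I expect this bookkeeping with the flag condition and the rank stratification to be the only genuinely delicate part; everything else is the substitution of Lemmas~\ref{lemm:13} and~\ref{lemm:14} together with the identity~\eqref{eq:32}.
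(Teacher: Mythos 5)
Your overall strategy --- the identity $W_I=-W_{I_0^{\complement}}$ from~\eqref{eq:32} combined with Lemmas~\ref{lemm:13} and~\ref{lemm:14} --- is the right one, and your diagnosis of where the difficulty lies is accurate. But the first route you propose does fail, for the reason you suspect: the entrywise bound $\vert(W_I)_k\vert\,\vert(W_I)_\ell\vert\le C^2\big(\sum_{i\in I}y_i\big)\big(\sum_{j\in I_0^{\complement}}y_j\big)$ paired with the denominator $1+\sum_{j\colon\rk_j\ge\min(k,\ell)}y_j$ from Lemma~\ref{lemm:13} does \emph{not} give the claimed bound, because that denominator omits all $y_j$ with $\rk_j<\min(k,\ell)$, and these can dominate $\sum_{k=0}^{d}y_k$. (Already for $g=1$ with $k=\ell=1$ the denominator is $1+\sum_{j\colon\rk_j=1}y_j$, and the variables $y_j$ with $\rk_j=0$, which may be arbitrarily large, are simply absent.) So it is not true that the denominator dominates a fixed positive multiple of $\sum_{k=0}^{d}y_k$ on the relevant range, and the "minimum/geometric mean" bound on the entries of $W_I$ throws away exactly the rank information needed to repair this.

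Your second, rank-adapted route is the one that works, and it is essentially the paper's proof; what is missing to make the "telescoping" rigorous is one elementary inequality. Concretely, for the $(k,\ell)$ entry with $k\le\ell$, bound $\vert(W_I)_k\vert\le C\sum_{i\in I,\,\rk_i\ge k}y_i\eqqcolon a$ using the direct representation, and $\vert(W_I)_\ell\vert=\vert(W_{I_0^{\complement}})_\ell\vert\le C\sum_{j\in I_0^{\complement},\,\rk_j\ge \ell}y_j\le C\sum_{j\in I_0^{\complement},\,\rk_j\ge k}y_j\eqqcolon b$ using the complementary one. Since $0\in I_0^{\complement}$, $y_0=1$, and $\rk_0=g$, the denominator from Lemma~\ref{lemm:13} is precisely $a+b$ (up to constants), so each entry contributes at most $C'ab/(a+b)$. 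One then needs
\begin{displaymath}
\frac{ab}{a+b}\le\frac{(a+c)(b+d)}{a+b+c+d}\qquad(a,b,c,d\ge 0,\ a+b>0),
\end{displaymath}
applied with $c,d$ the omitted low-rank sums over $I$ and $I_0^{\complement}$, to replace the partial sums by the full ones simultaneously in numerator and denominator; summing over the finitely many $(k,\ell)$ gives the lemma. (The paper organizes the computation as a double sum over pairs $(i,j)\in I\times I_0^{\complement}$ after writing $W_IY^{-1}W_I^{t}=\vert W_IY^{-1}W_{I_0^{\complement}}^{t}\vert$ and applying Lemma~\ref{lemm:17} termwise, grouping by the rank level $\ell=\min(\rk_i,\rk_j)$, but this is only a reordering of the same estimate.) You should supply this inequality and the exact matching of the denominator with $a+b$; without them your plan is not yet a proof.
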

\begin{proof}
By equation~\eqref{eq:32}, we have that $W_{I}=-W_{I_{0}^{\complement}}$, from which we deduce
\begin{displaymath}
W_{I}Y^{-1}W_{I}^{t}=\big\vert W_{I}Y^{-1}W_{I_{0}^{\complement}}^{t}\big\vert.
\end{displaymath}
By Lemma~\ref{lemm:17}, there is a constant $C_{1}>0$ such that 
\begin{align}
\notag
W_{I}Y^{-1}W_{I}^{t}&\le C_{1}\sum_{i\in I}\sum_{j\in I_{0}^{\complement}}\frac{y_{i}y_{j}}{1+\displaystyle\sum_{k\colon\rk_{k}\ge\min(\rk_{i},\rk_
{j})}y_{k}} \\
\label{eq:47}
&\le C_{1}\sum_{\ell=1}^{g}\frac{\displaystyle\sum_{\substack{i\in I\\ \rk_{i}=\ell}}y_{i}\sum_{\substack{j\in I_{0}^{\complement}\\ \rk_{j}\ge\ell}}y_
{j}+\sum_{\substack{i\in I\\ \rk_{i}>\ell}}y_{i}\sum_{\substack{j\in I_{0}^{\complement}\\ \rk_{j}=\ell}}y_{j}}{1+\displaystyle\sum_{k\colon\rk_{k}\ge
\ell}y_{k}}.
\end{align}
Before continuing, we observe that given $a,b,c,d\ge 0$ with $a+b>0$, then the inequality
\begin{displaymath}
\frac{ab}{a+b}\le\frac{(a+c)(b+d)}{a+b+c+d}
\end{displaymath}
holds, since we easily verify that
\begin{displaymath}
(a+b)(a+c)(b+d)-ab(a+b+c+d)=a^{2}d+b^{2}c+acd+bcd\ge 0.
\end{displaymath}
From this observation we can add the missing terms in the denominators of the quantity~\eqref{eq:47}, as long as we add the same term to each 
of the factors of the corresponding numerator. Since we can further add positive terms to the numerators, we deduce
\begin{displaymath}
W_{I}Y^{-1}W_{I}^{t}\le C_{1}\sum_{\ell=1}^{g}\frac{2\sum_{i\in I}y_{i}\sum_{j\in I_{0}^{\complement}}y_{j}}{\sum_{k=0}^{d}y_{k}},
\end{displaymath}
which implies the result. 
\end{proof}

The key estimate of this subsection is the next result.

\begin{proposition}
\label{prop:9}
Let $J,K\subseteq\{1,\ldots,d\}$ of cardinality $r$, $I=J\cap K$, and fix $\varepsilon>0$. Then, with the above notations, there is a constant $C>0$ 
such that for $(y_{1},\ldots,y_{d})\in\R_{\ge\varepsilon }^{d}$, the upper bound
\begin{displaymath}
\Bigg\vert\det\Bigg(\frac{\partial^{2}\varphi}{\partial y_{j}\partial y_{k}}\Bigg)_{\substack{j\in J\\k\in K}}\Bigg\vert\le C\,\frac{\sum_{i\in I_{0}^{\complement}}
y_{i}}{\sum_{i=0}^{d}y_{i}}\frac{1}{\prod_{j\in J}y_{j}^{1/2}\prod_{k\in K}y_{k}^{1/2}}
\end{displaymath}
holds.
\end{proposition}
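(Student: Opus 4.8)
The plan is to exploit the Gram/Cauchy--Binet structure of the Hessian established in Lemma~\ref{lemm:12}. Recall that $\bigl(\partial^{2}\varphi/\partial y_{j}\partial y_{k}\bigr)_{j,k}=2W_{j}Y^{-1}W_{k}^{t}$, so the $J\times K$ minor we must estimate is $2^{r}\det\bigl(W_{j}Y^{-1}W_{k}^{t}\bigr)_{j\in J,k\in K}$. Writing $A=Y^{-1/2}$ (the positive square root) and letting $M_{J}$ (resp.\ $M_{K}$) be the $r\times g$ matrix whose rows are the vectors $W_{j}A$ for $j\in J$ (resp.\ $W_{k}A$ for $k\in K$), we have $\bigl(W_{j}Y^{-1}W_{k}^{t}\bigr)=M_{J}M_{K}^{t}$. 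By the Cauchy--Binet formula,
\begin{displaymath}
\det\bigl(M_{J}M_{K}^{t}\bigr)=\sum_{\substack{L\subseteq\{1,\ldots,g\}\\ |L|=r}}\det\bigl((M_{J})_{\bullet,L}\bigr)\det\bigl((M_{K})_{\bullet,L}\bigr),
\end{displaymath}
so it suffices to bound each $r\times r$ subdeterminant of $M_{J}$ and of $M_{K}$ and then sum the (finitely many) products.

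The second step is to bound a generic maximal minor of $M_{J}$. Fix $L\subseteq\{1,\ldots,g\}$ with $|L|=r$. Expanding the determinant of $\bigl((W_{j}A)_{\ell}\bigr)_{j\in J,\ell\in L}$ as a signed sum over bijections $J\to L$, each term is a product $\prod_{j\in J}(W_{j}A)_{\sigma(j)}$. The idea is to control each entry $(W_{j}A)_{\ell}$. Since $A=Y^{-1/2}$, its entries decay like the square root of the decay of $Y^{-1}$ in Lemma~\ref{lemm:13}; combined with the vanishing and boundedness of the entries of $W_{j}$ from Lemma~\ref{lemm:14}, one gets $(W_{j}A)_{\ell}\lesssim \bigl(1+\sum_{k\colon\rk_{k}\ge\min(\rk_{j},\ell)}y_{k}\bigr)^{-1/2}$, and in particular $(W_{j}A)_{\ell}\lesssim 1$ uniformly. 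A cleaner route, avoiding the square root of $Y^{-1}$ explicitly, is to replace the Cauchy--Binet step by the observation that $W_{j}Y^{-1}W_{j}^{t}=\|W_{j}A\|^{2}$ is exactly the quantity estimated in Lemma~\ref{lemm:17} and (for index subsets) in Lemma~\ref{lemm:18}; by Hadamard's inequality applied to $M_{J}$,
\begin{displaymath}
\bigl|\det\bigl((M_{J})_{\bullet,L}\bigr)\bigr|\le\prod_{j\in J}\|W_{j}A\|\le\prod_{j\in J}\bigl(W_{j}Y^{-1}W_{j}^{t}\bigr)^{1/2}\lesssim\prod_{j\in J}\frac{1}{y_{j}^{1/2}},
\end{displaymath}
using the second bound of Lemma~\ref{lemm:17} with $i=j$ (so $1+y_{j}\lesssim_{\varepsilon} y_{j}$ on $\R_{\ge\varepsilon}^{d}$). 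The same bound holds for $M_{K}$, and Cauchy--Binet then yields $\bigl|\det(M_{J}M_{K}^{t})\bigr|\lesssim \prod_{j\in J}y_{j}^{-1/2}\prod_{k\in K}y_{k}^{-1/2}$, which is the claimed estimate \emph{without} the crucial gain factor $\bigl(\sum_{i\in I_{0}^{\complement}}y_{i}\bigr)/\bigl(\sum_{i=0}^{d}y_{i}\bigr)$.

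The main obstacle, therefore, is extracting this extra factor $\theta\coloneqq\bigl(\sum_{i\in I_{0}^{\complement}}y_{i}\bigr)/\bigl(\sum_{i=0}^{d}y_{i}\bigr)\le 1$, which measures how much of the "mass" sits outside $I=J\cap K$. Hadamard is too lossy here; the point is that the rows of $M_{J}$ indexed by $J$ and the rows of $M_{K}$ indexed by $K$ overlap precisely on $I$, and the linear relation $W_{0}+\sum_{j=1}^{d}y_{j}W_{j}=0$ of \eqref{eq:32} forces a near-degeneracy. Concretely I would argue as follows: in the Cauchy--Binet sum, group the contribution so that the "shared" rows $\{W_{i}A : i\in I\}$ appear in both determinants; for the determinant of $M_{J}$, perform a column/row manipulation using \eqref{eq:32} to replace one of the vectors $W_{i}$ ($i\in I$) by $-\tfrac1{y_{i}}\sum_{k\ne i}y_{k}W_{k}$ and isolate the part supported on $I_{0}^{\complement}$; equivalently, bound the relevant minor of $M_{J}$ (or a suitably row-reduced version) by replacing the factor $\|W_{i}A\|$ for one index $i\in I$ by $\|W_{I}A\| = (W_{I}Y^{-1}W_{I}^{t})^{1/2}$ and invoking Lemma~\ref{lemm:18}, which gives exactly $(W_{I}Y^{-1}W_{I}^{t})^{1/2}\lesssim \bigl(\sum_{i\in I}y_{i}\bigr)^{1/2}\bigl(\sum_{j\in I_{0}^{\complement}}y_{j}\bigr)^{1/2}\bigl(\sum_{k=0}^{d}y_{k}\bigr)^{-1/2}$. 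Dividing by the $\bigl(\sum_{i\in I}y_{i}\bigr)^{1/2}\ge y_{i}^{1/2}$ that one "spends" to make room produces the factor $\theta^{1/2}$ from $M_{J}$ and another $\theta^{1/2}$ from the matching manipulation on $M_{K}$, giving $\theta$ overall. Carrying out this bookkeeping carefully — making sure the row-reduction is legitimate for \emph{every} $L$ in the Cauchy--Binet sum and that the finitely many cases combine without losing the factor — is the delicate part; once it is done, collecting all constants (which depend only on $g$, $d$, $\varepsilon$, and the fixed matrices $Y_{0},\ldots,Y_{d}$) finishes the proof.
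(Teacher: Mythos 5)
Your strategy is essentially the paper's: exploit the Gram structure from Lemma~\ref{lemm:12}, use the relation $I\subseteq J$, $I\subseteq K$ to replace the row (resp.\ column) $W_{i_{0}}$, $i_{0}\in I$, by $\tfrac{1}{y_{i_{0}}}W_{I}$ without changing any minor, and then combine a Hadamard-type bound with Lemmas~\ref{lemm:17} and~\ref{lemm:18}. Your Cauchy--Binet decomposition through $A=Y^{-1/2}$ is in fact a more careful justification of the step the paper invokes loosely (``the determinant of a positive definite matrix is bounded by the product of its diagonal elements''), since for $J\neq K$ one really does need the Cauchy--Schwarz/Hadamard combination you describe; so that part of your write-up is an improvement rather than a deviation.

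The genuine gap is the step you defer as ``delicate bookkeeping,'' and the inequality you quote there points the wrong way. After the replacement, the factor contributed by the index $i_{0}$ in the Hadamard bound for $M_{J}$ is $\tfrac{1}{y_{i_{0}}}\bigl(W_{I}Y^{-1}W_{I}^{t}\bigr)^{1/2}\lesssim\tfrac{1}{y_{i_{0}}}\bigl(\sum_{i\in I}y_{i}\bigr)^{1/2}\theta^{1/2}$ with $\theta=\bigl(\sum_{i\in I_{0}^{\complement}}y_{i}\bigr)/\bigl(\sum_{i=0}^{d}y_{i}\bigr)$, and to recover the target factor $y_{i_{0}}^{-1/2}\theta^{1/2}$ you need $\sum_{i\in I}y_{i}\lesssim y_{i_{0}}$ --- not the inequality $\bigl(\sum_{i\in I}y_{i}\bigr)^{1/2}\ge y_{i_{0}}^{1/2}$ that you state, which is trivially true and does not help. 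Since the $y_{i}$, $i\in I$, can be wildly unequal, an arbitrary choice of $i_{0}$ leaves you short by the unbounded factor $\bigl(\sum_{i\in I}y_{i}\bigr)/y_{i_{0}}$. The missing (one-line) idea is either to choose $i_{0}$ with $y_{i_{0}}$ maximal in $I$, so that $\sum_{i\in I}y_{i}\le\vert I\vert\,y_{i_{0}}$, or, as the paper does, to multiply the bound obtained for each $i_{0}\in I$ by $y_{i_{0}}$, sum over $i_{0}\in I$, and divide by $\sum_{i\in I}y_{i}$, which cancels exactly against the factor $\sum_{i\in I}y_{i}$ produced by Lemma~\ref{lemm:18}. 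You should also record the trivial case $I=\emptyset$, where $\theta=1$ and plain Hadamard already gives the claim.
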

\begin{proof}
By Lemma~\ref{lemm:12}, we have
\begin{equation}
\label{eq:49}
\det\Bigg(\frac{\partial^{2}\varphi}{\partial y_{j}\partial y_{k}}\Bigg)_{\substack{j\in J\\k\in K}}=2^{r}\det\big(W_{j}Y^{-1}W_{k}^{t}\big)_{\substack{j\in 
J\\k\in K}}.
\end{equation}
We now fix $i_{0}\in I$ and note that the determinant of the Gram matrix on the right-hand side of~\eqref{eq:49} remains unchanged, if we replace 
both occurrences of $W_{i_{0}}$ by $(1/y_{i_{0}})W_{I}$. Recalling that the determinant of a real symmetric positive definite matrix can be bounded 
by the product of its diagonal elements, we are led by means of Lemma~\ref{lemm:17} and Lemma~\ref{lemm:18} to the upper bound
\begin{align*}
\det\Bigg(\frac{\partial^{2}\varphi}{\partial y_{j}\partial y_{k}}\Bigg)_{\substack{j\in J\\k\in K}}&\le\frac{C_{i_{0}}}{y_{i_{0}}^{2}}\frac{\sum_{i\in I}y_{i}
\sum_{j\in I_{0}^{\complement}}y_{j}}{\sum_{k=0}^{d}y_{k}}\frac{1}{\prod_{\substack{j\in J\\j\neq i_{0}}}y_{j}^{1/2}\prod_{\substack{k\in K\\k\neq i_
{0}}}y_{k}^{1/2}} \\
&=\frac{C_{i_{0}}}{y_{i_{0}}}\frac{\sum_{i\in I}y_{i}\sum_{j\in I_{0}^{\complement}}y_{j}}{\sum_{k=0}^{d}y_{k}}\frac{1}{\prod_{j\in J}y_{j}^{1/2}\prod_
{k\in K}y_{k}^{1/2}},
\end{align*}
where $C_{i_{0}}>0$ is a constant; observe that in the application of Lemma~\ref{lemm:17} we are allowed to remove the summand $1$ in the
denominator because we have $(y_{1},\ldots,y_{d})\in\R_{\ge\varepsilon }^{d}$. Since the inequality just obtained is true for each $i_{0}\in I$, we
find a constant $C>0$ such that
\begin{displaymath}
\sum_{i_{0}\in I}y_{i_{0}}\det\Bigg(\frac{\partial^{2}\varphi}{\partial y_{j}\partial y_{k}}\Bigg)_{\substack{j\in J\\k\in K}}\le C\frac{\sum_{i\in I}y_{i}\sum_
{j\in I_{0}^{\complement}}y_{j}}{\sum_{k=0}^{d}y_{k}}\frac{1}{\prod_{j\in J}y_{j}^{1/2}\prod_{k\in K}y_{k}^{1/2}},
\end{displaymath}
from which the claim follows immediately.
\end{proof}

\begin{lemma}
\label{lemm:19} 
Let $\varepsilon >0$ be a real number and $m,n>0$ integers. Then, there is a constant $C>0$ such that for all $s_{1},\ldots,s_{m},t_{1},\ldots,t_{n}
\ge\varepsilon $, the bound
\begin{displaymath}
\sum_{j=1}^{m}s_{j}\Bigg(\prod_{k=1}^{n}t_{k}\Bigg)^{1/n}\le C\,\sum_{k=1}^{n}t_{k}\prod_{j=1}^{m}s_{j}
\end{displaymath}
holds.
\end{lemma}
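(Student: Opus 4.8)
The plan is to decouple the two groups of variables and handle each by an elementary inequality. First I would apply the arithmetic--geometric mean inequality to the $t_k$'s: since $t_1,\ldots,t_n\ge 0$, we have
\begin{displaymath}
\Bigg(\prod_{k=1}^{n}t_k\Bigg)^{1/n}\le\frac{1}{n}\sum_{k=1}^{n}t_k\le\sum_{k=1}^{n}t_k.
\end{displaymath}
Multiplying by $\sum_{j=1}^{m}s_j\ge 0$, this already gives
\begin{displaymath}
\sum_{j=1}^{m}s_j\Bigg(\prod_{k=1}^{n}t_k\Bigg)^{1/n}\le\Bigg(\sum_{j=1}^{m}s_j\Bigg)\Bigg(\sum_{k=1}^{n}t_k\Bigg),
\end{displaymath}
so the lemma is reduced to the bound $\sum_{j=1}^{m}s_j\le C\prod_{j=1}^{m}s_j$, valid uniformly for $s_1,\ldots,s_m\ge\varepsilon$.

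For this reduced claim I would use that, for each fixed index $i\in\{1,\ldots,m\}$, the remaining $m-1$ factors are each at least $\varepsilon$, whence $\prod_{j=1}^{m}s_j = s_i\prod_{j\neq i}s_j\ge\varepsilon^{m-1}s_i$. Summing this inequality over $i=1,\ldots,m$ yields $m\prod_{j=1}^{m}s_j\ge\varepsilon^{m-1}\sum_{i=1}^{m}s_i$, i.e.
\begin{displaymath}
\sum_{i=1}^{m}s_i\le\frac{m}{\varepsilon^{m-1}}\prod_{j=1}^{m}s_j.
\end{displaymath}
Combining the two displays gives the assertion with the explicit constant $C=m/\varepsilon^{m-1}$, which depends only on $\varepsilon$ and $m$ (and not on $n$).

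There is no real obstacle here; the only point worth noting is that the hypothesis $t_k\ge\varepsilon$ is not actually needed (nonnegativity of the $t_k$ suffices for the AM--GM step), and the lower bound $\varepsilon$ on the $s_j$'s is used solely to convert the sum $\sum_j s_j$ into a multiple of the product $\prod_j s_j$.
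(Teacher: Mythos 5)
Your proof is correct and follows essentially the same route as the paper's: AM--GM on the $t_k$'s combined with the bound $\sum_j s_j\le (m/\varepsilon^{m-1})\prod_j s_j$, the only cosmetic difference being that you sum the inequality $\prod_j s_j\ge\varepsilon^{m-1}s_i$ over $i$ while the paper singles out the largest $s_j$; both yield the same constant $C=m/\varepsilon^{m-1}$. Your side remark that the hypothesis $t_k\ge\varepsilon$ is superfluous is also accurate.
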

\begin{proof}
Without loss of generality, we can assume that $s_{1}\ge s_{j}$ for $j=1,\ldots,m$. Since $s_{j}\ge\varepsilon$ for $j=1,\ldots,m$, we obtain
\begin{displaymath}
\sum_{j=1}^{m}s_{j}\le  ms_{1}\le ms_{1}\frac{s_{2}}{\varepsilon}\cdot\ldots\cdot\frac{s_{m}}{\varepsilon}=\frac{m}{\varepsilon^{m-1}}\prod_{j=1}^{m}
s_{j}.
\end{displaymath}
On the other hand, using the inequality between the geometric and the arithmetic mean, leads to 
\begin{displaymath}
\Bigg(\prod_{k=1}^{n}t_{k}\Bigg)^{1/n}\le\frac{1}{n}\sum_{k=1}^{n}t_{k}.
\end{displaymath}
Putting the above two inequalities together yields
\begin{displaymath}
\sum_{j=1}^{m}s_{j}\Bigg(\prod_{k=1}^{n}t_{k}\Bigg)^{1/n}\le\frac{m}{\varepsilon^{m-1}}\sum_{k=1}^{n}t_{k}\prod_{j=1}^{m}s_{j},
\end{displaymath}
which proves the claim.
\end{proof}

\subsection{The arithmetic self-intersection of the adelic arithmetic line bundle \texorpdfstring{$\overline{\sJ}_{k,m,N}$}{JkmN}}
\label{sec:arithm-self-inters}

We return to the setting introduced in Subsection~\ref{sec:glob-arithm-case}. For this, we realize $\sB_{g,N}$, as in Subsection~\ref{adJkmN} 
preceding Lemma~\ref{lem:ellN}, as an open dense subset $\caU$ of a projective arithmetic variety $\caX$ of relative dimension $d=g(g+1)/2+g$ 
over $\Spec(\Z)$. The difference $\caX\setminus\sB_{g,N}$ is then the support of an effective divisor $\caB$, namely the union of the fibers over 
the primes dividing $N$ together with the union of the closure of the components of the boundary divisor of the toroidal compactification $\overline
{\sB}_{g,N,\Pi}$ under consideration. As usual, we write $X=\caX(\C)$, i.\,e., $X=\overline{\sB}_{g,N,\Pi}(\C)$, for the associated complex manifold, 
$U=\sB_{g,N}(\C)$, and $B$ for the divisor induced by $\caB$ on $X$. We upgrade $\caB$ to an arithmetic boundary divisor $\overline{\caB}=
(\caB,g_{B})$ by means of a suitable Green function $g_{B}$ of continuous type for $B$.

As discussed in Subsection~\ref{adJkmN}, the adelic arithmetic line bundle $\overline{\sL}_{N}$ is integrable on $\sB_{g,N}$, while Theorem~\ref
{thm:12} in Subsection~\ref{sec:non-integr-line} shows that the adelic arithmetic line bundle $\overline{\omega}_{N}$ is not integrable on $\sA_{g,
N}$, in general. However, we can change the metric $\Vert\cdot\Vert_{\mathrm{Fal}}$ on $\overline{\omega}_{N}$ to obtain an integrable adelic 
arithmetic line bundle $\overline{\omega}_{N}'$ on $\sA_{g,N}$ as follows. By the definition of $\sA_{g}^{\ast}$, we can choose $r\in\N$ large 
enough so that $\omega^{\otimes r}$ becomes a very ample line bundle on $\sA_{g}^{\ast}$ giving rise to the closed immersion $\iota\colon\sA_
{g}^{\ast}\hookrightarrow\P^{n}_{\Z}$ for suitable $n\in\N$. Choosing next a smooth projective admissible cone decomposition $\Sigma$ of 
$\overline{C}_{g}$ and extending the two morphisms $\rho_{N}'$ and $\rho'$ in diagram~\eqref{eq:2mor} to the respective compactifications, 
we obtain the diagram
\begin{displaymath}
\overline{\sA}_{g,N,\Sigma}\overset{\rho_{N,\Sigma}'}\longrightarrow\overline{\sA}_{g,\Sigma}\overset{\rho_{\Sigma}'}\longrightarrow\sA_{g}^{\ast}.
\end{displaymath}
By construction, we find that
\begin{displaymath}
\omega_{N}=(\rho_{\Sigma}'\circ\rho_{N,\Sigma}')^{\ast}\omega=(\rho_{\Sigma}'\circ\rho_{N,\Sigma}')^{\ast}\iota^{\ast}\,\caO_{\P^{n}_{\Z}}(1)^
{\otimes 1/r}=(\iota\circ\rho_{\Sigma}'\circ\rho_{N,\Sigma}')^{\ast}\,\caO_{\P^{n}_{\Z}}(1)^{\otimes 1/r}.
\end{displaymath}
By now equipping $\caO_{\P^{n}_{\Z}}(1)$ with the Fubini--Study metric $\Vert\cdot\Vert_{\mathrm{FS}}$, we obtain after tensoring $1/r$-times 
and pull-back by $\iota\circ\rho_{\Sigma}'\circ\rho_{N,\Sigma}'$ the desired integrable adelic arithmetic line bundle $\overline{\omega}_{N}'$ on 
$\sA_{g,N}$, since the adelic arithmetic line bundle $\overline{\caO}_{\P^{n}_{\Z}}(1)=(\caO_{\P^{n}_{\Z}}(1),\Vert\cdot\Vert_{\mathrm{FS}})$ is 
integrable on $\P^{n}_{\Z}$. In this way, next to the adelic arithmetic line bundle $\overline{\sJ}_{k,m,N}$, we obtain the integrable adelic arithmetic 
line bundle 
\begin{displaymath}
\overline{\sJ}_{k,m,N}'\coloneqq\pi_{N}^{\ast}\overline{\omega}_{N}'^{\otimes k}\otimes\overline{\sL}_{N}^{\otimes m}
\end{displaymath}
on $\sB_{g,N}$.

In the next step, we shift from the language of adelic arithmetic line bundles to adelic arithmetic divisors on $\sB_{g,N}$. For this, we choose a 
rational $\Q$-section $s_{\omega}$ of $\omega_{N}$ and a rational $\Q$-section $s_{\sL}$ of $\sL_{N}$, and define the adelic arithmetic divisors
\begin{align*}
\overline{\caD}_{k,m,N}&=(\caD_{k,m,N},g_{k,m,N})\coloneqq\divh_{\overline{\sJ}_{k,m,N}}(s_{\omega}^{\otimes k}\otimes s_{\sL}^{\otimes m}), \\
\overline{\caD}_{k,m,N}'&=(\caD_{k,m,N},g_{k,m,N}')\coloneqq\divh_{\overline{\sJ}_{k,m,N}'}(s_{\omega}^{\otimes k}\otimes s_{\sL}^{\otimes m}).
\end{align*}
By construction, the adelic arithmetic divisors $\overline{\caD}_{k,m,N}'$ are integrable on $\sB_{g,N}$, so their arithmetic self-intersection product 
can be defined using the techniques developed in~\cite{YuanZhang:adelic}. However, we are interested in the arithmetic self-intersection product 
of the adelic arithmetic divisors $\overline{\caD}_{k,m,N}$, which cannot be defined by these techniques, although they have the same divisorial 
part as the adelic arithmetic divisors $\overline{\caD}_{k,m,N}'$. In order to solve this problem, we will use the generalized arithmetic self-intersection 
product of the adelic arithmetic divisors $\overline{\caD}_{k,m,N}$ involving finite energies as developed in Subsection~\ref{fourth-section}. For 
this, we need some more notations. First, we find by the linearity that
\begin{displaymath}
\overline{\caD}_{k,m,N}=k\overline{\caD}_{1,0,N}+m\overline{\caD}_{0,1,N},\quad\overline{\caD}_{k,m,N}'=k\overline{\caD}_{1,0,N}'+m\overline
{\caD}_{0,1,N}
\end{displaymath}
and
\begin{displaymath}
\caD_{k,m,N}=k\caD_{1,0,N}+m\caD_{0,1,N},\quad g_{k,m,N}=kg_{1,0,N}+mg_{0,1,N},\quad g_{k,m,N}'=kg_{1,0,N}'+mg_{0,1,N}.
\end{displaymath}
Next, we choose a sufficiently large arithmetic reference divisor $\overline{\caE}=(\caE,g_{E})$ on $\caX$ with $g_{E}$ a Green function of 
smooth type for $E$ such that
\begin{equation}
\label{eq:refdiv}
\overline{\caE}\ge\overline{\caB},\qquad
\overline{\caE}\ge\overline{\caD}_{1,0,N}'+2\overline{\caB},\qquad\overline{\caE}\ge\overline{\caD}_{0,1,N}+2\overline{\caB}.
\end{equation}
On the associated complex manifold $X=\caX(\C)$, we then define the functions
\begin{displaymath}
\varphi_{1,0,N}=g_{1,0,N}-g_{E}\qquad\text{and}\qquad\varphi_{1,0,N}'=g_{1,0,N}'-g_{E},
\end{displaymath}
and let $\omega_{0}=\omega_{E}(g_{E})$. By construction, the function $\varphi_{1,0,N}'$ is $\omega_{0}$-plurisubharmonic.

We claim that $\varphi_{1,0,N}\in\caE(X,\omega_{0},\varphi_{1,0,N}')$: To prove the claim, we start by noting that $g_{1,0,N}=o(g_{B})$ by
arguing as in Lemma~\ref{lem:omega'} and $g_{E}=O(g_{B})$ since $E\ge D_{1,0,N}+2B$, from which we derive
\begin{displaymath}
\varphi_{1,0,N}=g_{1,0,N}-g_{E}<\infty.
\end{displaymath}
Next, observing that the rational section $s_{\omega}$ of $\omega_{N}$ induces a meromorphic section $s=s_{\omega,\C}$ of $\omega_{N}
\otimes_{\Z[1/N,\zeta_{N}]}\C$ over $\sA_{g,N}(\C)$, we recall that
\begin{displaymath}
\log\Vert s\Vert_{\mathrm{inv}}^{2}=\log\vert s\vert^{2}+\log\det(Y),
\end{displaymath}
from which we compute
\begin{align*}
\ddc\varphi_{1,0,N}+\omega_{0}&=\ddc(g_{1,0,N}-g_{E})+\omega_{0}=\ddc g_{1,0,N}-\ddc g_{E}+\omega_{0} \\
&=-\ddc\log\Vert s\Vert_{\mathrm{inv}}^{2}+\delta_{E}=-\ddc\log\vert s\vert^{2}+\delta_{D_{1,0,N}}-\ddc\log\det(Y)+\delta_{C},
\end{align*}
where $C=E-D_{1,0,N}\ge 0$. By the meromorphicity of the section $s$, we therefore arrive at the inequality
\begin{displaymath}
\ddc\varphi_{0,1,N}+\omega_{0}\ge-\ddc\log\det(Y)
\end{displaymath}
$\sA_{g,N}(\C)$. Now, we recall from~\cite[pp.~71--73]{BV} that the function $-\log\det(Y)$ is convex on $\H_{g}$, whence
\begin{displaymath}
\ddc\varphi_{0,1,N}+\omega_{0}\ge 0,
\end{displaymath}
from which we conclude that $\varphi_{0,1,N}$ is $\omega_{0}$-plurisubharmonic on $X$. Finally, since the natural invariant metric $\Vert
\cdot\Vert_{\mathrm{inv}}$ is a good metric in the sense of Mumford~\cite{mumford:hp}, we find that  $\varphi_{1,0,N}$ has relative full 
mass with respect to $\varphi_{1,0,N}'$. Therefore, we end up with $\varphi_{1,0,N}\in\caE(X,\omega_{0},\varphi_{1,0,N}')$.

More generally, multiplying~\eqref{eq:refdiv} by $(k+m)$, we obtain
\begin{displaymath}
(k+m)\overline{\caE}\ge k\overline{\caD}_{1,0,N}'+2k\overline{\caB}+m\overline{\caD}_{0,1,N}+2m\overline{\caB}=\overline{\caD}_{k,m,N}'+
2(k+m)\overline{\caB}.
\end{displaymath}
By then defining the functions
\begin{displaymath}
\varphi_{k,m,N}=g_{k,m,N}-(k+m)g_{E}\qquad\text{and}\qquad\varphi_{k,m,N}'=g_{k,m,N}'-(k+m)g_{E},
\end{displaymath}
one shows in the same way as above that $\varphi_{k,m,N}'$ is $(k+m)\omega_{0}$-plurisubharmonic and that $\varphi_{k,m,N}\in\caE
(X,(k+m)\omega_{0},\varphi_{k,m,N}')$. 

\medskip
\noindent
We are now able to state the first main theorem of this section.

\begin{theorem}
\label{thm:9}
For non-negative integers $k,m$, the adelic arithmetic divisor $\overline{\caD}_{k,m,N}$ on $\sB_{g,N}$ has finite energy with respect to 
$\overline{\caD}_{k,m,N}'$. Therefore, the generalized arithmetic intersection product $\overline{\caD}_{k,m,N}^{d+1}$ is a well-defined real 
number.
\end{theorem}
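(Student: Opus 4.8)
The plan is to reduce the finiteness of the mixed relative energy $I_{\pmb{\varphi}'}(\pmb{\varphi})$ — where all entries are equal, $\pmb{\varphi} = (\varphi_{1,0,N} \text{ or } \varphi_{0,1,N} \text{ combined appropriately})$, reflecting $\overline{\caD}_{k,m,N} = k\overline{\caD}_{1,0,N} + m\overline{\caD}_{0,1,N}$ — to a problem about bounded integrals of explicit currents on the toroidal compactification $X = \overline{\sB}_{g,N,\Pi}(\C)$. By Proposition~\ref{prop:8} (the polarization formula) together with Lemma~\ref{lemm:10}, it suffices to show that for every subset $I \subseteq \{0,\ldots,d\}$ the partial sum $\pmb{\varphi}_I$ lies in $\caE^1$ of the corresponding data, and by Lemma~\ref{lemm:10} this amounts to checking that a single integral of the shape
\begin{displaymath}
\int_X (\varphi_I - \psi_I)\,\big\langle (\ddc\varphi_I + \omega_I)^{\wedge d}\big\rangle > -\infty,
\end{displaymath}
where $\psi_I$ is the corresponding sum of the $\varphi'$-functions (which are $\omega_I$-plurisubharmonic with algebraic singularities by Proposition~\ref{prop:7}). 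Because $\varphi_{1,0,N}$ and $\varphi_{0,1,N}$ differ from $\varphi'_{1,0,N},\varphi'_{0,1,N}$ only through the difference between the natural invariant metric and a smooth reference metric, and the natural invariant metric is a good metric in Mumford's sense (already invoked in the excerpt), the function $\varphi_I - \psi_I$ is, up to a bounded error, a combination of $\log\det(Y)$ and $\beta Y^{-1}\beta^t$ terms in the local coordinates of Remark~\ref{rem:loccor}.

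The core of the argument is therefore local and computational: work in one of the coordinate polydisks $V = \Delta_{r_1}\times\cdots\times\Delta_{r_d}$ of Remark~\ref{rem:loccor}, use a partition of unity to localize, and estimate the contribution of each chart. On $V$ the relevant Green functions are, modulo bounded terms, $-\frac{k}{2}\log\det(Y) + 2\pi m\,\beta Y^{-1}\beta^t = \frac{k}{2}\log\det(Y^{-1}) + 2\pi m\,\varphi(y_1,\ldots,y_n)$ where $\varphi$ is the function studied in Subsection~\ref{sec:norm-like-functions}, with $y_j = -\frac{1}{2\pi}\log|q_j|$ for the toroidal directions $j = 1,\ldots,n$ and the remaining $y_j$ bounded. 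The non-pluripolar Monge--Ampère current $\langle(\ddc\varphi_I + \omega_I)^{\wedge d}\rangle$ then unwinds, via the change of variables $q_j \mapsto y_j$ and the fact that $\ddc$ applied to a function of the $y_j$'s produces (up to the volume form $\prod \dd y_j\,\dd\theta_j / \prod |q_j|^2 \cdot (\text{const})$, i.e.\ $\prod \dd y_j$ after integrating out angles) the Hessian determinant of Proposition~\ref{prop:9}, into an integral over $\R_{\ge\varepsilon}^n$ of the form
\begin{displaymath}
\int_{\R_{\ge\varepsilon}^n} \big(\text{log and } \beta Y^{-1}\beta^t \text{ terms}\big)\cdot \Big|\det\Big(\tfrac{\partial^2\varphi}{\partial y_j\partial y_k}\Big)\Big| \prod_j e^{-2\pi y_j}\,\dd y_j,
\end{displaymath}
the exponential weight coming from the Fubini--Study/smooth reference forms being strictly positive in all toroidal directions at the boundary. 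The integrand's growth is controlled by Lemma~\ref{lemm:13}, Lemma~\ref{lemm:17}, Proposition~\ref{prop:9}, and Lemma~\ref{lemm:19}: the Hessian determinant is $O\big(\prod_{j} y_j^{-1/2}\big)$ times a bounded ratio of sums, while $\log\det(Y^{-1})$ and $\beta Y^{-1}\beta^t$ grow at most polynomially (in fact $\beta Y^{-1}\beta^t$ is bounded along the relevant strata by Lemma~\ref{lemm:17} and $\log\det(Y^{-1})$ is $O(\sum \log y_j)$). Against the exponential decay $\prod e^{-2\pi y_j}$ every such integral converges absolutely, which gives both $\varphi_I \in \caE$ (the mass equality, which also follows from Corollary~\ref{cor:7} as in the proof of Theorem~\ref{thm:fund}~\ref{item:27}) and the finiteness required by Lemma~\ref{lemm:10}.

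Concretely I would proceed in the following order: (i) reduce, via bilinearity and $\overline{\caD}_{k,m,N} = k\overline{\caD}_{1,0,N} + m\overline{\caD}_{0,1,N}$ together with Proposition~\ref{prop:8}, to showing $\pmb{\varphi}_I \in \caE^1$ for each $I$, hence by Lemma~\ref{lemm:10} to the finiteness of one explicit integral per $I$; (ii) verify the relative-full-mass condition $\varphi_I \in \caE(X,\omega_I,\psi_I)$ using Corollary~\ref{cor:7} to identify the non-pluripolar masses with geometric self-intersection numbers $D_I^d$ on $X$, so both sides agree (this mirrors the proof of Theorem~\ref{thm:fund}~\ref{item:27}); (iii) localize to the boundary charts of Remark~\ref{rem:loccor} and reduce each local integral, after passing to the coordinates $y_j = -\frac{1}{2\pi}\log|q_j|$, to an integral over a half-space with Gaussian-type weight; (iv) bound the integrand using the estimates of Subsection~\ref{sec:norm-like-functions} — the flag condition of Definition~\ref{def:flco} is satisfied by the $Y_j$ generating any cone $\tau \in \Pi$, by construction of the toroidal data — and conclude convergence. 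The main obstacle I expect is step (iii)--(iv): carefully matching the non-pluripolar product of the singular currents $\langle(\ddc\varphi_I + \omega_I)^{\wedge d}\rangle$ with the Hessian-determinant expression of Proposition~\ref{prop:9}, including keeping track of which products of $\ddc y_j$ survive (only $g$ of them can be nonzero in any monomial, by the rank bound in Lemma~\ref{lemm:12}) and ensuring the remaining $d - g$ differentials are supplied by the smooth reference form $\omega_I$ without destroying the exponential decay. Once this bookkeeping is done, the convergence is a routine consequence of the polynomial-versus-exponential comparison furnished by Lemma~\ref{lemm:19}.
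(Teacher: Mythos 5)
Your overall skeleton matches the paper's: reduce via Lemma~\ref{lemm:10} (and multilinearity) to the finiteness of integrals of the form $\int_U(\varphi_{1,0,N}-\varphi'_{1,0,N})(\ddc\varphi_{1,0,N}+\omega_{0})^{\wedge d-r}\wedge(\ddc\varphi_{0,1,N}+\omega_{0})^{\wedge r}$, localize to the toroidal charts of Remark~\ref{rem:loccor}, observe that only $r=g$ contributes (rank bound from Lemma~\ref{lemm:12} on one side, pull-back from the $d'$-dimensional base on the other), and control the Hessian of $\beta Y^{-1}\beta^{t}$ via the estimates of Subsection~\ref{sec:norm-like-functions}. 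The relative-full-mass verification via Corollary~\ref{cor:7} is also as in the paper.

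However, the convergence step (iii)--(iv) contains a genuine error: there is no exponential weight $\prod_{j}e^{-2\pi y_{j}}$ in the relevant measure. Under $q_{j}=e^{2\pi iz_{j}}$ one has $\dd q_{j}\wedge\dd\overline{q}_{j}/\vert q_{j}\vert^{2}\sim\dd y_{j}\,\dd\theta_{j}$, and both singular factors produce exactly such terms: on $V'$ one has $\ddc\varphi_{0,1,N}+\omega_{0}=4\pi\ddc(\beta Y^{-1}\beta^{t})$ with no smooth reference form left over, and the Chern form of the good metric on $\omega_{N}$ contributes $\dd q_{j}\wedge\dd\overline{q}_{j}/\vert q_{j}\log\vert q_{j}\vert\vert^{2}$. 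So after integrating out the angles the local integral is over $\R^{n}_{\ge\varepsilon}$ against $\prod\dd y_{j}$ (not a Gaussian weight), and convergence is decided purely by the powers of $y_{j}=-\tfrac{1}{2\pi}\log\vert q_{j}\vert$. With only the bound ``Hessian determinant $=O(\prod_{j\in J}y_{j}^{-1/2}\prod_{k\in K}y_{k}^{-1/2})$ times a bounded ratio'' the directions $j\in J\cap K$ carry exactly $\dd y_{j}/y_{j}$, which diverges. The paper's proof closes this gap with two ingredients you discard: the factor $\sum_{i\in I_{0}^{\complement}}y_{i}/\sum_{i=0}^{d}y_{i}$ in Proposition~\ref{prop:9} must be kept (not replaced by $O(1)$), and the excess of convergence of the log-log form $\eta_{1}$ (exponent $2$ per direction in $I'$, versus the needed $1+\delta$) must be transferred to the borderline directions in $I=J\cap K$ by means of Lemma~\ref{lemm:19}. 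Without this balancing the argument as written does not establish convergence; with it, the exponential decay you invoke is neither present nor needed.
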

\begin{proof}
In order to prove Theorem~\ref{thm:9}, it is enough to show that (using the preceding notation)
\begin{displaymath}
\varphi_{k,m,N}\in\caE^{1}(X,(k+m)\omega_{0},\varphi_{k,m,N}'). 
\end{displaymath}
Recalling that $X=\overline{\sB}_{g,N,\Pi}(\C)$ and $U=\sB_{g,N}(\C)$, and taking into account Lemma~\ref{lemm:10}, we are left to show that
\begin{displaymath}
\int_{X}(\varphi_{k,m,N}-\varphi_{k,m,N}')\big\langle(\ddc\varphi_{k,m,N}+(k+m)\omega_{0})^{\wedge d}\big\rangle>-\infty.
\end{displaymath}
By the multilinearity of the non-pluripolar product recalled in~Theorem~\ref{thm:2}~\ref{item:4}), it is enough to show that for any $0\le r\le d$, we 
have
\begin{displaymath}
\int_{X}(\varphi_{1,0,N}-\varphi_{1,0,N}')\big\langle(\ddc\varphi_{1,0,N}+\omega_{0})^{\wedge d-r}\wedge(\ddc\varphi_{0,1,N}+\omega_{0})^{\wedge 
r}\big\rangle>-\infty.
\end{displaymath}
Since the restrictions of the currents $\ddc\varphi_{1,0,N}+\omega_{0}$ and $\ddc\varphi_{0,1,N}+\omega_{0}$ to $U$ are smooth, we have
\begin{align*}
&\int_{X}(\varphi_{1,0,N}-\varphi_{1,0,N}')\big\langle(\ddc\varphi_{1,0,N}+\omega_{0})^{\wedge d-r}\wedge(\ddc\varphi_{0,1,N}+\omega_{0})^{\wedge 
r}\big\rangle \\
&\qquad=\int_{U}(\varphi_{1,0,N}-\varphi_{1,0,N}')(\ddc\varphi_{1,0,N}+\omega_{0})^{\wedge d-r}\wedge(\ddc\varphi_{0,1,N}+\omega_{0})^{\wedge r}.
\end{align*}
We will now prove the claimed boundedness result locally on the coordinate neighbourhoods $V\subseteq X$ introduced in Remark~\ref{rem:loccor}.
Recall that in order to define such a coordinate neighbourhood $V$ around a point $p_{\overline{\tau}}$ of a stratum $\overline{\sB}_{g,N,\overline
{\tau}}(\C)$ of $\overline{\sB}_{g,N,\Pi}(\C)$, we started from an integral basis
\begin{equation}
\label{eq:39}
\{(Y_{1},\beta_{1}),\ldots,(Y_{d},\beta_{d})\}
\end{equation}
of a maximal cone $\tau'\in\Pi$ that has $\tau$ as a face, together with a pair $(Y_{0},\beta_{0})$ with $Y_{0}$ a real symmetric positive definite 
$(g\times g)$-matrix and a row vector $\beta_{0}\in\R^{(1,g)}$. After possibly replacing the smooth projective admissible cone decomposition $\Pi$
of $\widetilde{C}_{g}$ by its barycentric subdivision, we may assume by~\cite[Lemma~3.5]{bghdj_sing} without loss of generality that the matrices 
$Y_{0},\ldots,Y_{d}$ satisfy the flag condition of Definition~\ref{def:flco} for each maximal cone $\tau'\in\Pi$. The coordinate neighbourhood $V$
with coordinates $(q_{1},\ldots,q_{d})$ arount the point $p_{\overline{\tau}}$ is then characterized as follows:
\begin{enumerate}
\item 
There is a subset $L\subseteq\{1,\ldots,d\}$ such that $V\setminus U$ is described by the equation $\prod_{j\in L}q_{j}=0$.
\item 
There are real numbers $0<r_{1},\ldots,r_{d}<1/e$ such that the set of pairs
\begin{displaymath}
V'\coloneqq\Bigg\{(Z,W)=i(Y_{0},\beta_{0})+\sum_{j=1}^{d}z_{j}(Y_{j},\beta_{j})\Bigg\vert\,z_{j}\in\C:\,
\begin{alignedat}{2}
2\pi\im(z_{j})&>-\log(r_{j}),\ &j&\in L \\
\vert z_{j}\vert&<r_{j},&j&\notin L
\end{alignedat}
\Bigg\}
\end{displaymath}
constitutes an open subset of $\H_{g}\times\C^{(1,g)}$. 
\item 
By choosing $V'$ small enough, the uniformization map $\H_{g}\times\C^{(1,g)}\rightarrow\sB_{g,N}(\C)$ maps the open subset $V'\subseteq\H_
{g}\times\C^{(1,g)}$ to the coordinate neighbourhood $V\subseteq X$ centered at $p_{\overline{\tau}}$ by means of the assignment
\begin{displaymath}
(z_{1},\ldots,z_{d})\mapsto(q_{1},\ldots,q_{d}),\text{ where }q_{j}=\Bigg\lbrace
\begin{alignedat}{2}
&e^{2\pi iz_{j}},\ &j&\in L, \\
&z_{j},&j&\notin L,
\end{alignedat}
\end{displaymath}
so that $V'$ maps surjectively onto $V\cap U$.
\end{enumerate}
We note that we need to keep the order prescribed by the flag condition, so we cannot assume that the normal crossing divisor $V\setminus U$ 
is given by the vanishing of the first coordinates.

Our task now is to prove that the integral
\begin{equation}
\label{eq:41}
\int_{V\cap U}(\varphi_{1,0,N}-\varphi_{1,0,N}')(\ddc\varphi_{1,0,N}+\omega_{0})^{\wedge d-r}\wedge(\ddc\varphi_{0,1,N}+\omega_{0})^{\wedge r}
\end{equation}
is finite. In fact, we will show that the integrand of~\eqref{eq:41} is absolutely integrable by bounding its absolute value by an integrable function. 
We need some more notation. We write
\begin{displaymath}
\eta_{1}=(\varphi_{1,0,N}-\varphi_{1,0,N}')(\ddc\varphi_{1,0,N}+\omega_{0})^{\wedge d-r}\qquad\text{and}\qquad\eta_{2}=(\ddc\varphi_{0,1,N}+
\omega_{0})^{\wedge r};
\end{displaymath}
then the integrand of~\eqref{eq:41} becomes $\eta=\eta_{1}\wedge\eta_{2}$. We further write
\begin{displaymath}
\eta_{2}=\sum_{J,K}\eta_{2,J,K}\qquad\text{with}\qquad\eta_{2,J,K}=f_{2,J,K}\,\dd q_{J}\wedge\dd\overline{q}_{K},
\end{displaymath}
where the sum runs over all subsets $J,K\subseteq\{1,\ldots,d\}$ of cardinality $r$ so that if $J=\{j_{1},\ldots,j_{r}\}$, then $\dd q_{J}=\dd q_{j_{1}}
\wedge\ldots\wedge\dd q_{j_{r}}$, and similarly for $\dd\overline{q}_{K}$. This gives
\begin{displaymath}
\eta=\sum_{J,K}\eta_{1,J^{\complement},K^{\complement}}\wedge\eta_{2,J,K},
\end{displaymath}
where $J^{\complement}=\{1,\ldots,d\}\setminus J$ and $K^{\complement}=\{1,\ldots,d\}\setminus K$; we also write
\begin{displaymath}
\eta_{1,J^{\complement},K^{\complement}}=f_{1,J^{\complement},K^{\complement}}\,\dd q_{J^{\complement}}\wedge\dd\overline{q}_{K^
{\complement}}. 
\end{displaymath}
Since the natural invariant metric $\Vert\cdot\Vert_{\mathrm{inv}}$ on the Hodge bundle $\overline{\omega}_{N}$ is a good metric in the sense 
of Mumford~\cite{mumford:hp} (see also~\cite{BurgosKramerKuehn:accavb}), we know that the differential forms $\eta_{1,J^{\complement},K^
{\complement}}$ have log-log growth when approaching the boundary $V\setminus U$. This means that there is a constant $C>0$ and an integer 
$M\ge 0$ such that the upper bound
\begin{equation}
\label{eq:50}
\vert f_{1,J^{\complement},K^{\complement}}\vert\le C\,\frac{\big(\sum_{i=1}^{d}\log(\log\vert q_{i}\vert^{2})\big)^{M}}{\prod_{\substack{j\in J^
{\complement}\\k\in K^{\complement}}}\big\vert q_{j}\log\vert q_{j}\vert\big\vert\,\big\vert q_{k}\log\vert q_{k}\vert\big\vert}
\end{equation}
holds on $V\cap U$.

Next, we need to find a bound for the functions $f_{2,J,K}$ on $V\cap U$. For this, we recall that the rational section $s_{\sL}$ of $\sL_{N}$
induces a meromorphic section $s=s_{\sL,\C}$ of $\sL_{N}\otimes_{\Z[1/N,\zeta_{N}]}\C$ over $U$. As $(Z,W)$ ranges through the open subset 
$V'\subseteq\H_{g}\times\C^{(1,g)}$ with coordinates $(z_{1},\ldots,z_{d})$ as described above, we compute
\begin{displaymath}
\log\Vert s\Vert_{\mathrm{inv}}^{2}=\log\vert s\vert^{2}-4\pi\im(W)\im(Z)^{-1}\im(W)^{t}=\log\vert s\vert^{2}-4\pi\beta Y^{-1}\beta^{t},
\end{displaymath}
where
\begin{displaymath}
Y=Y(y_{1},\ldots,y_{d})=Y_{0}+\sum_{j=1}^{d}y_{j}Y_{j}\qquad\text{and}\qquad\beta=\beta(y_{1},\ldots,y_{d})=\beta_{0}+\sum_{j=1}^{d}y_{j}
\beta_{j},
\end{displaymath}
recalling that $z_{j}=x_{j}+iy_{j}$ ($j=1,\ldots,d$). From this, we deduce the following equalities on the open subset $V'$ (which maps surjectively 
onto $V\cap U$)
\begin{align*}
\ddc\varphi_{0,1,N}+\omega_{0}&=\ddc(g_{0,1,N}-g_{E})+\omega_{0}=\ddc g_{0,1,N}-\ddc g_{E}+\omega_{0} \\
&=-\ddc\log\Vert s\Vert_{\mathrm{inv}}^{2}+\delta_{E}=-\ddc\log\vert s\vert^{2}+\delta_{D_{0,1,N}}+4\pi\ddc\beta Y^{-1}\beta^{t}+\delta_{C},
\end{align*}
where $C=E-D_{0,1,N}\ge 0$ such that $\vert C_{\C}\vert\subseteq V\setminus U$. By the meromorphicity of the section $s$, we therefore have
the equality 
\begin{equation}
\label{eq:44}
\ddc\varphi_{0,1,N}+\omega_{0}=4\pi\ddc\beta Y^{-1}\beta^{t}
\end{equation}
on the open subset $V'$.

Now, we replace the summand $Y_{0}$ of $Y$ by $Y_{0}-\sum_{j\notin L}r'_{j}Y_{j}$ with $r'_{j}$ being strictly smaller than $r_{j}$, which is still 
a real symmetric positive definite matrix by the definition of the open subset $V'$. Then, we replace $z_{j}$ by $z_{j}+ir'_{j}$ for $j\notin L$. After 
possibly shrinking $V'$ once again, we can thus assume that $y_{j}\ge\varepsilon>0$ for all $j\in\{1,\ldots,d\}$ and that the uniformization map is 
given by $q_{j}=z_{j}-ir'_{j}$ for all $j\notin L$. By recalling the function $\varphi\colon\R^{d}_{\ge\varepsilon}\rightarrow\R$ introduced in Subsection
\ref{sec:norm-like-functions}, we deduce from equation~\eqref{eq:44} that
\begin{equation}
\label{eq:45}
f_{2,J,K}=(2i)^{r}\,\det\Bigg(\frac{\partial^{2}\varphi}{\partial y_{j}\partial y_{k}}\Bigg)_{\substack{j\in J\\k\in K}}\,\,\prod_{\substack{j\in J\\k\in K}}\frac
{\partial y_{j}}{\partial q_{j}}\frac{\partial y_{k}}{\partial\overline{q}_{k}}.
\end{equation}
Using Proposition~\ref{prop:9}, we find that there are constants $C,C'>0$ such that the bound
\begin{align}
\notag
\vert f_{2,J,K}\vert&\le C\,\frac{\sum_{i\in I_{0}^{\complement}}y_{i}}{\sum_{i=0}^{d}y_{i}}\frac{1}{\prod_{j\in J}y_{j}^{1/2}\prod_{k\in K}y_{k}^{1/2}}\,
\Bigg\vert\prod_{\substack{j\in J\\k\in K}}\frac{\partial y_{j}}{\partial q_{j}}\frac{\partial y_{k}}{\partial\overline{q}_{k}}\Bigg\vert \\
 \label{eq:46} 
&\le C'\,\frac{\sum_{i\in I_{0}^{\complement}}\big\vert\log\vert q_{i}\vert\big\vert}{\sum_{i=0}^{d}\big\vert\log\vert q_{i}\vert\big\vert}\frac{1}{\prod_{j\in J}
\big\vert\log\vert q_{j}\vert\big\vert^{1/2}\prod_{k\in K}\big\vert\log\vert q_{k}\vert\big\vert^{1/2}}\prod_{\substack{j\in J\\k\in K}}\frac{1}{\vert q_{j}q_{k}
\vert},
\end{align}
holds on $V\cap U$; recall that $I=J\cap K$ and $I_{0}^{\complement}=\{0,\ldots,d\}\setminus I$, and that we use the convention $y_{0}=\log\vert q_
{0}\vert=1$.

The idea to prove the boundedness of the integral~\eqref{eq:41} is now simple. We observe that an integral of the form
\begin{displaymath}
\int_{\vert q\vert\le 1}\frac{\dd q\wedge\dd\overline{q}}{\vert q\vert^{2}\big\vert\log\vert q\vert\big\vert^{2a}}
\end{displaymath}
is convergent as long as $a>1/2$. This shows that the respective integrals over the components of $\eta_{2}$ would be divergent because for each
term of the form $\dd q_{i}/q_{i}$, we only have a term of the form $\log\vert q_{i}\vert$ of degree $-1/2$, while the respective integrals over the
components of $\eta_{1}$ have an excess of convergence because for each term of the form $\dd q_{i}/q_{i}$, we have a term of the form $\log
\vert q_{i}\vert$ of degree $-1$, which is more than what is needed. Luckily, the factor $\sum_{i\in I_{0}^{\complement}}\big\vert\log\vert q_{i}\vert
\big\vert/\sum_{i=0}^{d}\big\vert\log\vert q_{i}\vert\big\vert$ in~\eqref{eq:46} will allow us to transfer the excess of convergence from $\eta_{1}$ to 
$\eta_{2}$. 

Before doing so, we observe that the integrand of~\eqref{eq:41} vanishes unless $r=g$. Indeed, the differential form $\eta_{1}$ is the pull-back of a 
differential form on the base $\sA_{g,N}(\C)$ that has complex dimension $d'=d-g$. Therefore, $\eta_{1}$ vanishes unless $d-r\le d'=d-g$. In other 
words, $\eta_{1}$ vanishes unless $r\ge g$. On the other hand, equation~\eqref{eq:45} shows that $\eta_{2}$ vanishes unless $r\le g$, since the 
rank of the Hessian matrix of $\varphi$ is at most $g$ by Lemma~\ref{lemm:12}. We thus conclude that the integrand of the integral~\eqref{eq:41} 
vanishes unless $r=g$.    

Finally, we have all the ingredients in place to prove Theorem~\ref{thm:9}. Let $J,K\subseteq\{1,\ldots,d\}$ be subsets of cardinality $g$. As before, 
we write $I=J\cap K$, and we further set $I'=J^{\complement}\cap K^{\complement}$ and $I''=\{1,\ldots,d\}\setminus(I\cup I')$. Combining the upper 
bounds~\eqref{eq:50} and~\eqref{eq:46} on $V\cap U$, we obtain
\begin{displaymath}
\vert f_{1,J^{\complement},K^{\complement}}f_{2,J,K}\vert\le C\frac{\sum_{i\in I_{0}^{\complement}}\big\vert\log\vert q_{i}\vert\big\vert}{\sum_{i=0}^{d}
\big\vert\log\vert q_{i}\vert\big\vert}\frac{1}{\prod_{j\in I}\big\vert\log\vert q_{j}\vert\big\vert\prod_{k\in I''}\big\vert\log\vert q_{k}\vert\big\vert^{3/2}\prod_
{\ell\in I'}\big\vert\log\vert q_{\ell}\vert\big\vert^{2}}\prod_{i=1}^{d}\frac{1}{\vert q_{i}\vert^{2}},
\end{displaymath}
where $C>0$ is again a constant. Here we see that we have a potential problem of convergence with respect to the integration along the variables 
$q_{j}$ for $j\in I$, which can be resolved using Lemma~\ref{lemm:19}. Namely, using this lemma we deduce that there is a further constant $C'>0$ 
such that
\begin{align*}
\frac{\sum_{i\in I_{0}^{\complement}}\big\vert\log\vert q_{i}\vert\big\vert}{\sum_{i=0}^{d}\big\vert\log\vert q_{i}\vert\big\vert}\Bigg(\prod_{j\in I}\big\vert
\log\vert q_{j}\vert\big\vert\Bigg)^{1/n}\le C'\,\frac{\sum_{j\in I}\big\vert\log\vert q_{j}\vert\big\vert}{\sum_{i=0}^{d}\big\vert\log\vert q_{i}\vert\big\vert}
\prod_{i\in I_{0}^{\complement}}\big\vert\log\vert q_{i}\vert\big\vert\le \\
C'\prod_{i\in I_{0}^{\complement}}\big\vert\log\vert q_{i}\vert\big\vert=C'\prod_{i\in I^{\complement}}\big\vert\log\vert q_{i}\vert\big\vert=C'\prod_{k\in 
I''}\big\vert\log\vert q_{k}\vert\big\vert\prod_{\ell\in I'}\big\vert\log\vert q_{\ell}\vert\big\vert,
\end{align*}
where $n$ denotes the cardinality of $I$. Rearranging terms and taking the fourth root immediately leads to the inequality
\begin{displaymath}
\Bigg(\frac{\sum_{i\in I_{0}^{\complement}}\big\vert\log\vert q_{i}\vert\big\vert}{\sum_{i=0}^{d}\big\vert\log\vert q_{i}\vert\big\vert}\Bigg)^{1/4}\frac{1}
{\prod_{k\in I''}\big\vert\log\vert q_{k}\vert\big\vert^{1/4}\prod_{\ell\in I'}\big\vert\log\vert q_{\ell}\vert\big\vert^{1/4}}\le C'\frac{1}{\prod_{j\in I}\big\vert
\log\vert q_{j}\vert\big\vert^{1/4n}}.
\end{displaymath}
With this inequality at hand and observing that $\sum_{i\in I_{0}^{\complement}}\big\vert\log\vert q_{i}\vert\big\vert\big/\sum_{i=0}^{d}\big\vert\log
\vert q_{i}\vert\big\vert\le 1$, we arrive with a further constant $C''>0$ at the bound
\begin{displaymath}
\vert f_{1,J^{\complement},K^{\complement}}f_{2,J,K}\vert\le C''\frac{1}{\prod_{j\in I}\big\vert\log\vert q_{j}\vert\big\vert^{1+1/4n} \prod_{k\in I''}\big
\vert\log\vert q_{k}\vert\big\vert^{1+1/4}\prod_{\ell\in I'}\big\vert\log\vert q_{\ell}\vert\big\vert^{1+3/4}}\prod_{i=1}^{d}\frac{1}{\vert q_{i}\vert^{2}},
\end{displaymath}
which is integrable on $V\cap U$ and thus proves Theorem~\ref{thm:9}.
\end{proof}

Once we know that the arithmetic self-intersection product of the adelic arithmetic line bundle $\overline{\sJ}_{k,m,N}$ of Siegel--Jacobi forms of weight 
$k$, index $m$, and level $N$ is well-defined, it can easily be computed using the functorial properties discussed in Subsection~\ref{sixth-section}. As 
in the beginning of Subsection~\ref{sec:arithm-self-inters}, let $s_{\omega}$ be a rational $\Q$-section of $\omega_{N}$ and $s_{\sL}$ a rational $\Q
$-section of $\sL_{N}$, which allow us to define the adelic arithmetic divisor
\begin{displaymath}
\overline{\caD}_{k,m,N}=\divh_{\overline{\sJ}_{k,m,N}}(s_{\omega}^{\otimes k}\otimes s_{\sL}^{\otimes m})
\end{displaymath}
on $\sB_{g,N}$ corresponding to the adelic arithmetic line bundle $\overline{\sJ}_{k,m,N}$. In the same way, we define the adelic arithmetic divisor
\begin{displaymath}
\overline{\caE}_{k,N}=\divh_{\overline{\omega}_{N}^{\otimes k}}(s_{\omega}^{\otimes k})
\end{displaymath}
on $\sA_{g,N}$ corresponding to the adelic arithmetic line bundle $\overline{\omega}_{N}^{\otimes k}$ of Siegel modular forms of weight $k$ and level 
$N$.

\begin{theorem}
\label{thm:10}
With the above notations, we have
\begin{displaymath}
\overline{\caD}_{k,m,N}^{d+1}=\frac{(d+1)!}{(d'+1)!}k^{d'+1}m^{g}2^{g}\,\overline{\caE}_{1,N}^{d'+1},
\end{displaymath}
\end{theorem}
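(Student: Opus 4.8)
The plan is to translate the statement into the language of adelic arithmetic divisors, expand $\overline{\caD}_{k,m,N}^{d+1}$ by multilinearity, annihilate every mixed term except one by means of the theorem of the cube, and evaluate the surviving term with the projection formula. By the linearity recorded in Subsection~\ref{sec:arithm-self-inters}, $\overline{\caD}_{k,m,N}=k\,\overline{\caD}_{1,0,N}+m\,\overline{\caD}_{0,1,N}$ with $\overline{\caD}_{1,0,N}=\pi_{N,\mathrm{adel}}^{\ast}\overline{\caE}_{1,N}$ (since $\divh$ commutes with the flat pull-back $\pi_{N}$) and $\overline{\caD}_{0,1,N}=\divh_{\overline{\sL}_{N}}(s_{\sL})$. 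First I would check that the generalized arithmetic intersection product of Definition~\ref{def:29} is defined and multilinear on all the relevant mixed tuples. Theorem~\ref{thm:9}, and more precisely the local estimates carried out in its proof together with the polarization criterion of Proposition~\ref{prop:8}, show that every tuple obtained by taking $a$ copies of $\overline{\caD}_{1,0,N}$ and $b=d+1-a$ copies of $\overline{\caD}_{0,1,N}$ has finite energy relative to the corresponding tuple built out of $\overline{\caD}_{1,0,N}'$ and $\overline{\caD}_{0,1,N}$, so that $\overline{\caD}_{1,0,N}^{\,a}\cdot\overline{\caD}_{0,1,N}^{\,b}$ is a well-defined real number, independent of the auxiliary reference divisors by Lemma~\ref{lemm:11}. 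Combining the multilinearity of the Yuan--Zhang product on the reference side with the multilinearity of the mixed relative energy (Lemma~\ref{lem:tech}~\ref{item:22}, extended to the finite energy class via Propositions~\ref{prop:10} and~\ref{prop:8}), the generalized product is symmetric and multilinear on these tuples, whence
\[
\overline{\caD}_{k,m,N}^{d+1}=\sum_{b=0}^{d+1}\binom{d+1}{b}k^{\,d+1-b}m^{\,b}\;\overline{\caD}_{1,0,N}^{\,d+1-b}\cdot\overline{\caD}_{0,1,N}^{\,b}.
\]

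Next I would kill the terms with $b\neq g$ using the cubical structure. Let $[2]\colon\sB_{g,N}\to\sB_{g,N}$ denote multiplication by $2$ on the abelian scheme; it is finite flat of degree $2^{2g}$, and it is a morphism over $\sA_{g,N}$, so $\pi_{N}\circ[2]=\pi_{N}$ and therefore $[2]_{\mathrm{adel}}^{\ast}\overline{\caD}_{1,0,N}=\overline{\caD}_{1,0,N}$, while the isometry~\eqref{eq:38} provided by the theorem of the cube yields $[2]_{\mathrm{adel}}^{\ast}\overline{\caD}_{0,1,N}=4\,\overline{\caD}_{0,1,N}$, both equalities up to the arithmetic divisor of a rational function, which does not affect intersection numbers. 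Applying the finite energy version of the projection formula for finite flat morphisms, Proposition~\ref{prop:14}, to $[2]$ and the $d+1$ divisors consisting of $d+1-b$ copies of $\overline{\caD}_{1,0,N}$ and $b$ copies of $\overline{\caD}_{0,1,N}$, together with the two equalities above and multilinearity, I obtain
\[
2^{2b}\;\overline{\caD}_{1,0,N}^{\,d+1-b}\cdot\overline{\caD}_{0,1,N}^{\,b}=[2]_{\mathrm{adel}}^{\ast}\overline{\caD}_{1,0,N}^{\,d+1-b}\cdot[2]_{\mathrm{adel}}^{\ast}\overline{\caD}_{0,1,N}^{\,b}=2^{2g}\;\overline{\caD}_{1,0,N}^{\,d+1-b}\cdot\overline{\caD}_{0,1,N}^{\,b}.
\]
Since $g\ge 1$, for $b\neq g$ one has $2^{2b}\neq 2^{2g}$, hence $\overline{\caD}_{1,0,N}^{\,d+1-b}\cdot\overline{\caD}_{0,1,N}^{\,b}=0$, and only $b=g$ survives in the binomial sum.

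To evaluate the term $b=g$ I would apply the generalized projection formula for projective flat morphisms, Proposition~\ref{prop:18}, to $f=\pi_{N}\colon\sB_{g,N}\to\sA_{g,N}$, which is projective and flat of relative dimension $e=g$ over a base of relative dimension $d'$ over $\Spec(\Z[1/N,\zeta_{N}])$, with $\overline{\caE}_{1}=\dots=\overline{\caE}_{g}=\overline{\caD}_{0,1,N}$ and $\overline{\caD}_{0}=\dots=\overline{\caD}_{d'}=\overline{\caE}_{1,N}$. Since $\pi_{N,\mathrm{adel}}^{\ast}\overline{\caE}_{1,N}=\overline{\caD}_{1,0,N}$ and $d+1-g=d'+1$, this gives
\[
\overline{\caD}_{1,0,N}^{\,d'+1}\cdot\overline{\caD}_{0,1,N}^{\,g}=\deg\bigl(\sL_{N}|_{F},\dots,\sL_{N}|_{F}\bigr)\cdot\overline{\caE}_{1,N}^{\,d'+1},
\]
where $F$ is a generic fibre of $\pi_{N}$, a $g$-dimensional abelian variety over a number field, and $\overline{\caE}_{1,N}^{\,d'+1}$ is the generalized arithmetic self-intersection number of $\overline{\caE}_{1,N}$ (well defined by the same kind of estimates as in Theorem~\ref{thm:9}). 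As recalled after the definition of $\sL$, the restriction $\sL_{N}|_{F}=\caO_{F}(2\Theta)$ is the line bundle attached to twice the principal polarization $\Theta$ of $F$, so $\deg(\sL_{N}|_{F},\dots,\sL_{N}|_{F})=2^{g}\,\Theta^{g}=2^{g}\,g!$. Substituting into the binomial sum leaves only the term $b=g$ and yields
\[
\overline{\caD}_{k,m,N}^{d+1}=\binom{d+1}{g}k^{\,d'+1}m^{\,g}\cdot 2^{g}g!\cdot\overline{\caE}_{1,N}^{\,d'+1}=\frac{(d+1)!}{(d'+1)!}\,k^{\,d'+1}m^{\,g}\,2^{g}\,\overline{\caE}_{1,N}^{\,d'+1},
\]
as claimed.

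The main obstacle is the bookkeeping in the first step rather than any new analysis: because the finite energy class is not stable under the additions occurring in the binomial expansion (Remark~\ref{rem:7}), one must first know that each mixed term lies in the finite energy regime — which is exactly what the proof of Theorem~\ref{thm:9} and Proposition~\ref{prop:8} supply — and one must verify the hypotheses of Propositions~\ref{prop:14} and~\ref{prop:18}, namely that $\overline{\caD}_{0,1,N}$ (nef, being the cubical canonical extension of $\sL_{N}$ from Lemma~\ref{lem:ellN}) and $\overline{\caD}_{1,0,N}=\pi_{N,\mathrm{adel}}^{\ast}\overline{\caE}_{1,N}$ both have finite energy with respect to nef adelic arithmetic divisors (respectively $\overline{\caD}_{0,1,N}$ itself and the pull-back of the integrable modified Hodge divisor $\overline{\caE}_{1,N}'$). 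Once these points are in place, the cube trick and the projection formula reduce the statement to the short computation above.
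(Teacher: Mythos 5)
Your proposal is correct and follows essentially the same route as the paper: multilinear expansion of $\overline{\caD}_{k,m,N}^{d+1}$, the cube trick with $[2]$ combined with Proposition~\ref{prop:14} to show the mixed terms vanish unless exactly $g$ factors of $\overline{\caD}_{0,1,N}$ appear, and the projection formula of Proposition~\ref{prop:18} together with $\deg(\sL_{N}\vert_{F}^{g})=2^{g}g!$ to evaluate the surviving term. Your additional care in justifying that each mixed tuple lies in the finite energy regime (via Theorem~\ref{thm:9} and Proposition~\ref{prop:8}) is a point the paper leaves implicit, but the argument is the same.
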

where we recall that $d'=g(g+1)/2$ and $d=d'+g$.
\begin{proof}
By the multilinearity of the arithmetic intersection product, we compute
\begin{displaymath}
\overline{\caD}_{k,m,N}^{d+1}=\sum_{j=0}^{d+1}\binom{d+1}{j}k^{d+1-j}m^{j}\overline{\caD}_{1,0,N}^{d+1-j}\,\overline{\caD}_{0,1,N}^{j}. 
\end{displaymath}
The next step is the global arithmetic analogue of the fact used in course of the proof of Theorem~\ref{thm:9} that the arithmetic intersection products 
$\overline{\caD}_{1,0,N}^{d+1-j}\,\overline{\caD}_{0,1,N}^{j}$ vanish unless $j=g$. Indeed, let $[2]\colon\sB_{g,N}\rightarrow\sB_{g,N}$ be the morphism 
given by fiberwise multiplication by $2$, which has degree $2^{2g}$. We will then compute the arithmetic intersection product $([2]^{\ast}_{\mathrm{adel}}
\overline{\caD}_{1,0,N})^{d+1-j}([2]^{\ast}_{\mathrm{adel}}\overline{\caD}_{0,1,N})^{j}$ in two ways. In the first case, we use that $\overline{\caD}_{1,0,N}$ 
is the pull-back of an adelic arithmetic divisor from the base $\sA_{g,N}$, whence $[2]^{\ast}_{\mathrm{adel}}\overline{\caD}_{1,0,N}=\overline{\caD}_{1,
0,N}$, while Lemma~\ref{lem:ellN} shows that $[2]^{\ast}_{\mathrm{adel}}\overline{\caD}_{0,1,N}\sim 2^{2}\,\overline{\caD}_{0,1,N}$ (linear equivalence 
of adelic arithmetic divisors). Therefore, we find in the first case
\begin{displaymath}
([2]^{\ast}_{\mathrm{adel}}\overline{\caD}_{1,0,N})^{d+1-j}([2]^{\ast}_{\mathrm{adel}}\overline{\caD}_{0,1,N})^{j}=2^{2j}\,\overline{\caD}_{1,0,N}^{d+1-j}\,
\overline{\caD}_{0,1,N}^{j}.
\end{displaymath}
In the second case, we use Proposition~\ref{prop:14} to deduce
\begin{displaymath}
([2]^{\ast}_{\mathrm{adel}}\overline{\caD}_{1,0,N})^{d+1-j}([2]^{\ast}_{\mathrm{adel}}\overline{\caD}_{0,1,N})^{j}=2^{2g}\,\overline{\caD}_{1,0,N}^{d+1-j}\,
\overline{\caD}_{0,1,N}^{j},   
\end{displaymath}
which proves the claim. As a consequence, we derive
\begin{displaymath}
\overline{\caD}_{k,m,N}^{d+1}=\binom{d+1}{g}k^{d'+1}m^{g}\overline{\caD}_{1,0,N}^{d'+1}\,\overline{\caD}_{0,1,N}^{g}.
\end{displaymath}
Finally, we use that for any fiber $F$ of the projection $\sB_{g,N}\rightarrow\sA_{g,N}$, the restriction of $\sL_{N}$ to $F$ equals twice the principal
polarization of the abelian variety $F$. We deduce that $\deg(\caD_{0,1,N}\vert_{F}^{g})=2^{g}g!$. Therefore, Proposition~\ref{prop:18} implies that
\begin{displaymath}
\overline{\caD}_{k,m,N}^{d+1}=\binom{d+1}{g}k^{d'+1}m^{g}2^{g}g!\,\overline{\caD}_{1,0,N}^{d'+1}=\frac{(d+1)!}{(d'+1)!}k^{d'+1}m^{g}2^{g}\,\overline
{\caE}_{1,N}^{d'+1},
\end{displaymath}
which proves the theorem.
\end{proof}

\begin{remark}
When $g=1$, the arithmetic self-intersection product of the line bundle of modular forms has been computed independently by U.~K\"uhn in~\cite
{Kuehn:gainc} and J.-B.~Bost in~\cite{Bost}. After normalizing the Faltings metric $\Vert\cdot\Vert_{\mathrm{Fal}}$ on the Hodge bundle $\overline
{\omega}_{N}$ by the factor $1/(4\pi)$, they obtained
\begin{displaymath}
\overline{\caE}_{1,N}^{2}=\frac{[\SL_{2}(\Z):\Gamma(N)]}{2}\Bigg(\frac{1}{2}\zeta_{\Q}(-1)+\zeta'_{\Q}(-1)\Bigg)=-\deg(\caE_{1,N})\Bigg(\frac{\zeta'_
{\Q}(-1)}{\zeta_{\Q}(-1)}+\frac{1}{2}\Bigg),
\end{displaymath}
where $\zeta_{\Q}(s)$ denotes the classical Riemann zeta function. For general $g$ and $N=1$, upon suitable normalizations, it is conjectured (see 
also~\cite{MR}) that a formula of the type
\begin{displaymath}
\overline{\caE}_{1,1}^{d'+1}=-\deg(\caE_{1,1})\Bigg(\frac{\zeta'_{\Q}(1-2g)}{\zeta_{\Q}(1-2g)}+\frac{\zeta'_{\Q}(3-2g)}{\zeta_{\Q}(3-2g)}+\ldots+\frac
{\zeta'_{\Q}(-1)}{\zeta_{\Q}(-1)}+a\log(2)+b\Bigg)
\end{displaymath}
with $a,b\in\Q$ holds. This has been mostly verified in the paper~\cite{JvP} by B.~Jung and A.~Pippich for $g=2$. There is further confirmation of 
an analogue of this formula for Hilbert modular surfaces in the paper~\cite{BBGK}. 

We wonder whether Theorem~\ref{thm:10} can be used to compute the arithmetic self-intersection product of the line bundle of Siegel modular forms 
by an inductive argument mimicking, in the arithmetic setting, Siegel's original proof of the formula for the volume of the fundamental domain of $\Sp
(2g,\Z)$, i.\,e., the formula
\begin{displaymath}
\deg(\caE_{1,1})=(-1)^{d'}\zeta_{\Q}(1-2g)\cdot\zeta_{\Q}(3-2g)\cdot\ldots\cdot\zeta_{\Q}(-1).
\end{displaymath}
Siegel's proof consists of the following two steps: (1)~Start with the quantity $\deg(\caE_{1,1})$ on $\sA_{g+1}(\C)$ and sweep it to the boundary 
$\partial\overline{\sA}_{g+1}(\C)\cong\sB_{g}(\C)$ using Eisenstein series. (2)~Push the resulting quantity down from $\sB_{g}(\C)$ to $\sA_{g}(\C)$, 
which leads to the desired inductive relation of $\deg(\caE_{1,1})$ on $\sA_{g+1}(\C)$ and on $\sA_{g}(\C)$. The formula established in Theorem
\ref{thm:10} corresponds, in the arithmetic setting, to step~(2) in Siegel's proof in the geometric setting.
\end{remark}


\end{document}